\documentclass[11pt,reqno]{amsart}

\usepackage{lipsum}
\usepackage{amsmath}
 \usepackage{amsaddr}

\usepackage[utf8]{inputenc}
\usepackage{amssymb}

\usepackage{ragged2e}
\usepackage{xfrac}
\usepackage{fancyvrb}

\usepackage{amsthm}
\usepackage{amsfonts}
\usepackage{amssymb}
\usepackage{tikz}
\usepackage{color}
\usepackage{float}
\usepackage[makeroom]{cancel}
\usepackage{ytableau}
\usepackage{mathdots}
\usepackage{multicol}
\usepackage{mathtools}
\usepackage{stmaryrd}
\usepackage{multirow}
\usepackage{pdflscape}

\usepackage[ruled,vlined]{algorithm2e}
\usepackage[margin=3cm]{geometry}

\newtheorem{teo}{Theorem}[section]
\newtheorem{cor}[teo]{Corollary}
\newtheorem{prop}[teo]{Proposition}
\newtheorem{lema}[teo]{Lemma}

\theoremstyle{definition}

\newtheorem{defin}[teo]{Definition}
\newtheorem{obs}[teo]{Remark}
\theoremstyle{remark}
\newtheorem{ex}[teo]{Example}

\DeclareMathSymbol{\shortminus}{\mathbin}{AMSa}{"39}

\numberwithin{equation}{section}

\title{An action of the cactus group on shifted tableau crystals}
\author{Inês Rodrigues} 
\email{imarrodrigues@fc.ul.pt}
\thanks{The author is partially supported by the Lisbon Mathematics PhD program (funded by the Portuguese Science Foundation). This research was made within the activities of the Group for Linear, Algebraic and Combinatorial Structures of the Center for Functional Analysis, Linear Structures and Applications (University of Lisbon), and was partially supported by FCT - Fundação para a Ciência e Tecnologia, under the project UIDB/04721/2020.}

\usepackage{hyperref}
\hypersetup{colorlinks=true,citecolor=blue,linkcolor=magenta,breaklinks=true}

\definecolor{lgray}{rgb}{0.65, 0.65, 0.65}

\begin{document}
\begin{abstract}
Recently, Gillespie, Levinson and Purbhoo introduced a crystal-like structure for shifted tableaux, called the shifted tableau crystal. We introduce, on this structure, a shifted version of the crystal reflection operators, which coincide with the restrictions of the shifted Schützenberger involution to any primed interval of two adjacent letters. Unlike type $A$ Young tableau crystals, these operators do not realize an action of the symmetric group on the shifted tableau crystal since the braid relations do not need to hold. Following a similar approach as Halacheva, we exhibit a natural internal action of the cactus group on this crystal, realized by the restrictions of the shifted Schützenberger involution to all primed intervals of the underlying crystal alphabet, containing, in particular, the aforesaid action of the shifted crystal reflection operator analogues. 
\end{abstract}

\maketitle

\section{Introduction}
Young tableaux, as well as shifted tableaux, arise in many areas of mathematics, such as algebraic combinatorics, geometry or representation theory \cite{Gi19,Pr91,Sag90,Stem89}. While the first have their original role in the representation theory of symmetric groups \cite{JK81,Yo86}, the latter have their origin in projective representations \cite{HoHu92,Joz89}, due to I. Schur \cite{Sch1911}, and their connection to geometry was shown by Hiller and Boe \cite{HB86} and Pragacz \cite{Pr91}. One important tool for the study of Young tableaux are Kashiwara crystals \cite{BumpSchi17,Kash95}. A \emph{Kashiwara crystal} of type $A$ (for $GL_n$) is a non-empty set $\mathcal{B}$ together with partial maps $e_i, f_i : \mathcal{B} \longrightarrow \mathcal{B} \sqcup \{\emptyset\}$, length functions $\varepsilon_i, \varphi_i : \mathcal{B} \longrightarrow \mathbb{Z}$, for $i \in I = [n-1]$, and weight function $\mathsf{wt}: \mathcal{B} \longrightarrow \mathbb{Z}^n$ satisfying certain axioms (see, for example,  \cite[Definition 2.13]{BumpSchi17}). This crystal may be regarded as a coloured and weighted directed acyclic graph, with vertices in $\mathcal{B}$ and $i$-coloured edges $y \xrightarrow{i} x$ if and only if $f_i (y) = x$, for $i \in I$. The set of semistandard Young tableaux of a given shape, in the alphabet $[n]$, is known to provide a model for Kashiwara type $A$ crystals \cite[Chapter 3]{BumpSchi17}, with coplactic operators $e_i$ and $f_i$ defined in terms of reading words. This crystal is isomorphic to the crystal basis of an irreducible $U_q(\mathfrak{gl}_n)$-module. The Schützenberger involution \cite{Schu76}, also known as Lusztig involution \cite{Lusz91}, is defined on the type $A$ Young tableau crystals, as a set map on  $\mathcal{B}$, and acts on its graph structure by ``flipping" it upside down, while reverting the orientation of arrows and its colours. This involution is realized by the evacuation, for straight shapes \cite{Schu76}, or its coplactic extension, often called reversal, for skew shapes \cite{BSS96,Haim92}.

Recently, Gillespie, Levinson and Purbhoo \cite{GLP17} and Gillespie, Levinson \cite{GL19} introduced a crystal-like structure on shifted tableaux. This structure has vertices the skew shifted tableaux, for a given shape $\lambda/\mu$, on the primed alphabet $ [n]' $, and double edges, corresponding to the action of the primed and unprimed lowering and raising operators which commute with the shifted \textit{jeu de taquin}. Each connected component has an unique highest weight element, a shifted skew tableau where each primed and unprimed raising operator is equal to $\emptyset$, which is a Littlewood-Richardson-Stembridge (LRS) tableau of shape $\lambda/\mu$ \cite{Stem89}. Similarly, it has a unique lowest weight element, a shifted skew tableau such that each primed and unprimed lowering operator is equal to $\emptyset$, which is the reversal of the highest weight element. The primed and unprimed operators considered separately yield a type $A$ Kashiwara crystal. We remark that this structure is not a queer crystal\footnote{Hence the terminology ``crystal-like structure". However, we will henceforth use the term \emph{crystal} to refer to this structure, whenever there is no risk of ambiguity.} and differs from the one in \cite{AsOg18,GHPS18}, which is indeed a crystal for the quantum queer Lie superalgebra.

Crystal reflection operators were originally defined by Lascoux and Schützenberger \cite{LaSchu81} in type $A$ tableau crystals and they have been shown to define an action of the symmetric group on those crystals. Kashiwara 
\cite[Theorem 7.2.2]{Kash94}, \cite[Theorem 11.1]{Kash95} defined the Weyl group action on arbitrary normal crystals. 
Halacheva \cite{Hala16,Hala20} has shown that there is an internal action of the cactus group in any normal crystal via partial Schützenberger involutions. When considering subintervals of adjacent letters, the action of the cactus group agrees with the action of the corresponding Weyl group generators. Indeed, the internal action factors through the quotient of this group by the braid relations of the corresponding Weyl group \cite{Hala16,Hala20,HaKaRyWe20}.

We introduce a shifted version of the crystal reflection operators in type $A$, Definition \ref{def:crystalreflection}. In Theorem \ref{sigmarever}, we show that, similarly to type $A$, they coincide with the restrictions of the shifted Schützenberger involution to the primed interval of adjacent letters $[i,i+1]'=\{i',i,(i+1)',i+1\}\subseteq  [n]'$, for any $i \in I$. They act on the $\{i',i\}$-coloured components of the shifted tableau crystal by a double reflection through vertical and horizontal axes, rather than a simple reflection as in the Young tableau crystal.

Unlike type $A$ crystals, they do not define a natural action of the symmetric group $\mathfrak{S}_n$ on the shifted tableau crystal, since the braid relations do not need to hold, as shown in Example \ref{exbraidnot}. Following a similar approach as Halacheva \cite{Hala16, Hala20}, we then show in Theorem \ref{teo:cactusaction} that the restrictions of the shifted Schützenberger involution on the primed subintervals of $[n]$ yield an internal action of the cactus group $J_n$ on that crystal. We note that this internal action on the shifted tableau crystal, unlike the one on type $A$ crystals, does not factor through the braid relations of the symmetric group. When the shifted Schützenberger involution is restricted to primed subintervals of two adjacent letters, the cactus group action agrees with the action of the shifted crystal reflection operators on the shifted crystal. This means that both actions agree as permutations of the vertices within each $\{i',i\}$-coloured component of the shifted crystal. 

The cactus group $J_n$ first appeared in the works of Devadoss \cite{Dev99} and Davis, Januszkiewicz and Scott \cite{DJS03}, as the fundamental group of the quotient orbifold of $\overline{M}_{0}^{n+1} (\mathbb{R})$, the Deligne-Mumford moduli space of stable curves of genus 0 with $n+1$ marked points, by the action of $\mathfrak{S}_n$ that permutes the first $n$ of those points. It is expected, although we have not attempted to explore it, that this combinatorial internal action of the cactus group on the shifted crystal carries some geometrical meaning, as this crystal has its origin in the orthogonal Grassmannian \cite{GLP19}. Indeed, this is the case for $\mathfrak{g}$-crystals, for $\mathfrak{g}$ a semi-simple Lie algebra \cite{Hala16,Hala20,HaKaRyWe20}. Moreover, the tensor product of shifted tableau crystals is not known, and consequently, nor an external action of the cactus group.
 
This paper has the following structure: Section \ref{sec2} provides the basic notions on shifted tableaux. We recall the definition of words and tableaux, following \cite{GLP17}, as well as the notion of shifted \textit{jeu de taquin} and shifted evacuation. Section \ref{sec3} recalls the main concepts on the shifted tableau crystal of \cite{GLP17}. We provide more details on the $i$-string decomposition of such crystals (which corresponds to the axiom (B1) in \cite{GL19} and (A2) in \cite{GLP17}). We then introduce, in Section \ref{sec4}, the shifted crystal reflection operators and prove their coincidence with the restriction of the shifted Schützenberger involution to a marked alphabet of two adjacent letters, Theorem \ref{sigmarever}. In Section \ref{sec5}, we recall the definition of the cactus group \cite{HenKam06} and then we prove Theorem \ref{teo:cactusaction}, the main result, which presents an action of the cactus group on the shifted tableau crystal. Some additional examples are discussed in Appendix \ref{appendix}. 

An extended abstract \cite{Ro20} of this paper was accepted in the Proceedings of the 32nd Conference on Formal Power
Series and Algebraic Combinatorics.

\section{Background}\label{sec2}

This section is intended to provide the basic definitions and results on shifted tableaux, words, and involutions and algorithms among them. We mainly follow the notations in \cite{GL19, GLP17}.

A \emph{strict partition} is a sequence $\lambda = (\lambda_1, \ldots, \lambda_k)$ of positive integers such that $\lambda_1 > \ldots > \lambda_k$. The \emph{size} of $\lambda$ is $|\lambda| = \sum \lambda_i$. The entries $\lambda_i$ are called the \emph{parts} of $\lambda$ and the \emph{length} of $\lambda$, denoted $\ell(\lambda)$, is the number of non-zero parts of $\lambda$. A strict partition $\lambda$ is identified with its \emph{shifted shape} $S(\lambda)$ which consists of $|\lambda|$ boxes placed in $\ell(\lambda)$ rows, with the $i$-th row having $\lambda_i$ boxes and being shifted $i-1$ units to the right. We use the English (or matrix) notation. The boxes in $\{(1,j), (2,j+1), (3,j+2), \ldots\}$ form a \emph{diagonal}, for $j \geq 1$. If $j=1$ it is called the \emph{main diagonal}. Given strict partitions $\lambda$ and $\mu$ such that $S(\mu) \subseteq S(\lambda)$, we write $\mu \subseteq \lambda$ and define the \emph{skew shifted shape} of $\lambda/\mu$ as $ S(\lambda/\mu) = S(\lambda) \setminus S(\mu)$ (see Figure \ref{fig:lambdamu}). Shapes of the form $\lambda/\emptyset$ are called \emph{straight} (or \emph{normal}). Note that the shifted shape $\lambda$ lies naturally in the ambient triangle of the \emph{shifted staircase shape} $\delta = (\lambda_1, \lambda_1-1, \ldots, 1)$. We define the \emph{complement} of $\lambda$ to be the strict partition $\lambda^{\vee}$ whose set of parts is the complement of the set of parts of $\lambda$ in $\{\lambda_1, \lambda_1-1, \ldots, 1\}$. In particular, $\emptyset^{\vee} = \delta$ (see Figure \ref{fig:lambdamu}).

\begin{figure}[h]
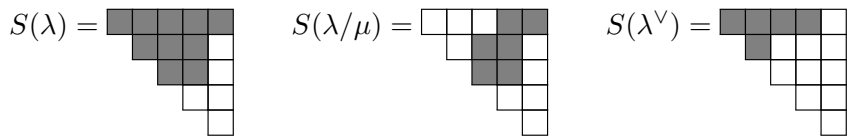

\begin{center}
$S(\lambda)=
	\ytableausetup{smalltableaux}	
	\begin{ytableau}
    *(gray)  & *(gray)  &*(gray)  &*(gray)  &*(gray)  \\
    \none & *(gray)  & *(gray)  & *(gray) &   \\
    \none & \none & *(gray) & *(gray)  & \\
    \none & \none & \none & & \\
    \none & \none & \none & \none &
  \end{ytableau}
  \qquad
  S(\lambda/\mu)=
	\ytableausetup{smalltableaux}	
	\begin{ytableau}
    {}  & {} & {} &*(gray)  &*(gray)  \\
    \none & {}  & *(gray)  & *(gray) &   \\
    \none & \none & *(gray) & *(gray) & \\
    \none & \none & \none & & \\
    \none & \none & \none & \none &
  \end{ytableau}
  \qquad
 S(\lambda^{\vee})=
	\ytableausetup{smalltableaux}	
	\begin{ytableau}
    *(gray)  & *(gray)  &*(gray)  &*(gray)  &   \\
    \none & *(gray)  &    &  &   \\
    \none & \none &   &  & \\
    \none & \none & \none & & \\
    \none & \none & \none & \none &
  \end{ytableau}
  $
 \end{center}
 \caption{The shapes of $\lambda$, $\lambda/\mu$ and $\lambda^{\vee}$, shaded in gray, for $\lambda= (5,3,2)$ and $\mu = (3,1)$.}
 \label{fig:lambdamu}
 \end{figure}

We set $[n] := \{1 < \ldots < n\}$ and define the \emph{primed} (or marked) alphabet  $[n]':=\{1' < 1 < \ldots n' < n\}$. When referring to the letters $i$ and $i'$ without specifying whether they are primed, we write $\mathbf{i}$. Given a string $w = w_1 \ldots w_m$ in the alphabet $[n]'$, the \emph{first} $i$ or $i'$ will be the leftmost entry of $w$ equal to $i$ or $i'$, for $1 \leq i \leq n$. The \emph{canonical form} of $w$ is the string obtained from $w$ by replacing the first $i$ or $i'$ (if it exists) with $i$, for all $1 \leq i \leq n$. Two strings $w$ and $v$ are said to be \emph{equivalent}, denoted by $w \simeq v$, if they have the same canonical form (this is an equivalence relation).

\begin{defin}[\cite{GLP17} Definition 2.2]
A \emph{word} $\hat{w}$ is an equivalence class of strings. The representative in canonical form is called the \emph{canonical representative}. The \emph{weight} of a word $\hat{w}$ is $\mathsf{wt}(\hat{w}) = (wt_1, \ldots, wt_n)$, where $wt_i$ is equal to the total number of $i$ and $i'$ in $\hat{w}$.
\end{defin}

\begin{ex}
The string $w=12'2'1123'2'2$ is equivalent to $\hat{w} = 122'11232'2$, the latter being the canonical form of the former. The weight of $\hat{w}$ is $(3,5,1)$.
\end{ex}

A \emph{partial operator} on a set $S$ is a map $A: T \longrightarrow S$, where $T \subseteq S$, such that $A(s) = \emptyset$ for $s \not\in T$. In this case, it is said that $A$ is \emph{undefined} on $s$. Otherwise, it is said to be \emph{defined}. Given a partial operator $A$ on the set of finite strings in the alphabet $[n]'$, we say that $A$ is \emph{defined on words} \cite[Definition 2.4]{GLP17} if $A(v)$ is defined on some representative $v$ of $\hat{w}$ and if $A(u) \simeq A(v)$ for any $u$ and $v$ representatives of $\hat{w}$. The \emph{induced operator} $\hat{A}$ on words is then defined as $\hat{A}(w) = \widehat{A(v)}$ if such a representative $v$ exists, and $\hat{A}(w) = \emptyset$ otherwise. For simplicity, from now on we refer to a word by $w$ instead of $\hat{w}$ and to an operator defined on a word by $A$ instead of $\hat{A}$.

\begin{defin}
Let $\lambda$ and $\mu$ be strict partitions such that $\mu \subseteq \lambda$. A \emph{shifted semistandard (Young) tableau} $T$ of shape $\lambda / \mu$ is a filling of $S(\lambda/\mu)$ with letters in $\{1' < 1 < \ldots\}$ such that:
	\begin{enumerate}
	\item The entries are weakly increasing in each row and in each column.
	\item There is at most one $i$ per column, for any $i \geq 1$.
	\item There is at most one $i'$ per row, for any $i \geq 1$.
	\end{enumerate}
\end{defin}

The \emph{(row) reading word} $w(T)$ of such a tableau is formed by reading the entries of $T$ from left to right, going bottom to top. The \emph{weight} of $T$ is defined as $\mathsf{wt}(T)=\mathsf{wt}(w(T))$. A shifted tableau is said to be \emph{standard} if its weight is $(1, \ldots, 1)$. We say that a tableau $T$ is in \emph{canonical form} if so it is $w(T)$. A tableau $T$ in canonical form is identified with its set of \emph{representatives}, that are obtained by possibly priming the entry corresponding to the first $i$ in $w(T)$, for all $i$. We denote by $\mathsf{ShST}(\lambda/\mu,n)$ the set of shifted semistandard tableaux of shape $\lambda/\mu$, on the alphabet $[n]'$, in canonical form.

\begin{ex}\label{semistandard}
The following is a shifted semistandard tableau, with $\lambda=(6,5,3)$, $\mu=(2,1,0)$ and $n=3$, in the canonical form, with its word and weight.

$$T=\begin{ytableau}
{} & {} & {} & 1 &1 & 2'\\
\none & {} & 2 & 3' & 3\\
\none & \none & 3 & 3
\end{ytableau}\qquad
w(T)=3323'3112' \qquad
\mathsf{wt}(T)=(2,2,4)$$

The following shifted semistandard tableaux are the representatives of $T$ that are not in canonical form:
$$\begin{ytableau}
{} & {} & {} & 1' &1 & 2'\\
\none & {} & 2 & 3' & 3\\
\none & \none & 3 & 3
\end{ytableau}
\quad
\begin{ytableau}
{} & {} & {} & 1 &1 & 2'\\
\none & {} & 2' & 3' & 3\\
\none & \none & 3 & 3
\end{ytableau}
\quad
\begin{ytableau}
{} & {} & {} & 1 &1 & 2'\\
\none & {} & 2 & 3' & 3\\
\none & \none & 3' & 3
\end{ytableau}
\quad
\begin{ytableau}
{} & {} & {} & 1' &1 & 2'\\
\none & {} & 2' & 3' & 3\\
\none & \none & 3 & 3
\end{ytableau}
$$
$$
\begin{ytableau}
{} & {} & {} & 1' &1 & 2'\\
\none & {} & 2 & 3' & 3\\
\none & \none & 3' & 3
\end{ytableau}
\quad
\begin{ytableau}
{} & {} & {} & 1 &1 & 2'\\
\none & {} & 2' & 3' & 3\\
\none & \none & 3' & 3
\end{ytableau}
\quad
\begin{ytableau}
{} & {} & {} & 1' &1 & 2'\\
\none & {} & 2' & 3' & 3\\
\none & \none & 3' & 3
\end{ytableau}$$

\end{ex}

A \emph{diagonally-shaped tableau} is a skew shifted tableau of shape $(2n-1, 2n-3, \ldots,1) / (2n-2, 2n-4, \ldots, 2)$. Every word $w=w_1 \ldots w_n$ may be regarded as a shifted tableau $D_w$ of such shape.

\begin{ex}\label{diagonal}
The word $w=2311'$ is the reading word of
$$D_w=\begin{ytableau}
{} & {} & {} & {} & {} & {} & {1'}\\
\none & {} & {} & {} & {} & {1}\\
\none & \none & {} & {} & 3\\
\none & \none & \none & 2
\end{ytableau}$$
\end{ex}

\subsection{The shifted \textit{jeu de taquin}, Knuth equivalence and dual equivalence}

A skew shape $S(\lambda/\mu)$ is said to be a \emph{border strip} if it contains no subset of the form $\{(i,j),(i+1,j+1)\}$ and a \emph{double border strip} if it contains no subset of the form $\{(i,j),(i+1,j+1),(i+2,j+2)\}$. A shifted semistandard tableau $T$ is a disjoint union of border strips $\bigsqcup\limits_{i} T^i$, where $T^{i}$ is the tableau obtained from $T$ considering only the entries filled with $\mathbf{i}$. Given strict partitions $\nu \subseteq \mu \subseteq \lambda$, we say that $\lambda/\mu$ \emph{extends} $\mu/\nu$, and, in this case, we define
$$(\lambda/\mu) \sqcup (\mu/\nu) := \lambda/\nu.$$

If $\lambda/\mu$ extends a shape $B$ consisting of a single box, then $B$ is said to be an \emph{inner corner} of $\lambda/\mu$. Similarly, if $B$, consisting of a single box, extends $\lambda/\mu$, we say that $B$ is an \emph{outer corner}.

\begin{defin}[\cite{Wor84}, Section 6.4]
Let $T\in \mathsf{ShST}(\lambda/\mu,n)$.  An \emph{inner jeu de taquin slide} is the process in which an empty inner corner of the skew shape of $T$ is chosen and then either the entry to its right or the one below it is chosen to slide into the empty square, in such way that the tableau is still semistandard, and then repeating the process with the obtained empty square until it is an outer corner. An \emph{outer jeu de taquin slide} is the reverse process, starting with an outer corner. This process has an exception to the sliding rules when the empty box of an inner or outer slide enters in the diagonal. If an inner slide moves a box with $a'$ to the left into the diagonal and then moves a box with $a$ up from the diagonal, to the right of it, the former becomes unprimed (and vice versa for the corresponding outer slide), as illustrated by the following slide:

\begin{center}
\includegraphics[scale=0.4]{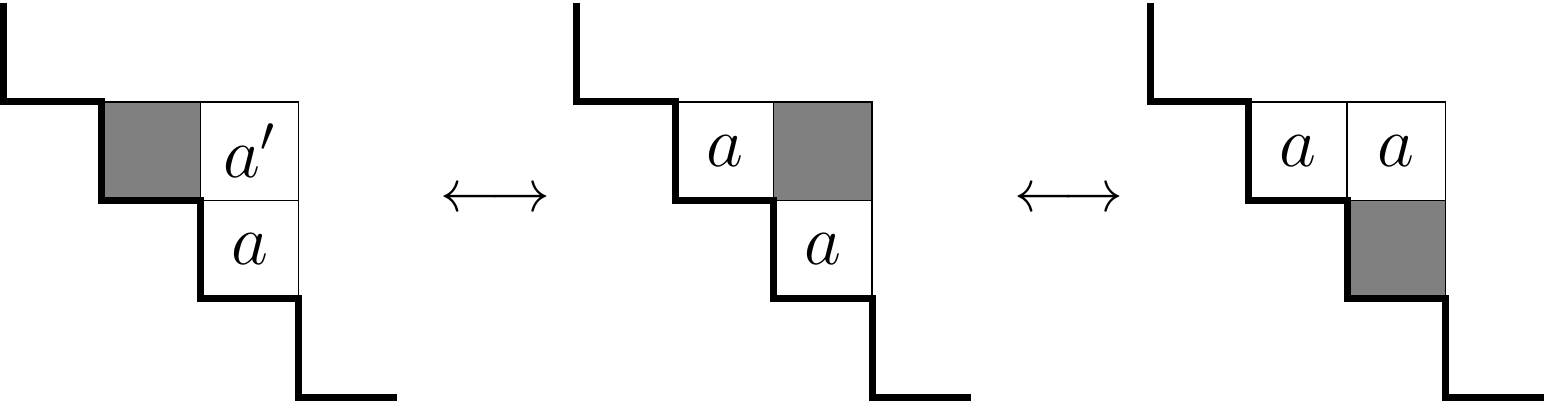}
\end{center}
\end{defin}

If $T$ is not in the canonical form, there is another exception to consider (observe that this illustration
is in the same canonical class of the former):
\begin{center}
\includegraphics[scale=0.4]{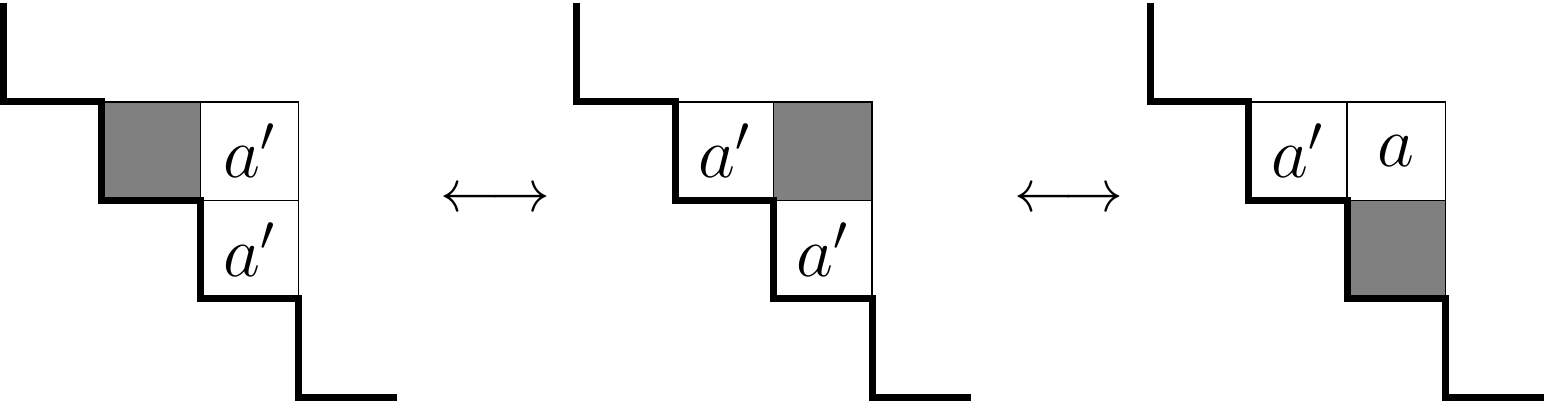}
\end{center}

The \emph{rectification} $\mathsf{rect}(T)$ of $T$ is the tableau obtained by applying a sequence of inner slides until a straight shape is obtained. This is well defined as the rectification process does not depend on the chosen sequence of slides \cite{Wor84}, \cite[Theorem 11.1]{Sag87}. The \emph{rectification} of a word $w$ is the rectification of any tableau with reading word $w$ (in particular, the diagonally-shaped tableau $D_w$ with word $w$). Two tableaux are said to be \emph{shifted jeu de taquin equivalent} (or plactic equivalent) if they have the same rectification. In Example \ref{diagonal}, the rectification of $w=2311'$ is $1123$.
The \emph{standardization} of a word $w$, denoted $\mathsf{std}(w)$, is obtained by replacing in order the letters in any representative of $w$ with $1, \ldots, n$, from least to greatest, reading right to left for primed entries, and left to right for unprimed entries. This does not depend on the choice of representative. The \emph{standardization} of a shifted tableau $T$ is the tableau of the same shape as $T$ with reading word $\mathsf{std}(w(T))$.
In both cases, the standardization is independent of the choice of representative. For example, the standardization of the word $w=3323'3112'$ is $\mathsf{std}(w) = 67458123$.

\begin{lema}[\cite{GLP17}, Lemma 3.5]\label{standard} 
If $s$ is a word in $[n]$, with $n =a_1+\ldots+a_k$, and such that $\mathsf{wt}(s) = (1,\ldots,1)$, then there is at most one word $w$ of weight $(a_1, \dots , a_k)$ with standardization $std(w) = s$.
\end{lema}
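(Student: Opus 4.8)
The plan is to show that, once the standard word $s$ and the target weight $(a_1,\dots,a_k)$ are fixed, the word $w$ (taken in its canonical form) is reconstructible entry by entry from these data alone; since the reconstruction is forced at every step, no two distinct such words can exist. Throughout I would set $A_0 = 0$ and $A_i = a_1 + \cdots + a_i$, so that $A_k = n$.

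First I would recover the unprimed content of $w$. The standardization rule processes the letters of value $i$ (that is, the $i'$ and the $i$) consecutively, assigning them exactly the block of labels $\{A_{i-1}+1,\dots,A_i\}$. Because $\mathsf{wt}(s) = (1,\dots,1)$, every label $1,\dots,n$ appears once in $s$, so for any $w$ with $\mathsf{std}(w)=s$ and $\mathsf{wt}(w)=(a_1,\dots,a_k)$ a position $p$ must carry a letter of value $i$ if and only if $s_p \in \{A_{i-1}+1,\dots,A_i\}$. This pins down which positions hold value $i$, using only $s$ and the weight, independently of the choice of $w$.

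It then remains to recover the primes, and here I would exploit the precise bookkeeping of the standardization restricted to a single value $i$: the primed value-$i$ positions, listed left to right, receive strictly decreasing labels drawn from $\{A_{i-1}+1,\dots,A_{i-1}+m_i\}$, while the unprimed ones, listed left to right, receive strictly increasing labels from $\{A_{i-1}+m_i+1,\dots,A_i\}$, where $m_i$ is the number of primed value-$i$ letters. Thus a value-$i$ position is primed precisely when its $s$-label lies in $\{A_{i-1}+1,\dots,A_{i-1}+m_i\}$. The crucial point is that $m_i$ itself is forced: in canonical form the leftmost value-$i$ position is unprimed, hence it is the leftmost unprimed value-$i$ letter and so receives the smallest unprimed label $A_{i-1}+m_i+1$. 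Writing $q$ for this leftmost value-$i$ position, I get $m_i = s_q - A_{i-1} - 1$, so a value-$i$ position $p$ is primed if and only if $s_p < s_q$ — a condition determined entirely by $s$ and the weight.

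Combining the two reconstructions, both the value and the priming of every letter of $w$ are determined by $s$ and $(a_1,\dots,a_k)$, whence at most one such word exists (the lemma asserts only uniqueness, not existence, so I need not check that this candidate actually standardizes back to $s$). The step I expect to demand the most care is the second one: I must verify that the standardization genuinely splits each value-$i$ block into a positionally-decreasing run of small primed labels interleaved with a positionally-increasing run of large unprimed labels, and that passing to the canonical representative — which unprimes the first occurrence of each value — leaves the count $m_i$ well defined. It is exactly this canonical-form normalization that makes $m_i$, and therefore the whole prime pattern, unambiguous; without it the pair $11 \simeq 1'1$ already shows that the prime of the first occurrence is genuinely free.
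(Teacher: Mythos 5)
The paper does not actually prove this statement: it is imported verbatim from \cite{GLP17} (Lemma 3.5) and used as a black box, so there is no in-paper proof to compare against. Your reconstruction argument is correct and is the natural one: since $\mathsf{wt}(w)=(a_1,\dots,a_k)$, position $p$ carries value $i$ exactly when $s_p\in\{A_{i-1}+1,\dots,A_i\}$, and the canonical-form normalization, which makes the leftmost value-$i$ letter unprimed, forces $m_i=s_q-A_{i-1}-1$ and hence the whole prime pattern, because under the paper's definition of standardization the primed value-$i$ letters receive the low labels of that block right-to-left and the unprimed ones the high labels left-to-right. The only point worth making explicit is that a \emph{word} in this paper is an equivalence class determined by its canonical representative, so determining the canonical form of $w$ from $s$ and $(a_1,\dots,a_k)$ is exactly what uniqueness of $w$ means; with that remark your argument is complete, and the step you flagged as delicate is immediate from the definition of $\mathsf{std}$ in Section 2.
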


Given $\nu$ a strict partition, the \emph{Yamanouchi tableau} of shape $\nu$, denoted $Y_{\nu}$, is the shifted semistandard tableau whose $i$-th row is filled only with unprimed $i$'s, for all $i= 1, \ldots, \ell(\nu)$.

\begin{prop}\label{yamunique}
If $\nu$ is a strict partition, there is a unique shifted tableau of shape and weight $\nu$, up to canonical form, which is $Y_{\nu}$.
\end{prop}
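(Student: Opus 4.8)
The plan is to argue by induction on the number of parts $\ell(\nu)$. The heart of the matter is to show that in any tableau $T \in \mathsf{ShST}(\nu,n)$ of weight $\nu$ the first row must be filled entirely with unprimed $1$'s; once this is in hand, deleting the first row yields, after a relabelling, another straight-shaped tableau whose weight equals its shape, and the statement follows from the inductive hypothesis. So the real content lies in the single step about the top row, and the base case $\ell(\nu)=1$ will be immediate, since a one-row tableau of weight $(\nu_1)$ consists of $\nu_1$ letters $\mathbf{1}$, whose leftmost occurrence is unprimed in canonical form and which are then all unprimed by row weak-increase, giving $Y_{(\nu_1)}$.

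First I would prove that no letter $\mathbf{1}$ (that is, $1'$ or $1$) occurs below the first row. Suppose $\mathbf{1}$ sits in a cell $(i,j)$ with $i\ge 2$. Since $\mathbf{1}$ is the minimal letter of $[n]'$ and entries weakly increase down columns, every cell of column $j$ above $(i,j)$ also carries $\mathbf{1}$; using that $\nu$ is strict one checks that $(1,j)$ and $(2,j)$ are cells and that $j\ge 2$. Thus $(1,j)$ and $(2,j)$ both carry $\mathbf{1}$, and I would now force the three defining conditions into conflict: by condition (2) these two entries cannot both be the unprimed $1$; if $(1,j)=1'$, then by row weak-increase and minimality the cells $(1,1),\dots,(1,j)$ are all equal to $1'$, producing two $1'$ in the first row and contradicting condition (3); and the only remaining case $(1,j)=1$, $(2,j)=1'$ forces $1\le 1'$ down column $j$, contradicting condition (1). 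Hence no $\mathbf{1}$ lies below the first row. I expect this parity bookkeeping to be the main obstacle, since it is precisely where the shifted conditions (2) and (3) must be invoked rather than mere monotonicity; everything else is formal.

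It follows that all $\nu_1$ copies of $\mathbf{1}$ lie in the first row, which has exactly $\nu_1$ cells, so they fill it, and as argued above the whole row is unprimed in canonical form. Deleting this row leaves a tableau $T'$ supported on rows $2,\dots,\ell(\nu)$, all of whose entries are $\ge \mathbf{2}$; applying the diagonal shift $(i,j)\mapsto (i-1,j-1)$ together with the relabelling $\mathbf{i}\mapsto \mathbf{i-1}$ turns $T'$ into a shifted semistandard tableau of straight shape $\nu'=(\nu_2,\dots,\nu_{\ell(\nu)})$ and weight $\nu'$. This shift sends rows to rows, columns to columns, and the main diagonal to the main diagonal, so it preserves all three conditions; and since the letters $\mathbf{1}$ of $T$ are confined to the deleted row, it also preserves canonical form. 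By the induction hypothesis $T'=Y_{\nu'}$, so each row $i\ge 2$ of $T$ consists of unprimed $i$'s, and together with the first row this gives $T=Y_{\nu}$, proving uniqueness up to canonical form.
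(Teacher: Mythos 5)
Your proof is correct and follows essentially the same route as the paper's: the paper also argues that the weight count forces each box of row $i$ to be filled with $\mathbf{i}$, handling the rows one after another. You have simply made rigorous the step the paper leaves implicit (that semistandardness confines all $\mathbf{1}$'s to the first row) and packaged the "remaining rows" step as an induction via deletion and relabelling.
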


\begin{proof}
We construct such a tableau of shape and weight $\nu$, on the primed alphabet $\{1,\ldots,\ell(\nu)\}'$, starting on the first row. Since $\mathsf{wt}_1 = \nu_1$ and there are $\nu_1$ boxes on the first row, to ensure semistandardness each box must be filled with $1$'s, except for the first one which may be filled with $1'$ or $1$. If it is $1'$, it is identified with $1$ in canonical form, as this is the first occurrence. The process for the remaining rows is the same.
\end{proof}

\begin{ex}
If $\nu = (4,3,1)$, then,
$$Y_{\nu} = \begin{ytableau}
1 & 1 & 1 & 1\\
\none & 2 & 2 & 2\\
\none & \none & 3
\end{ytableau}$$
\end{ex}

\begin{defin}\label{ballot}
A word $w$ on the alphabet $[n]'$ with weight $\nu$, a strict partition, is said to be \emph{ballot} (or \emph{lattice}, or \emph{Yamanouchi}) if its rectification is $w(Y_\nu)$.
\end{defin}

\begin{defin}[\cite{Stem89,Wor84}]
A shifted semistandard tableau $T$ of weight $\nu$, a strict partition, is said to be \emph{Littlewood-Richardson-Stembridge} (LRS) if $\mathsf{rect}(T)=Y_{\nu}$. Equivalently, the reading word of such a tableau is a \emph{ballot} word of weight $\nu$.

\end{defin}

There is another formulation for LRS tableaux due to Stembridge, using some statistics on its word (for details, see \cite[Theorem 8.3]{Stem89}). Given strict partitions $\lambda$, $\mu$ and $\nu$, such that $|\lambda| = |\mu| + |\nu|$, the \emph{shifted Littlewood-Richardson coefficient} $f_{\mu,\nu}^{\lambda}$ is defined to be the number of LRS tableaux of shape $\lambda/\mu$ and weight $\nu$.

\begin{ex}\label{lrs}
The following tableau of shape $(6,5,2,1)/(4,2)$ and weight $(4,3,1)$ rectifies to $Y_{(4,3,1)}$, thus it is a LRS tableau. Note that its word $322112'1'1$ is a ballot word with weight $(4,3,1)$.
$$\begin{ytableau}
 {} &  & {} & {} & {1'} & 1\\
\none & {} & {} & 1 & 1 & 2'\\
\none & \none & 2 & 2\\
\none & \none & \none & 3
\end{ytableau}$$

\begin{align*}
\begin{ytableau}
 {} &  & {} & {} & {1'} & 1\\
\none & {} & *(gray){} & 1 & 1 & 2'\\
\none & \none & 2 & 2\\
\none & \none & \none & 3
\end{ytableau}
&\longrightarrow
\begin{ytableau}
 {} &  & {} & {} & {1'} & 1\\
\none & *(gray){} & 1 & 1 & 2'\\
\none & \none & 2 & 2\\
\none & \none & \none & 3
\end{ytableau}
\longrightarrow
\begin{ytableau}
 {} &  & {} & *(gray){} & {1'} & 1\\
\none & 1 & 1 & 2'\\
\none & \none & 2 & 2\\
\none & \none & \none & 3
\end{ytableau}
\longrightarrow
\begin{ytableau}
 {} & {}& *(gray){} & {1'} & 1\\
\none & 1 & 1 & 2'\\
\none & \none & 2 & 2\\
\none & \none & \none & 3
\end{ytableau}\\
&\longrightarrow
\begin{ytableau}
 {} & *(gray){} & {1'} & 1\\
\none & 1 & 1 & 2'\\
\none & \none & 2 & 2\\
\none & \none & \none & 3
\end{ytableau}
\longrightarrow
\begin{ytableau}
*(gray){} & {1'} & 1 & 1\\
\none & 1 & 2' & 2\\
\none & \none & 2 & 3
\end{ytableau}
\longrightarrow
\begin{ytableau}
1 & 1 & 1 & 1\\
\none & 2 & 2 & 2\\
\none & \none & 3
\end{ytableau}
\end{align*}
\end{ex}

\begin{defin}[\cite{Sag87}]
Two words $w$ and $v$ on an alphabet $[n]'$ are said to be \emph{shifted Knuth equivalent}, denoted $w \equiv_k v$, if one can be obtained from the other by applying a sequence of the following Knuth moves on adjacent letters
	\begin{description}
	\item[(K1)] $bac \longleftrightarrow bca$ if, under the standardization ordering, $a < b < c$.
	\item[(K2)] $acb \longleftrightarrow cab$ if, under the standardization ordering, $a < b < c$.
	\item[(S1)] $ab \longleftrightarrow ba$ if these are the first two letters.
	\item[(S2)] $aa \longleftrightarrow aa'$ if these are the first two letters.
	\end{description}
\end{defin}

\begin{ex}
Let $w=212'21$. We have $\mathsf{std}(w) = 41352$, and then the last 1 is less than 2', which is less that the last 2, in standardization ordering. Thus, $w \equiv_k 212'12$.
\end{ex}

Observe that the shifted Knuth moves above may be performed via (inner or outer) \textit{jeu de taquin} slides. If $a < b < c$ in standardization order, then the Knuth moves (K1) and (K2) are illustrated by:

$$\begin{ytableau}
{} & {} & a\\
\none & b & c
\end{ytableau}
\longrightarrow
\begin{ytableau}
{} & a & c \\
\none & b
\end{ytableau}
\qquad \qquad
\begin{ytableau}
{} & {} & b\\
\none & a & c
\end{ytableau}
\longrightarrow
\begin{ytableau}
{} & a & b \\
\none & c
\end{ytableau}
$$

For the Knuth move (S1), assume, without loss of generality, that $a < b$ in standardization ordering. Then,
$$\begin{ytableau}
{} & {} & a\\
\none & b
\end{ytableau}
\longrightarrow
\begin{ytableau}
a & b
\end{ytableau}$$

Finally the Knuth move (S2) is illustrated by the exception slide

$$\begin{ytableau}
{} & {a'}\\
\none & a
\end{ytableau} \longleftrightarrow
\begin{ytableau}
a & a
\end{ytableau}$$

If $w$ and $v$ are shifted Knuth equivalent words, the diagonally-shaped tableaux $D_w$ and $D_v$ have the same rectification. Thus $D_w$ can be transformed into $D_v$ via some sequence of \textit{jeu de taquin} slides.

\begin{teo}[\cite{Sag87} Theorem 12.2, \cite{Wor84} Theorem 4.4.4]\label{jdtknuth}
Two shifted semistandard tableaux are \textit{jeu de taquin} equivalent if and only if their reading words are shifted Knuth equivalent.
\end{teo}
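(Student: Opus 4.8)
The plan is to prove the two implications by relating both equivalences to a common stock of elementary moves, using as a bridge the fact already recorded in the excerpt that $\mathsf{rect}(T)=\mathsf{rect}(w(T))$: the rectification of a tableau agrees with that of its reading word (realized through the diagonally-shaped tableau $D_{w(T)}$), and rectification is well defined independently of the chosen sequence of slides. Thus the statement reduces to comparing the rectification of words with shifted Knuth equivalence of words, and I will handle the two directions separately.

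For the backward implication, suppose $w(T)\equiv_k w(T')$. The paragraph preceding the statement already exhibits each shifted Knuth move as an (inner or outer) \textit{jeu de taquin} slide on a local configuration, so that $D_w$ and $D_v$ share a rectification whenever $w\equiv_k v$; concretely, performing the slide realizing a single Knuth move leaves the rectification unchanged precisely because rectification does not depend on the order of slides. Iterating over a sequence of Knuth moves gives $\mathsf{rect}(w(T))=\mathsf{rect}(w(T'))$, whence $\mathsf{rect}(T)=\mathsf{rect}(T')$ by the identity above; that is, $T$ and $T'$ are \textit{jeu de taquin} equivalent.

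The forward implication carries the real content and rests on the key lemma that a single \textit{jeu de taquin} slide changes the reading word only up to shifted Knuth equivalence: if $T'$ is obtained from $T$ by one slide, then $w(T)\equiv_k w(T')$. Granting this, since $T$ is carried to $\mathsf{rect}(T)$ by a sequence of inner slides we obtain $w(T)\equiv_k w(\mathsf{rect}(T))$, and likewise $w(T')\equiv_k w(\mathsf{rect}(T'))$; when $T$ and $T'$ are \textit{jeu de taquin} equivalent these two rectifications are literally the same tableau, so transitivity of $\equiv_k$ yields $w(T)\equiv_k w(T')$.

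To prove the key lemma I would follow the reading word through the slide. It suffices to treat a slide disturbing only two adjacent rows, since a general slide decomposes into such pieces as the hole travels upward, and then to compare the reading words before and after against the local two- and three-cell configurations pictured above: a horizontal move within a row leaves the reading word unchanged, while up to standardization order each vertical move of an entry across a row is realized by a move (K1) or (K2). To keep the bookkeeping uniform I would first pass to standardizations, as slides and reading words are compatible with $\mathsf{std}$, the shifted Knuth moves are defined through the standardization order, and Lemma \ref{standard} allows one to descend from the standard case back to the original weight. The genuine difficulty, and the point where the shifted theory departs from type $A$, is the behaviour near the main diagonal: the exceptional slides that prime or unprime an entry as the hole enters the diagonal must be shown to correspond exactly to the first-letter moves (S1) and (S2), and one must check that the canonical-form conventions for primed letters are respected throughout. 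This diagonal case analysis is the main obstacle; once it is settled, assembling the elementary steps proves the key lemma and completes the theorem.
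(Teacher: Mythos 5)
First, a point of reference: the paper does not prove this statement. It is quoted with citations to Worley (Theorem 4.4.4) and Sagan (Theorem 12.2), so there is no internal proof to compare yours against; the pictures preceding the theorem, realizing (K1), (K2), (S1), (S2) as local slides, are offered as motivation rather than as an argument. Your overall strategy --- a single slide perturbs the reading word only by shifted Knuth moves in one direction, and Knuth moves are realized by slides in the other --- is indeed the route taken in those references, so the plan is the right one.

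That said, two genuine gaps remain. The first you acknowledge yourself: the key lemma (a single inner or outer slide changes the reading word only up to $\equiv_k$) carries the whole forward implication, and the case you defer --- slides entering the main diagonal, where entries are primed or unprimed and where (S1)/(S2) apply only to the \emph{first two letters} of the word --- is exactly where the shifted theory departs from type $A$ and where Worley and Sagan spend their effort; a proof that postpones this case has not proved the theorem. The second gap is in the backward implication. You pass from $\mathsf{rect}(D_{w(T)})=\mathsf{rect}(D_{w(T')})$ to $\mathsf{rect}(T)=\mathsf{rect}(T')$ via the ``bridge'' $\mathsf{rect}(T)=\mathsf{rect}(D_{w(T)})$. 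But the well-definedness the paper actually cites is only that $\mathsf{rect}(T)$ is independent of the chosen sequence of slides for a \emph{fixed} $T$; the stronger claim that any two tableaux with the same reading word rectify to the same tableau is not independent of the theorem being proved. Granting your key lemma, it yields $w(\mathsf{rect}(T))\equiv_k w(T)=w(D_{w(T)})\equiv_k w(\mathsf{rect}(D_{w(T)}))$, and to conclude that these two straight-shape tableaux are \emph{equal} you still need that each shifted Knuth class contains the reading word of at most one straight-shape shifted tableau --- a uniqueness statement usually established through Sagan--Worley insertion and its invariance under the moves (K1)--(S2). Your proposal never addresses this, and without it the backward direction does not close. (The proposed reduction to standardizations also needs care: (S1) and (S2) are first-letter and priming rules that do not survive standardization, so the descent via Lemma \ref{standard} cannot be applied uniformly to all four moves.)
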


Therefore, two tableaux in $\mathsf{ShST}(\lambda/\mu,n)$ are said to be shifted Knuth equivalent if so are their reading words. Shifted Knuth equivalence classes and \textit{jeu de taquin} classes on words coincide and are in one-to-one correspondence with shifted semistandard tableaux of straight shape, via rectification (or shifted Schensted insertion \cite{Sag87}). Unlike the classic Knuth relations for unprimed alphabets, the shifted Knuth equivalence is not a congruence, due to rules (S1) and (S2), since $w \equiv_k v$ does not necessarily imply that $tw \equiv_k tw$ for any letter $t \in [n]'$. For instance, $22'1 \equiv_k 221$ but $322'1 \not\equiv_k 3221$. However, under certain conditions we have the following results.

\begin{lema}\label{congresq}
Let $w$ and $v$ be two words in $[n]'$ such that $w \equiv_k v$. Let $t \in [n]'$. Then,
$$wt \equiv_k vt$$
\end{lema}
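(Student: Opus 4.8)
The plan is to reduce the statement to the single-Knuth-move case and then verify it move by move. Since $w \equiv_k v$ means that one word is obtained from the other by a finite sequence of Knuth moves (K1), (K2), (S1), (S2), and since $\equiv_k$ is transitive, it suffices to prove the claim when $w$ and $v$ differ by exactly one such move; the general case follows by induction on the length of the move sequence. So I fix a single Knuth move taking $w$ to $v$ and must show that appending the same letter $t$ on the right yields $wt \equiv_k vt$.

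The key observation is that the problematic rules (S1) and (S2) are \emph{sensitive to position}: they may only be applied to the first two letters of a word. The failure of right-congruence in the paper's counterexample ($22'1 \equiv_k 221$ but $322'1 \not\equiv_k 3221$) comes precisely from prepending a letter, which pushes the special pair out of the first-two-letters position. Appending a letter on the \emph{right} does not interfere with this, so I would split into cases according to which move is used.

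First, if the move is (K1) or (K2), it acts on three adjacent letters determined by their standardization order, and appending $t$ at the far right neither changes which triple is involved nor changes the relative standardization order among those three letters. Here I must be slightly careful, since standardization reads primed and unprimed letters in different directions and is a global operation; the safe route is to note that the conditions $a<b<c$ in standardization ordering for an \emph{interior} Knuth move depend only on the relative order of the three letters among themselves together with the letters to their left (those determine which copies get primed first), and appending $t$ on the right affects none of these. Hence the same (K1)/(K2) move applies verbatim to $wt$ and $vt$, giving $wt \equiv_k vt$. Second, if the move is (S1) or (S2), then it is applied to the first two letters of $w$ (equivalently $v$), and these remain the first two letters of $wt$ and $vt$ since $t$ is appended at the end; thus the identical (S1) or (S2) move is still legal on $wt$ and $vt$, again giving $wt \equiv_k vt$.

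The main obstacle I anticipate is the bookkeeping for the standardization-order hypotheses in the (K1)/(K2) cases: because $\mathsf{std}$ depends on the first occurrences of each value and on the left-to-right versus right-to-left reading convention for unprimed versus primed letters, I need to confirm that appending $t$ on the right can alter the standardization of a letter only by shifting its standardized value uniformly, never by reordering the three active letters relative to one another. Concretely, if $t$ introduces a new smallest first-occurrence it could renumber some standardized entries, so I would argue that such a global shift preserves all strict inequalities among the three letters participating in the move, leaving the applicability of (K1)/(K2) intact. Once this invariance of the relevant order relations under right-appending is established, each case is immediate and the induction closes.
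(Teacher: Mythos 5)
Your proposal is correct, but it takes a genuinely different route from the paper. The paper's proof goes through Theorem \ref{jdtknuth}: it rectifies the diagonally-shaped tableaux $D_w$ and $D_v$ to a common straight-shaped tableau $T$, adjoins $t$ as a single disconnected box above and to the right of $T$ (so the reading word becomes $w(T)t$), and observes that this augmented tableau, $D^0_w$ and $D^0_v$ all have the same rectification; the conclusion then follows from the identification of \textit{jeu de taquin} classes with shifted Knuth classes. You instead check compatibility of right-appending with each generating move, which is more elementary and self-contained, and has the virtue of making transparent exactly why the lemma is one-sided: (S1) and (S2) are anchored at the first two letters, a position unaffected by appending on the right but destroyed by prepending on the left (precisely the paper's counterexample $322'1 \not\equiv_k 3221$, and the reason Lemma \ref{congrdir} must exclude those moves). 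The one step you leave as a sketch --- that appending $t$ preserves the relative standardization order of the letters of $w$, so the hypotheses of (K1)/(K2) survive --- does need to be written out, but it is true and short: each letter's standardization number either stays fixed or increases by exactly $1$, and the set of letters that increase is an up-set for the order $1'<1<2'<2<\cdots$ on values (with the tie at the value of $t$ resolved consistently by the left-to-right/right-to-left reading rule), hence an up-set for the standardization order itself, so no two letters of $w$ can exchange relative positions. With that sub-claim filled in, your reduction to a single move and the induction on the length of the move sequence close the argument. The trade-off is that the paper's proof is shorter because it borrows the Worley--Sagan machinery, while yours stays entirely at the level of the defining relations at the cost of this standardization bookkeeping.
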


\begin{proof}
Let $D_w$ and $D_v$ be diagonally-shaped shifted tableaux with words $w$ and $v$, respectively. By Theorem \ref{jdtknuth}, $\mathsf{rect}(D_w) = \mathsf{rect}(D_v)$. Let $T$ be this straight-shaped tableau, of shape $\lambda$. Hence, we may consider the tableau $T^0$ of shape $(\lambda_1+2, \lambda_1, \ldots, \lambda_k)/(\lambda_1+1)$ consisting of $t$ on the entry $(1,\lambda_1+2)$ and $T$ on the remaining part. Clearly, $\mathsf{rect}(T^0)=\mathsf{rect}(D^0_w)=\mathsf{rect}(D^0_v)$ where $D^0_w$ and $D^0_v$ are the diagonally-shaped shifted tableaux  with  words $wt$ and $vt$ respectively.
\end{proof}

\begin{lema}\label{congrdir}
Let $w$ and $v$ be two words in $[n]'$ such that $w \equiv_k v$ and such that there exists a sequence of Knuth relations turning $w$ into $v$ using only $(K1)$ and $(K2)$. Let $t \in [n]'$. Then,
$$tw \equiv_k tv$$
\end{lema}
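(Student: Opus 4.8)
The plan is to reduce to a single Knuth move and then to exploit the fact that the conditions defining (K1) and (K2) are phrased purely in terms of the standardization ordering of the three letters being rearranged. First I would observe that it suffices to treat one elementary move: if $w = u_0, u_1, \ldots, u_r = v$ is the given chain in which each $u_{j+1}$ is obtained from $u_j$ by a single application of (K1) or (K2), then establishing $t u_j \equiv_k t u_{j+1}$ for every $j$ yields $tw \equiv_k tv$ by transitivity. So I fix one such move, rearranging the three consecutive letters in positions $i, i+1, i+2$ of $u_j$, the prescribed pattern being valid because the standardization ordering of those three letters satisfies $a < b < c$.

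The heart of the argument is that prepending a letter does not alter the standardization ordering among the letters already present. Indeed, for two letters $x$ and $y$ sitting at positions $p$ and $q$, one reads off directly from the definition of $\mathsf{std}$ that $x$ precedes $y$ in the standardization ordering precisely when one of the following holds: their unprimed values differ and $x$ has the smaller value; or their values agree, $x$ is primed, and $y$ is not; or their values agree, both are primed, and $p > q$ (since primed entries are read right to left); or their values agree, both are unprimed, and $p < q$ (since unprimed entries are read left to right). In every case the comparison depends only on the intrinsic values and prime decorations of $x$ and $y$ together with the comparison of the positions $p$ and $q$. Prepending $t$ shifts every position of $u_j$ uniformly by one, so all relative positions are unchanged; hence the standardization ordering restricted to the letters of $u_j$ is identical to the one induced on those same letters inside $t u_j$.

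With this in hand the elementary move survives prepending. The three letters rearranged in passing from $u_j$ to $u_{j+1}$ now occupy positions $i+1, i+2, i+3$ of $t u_j$, and the validity condition $a < b < c$ for (K1) or (K2) continues to hold because, by the previous paragraph, the standardization ordering of those three letters is the same in $t u_j$ as in $u_j$. Applying the very same move at these shifted positions leaves the prepended $t$ untouched and produces exactly $t u_{j+1}$, so $t u_j \equiv_k t u_{j+1}$; chaining over $j$ gives $tw \equiv_k tv$.

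I expect the only delicate point to be the verification in the second paragraph, where one must respect the direction conventions in $\mathsf{std}$ (right to left for primed, left to right for unprimed) and confirm that the tie-breaking between equal-valued letters is governed solely by a comparison of their positions, which a uniform shift preserves. This is also exactly where the restriction to (K1) and (K2) is essential: the moves (S1) and (S2) are permitted only on the first two letters of a word, and prepending $t$ destroys that status, so the present reasoning cannot be applied to them. This reflects the failure of $\equiv_k$ to be a left congruence in general, witnessed by $22'1 \equiv_k 221$ while $322'1 \not\equiv_k 3221$.
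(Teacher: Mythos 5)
Your proof is correct. It reaches the same conclusion as the paper's but by a more self-contained route: the paper disposes of this lemma in a single line, observing that a chain of (K1)/(K2) moves between $w$ and $v$ standardizes to a chain of ordinary (unprimed) Knuth moves between $\mathsf{std}(w)$ and $\mathsf{std}(v)$, and then implicitly invoking the fact that classical Knuth equivalence is a two-sided congruence (together with the lifting of classical moves on standardizations back to shifted (K1)/(K2) moves). You instead verify the needed stability directly: you reduce to a single (K1) or (K2) move and check that the standardization ordering among the letters of $u_j$ is unchanged by prepending $t$, since each pairwise comparison depends only on the values, the prime decorations, and the relative positions of the two letters, all of which survive a uniform shift of positions. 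This buys self-containedness --- you never need to import the classical congruence property as a black box --- at the cost of a longer verification; the delicate point you flag (the right-to-left versus left-to-right tie-breaking in $\mathsf{std}$) is exactly the content that the paper's one-line proof leaves implicit. Your closing remark on why (S1) and (S2) must be excluded, illustrated by the paper's own counterexample $322'1 \not\equiv_k 3221$, correctly identifies where the argument would break down without the hypothesis.
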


\begin{proof}
If the rules $(S1)$ and $(S2)$ are not used, then $w$ and $v$ are Knuth equivalent as Young tableau words, considering the standardization to avoid primed entries.
\end{proof}

We define now shifted dual equivalence on words and tableaux. Lemma \ref{standard} ensures that it is compatible with standardization, i.e., that shifted \textit{jeu de taquin} commutes with standardization.

\begin{defin}[\cite{Haim92}]\label{def:dualstd}
Two standard shifted tableaux are \emph{shifted dual equivalent} (or coplactic equivalent) if they have the same shape after applying any sequence (including the empty sequence) of inner or outer \textit{jeu de taquin} slides to both. Two shifted semistandard tableaux are shifted dual equivalent if so are their standardizations.
\end{defin}

In particular, considering the empty sequence of jeu de taquin slides, we have that shifted tableaux that are dual equivalent must have the same shape. In terms of mixed insertion \cite{Haim89}, two shifted semistandard tableaux of the same shape are dual equivalent if and only if they have the same mixed insertion recording tableau \cite{Haim92,Sag87}. This notion is extended to words, with two words being \emph{shifted dual equivalent} if their corresponding diagonally-shaped tableaux are shifted dual equivalent.

The following characterizes dual equivalence on straight-shaped shifted tableaux, in which the dual equivalence classes are determined by the (straight) shapes. Considering mixed-insertion, the recording tableau of a dual class is the recording tableau of the unique shifted Yamanouchi tableau in that class.

\begin{prop}[\cite{Haim92}, Corollary 2.5]\label{haimdual}
Two tableaux of the same straight shape are dual equivalent.
\end{prop}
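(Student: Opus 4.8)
The plan is to prove that any two shifted semistandard tableaux $S$ and $T$ of the same straight shape $\nu$ are shifted dual equivalent, i.e.\ that they remain of equal shape under every common sequence of jeu de taquin slides. By Definition \ref{def:dualstd}, dual equivalence of semistandard tableaux is defined via their standardizations, and Lemma \ref{standard} guarantees that shifted jeu de taquin commutes with standardization; hence it suffices to treat the standard case. So first I would reduce to showing that any two \emph{standard} shifted tableaux of the same straight shape $\nu$ are shifted dual equivalent.

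The key structural input is the correspondence between jeu de taquin classes and recording data. First I would recall that, by Theorem \ref{jdtknuth} together with the Schensted/mixed insertion correspondence, a shifted tableau is determined by the pair consisting of its rectification (an insertion tableau, invariant under jeu de taquin) and its recording tableau (a standard tableau encoding the slide sequence, invariant under Knuth moves). The crucial observation for a \emph{straight} shape is that the jeu de taquin slides leaving the straight-shaped tableau fixed are only the outer slides, and for standard straight-shaped tableaux the recording tableau is uniquely pinned down. Concretely, I would invoke the characterization mentioned just before the statement: for a straight shape, each dual equivalence class contains a unique shifted Yamanouchi tableau, and its recording tableau is the recording tableau of that Yamanouchi tableau. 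Since a straight shape $\nu$ admits (by Proposition \ref{yamunique}) a unique Yamanouchi tableau $Y_\nu$, there is exactly one dual equivalence class of that shape.

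The heart of the argument is therefore to show that the straight shape alone determines the dual class. I would argue as follows: take $S$ and $T$ standard of shape $\nu$ and apply an arbitrary common sequence of inner/outer slides. By the commutation of standardization with jeu de taquin and Theorem \ref{jdtknuth}, the resulting shapes are controlled by the recording tableaux of $S$ and $T$ under mixed insertion; the claim that dual classes of straight shape are indexed precisely by the shape then forces $S$ and $T$ to track the same shape at every stage. Equivalently, one may run both $S$ and $T$ backward to their common straight rectification and observe that, on straight shapes, the reverse slide sequence is determined by the geometry of the corners rather than the entries, so $S$ and $T$ evolve through identical shapes.

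The main obstacle I anticipate is the exceptional behaviour of the shifted jeu de taquin slides on the main diagonal, where the priming/unpriming rule (and its second canonical-form exception) breaks the clean parallel with the type $A$ theory of Haiman. Controlling this requires checking that the diagonal exception does not alter the \emph{shape} produced by a slide, only the marking of an entry, so that the shape-tracking argument is unaffected. I would handle this by noting that the exceptional rule is shape-preserving at each step — the empty box still exits through an outer corner in the same position regardless of the priming convention — so the dual-equivalence computation, which only compares shapes, is insensitive to it. Once this is verified, the reduction to the type $A$ result of Haiman (Proposition \ref{haimdual} being precisely its shifted analogue) completes the proof.
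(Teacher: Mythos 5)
The paper offers no proof of this proposition: it is imported verbatim as Corollary 2.5 of Haiman \cite{Haim92}, so there is no in-paper argument to compare yours against. Judged on its own terms, your proposal has a genuine gap at its centre. The step you call ``the heart of the argument'' --- that a common sequence of slides applied to two standard tableaux of the same straight shape produces identical shapes because ``the reverse slide sequence is determined by the geometry of the corners rather than the entries'' --- is precisely the statement to be proven, asserted rather than established. Entry-independence of the shape evolution under slides is exactly what dual equivalence means, and it fails for general skew shapes (otherwise dual equivalence would be a trivial relation); the fact that it holds for straight shapes is the whole content of Haiman's corollary, and in his paper it requires the machinery of elementary dual equivalences and a nontrivial induction, none of which your outline supplies.

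The other pillar of your argument is circular. You ``invoke the characterization mentioned just before the statement,'' namely that each dual class of straight shape contains a unique Yamanouchi tableau and is determined by its shape. But in the paper that sentence is a gloss on Proposition \ref{haimdual} itself --- it is the conclusion being introduced, not an independent input --- so using it to deduce that there is exactly one dual class per straight shape assumes what you want to prove. The remarks about recording tableaux under mixed insertion likewise presuppose the equivalence between ``same straight shape'' and ``same recording tableau,'' which is again equivalent to the proposition. The one genuinely new point you raise --- that the diagonal priming exception changes only the marking of an entry and not the shape traced by a slide --- is correct and worth recording, but it only shows that the shifted rules introduce no extra obstruction; it does not replace the missing core argument. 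If you want a proof within the paper's own toolkit, a cleaner route is to combine Proposition \ref{highuni} (all of $\mathsf{ShST}(\nu,n)$ forms a single connected component of the crystal) with Proposition \ref{compdual} (each connected component is a dual equivalence class), though you should check that the sources of those two results do not themselves rest on Haiman's corollary.
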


An operator on shifted tableaux (in canonical form) of the same shape is said to be \emph{coplactic} if it commutes with all sequences of shifted \textit{jeu de taquin} slides.

\subsection{The shifted evacuation and reversal}\label{subsectevacrev}

In this section we recall the \emph{shifted evacuation}, an involution on shifted semistandard tableaux of straight shape that preserves the shape and reverses the weight. This involution was firstly introduced by Worley \cite{Wor84}, as an analogue of the Schützenberger involution \cite{Schu76} on ordinary Young tableaux. Choi, Nam, and Oh \cite{CNO17} recently gave a new formulation using the shifted switching process and showed that the two definitions coincide.

We define a complementation within the alphabet $[n]'$ as:

\begin{align}\label{opcn}
\mathsf{c}_n:&[n]' \to [n]' \nonumber\\
&k \mapsto (n-k+1)'\\
&k' \mapsto n-k+1 \nonumber
\end{align}

This is extended to shifted semistandard tableaux as follows. Given $T \in \mathsf{ShST}(\lambda/\mu,n)$, we define $\mathsf{c}_n (T)$ as the tableau of shape $\mu^{\vee}/\lambda^{\vee}$ obtained by taking each box $(i,j) \in S (\lambda/\mu)$, filled with $\mathbf{k}$, to a box $(\lambda_1 - j +1, \lambda_1 - i +1) \in S(\mu^{\vee}/\lambda^{\vee})$, filled with $\mathsf{c}_n (\mathbf{k})$. In other words, the operator $\mathsf{c}_n$ flips the shape of $T$ across the anti-diagonal of its staircase ambient shape, while complementing the entries using \eqref{opcn}. Then, if $\mathsf{wt}(T) = (wt_1, \ldots, wt_n)$, we have $\mathsf{wt}(\mathsf{c}_n (T)) = \mathsf{wt}(T)^{\mathsf{rev}} := (wt_n, \ldots, wt_1)$. Thus, the operator $\mathsf{c}_n$ is a weight-reversing and shape-``flipping" bijection between $\mathsf{ShST}(\lambda/\mu,n)$ and $\mathsf{ShST}(\mu^{\vee}/\lambda^{\vee},n)$, and consequently an involution in $\mathsf{ShST}(\lambda/\mu,n) \sqcup \mathsf{ShST}(\mu^{\vee}/\lambda^{\vee},n)$.

If $w=w_1 \ldots w_l$ is a word on $[n]'$, then $\mathsf{c}_n (w)$ is defined as the word, in canonical form, of  $\mathsf{c}_n (D_w)$. Then, we have $\mathsf{c}_n (w_1) \ldots \mathsf{c}_n (w_l)$, after canonicalizing. The operator $\mathsf{c}_n$ on words and diagonally-shaped shifted tableaux is a weight-reversing involution.

\begin{ex}
Consider the following shifted semistandard tableau, with (row) reading word $w = 33223'3112'$ and $n=3$, 
$$T= \begin{ytableau}
{} & {} & 1 & 1 & 2'\\
\none & 2 & 2 & 3' & 3\\
\none & \none & 3 & 3
\end{ytableau}$$

We then have $\mathsf{c}_n (w) = 11'22'11'33'2$ and
$$\mathsf{c}_n (T) = \begin{ytableau}
{} & {} & {} & 1' & 2\\
\none & {} & 1' & 1 & 3'\\
\none & \none & 1 & 2' & 3\\
\none & \none & \none & 2
\end{ytableau} $$

\end{ex}

Additionally, we have 
\begin{equation}\label{wcolwrow}
w_{\mathsf{col}}(\mathsf{c}_n (T)) = \mathsf{c}_n (w(T))
\end{equation}
where $w_{\mathsf{col}} (T)$ denotes the \emph{column reading word} of $T$, which is read along columns from bottom to top, going left to right. For diagonally-shaped tableaux, it is clear that the row and column reading words coincide. More generally, for $T$ a shifted semistandard tableau of any shape, we have that $w(T) \equiv_k w_{\mathsf{col}} (T)$ (\cite[Lemma 6.4.12]{Wor84}), and thus

\begin{equation}\label{eq:eqkwcol}
w(\mathsf{c}_n (T)) \equiv_k \mathsf{c}_n (w(T)).
\end{equation}

By construction, the operator $\mathsf{c}_n$ is coplactic. In particular, it preserves shifted Knuth and dual equivalences. Moreover, it commutes with standardization.

\begin{defin}[\cite{Wor84}, Definition 7.1.5]
Let $T$ be a shifted semistandard tableau of straight shape. The \emph{(shifted) evacuation} $\mathsf{evac}$ is defined as $\mathsf{evac}(T) := \mathsf{rect} (\mathsf{c}_n (T))$.
\end{defin}

The coplacity of $\mathsf{c}_n$ ensures that $\mathsf{evac} (T)$ may also be obtained by first ``rectifying" $T$ south-eastward, until obtaining a tableau of shape $\delta/\lambda^{\vee}$, also known as \emph{anti-straight shape}, and then applying $\mathsf{c}_n$.

\begin{ex}\label{exevac1}
Consider the following tableau.
$$T = \begin{ytableau}
1 & 1 & 2' & 2\\
\none & 2 & 2\\
\none & \none & 3
\end{ytableau}$$

Assuming the underlying alphabet to be $\{1,2,3\}'$, to obtain $\mathsf{evac}(T)$ we first compute $\mathsf{c}_n (T)$ and then rectify it.

\begin{align*}
T=\begin{ytableau}
1 & 1 & 2' & 2\\
\none & 2 & 2\\
\none & \none & 3
\end{ytableau} &\xrightarrow{\mathsf{c}_3}
\mathsf{c}_3 (T) = \begin{ytableau}
{} &  & *(gray) & 2'\\
\none & 1 & 2' & 2\\
\none & \none & 2 & 3'\\
\none & \none & \none & 3
\end{ytableau} \rightarrow
\begin{ytableau}
{} & *(gray) & 2' & 2\\
\none & 1 & 2' & 3'\\
\none & \none & 2 & 3
\end{ytableau}
\rightarrow
\begin{ytableau}
*(gray) & 1 & 2' & 2\\
\none & 2 & 2 & 3'\\
\none & \none & 3
\end{ytableau} \rightarrow
\begin{ytableau}
1 & 2' & 2 & 2\\
\none & 2 & 3'\\
\none & \none & 3
\end{ytableau} = \mathsf{evac}(T).
\end{align*}

If the underlying alphabet was $\{1,2,3,4\}'$, then
$\mathsf{evac}(T) =
\begin{ytableau}
2 & 3' & 3 & 3\\
\none & 3 & 4'\\
\none & \none & 4
\end{ytableau}.$

\end{ex}

\begin{prop}[\cite{Wor84}, Lemma 7.1.6]\label{Jshape}
Let $T$ be a shifted semistandard tableau of straight shape. Then, $\mathsf{evac}(T)$ and $T$ have the same shape and $\mathsf{evac}^2(T) = T$.
\end{prop}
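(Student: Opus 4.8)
The plan is to derive both assertions from two properties of $\mathsf{c}_n$ already in hand: that it is a coplactic involution, and that it acts on shapes by sending $\mathsf{ShST}(\alpha/\beta,n)$ to $\mathsf{ShST}(\beta^\vee/\alpha^\vee,n)$. Everything else is bookkeeping around the well-definedness of rectification.

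For the shape statement I would first note that, since $T$ has straight shape $\lambda=\lambda/\emptyset$, the tableau $\mathsf{c}_n(T)$ has shape $\emptyset^\vee/\lambda^\vee=\delta/\lambda^\vee$. Then I would invoke the alternative, coplactic description of evacuation recorded just after the definition: because $\mathsf{c}_n$ commutes with all jeu de taquin slides, one has $\mathsf{evac}(T)=\mathsf{c}_n(R)$, where $R$ is obtained from $T$ by sliding south-eastward to the anti-straight shape $\delta/\lambda^\vee$. The identity $\mathsf{rect}(\mathsf{c}_n(T))=\mathsf{c}_n(R)$ is precisely coplacity together with uniqueness of rectification: applying $\mathsf{c}_n$ to the outer slides carrying $T$ to $R$ produces inner slides carrying $\mathsf{c}_n(T)$ to $\mathsf{c}_n(R)$, and since $\mathsf{c}_n(R)$ is of straight shape it must equal $\mathsf{rect}(\mathsf{c}_n(T))$. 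Applying the shape rule to $R$, of shape $\delta/\lambda^\vee$, then gives that $\mathsf{c}_n(R)$ has shape $(\lambda^\vee)^\vee/\delta^\vee=\lambda/\emptyset$, using $(\lambda^\vee)^\vee=\lambda$ and $\delta^\vee=\emptyset$. Hence $\mathsf{evac}(T)$ has shape $\lambda$.

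For the involution property I would argue entirely through shifted Knuth equivalence, which $\mathsf{c}_n$ preserves by coplacity. Since $\mathsf{evac}(T)=\mathsf{rect}(\mathsf{c}_n(T))$ is obtained from $\mathsf{c}_n(T)$ by inner slides, we have $\mathsf{evac}(T)\equiv_k \mathsf{c}_n(T)$. Applying the coplactic involution $\mathsf{c}_n$ to this relation gives $\mathsf{c}_n(\mathsf{evac}(T))\equiv_k \mathsf{c}_n(\mathsf{c}_n(T))=T$. Rectifying both sides, and using that rectification is constant on Knuth classes together with $\mathsf{rect}(T)=T$ (as $T$ is already of straight shape), yields $\mathsf{evac}^2(T)=\mathsf{rect}(\mathsf{c}_n(\mathsf{evac}(T)))=\mathsf{rect}(T)=T$. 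That $\mathsf{evac}$ can legitimately be applied twice is guaranteed by the first part, which shows $\mathsf{evac}(T)$ is again of straight shape $\lambda$.

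The calculations are light; the step deserving care, and the main obstacle, is the precise use of coplacity. I must ensure that applying $\mathsf{c}_n$ to a sequence of jeu de taquin slides returns a legitimate sequence of slides, with outer slides on the $T$-side corresponding to inner slides on the $\mathsf{c}_n(T)$-side---this is what reconciles the two descriptions of evacuation and what converts the anti-straight shape $\delta/\lambda^\vee$ back into the straight shape $\lambda$---and that uniqueness of the rectification target legitimizes identifying the resulting straight-shape tableau with $\mathsf{rect}(\mathsf{c}_n(T))$. Both points are exactly the content of the coplacity of $\mathsf{c}_n$ and the well-definedness of rectification recalled earlier, so once these are invoked with care the argument is complete.
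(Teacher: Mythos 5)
Your proposal is correct. The paper does not actually prove this proposition — it is quoted from Worley's thesis \cite[Lemma 7.1.6]{Wor84} — so there is no in-paper argument to compare against; your derivation is the natural one from the two facts the paper does record about $\mathsf{c}_n$ (that it is a coplactic involution exchanging shapes $\lambda/\mu$ and $\mu^\vee/\lambda^\vee$, and that consequently $\mathsf{evac}(T)$ equals $\mathsf{c}_n$ applied to the south-east ``anti-rectification'' $R$ of $T$), together with the well-definedness of rectification and Theorem \ref{jdtknuth}. Both halves check out: the shape computation $(\lambda^\vee)^\vee/\delta^\vee=\lambda/\emptyset$ is sound provided the complement is always taken relative to the fixed ambient staircase $\delta$ (which is how the paper uses $^\vee$, e.g.\ in $\emptyset^\vee=\delta$), and the involution step correctly combines $\mathsf{evac}(T)\equiv_k\mathsf{c}_n(T)$, preservation of $\equiv_k$ by $\mathsf{c}_n$, $\mathsf{c}_n^2=\mathrm{id}$, and the constancy of $\mathsf{rect}$ on shifted Knuth classes.
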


Moreover, since rectification leaves the weight unchanged, we have $\mathsf{wt}(\mathsf{evac}(T)) = \mathsf{wt}(T)^\mathsf{rev}$. We remark that $\mathsf{evac}(T)$ is shifted Knuth equivalent to $\mathsf{c}_n (T)$ by construction. And since $\mathsf{evac}(T)$ has the same straight shape as $T$, Proposition \ref{haimdual} ensures that they are shifted dual equivalent. Thus, $\mathsf{evac} (T)$ is the unique tableau that is shifted dual equivalent to $T$ and shifted Knuth equivalent to $\mathsf{c}_n (T)$.

\begin{prop}\label{Yevanunique}
Let $\nu$ be a strict partition. Then, $\mathsf{evac}(Y_{\nu})$ is the unique shifted tableau of shape $\nu$ and weight $\nu^{\mathsf{rev}}$.
\end{prop}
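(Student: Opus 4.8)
The plan is to deduce both existence and uniqueness from the two facts already established about $\mathsf{evac}$: that it preserves straight shapes and squares to the identity (Proposition \ref{Jshape}), and that it reverses weights, $\mathsf{wt}(\mathsf{evac}(T)) = \mathsf{wt}(T)^{\mathsf{rev}}$. Throughout I fix the alphabet to be $[n]'$ with $n = \ell(\nu)$, so that the reversal of an $\ell(\nu)$-part weight is again recorded as an $\ell(\nu)$-part weight. This is the one place where the choice of alphabet matters, and it is what makes ``weight $\nu^{\mathsf{rev}}$'' mean a composition supported on the letters $1, \ldots, \ell(\nu)$, exactly as in Example \ref{exevac1}.

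For existence I simply evaluate $\mathsf{evac}$ on $Y_\nu$. Since $Y_\nu$ has straight shape $\nu$ and $\mathsf{wt}(Y_\nu) = \nu$, Proposition \ref{Jshape} gives that $\mathsf{evac}(Y_\nu)$ again has shape $\nu$, while the weight-reversal property gives $\mathsf{wt}(\mathsf{evac}(Y_\nu)) = \nu^{\mathsf{rev}}$. Hence $\mathsf{evac}(Y_\nu)$ is indeed a shifted tableau of shape $\nu$ and weight $\nu^{\mathsf{rev}}$.

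For uniqueness I would use a bijection/counting argument rather than a direct construction. By Proposition \ref{Jshape} the map $\mathsf{evac}$ is a shape-preserving involution on $\mathsf{ShST}(\nu, n)$, hence a bijection of this finite set onto itself; since it sends a tableau of weight $\alpha$ to one of weight $\alpha^{\mathsf{rev}}$, it restricts to a bijection between the tableaux of shape $\nu$ and weight $\nu$ and those of shape $\nu$ and weight $\nu^{\mathsf{rev}}$. The former set has exactly one element, namely $Y_\nu$, by Proposition \ref{yamunique}; therefore the latter also has exactly one element. Combined with the existence step, this forces that unique tableau to be $\mathsf{evac}(Y_\nu)$. (A more hands-on alternative, mimicking the row-by-row argument in the proof of Proposition \ref{yamunique}, would also work, but the involution argument is shorter and reuses results already in place.)

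The only genuinely delicate point — which I would flag as the main, though minor, obstacle — is the bookkeeping of the alphabet: one must ensure that applying $\mathsf{evac}$ over $[n]'$ with $n = \ell(\nu)$ produces the weight $\nu^{\mathsf{rev}}$ with no spurious zero parts (a larger alphabet would shift the support of the weight, as Example \ref{exevac1} illustrates), and that the uniqueness of Proposition \ref{yamunique} is invoked over this same alphabet, so that the two ends of the bijection are counted consistently.
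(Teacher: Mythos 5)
Your proposal is correct and follows essentially the same route as the paper: the paper also gets existence by evaluating $\mathsf{evac}$ on $Y_\nu$ and gets uniqueness by applying the weight-reversing, shape-preserving involution $\mathsf{evac}$ to a candidate $Q$ and invoking Proposition \ref{yamunique} (your bijection/cardinality phrasing is just a restatement of that step). The alphabet bookkeeping you flag is a fair point that the paper leaves implicit.
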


\begin{proof}
The tableau $\mathsf{evac}(Y_{\nu})$ has shape $\nu$ and weight $\nu^{\mathsf{rev}}$. Let $Q$ be another shifted tableau in the same conditions. Then, $\mathsf{evac}(Q)$ has shape and weight equal to $\nu$, thus by Proposition \ref{yamunique}, $\mathsf{evac}(Q) = Y_{\nu}$. Since $\mathsf{evac}$ is an involution, we have $\mathsf{evac}^2(Q) = Q = \mathsf{evac}(Y_{\nu})$.
\end{proof}

The following result provides a direct way to compute the evacuation of $Y_{\nu}$.

\begin{prop}\label{evacyam}
Let $\nu = (\nu_1, \ldots, \nu_n)$ be a strict partition, with $n > 1$. Considering $[n]'$ to be the underlying alphabet, $\mathsf{evac}(Y_{\nu})$ is the tableau of shape $\nu$ such that its $n$-th row is filled with $n^{\nu_n}$, and its $i$-th row is filled with $i^{\nu_n}(i+1)' (i+1)^{\nu_{n \shortminus 1}-\nu_{n}-1} \ldots n' n^{\nu_i - \nu_{i+1}-1}$, reading from left to right, for $i < n$.
\end{prop}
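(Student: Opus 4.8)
The plan is to avoid computing $\mathsf{rect}(\mathsf{c}_n(Y_\nu))$ directly and instead lean on the uniqueness already proved in Proposition \ref{Yevanunique}: $\mathsf{evac}(Y_\nu)$ is the \emph{unique} shifted semistandard tableau of shape $\nu$ and weight $\nu^{\mathsf{rev}}$ (up to canonical form). Writing $Z$ for the tableau described in the statement, it then suffices to verify four things about $Z$: that it is well-defined, that it has shape $\nu$, that it has weight $\nu^{\mathsf{rev}}$, and that it is a genuine shifted semistandard tableau in canonical form. Uniqueness forces $Z=\mathsf{evac}(Y_\nu)$. Well-definedness is immediate, since the exponents $\nu_{n-t}-\nu_{n-t+1}-1$ and $\nu_i-\nu_{i+1}-1$ are nonnegative by strictness of $\nu$, and $\nu_n\ge 1$.

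Shape and weight are routine telescoping computations. Reading row $i$ as the block $i^{\nu_n}$ followed, for $t=1,\dots,n-i$, by $(i+t)'\,(i+t)^{\nu_{n-t}-\nu_{n-t+1}-1}$, the number of boxes in row $i$ is $\nu_n+\sum_{t=1}^{n-i}(\nu_{n-t}-\nu_{n-t+1})=\nu_i$, so $Z$ has shape $\nu$, which is a legitimate shifted shape precisely because $\nu$ is strict. For the weight, a letter $k$ occurs as $\nu_n$ unprimed copies in row $k$ and, in each row $i<k$, as one primed copy together with $\nu_{n-(k-i)}-\nu_{n-(k-i)+1}-1$ unprimed copies; summing over $i$ and telescoping gives $\mathsf{wt}_k(Z)=\nu_{n-k+1}$, i.e. $\mathsf{wt}(Z)=\nu^{\mathsf{rev}}$. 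Canonicity is also quick: the letter $k$ occurs only in rows $1,\dots,k$, whose bottom-most member is row $k$, and the leftmost entry of row $k$ is an unprimed $k$, so the first occurrence of $k$ in the reading word is unprimed.

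The substantive point is semistandardness, and this is where I expect the main obstacle to lie. Conditions (1) and (3) within a single row are visible from the block description, since within row $i$ the blocks are strictly increasing in value and each primed letter $(i+t)'$ occurs exactly once. For the column conditions I would run an offset analysis: the entry of box $(i,c)$ sits at offset $d=c-i$ within row $i$, and by the block description its value is $i+t$ where $t=t(d)$ is the unique index with $\nu_{n-t+1}\le d<\nu_{n-t}$ (setting $\nu_{n+1}=0$), the entry being primed exactly when $t\ge 1$ and $d=\nu_{n-t+1}$, the start of its block. Descending the column, $i$ increases by $1$ while $d$ decreases by $1$; because the thresholds $\nu_n<\nu_{n-1}<\cdots<\nu_1$ are \emph{distinct} integers, $t(d)$ can drop by at most one per step, so the unprimed value $i+t(d)$ is weakly increasing down the column. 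Where it strictly increases, weak increase in the primed order $\cdots<a'<a<(a+1)'<\cdots$ is automatic; where it stays equal to some $v$ across a run of boxes, the same threshold bookkeeping shows that every box of the run except possibly the last sits at the start of a block of positive index, hence is primed $v'$, so only the final box of the run can be an unprimed $v$. This gives both weak increase along the column (as $v'\le\cdots\le v'\le v$) and the bound of one unprimed copy of $v$ per column, establishing conditions (1) and (2) for columns; note that repeated primed entries $v'$ in a single column are allowed, so they cause no violation.

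With well-definedness, shape, weight, canonicity and semistandardness in hand, Proposition \ref{Yevanunique} identifies $Z$ with $\mathsf{evac}(Y_\nu)$. The delicate part is the column verification, where one must carefully track the interaction between block boundaries and the priming rule; the key realization that makes it go through is that a column may legitimately contain several equal primed entries, so that the only genuine constraint to check is the single unprimed copy per value, which falls out of the fact that all but the bottom box of each constant-value run is forced to the start of its block.
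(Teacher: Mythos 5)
Your proposal is correct and follows essentially the same route as the paper: exhibit the described filling, check that it is a shifted semistandard tableau of shape $\nu$ and weight $\nu^{\mathsf{rev}}$, and conclude by the uniqueness statement of Proposition \ref{Yevanunique}. The only difference is one of detail: the paper asserts semistandardness and the weight computation as ``clear,'' whereas you carry out the telescoping and the column/priming analysis explicitly, and those verifications are sound.
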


\begin{proof}
This filling clearly defines a shifted semistandard tableau. Let $T_0$ be the tableau in those conditions. By construction, $T_0$ has shape $\nu$ and it is clear that its weight is given by $(\nu_m, \ldots, \nu_1) = \nu^{\mathsf{rev}}$. Hence, by Proposition \ref{Yevanunique}, $T_0 = \mathsf{evac}(Y_{\nu})$.
\end{proof}

\begin{ex}\label{exYevac}
Let $\mu = (4,3,1)$ and $n=3$. Then,
$$Y_{\mu} = \begin{ytableau}
1 & 1 & 1 & 1\\
\none & 2 & 2 & 2\\
\none & \none & 3
\end{ytableau}
\longrightarrow
\mathsf{evac}(Y_{\mu})=
\begin{ytableau}
1 & 2' & 2 & 3'\\
\none & 2 & 3' & 3\\
\none & \none & 3
\end{ytableau}.$$
\end{ex}

\begin{defin}
A word $v$ is said to be \emph{anti-ballot} if there exists a tableau $T \in \mathsf{ShST}(\lambda/\mu,n)$ such that $\mathsf{rect}(T) = \mathsf{evac}(Y_{\nu})$, for $\nu$ a strict partition.
\end{defin}

It is due to Haiman \cite[Theorem 2.13]{Haim92} that, given a shifted semistandard tableau $T$, there is a unique tableau $T^e$, the \emph{reversal} of $T$, that is shifted Knuth equivalent to $\mathsf{c}_n (T)$ and dual equivalent to $T$. Since the operator $\mathsf{c}_n$ preserves shifted Knuth equivalence \cite[Lemma 7.1.4]{Wor84}, the reversal operator is the coplactic extension of evacuation, in the sense that, we may first rectify $T$, then apply the evacuation operator, and then perform outer \textit{jeu de taquin} slides, in the reverse order defined by the previous rectification, to get a tableau $T^e$ with the same shape of $T$. From \cite[Corollary 2.5, 2.8 and 2.9]{Haim92}, this tableau $T^e$ is shifted dual equivalent to $T$, besides being shifted Knuth equivalent to $\mathsf{c}_n (T)$. In particular, $\mathsf{evac}(T) = T^e$ for tableaux of straight shape.

\begin{ex}\label{ex:reversal}
Let $\lambda = (6,5,3,1)$ and $\mu=(4,2)$ and consider the following tableau in the alphabet $\{1,2,3\}'$

$$
T= \begin{ytableau}
{} & {} & {} & {} & 1' & 1\\
\none & {} & {} & {} & 1 & 2'\\
\none & \none & 1 & 2 & 2\\
\none & \none & \none & 3
\end{ytableau}.
$$

To compute the reversal $T^e$, we first rectify $T$, recording the outer corners resulting from the sequence of inner \textit{jeu de taquin} slides:

\begin{align*}
&\begin{ytableau}
{} & {} & {} & {} & 1' & 1\\
\none & {} & {} & *(gray){} & 1 & 2'\\
\none & \none & 1 & 2 & 2\\
\none & \none & \none & 3
\end{ytableau}
\longrightarrow
\begin{ytableau}
{} & {} & {} & {} & 1' & 1\\
\none & {} & *(gray){} & 1 & 2' & {\color{lgray} \bullet_7}\\
\none & \none & 1 & 2 & 2\\
\none & \none & \none & 3
\end{ytableau}
\longrightarrow
\begin{ytableau}
{} & {} & {} & {} & 1' & 1\\
\none & *(gray){}& 1 & 1 & 2' & {\color{lgray} \bullet_7}\\
\none & \none & 2 & 2 & {\color{lgray} \bullet_6}\\
\none & \none & \none & 3
\end{ytableau}
\longrightarrow
\begin{ytableau}
{} & {} & {} & *(gray){} & 1' & 1\\
\none & 1 & 1 & 2' & {\color{lgray} \bullet_5} & {\color{lgray} \bullet_7}\\
\none & \none & 2 & 2 & {\color{lgray} \bullet_6}\\
\none & \none & \none & 3
\end{ytableau}
\longrightarrow\\
&\begin{ytableau}
{} & {} & *(gray){} & 1' & 1 & {\color{lgray} \bullet_4}\\
\none & 1 & 1 & 2' & {\color{lgray} \bullet_5} & {\color{lgray} \bullet_7}\\
\none & \none & 2 & 2 & {\color{lgray} \bullet_6}\\
\none & \none & \none & 3
\end{ytableau}
\longrightarrow
\begin{ytableau}
{} & *(gray){} & 1' & 1 & {\color{lgray} \bullet_3} & {\color{lgray} \bullet_4}\\
\none & 1 & 1 & 2' & {\color{lgray} \bullet_5} & {\color{lgray} \bullet_7}\\
\none & \none & 2 & 2 & {\color{lgray} \bullet_6}\\
\none & \none & \none & 3
\end{ytableau}
\longrightarrow
\begin{ytableau}
*(gray){} & 1' & 1 & 1 & {\color{lgray} \bullet_3} & {\color{lgray} \bullet_4}\\
\none & 1 & 2' & 2 & {\color{lgray} \bullet_5} & {\color{lgray} \bullet_7}\\
\none & \none & 2 & 3 & {\color{lgray} \bullet_6}\\
\none & \none & \none & {\color{lgray} \bullet_2}
\end{ytableau}
\longrightarrow
\begin{ytableau}
1 & 1 & 1 & 1 & {\color{lgray} \bullet_3} & {\color{lgray} \bullet_4}\\
\none & 2 & 2 & 2 & {\color{lgray} \bullet_5} & {\color{lgray} \bullet_7}\\
\none & \none & 3 & {\color{lgray} \bullet_1} & {\color{lgray} \bullet_6}\\
\none & \none & \none & {\color{lgray} \bullet_2}
\end{ytableau}.
\end{align*}

Then, we compute the evacuation of the obtained straight-shaped tableau (see Example \ref{exYevac}):

$$\begin{ytableau}
1 & 1 & 1 & 1 & {\color{lgray} \bullet_3} & {\color{lgray} \bullet_4}\\
\none & 2 & 2 & 2 & {\color{lgray} \bullet_5} & {\color{lgray} \bullet_7}\\
\none & \none & 3 & {\color{lgray} \bullet_1} & {\color{lgray} \bullet_6}\\
\none & \none & \none & {\color{lgray} \bullet_2}
\end{ytableau} \xrightarrow{\mathsf{evac}} 
\begin{ytableau}
1 & 2' & 2 & 3' & {\color{lgray} \bullet_3} & {\color{lgray} \bullet_4}\\
\none & 2 & 3' & 3 & {\color{lgray} \bullet_5} & {\color{lgray} \bullet_7}\\
\none & \none & 3 & {\color{lgray} \bullet_1} & {\color{lgray} \bullet_6}\\
\none & \none & \none & {\color{lgray} \bullet_2}
\end{ytableau}.$$

Finally, we perform outer \textit{jeu de taquin} slides defined by the outer corners of the previous sequence:

\begin{align*}
\begin{ytableau}
1 & 2' & 2 & 3' & {\color{lgray} \bullet_3} & {\color{lgray} \bullet_4}\\
\none & 2 & 3' & 3 & {\color{lgray} \bullet_5} & {\color{lgray} \bullet_7}\\
\none & \none & 3 & {\color{lgray} \bullet_1} & {\color{lgray} \bullet_6}\\
\none & \none & \none & {\color{lgray} \bullet_2}
\end{ytableau}
&\longrightarrow
\begin{ytableau}
{} & 1 & 2' & 3' & {\color{lgray} \bullet_3} & {\color{lgray} \bullet_4}\\
\none & 2 & 2 & 3' & {\color{lgray} \bullet_5} & {\color{lgray} \bullet_7}\\
\none & \none & 3 & 3 & {\color{lgray} \bullet_6}\\
\none & \none & \none & {\color{lgray} \bullet_2}
\end{ytableau}
\longrightarrow
\begin{ytableau}
{} & {} & 2' & 3' & {\color{lgray} \bullet_3} & {\color{lgray} \bullet_4}\\
\none & 1 & 2' & 3' & {\color{lgray} \bullet_5} & {\color{lgray} \bullet_7}\\
\none & \none & 2 & 3' & {\color{lgray} \bullet_6}\\
\none & \none & \none & 3
\end{ytableau}
\longrightarrow
\begin{ytableau}
{} & {} & {} & {} & 2' & 3'\\
\none & 1 & 2' & 3' & {\color{lgray} \bullet_5} & {\color{lgray} \bullet_7}\\
\none & \none & 2 & 3' & {\color{lgray} \bullet_6}\\
\none & \none & \none & 3
\end{ytableau}\\
&\longrightarrow
\begin{ytableau}
{} & {} & {} & {} & 2' & 3'\\
\none & {} & 1 & 2' & 3' & {\color{lgray} \bullet_7}\\
\none & \none & 2 & 3' & {\color{lgray} \bullet_6}\\
\none & \none & \none & 3
\end{ytableau}
\longrightarrow
\begin{ytableau}
{} & {} & {} & {} & 2' & 3'\\
\none & {} & {} & 2' & 3' & {\color{lgray} \bullet_7}\\
\none & \none & 1 & 2 & 3'\\
\none & \none & \none & 3
\end{ytableau}
\longrightarrow
\begin{ytableau}
{} & {} & {} & {} & 2' & 3'\\
\none & {} & {} & {} & 2' & 3'\\
\none & \none & 1 & 2 & 3'\\
\none & \none & \none & 3
\end{ytableau} = T^e.
\end{align*}
\end{ex}

By construction, the reversal is an involution on $\mathsf{ShST}(\lambda/\mu,n)$ which permutes the tableaux within
each shifted dual class while reversing the weight. It coincides with evacuation on straight-shaped tableaux. In
particular, since each dual class has a unique LRS tableau, it yields a bijection $T \longmapsto \mathsf{c}_n (T^e)$ between the set of LRS tableaux of shape $\lambda/\mu$ and weight $\nu$ and the set of LRS tableaux of shape $\mu^{\vee}/\lambda^{\vee}$ and weight $\nu$. Hence, we have the symmetry $f_{\mu\nu}^{\lambda} = f_{\lambda^{\vee}\nu}^{\mu^{\vee}}$. The symmetry $f_{\mu\nu}^{\lambda} = f_{\nu\mu}^{\lambda}$, as well as the others resulting from these two, may be obtained using the \emph{shifted tableau switching} (see \cite{CNO17}).

Henceforth, we will denote by $\eta$ either the involution $\mathsf{evac}$ or its coplactic extension $e$, and we will refer to it by the \emph{Schützenberger involution}\footnote{We remark that our formulation of $\eta$, which coincides with the one in \cite[Remark 5.7]{GL19}, consists of $\mathsf{c}_n$ followed by some \textit{jeu de taquin} slides, to preserve shape. This differs from the $\eta$ presented in \cite{GLP17}, which corresponds to $\mathsf{c}_n$ here.}. Later, in Section \ref{sec4}, the involution $\eta$ will be formulated in the language of a shifted tableau crystal, and it will become clear how the tableaux are permuted within each
shifted dual class.

\section{A crystal-like structure on shifted tableaux}\label{sec3}
	
We recall the main results on the shifted tableau crystal $\mathcal{B}(\lambda/\mu,n)$, the crystal-like structure on $\mathsf{ShST}(\lambda/\mu,n)$ introduced in \cite{GL19, GLP17}. Let $\{e_1, \ldots,e_n\}$ be the canonical basis of $\mathbb{R}^n$, and let $\alpha_i=e_i-e_{i+1}$ be the weight vectors, for $i \in I := [n-1]$. We first recall the primed raising and lowering operators on words.

\begin{defin}[\cite{GLP17}, Definition 3.3] 
Given a word $w$ on $[n]'$ and  $i \in I$, the \emph{primed raising operator} $E_i'(w)$ is defined as the unique word such that
	\begin{enumerate}
	\item $\mathsf{std}(E_i' (w)) = \mathsf{std}(w)$,
	\item $\mathsf{wt}(E_i'(w)) = \mathsf{wt}(w) + \alpha_i$,
	\end{enumerate}
if such word exists. Otherwise, $E_i'(w) = \emptyset$, and we say that $E_i'$ is undefined on $w$. The \emph{primed lowering operator} $F_i'(w)$ is defined in analogous way using $-\alpha_i$.
\end{defin}
This notion is well defined due to Lemma \ref{standard}, and as a direct consequence we have that $E_i'(w) = v$ if and only if $w=F_i' (v)$, for any words $w$ and $v$ \cite[Proposition 3.4]{GLP17}. This definition is extended to a shifted semistandard tableau $T$, putting $E_i' (T)$ as the shifted semistandard tableau with the same shape as $T$ and with (row) reading word $E_i' (w(T))$. The primed operators preserve semistandardness \cite[Proposition 3.6]{GLP17} and they are coplactic \cite[Proposition 3.7]{GLP17}. Moreover, the tableaux $T$, $E_i' (T)$ and $F_i' (T)$ are dual equivalent, since their standardization is unchanged (Definition \ref{def:dualstd}), whenever $E_i'$ and $E_i$ are not undefined on $T$.

In order to simplify the notation, from now on we consider the alphabet $\{1,2\}'$, but the results hold for any primed alphabet $\{i,i+1\}'$ of two adjacent letters. The following propositions provide a simple way to compute the primed operators both on words and on shifted tableaux of straight shape.

\begin{prop}[\cite{GLP17}, Proposition 3.9]\label{computEF}
To compute $F_1' (w)$ consider all representatives of $w$. If all representatives have
the property that the last $1$ is left of the last $2'$ then $F_1' (w) = \emptyset$. If there exists a representative such that the last $1$ is right of the last $2'$ then $F_1'(w)$ is obtained by changing the last $1$ to $2'$ in that representative. The word $E_1'(w)$ is defined similarly reverting the roles of $1$ and $2'$.
\end{prop}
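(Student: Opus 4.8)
The plan is to reduce the computation of $F_1'$ on a word $w$ to the combinatorics of its standardization, where the analogous statement is the classical rule for the type $A$ lowering operator $f_i$ in terms of bracketing. First I would recall that $F_1'(w)$ is by definition the unique word $v$ (if it exists) with $\mathsf{std}(v)=\mathsf{std}(w)$ and $\mathsf{wt}(v)=\mathsf{wt}(w)-\alpha_1$; equivalently, $\mathsf{wt}(v)=(\mathsf{wt}_1-1,\mathsf{wt}_2+1,\ldots)$. So the question is precisely \emph{whether} a representative $u$ of $w$ admits a single letter change that turns one $1$ (or $1'$) into a $2$ (or $2'$) while preserving the standardization, and then identifying which letter must change. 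Since Lemma \ref{standard} guarantees such a $v$ is unique once it exists, it suffices to locate the correct letter.

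The key step is to translate the standardization-preserving condition into a positional condition on the last $1$ and last $2'$. Recall from the definition of $\mathsf{std}$ that, when building $\mathsf{std}(w)$, the unprimed letters equal to $1$ are read \emph{left to right} and the primed letters $2'$ are read \emph{right to left}, and all the $1$'s and $2'$'s are assigned consecutive standardized values within the block coming from the letters $\mathbf{1}$ and $2'$ (under the ordering $1' < 1 < 2' < 2$). Thus I would examine, in a fixed representative, the position occupied by the \emph{last} $1$ versus the \emph{last} $2'$: changing a $1$ to $2'$ decreases $\mathsf{wt}_1$ by one and increases $\mathsf{wt}_2$ by one as required, and the only way this can leave $\mathsf{std}$ unchanged is when the $1$ being altered is, in standardization order, immediately adjacent to the block of $2'$'s, which happens exactly when the last $1$ sits to the right of the last $2'$ in some representative. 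When every representative has its last $1$ to the left of its last $2'$, no such letter exists and $F_1'(w)=\emptyset$, matching the definition of undefinedness.

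The main obstacle is the dependence on the \emph{choice of representative}, which is exactly what forces the phrase ``if there exists a representative'' in the statement. Because priming the first occurrence of a letter may convert a $2'$ into a $2$ (or expose/hide a $1$ versus $1'$), different representatives of the same word $w$ can have the relevant last $1$ and last $2'$ in different relative positions; I would therefore check that the standardization-preserving criterion is a genuine property of the word $\hat w$ and not of the string, invoking that $\mathsf{std}$ is independent of the representative. The cleanest way to close this is to argue that $\mathsf{std}(F_1'(w))=\mathsf{std}(w)$ holds iff the standardized image $s=\mathsf{std}(w)$ admits the type $A$ operator $f_1$ relative to the $\{1,2\}'$ standardization (equivalently, iff there is an unbracketed $1$ under the usual bracketing), and then transport this back through $\mathsf{std}^{-1}$ via Lemma \ref{standard}; the positional statement about the last $1$ and last $2'$ is simply the concrete reading of ``there is an unbracketed letter to change'' in the primed alphabet. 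The statement for $E_1'$ follows by the symmetry $E_1'(w)=v \iff w=F_1'(v)$ noted after the definition, swapping the roles of $1$ and $2'$.
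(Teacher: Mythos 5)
First, a structural remark: the paper offers no proof of this statement. It is imported verbatim as Proposition 3.9 of \cite{GLP17}, so there is no internal argument to compare yours against; the actual proof lives in that reference. Judged on its own, your strategy is the right one and is in the spirit of the source: by the definition of $F_1'$ and the uniqueness granted by Lemma \ref{standard}, the whole problem is to decide when a single letter of $w$ can be converted from a $\mathbf{1}$ to a $\mathbf{2}$ without disturbing $\mathsf{std}(w)$, and this is a purely positional question because the unprimed $1$'s are standardized left to right while the $2'$'s are standardized right to left. The last unprimed $1$ carries the largest standardization value among the $\mathbf{1}$'s, and after conversion to $2'$ it must carry the smallest value among the $\mathbf{2}$'s, which forces it to lie to the right of every remaining $2'$; conversely, converting any $1'$, or any $1$ other than the last, necessarily changes the standardization. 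This is exactly the content of the proposition, and it is precisely the step you assert (``the only way this can leave $\mathsf{std}$ unchanged is\ldots'') rather than verify; a complete write-up needs that short case analysis, including ruling out the $1'$'s as candidates for the changed letter.

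Two smaller points. The appeal to ``the type $A$ operator $f_1$ / an unbracketed $1$ under the usual bracketing'' is not the right mechanism here: the bracketing/pairing rule is what underlies the \emph{unprimed} operators $F_i$ (via the lattice walks of Section \ref{sec3}), whereas $F_1'$ always moves the weight by exactly one $\alpha_1$ and its definedness is governed solely by the last-$1$-versus-last-$2'$ condition; transporting the type-$A$ signature rule through $\mathsf{std}^{-1}$ would not produce this criterion. On the other hand, your worry about representative-dependence resolves itself more easily than you suggest: once a representative admitting the change is found, the output has the prescribed weight and standardization, so Lemma \ref{standard} makes the result independent of the chosen representative automatically. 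What genuinely requires the quantifier over representatives is only the definedness test, since priming or unpriming the first $\mathbf{1}$ or the first $\mathbf{2}$ changes which letters count as the last $1$ and the last $2'$.
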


\begin{prop}[\cite{GLP17}, Proposition 3.11]
Let $T\in \mathsf{ShST}(\lambda,n)$  a shifted semistandard tableau of straight shape. If $T$ has one row, then $E_1'(T)$ (respectively $F_1'(T)$) is obtained by changing the leftmost $2$ to $1$ (respectively, $1$ to $2$), if possible, and it is $\emptyset$ otherwise. If $T$ has two rows and the first row contains a $2'$, then $E'_1(T)$ is obtained by changing that $2'$ to $1$ and $F_1'(T) = \emptyset$. If the first row does not contain a $2'$, then $E_1'(T) = \emptyset$ and $F_1'(T)$ is obtained by changing the rightmost $1$ to $2'$.
\end{prop}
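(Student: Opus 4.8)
The plan is to reduce everything to the word-level rule of Proposition \ref{computEF}. By definition, $E_1'(T)$ (respectively $F_1'(T)$) is the tableau of the same shape as $T$ whose row reading word is $E_1'(w(T))$ (respectively $F_1'(w(T))$), and since the primed operators preserve semistandardness \cite[Proposition 3.6]{GLP17}, this filling is automatically a shifted semistandard tableau. Hence it suffices to write $w(T)$ explicitly, apply Proposition \ref{computEF}, and read off which box of $T$ is altered.

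First I would pin down the possible shapes and reading words. A short argument on the main diagonal shows that a straight shape admits a filling in $\{1,2\}'$ only if it has at most two rows: the entries of the boxes $(i,i)$, $(i,i+1)$, $(i+1,i+1)$ weakly increase, and the constraint that a column carries at most one unprimed copy of each value forces the unprimed values along the main diagonal to strictly increase, so there are at most two rows. In canonical form the bottom row (when present) is read before the top row, so the first $1$ and the first $2$ of $w(T)$ sit as far left as possible; this forces $w(T) = 2^{k}\,1^{p}\,(2')^{\varepsilon}\,2^{q}$, where $k=\lambda_2$ (with $k=0$ in the one-row case), $\varepsilon\in\{0,1\}$ records whether the top row contains a $2'$, and no $1'$ occurs. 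The crucial structural point, which I would isolate first, is that in the two-row case the genuine $2$'s that may be primed to form representatives are exactly those of the bottom row, not those in the top row.

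For the one-row case ($k=0$, forcing $\varepsilon=0$ since the first $2$ is then unprimed), I would apply Proposition \ref{computEF} to $w(T)=1^{p}2^{q}$: priming the leftmost $2$ places a $2'$ to the right of the last $1$, so $E_1'$ unprimes it, i.e. the leftmost $2$ becomes a $1$; dually $F_1'$ turns the last $1$ into a $2'$, which recanonicalizes to turning a $1$ into a $2$. Each is $\emptyset$ precisely when the relevant letter is absent. For the two-row case I would split on $\varepsilon$. If $\varepsilon=1$, the top-row $2'$ already lies to the right of the last $1$ and is the rightmost available $2'$, so $E_1'$ changes that $2'$ to $1$, while the presence of this $2'$ forces every representative to have its last $1$ left of its last $2'$, giving $F_1'(T)=\emptyset$. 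If $\varepsilon=0$, the only primable $2$ is the first one, which lies in the bottom row to the \emph{left} of all the $1$'s; hence no representative has a $2'$ right of the last $1$ and $E_1'(T)=\emptyset$, whereas priming that bottom $2$ does place a $2'$ left of the last $1$, so $F_1'$ changes the rightmost $1$ of the top row into a $2'$.

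The only real obstacle is the bookkeeping across representatives in the two-row case, and in particular making precise the convention in Proposition \ref{computEF} when a representative contains no $2'$ (treating the ``last $2'$'' as lying to the left of everything). This is exactly what produces the asymmetry between $E_1'$ and $F_1'$: a $2'$ in the top row enables $E_1'$ and blocks $F_1'$, while its absence does the reverse. Once the reading word is translated back, the changed letter corresponds to a single box, and it remains only to note that the resulting filling has the same shape and is semistandard by \cite[Proposition 3.6]{GLP17}, which finishes each case.
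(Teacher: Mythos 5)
This proposition is imported from \cite{GLP17} and the paper gives no proof of it, so there is no in-paper argument to compare against; your derivation is correct and is essentially the natural one (and the one the source itself uses): reduce to the word-level rule of Proposition \ref{computEF}, classify the canonical reading words as $w(T)=2^{k}1^{p}(2')^{\varepsilon}2^{q}$, and track which representative witnesses the defining condition. The two points you single out are indeed the only delicate ones, and you resolve them correctly: the convention for a representative containing no $2'$ (needed only for $F_1'$ on $1^a$, since in every other case a representative with an actual $2'$ is available), and the fact that the unique primable $\mathbf{2}$ is the single box $T(2,2)$ at the very start of the reading word --- note it is only that one box, not ``the $2$'s of the bottom row'' as your phrasing momentarily suggests, though you use it correctly afterwards. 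Two cosmetic remarks: the reason a two-row tableau has second row $2^{\lambda_2}$ and no unprimed $2$ in the first $\lambda_2+1$ columns of row one is the column condition at $(1,j),(2,j)$ combined with the one-$i'$-per-row rule, which is worth spelling out since it is what guarantees $p\geq 1$ and the semistandardness of the altered filling; and in the one-row case $E_1'$ replaces the last $2'$ by a $1$ rather than merely ``unpriming'' it, though the net effect you state (leftmost $2$ becomes $1$) is the right one.
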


\begin{ex}
Let $T = \begin{ytableau}
1&1&1&2'&2\\
\none & 2&2&2\\
\none & \none &3
\end{ytableau}$. Then, $F'_2(T) = \begin{ytableau}
1&1&1&2'&3'\\
\none & 2&2&2\\
\none & \none &3
\end{ytableau}.$

Observe that both tableaux have the same standardization and that $\mathsf{wt}(F_2'(T)) = (3,4,2) = \mathsf{wt}(T)-(0,1,-1)$.
\end{ex}

Given a word $w$ on the alphabet $[n]'$ and $i \in I$, the $i$-th \emph{lattice walk} of $w$ is obtained by considering the subword $w_i$, consisting of the letters $\{i,i+1\}'$, and replacing each letter according to the following table, starting at the origin $(0,0)$.

\begin{center}
\begin{tabular}{|c|cccc|}
\hline
$x_k y_k = 0$ & $\xrightarrow{\text{1'}}$ & $\xrightarrow{\text{1}}$ & $\uparrow 2'$ & $\uparrow 2$ \\
\hline
$x_k y_k \neq 0$ & $\xrightarrow{\text{1'}}$ & $\downarrow 1$ & $\xleftarrow{\text{2'}}$ & $\uparrow 2$ \\
\hline
\end{tabular}
\end{center}

The lattice walks of a word may be used to provide another criterion for balotness. 

\begin{prop}[\cite{GLP17}, Corollary 4.5]\label{cor4.5}
A word $w$ is ballot if and only if the $i$-th lattice walk of the subword $w_i$ consisting of $\mathbf{i}$ and $\mathbf{i+1}$ ends on the $x$ axis, for all $i\in I$.
\end{prop}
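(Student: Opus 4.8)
The plan is to prove Proposition \ref{cor4.5} by relating the $i$-th lattice walk of a word $w$ directly to the behaviour of the primed operators $E_i'$ and $F_i'$ on the subword $w_i$, thereby connecting the walk to the ballotness criterion from Definition \ref{ballot}. Since everything in sight is determined by the two-letter subword $w_i$ on the alphabet $\{\mathbf{i},\mathbf{i+1}\}$, I first reduce to the case $n=2$ and the alphabet $\{1,2\}'$, exactly as the excerpt does when discussing the primed operators; the general statement then follows by applying the argument to each $i \in I$ separately. I would also pass to the standardization, so that the walk becomes an ordinary lattice path in which each unbarred or barred letter contributes a $\pm 1$ step in a controlled way, and where the position of the walk encodes the "height" statistic of the corresponding subword.

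The key observation to extract is that the lattice walk is designed so that its horizontal excursions track the applicability of $E_1'$ and $F_1'$. Concretely, by Proposition \ref{computEF}, $F_1'(w)$ is defined precisely when some representative has its last $1$ to the right of its last $2'$, and $E_1'(w)$ is defined dually; I would verify that these conditions are exactly the statements that the walk can still step rightward (for $E_1'$) or has stepped rightward and can return (for $F_1'$) from its current position. Thus the walk ends on the $x$-axis if and only if the net number of $1$'s equals the net number of $2$'s in the relevant sense, which by the weight bookkeeping $\mathsf{wt}(E_1'(w)) = \mathsf{wt}(w)+\alpha_1$ means the subword rectifies to a balanced configuration. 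Comparing this with the rectification of $Y_\nu$, whose $i$-th and $(i+1)$-th rows are the defining balanced pattern, I would identify "walk ends on the $x$-axis for all $i$" with "$\mathsf{rect}(w) = w(Y_\nu)$", which is the definition of ballot.

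The cleanest route is to use the $i$-string structure: the lattice walk is a visual encoding of the $\{i',i\}$-string through $w$, where reaching the rightmost point of the walk corresponds to the string top (where $E_i'$ becomes undefined) and the leftmost endpoint to the string bottom. The endpoint of the walk lies on the $x$-axis exactly when the number of rightward steps equals the number of leftward steps along the walk, which is the condition that the subword has a balanced number of $\mathbf{i}$'s that are "unmatched" by $\mathbf{i+1}$'s. I would then invoke the characterization of ballot words in terms of the Yamanouchi tableau (Definition \ref{ballot}, Proposition \ref{yamunique}), checking that the reading word of $Y_\nu$ produces, for each $i$, a walk returning to the axis, and conversely that any word all of whose walks return rectifies to $w(Y_\nu)$.

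The main obstacle I anticipate is handling the case distinction in the walk's step rules, namely the different behaviour when $x_k y_k = 0$ versus $x_k y_k \neq 0$ (that is, when the walk is on an axis versus in the interior). This reflects the subtlety of the shifted setting, where primed and unprimed letters interact asymmetrically and the priming of the first occurrence (the canonical form convention) changes how $1'$ versus $1$ contributes. Carefully tracking how a representative's choice of priming the first $\mathbf{i}$ affects only the initial step of the walk, and showing this does not change whether the walk ends on the axis, is the delicate point; I would manage it by always working with the standardization, where these conventions are normalized, and then transferring the conclusion back via Lemma \ref{standard}, which guarantees that the weight-$(a_1,\dots,a_k)$ preimage is unique and hence the walk is well defined on the word rather than on a chosen representative.
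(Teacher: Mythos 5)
The paper does not prove this statement at all: it is quoted verbatim from \cite{GLP17} (Corollary 4.5 there) and used as a black box, so there is no internal proof to compare against. Judged on its own terms, your sketch has genuine gaps.

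First, the heart of any proof of this proposition is the identification of the \emph{endpoint} of the $i$-th lattice walk with the pair of total length functions $(\varphi_i(w),\varepsilon_i(w))$, together with the fact (cited in the paper as \cite[Proposition 5.17]{GLP17}) that $w$ is ballot on $\{i,i+1\}'$ if and only if \emph{both} $E_i(w)=\emptyset$ and $E_i'(w)=\emptyset$. Your argument engages almost exclusively with the primed operators via Proposition \ref{computEF}; but the unprimed operator $E_i$ is defined through the $E_1$-critical substrings, whose types and admissible locations are themselves phrased in terms of the walk, and showing that ``no non-5E critical substring exists'' is equivalent to ``$y$-coordinate of the endpoint is $0$'' is the technically hard half of the statement. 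You never address it. Relatedly, your assertion that the walk ends on the $x$-axis if and only if ``the net number of $1$'s equals the net number of $2$'s'' is false as stated: for a ballot word of weight $(\nu_1,\nu_2)$ with $\nu_1>\nu_2$ the walk ends at $(\nu_1-\nu_2,0)$, so the counts are unequal; what matters is only the $y$-coordinate, which equals $\varepsilon_i(w)$, not any balance condition.

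Second, the proposed reduction via standardization does not work: standardizing replaces the two-letter subword by a word with all distinct letters, and the lattice walk is defined on the two-letter subword itself (with its case distinction $x_ky_k=0$ versus $x_ky_k\neq 0$), not on its standardization. The correct way to handle representative-independence is to observe that only the priming of the \emph{first} $\mathbf{i}$ and first $\mathbf{i+1}$ can vary, and that on either axis the primed and unprimed steps point in the same direction, so the walk is independent of the representative; Lemma \ref{standard} is about uniqueness of destandardization and does not by itself deliver this. Finally, the claim that the rightmost point of the walk corresponds to the top of the $i$-string conflates the extremal excursion of the path with its terminal position; it is the final coordinates, not the extremes visited, that encode $\varphi_i$ and $\varepsilon_i$. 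As written, the proposal is a plausible plan but does not contain the substance of a proof.
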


\begin{ex}\label{exlatwalk}
Let $w=3211221'11$. To obtain the $1$st and $2$nd lattice walks of $w$, consider the subwords $w_1 = 211221'11$  and $w_2 = 3222$ (which corresponds to $2111$, using the alphabet $\{1,2\}'$). Replacing each letter accordingly, we obtain
	
	\begin{center}
	\includegraphics[scale=1]{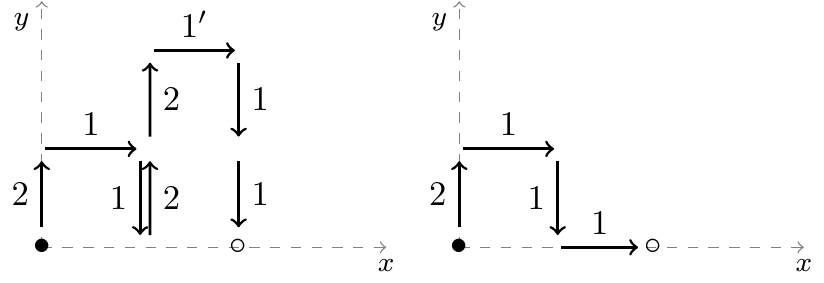}
	\end{center}
Since both lattice walks end on the $x$ axis, the word $w$ is ballot.
\end{ex}

If $w$ is a word on the alphabet $\{1', 1, 2',2\}$ and $u= w_k w_{k+1}$ $ \ldots w_l$ is a substring of some representative of $w$, then $u$ is called a \emph{substring} of $w$. The coordinates $(x,y)$ of the point of the 1-lattice walk before the start of $u$ is called the \emph{location} of $u$.

\begin{defin}[\cite{GLP17}, Definition 5.3]
A substring $u$ is said to be a \emph{$F_1$-critical substring} if any of these conditions on $u$ and its location are satisfied (as well as the adequate transformations to apply), where $ab^* c$ means any string of the form $ab \cdots bc$, including $ac$:

\begin{center}
\begin{table}[h]
\begin{tabular}{|c|ccc|c|}
\hline
Type & Substring &  Condition steps & Location & Transformation \\
\hline
\multirow{2}{*}{1F} & \multirow{2}{*}{$u=1(1')^* 2'$} & $\rightarrow \rightarrow \uparrow$ & $y=0$ & \multirow{2}{*}{$u\rightarrow 2'(1')^* 2$} \\

  &   & $\downarrow\rightarrow\uparrow$ & $y=1, x\geq 1$ &   \\
\hline
\multirow{2}{*}{2F} & \multirow{2}{*}{$u=1(2)^* 1'$} & $\rightarrow\uparrow\rightarrow$ & $x=0$ & \multirow{2}{*}{$u \rightarrow 2'(1')^* 1$} \\

  &   & $\downarrow\uparrow\rightarrow$ & $x=1, y \geq 1$ &  \\
\hline
3F & $u=1$ & $\rightarrow$ & $y=0$ & $u \rightarrow 2$ \\
\hline
4F & $u=1'$ & $\rightarrow$ & $x=0$ & $u \rightarrow 2'$ \\
\hline
\multirow{2}{*}{5F} & $u=1$ & $\downarrow$ & \multirow{2}{*}{$x=1, y\geq 1$} & \multirow{2}{*}{$\emptyset$} \\

  & $u=2'$  & $\leftarrow$ &  &   \\
\hline
\end{tabular}
\end{table}
\end{center}
\end{defin}

The \emph{final} $F_1$-critical substring $u$ of the word $w$ is the $F_1$-critical substring $u$ with the highest starting index, taking the longest in the case of a tie. If there is still a tie (due to different representatives), take any such $u$. The unprimed raising and lowering operators may now be recalled.

\begin{defin}[\cite{GLP17}, Definition 5.4]
Let $w$ be a word. The word $F_1(w)$ is obtained by taking a representative of $w$ containing a final $F_1$-critical substring and transforming it according to the previous table. If there is no $F_1$-critical substring or it is of type 5F, then put $F_1 (w) = \emptyset$, and in this case, $F_1$ is said to be undefined on $w$.
\end{defin}

\begin{lema}[\cite{GLP17}, Proposition 5.14 (i)]\label{lemaxindef}
Let $w$ be a word on the alphabet $\{i',i,(i+1)',(i+1)\}$ and let $(x,y)$ be the endpoint of the $i$-lattice walk of $w$. If $x=0$, then $F_i (w) = \emptyset$.
\end{lema}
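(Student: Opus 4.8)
The plan is to argue directly from the combinatorial definition of $F_i$ in terms of critical substrings, reducing as usual to the two-letter alphabet $\{1,2\}'$ (relabelling $i \mapsto 1$, $i+1\mapsto 2$, which is harmless since the statement only involves the subword on $\{i',i,(i+1)',(i+1)\}$). Writing $F_i(w)=\emptyset$ exactly when $w$ has no $F_1$-critical substring or its final one is of type 5F, I would show that the hypothesis $x=0$ forces precisely this situation. Throughout, let $P_0=(0,0),P_1,\dots,P_\ell$ denote the $1$-lattice walk of $w$, where $P_k$ is the point reached after reading $w_1\cdots w_k$.

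First I would record two preliminary facts about the walk. \emph{(a)} It never leaves the closed first quadrant: on an axis we only step right or up, and off the axes one has $x,y\geq 1$, so a left or down step keeps both coordinates nonnegative. \emph{(b)} The walk, and hence its endpoint, does not depend on the chosen representative of $w$: representatives differ only in the priming of the \emph{first} occurrence of each value, the first $\mathbf 1$ is necessarily read at $x=0$ and the first $\mathbf 2$ at $y=0$ (since $x$ can only grow after a previous $\mathbf 1$ and $y$ only after a previous $\mathbf 2$), and on the relevant axis the primed and unprimed versions induce the same step from the step table. This makes $(x,y)$ well defined on $w$.

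Now assume $x=0$ at $P_\ell$, and let $j$ be the least index with $P_k$ having first coordinate $0$ for all $j\leq k\leq\ell$, so that $[j,\ell]$ is the maximal final run of the walk on the $y$-axis. If $j=0$ the whole walk lies on $x=0$, so every step is an up-step and $w$ contains no letter $1$ or $1'$; since each of types 1F--4F contains a $1$ or $1'$ and the $2'$-case of type 5F requires an off-axis location, $w$ then has no $F_1$-critical substring and $F_1(w)=\emptyset$. Otherwise $j\geq 1$ and $P_{j-1}$ has first coordinate $\geq 1$; as the only step decreasing $x$ is a $2'$ read off the axes, we must have $P_{j-1}=(1,y_{j-1})$ with $y_{j-1}\geq 1$ and $w_j=2'$. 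This single letter, at location $(1,y_{j-1})$ with step $\leftarrow$, is exactly a type 5F critical substring. Moreover, for each $k$ with $j<k\leq\ell$ the point $P_{k-1}$ lies on $x=0$, so reading $w_k$ is an up-step (a $1$ or $1'$ there would step right and leave the run), forcing $w_k\in\{2',2\}$.

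I would then conclude using maximality of the final run: no $F_1$-critical substring can start at an index $>j$, since any such substring would consist only of $2'$ and $2$ letters read on the $y$-axis, whereas types 1F--4F require an occurrence of $1$ or $1'$ and the $2'$-case of type 5F requires the off-axis location $x=1$. Because a type 5F substring does start at $j$ and none starts later, the final $F_1$-critical substring (highest starting index, breaking ties by length) is of type 5F, whence $F_1(w)=\emptyset$. The point requiring the most care is preliminary fact \emph{(b)} — that the walk and its endpoint are genuinely representative-independent — together with the bookkeeping showing that no type 1F--4F substring can intrude into the final $y$-axis run; both reduce to the sign analysis of the step table, which is the real crux of the argument.
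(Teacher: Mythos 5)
Your argument is correct. Note, however, that the paper does not prove this lemma at all: it is imported verbatim from \cite{GLP17} (Proposition 5.14(i)), so there is no internal proof to compare against. Your blind proof is a sound, self-contained verification from the step table and the list of $F_1$-critical substrings: the reduction to $\{1,2\}'$, the two preliminary facts (the walk stays in the first quadrant and is representative-independent, since the first $\mathbf 1$ is always read on the $y$-axis and the first $\mathbf 2$ on the $x$-axis), and the analysis of the maximal final run on the $y$-axis all check out, as does the key observation that the step into that run must be a left-step $2'$ from $(1,y)$ with $y\geq 1$, which is precisely a type 5F substring. The only spot worth one more half-sentence is the tie-breaking at the starting index $j$ itself: since ties at the same starting index are broken by taking the \emph{longest} critical substring, you should also note that no substring of type 1F--4F can begin at $j$ and outrank the 5F one --- immediate, because all of those types begin with $1$ or $1'$ while $w_j=2'$ --- and that the letters $w_j,\dots,w_\ell$ have fixed priming across representatives (none of them is the first $\mathbf 2$, as $y_{j-1}\geq 1$ forces an earlier $\mathbf 2$), so the conclusion holds in every representative. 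With that remark added, the proof is complete.
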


The operators $F_i$ are called the \emph{unprimed raising operators}. The \emph{unprimed lowering operators} $E_i$ are defined on words by $E_i (w) := \mathsf{c}_n F_{n-i} \mathsf{c}_n (w) $, for $i \in I$. In particular, for $n=2$, we have $E_1 (w) = \mathsf{c}_2 F_1 (\mathsf{c}_2 (w))$, thus $E_1$ may be obtained in similar way using the following table of $E_1$-critical substrings:

\begin{center}
\begin{table}[h]
\begin{tabular}{|c|ccc|c|}
\hline
Type & Substring &  Condition steps & Location & Transformation \\
\hline
\multirow{2}{*}{1E} & \multirow{2}{*}{$u=2'(2)^*1$} & $\uparrow \uparrow \rightarrow$ & $x=0$ & \multirow{2}{*}{$u\rightarrow 1(2)^*1'$} \\

  &   & $\leftarrow \uparrow \rightarrow$ & $x=1, y\geq 1$ &   \\
\hline
\multirow{2}{*}{2E} & \multirow{2}{*}{$u=2'(1')^*2$} & $\uparrow \rightarrow \uparrow$ & $y=0$ & \multirow{2}{*}{$u \rightarrow 1(2)^*2'$} \\

  &   & $\leftarrow \rightarrow \uparrow$ & $y=1, x \geq 1$ &  \\
\hline
3E & $u=2'$ & $\uparrow$ & $x=0$ & $u \rightarrow 1'$ \\
\hline
4E & $u=2$ & $\uparrow$ & $y=0$ & $u \rightarrow 1$ \\
\hline
\multirow{2}{*}{5E} & $u=1$ & $\downarrow$ & \multirow{2}{*}{$y=1, x\geq 1$} & \multirow{2}{*}{$\emptyset$} \\

  & $u=2'$  & $\leftarrow$ &  &   \\
\hline
\end{tabular}
\end{table}
\end{center}

These definitions can be extended to tableaux: given $T \in \mathsf{ShST} (\lambda/\mu,n)$, $F_i (T)$ is the tableau with the same shape as $T$ with (row) reading word $F_i (w(T))$, for $i \in I$. The definition of $E_i (T)$ is analogous. In both definitions, the row reading word of $T$ may be replaced by the column reading word \cite[Proposition 5.21]{GLP17}. This notions are well defined, since $E_i (T)$ and $F_i (T)$ are shifted semistandard tableaux, for all $i \in I$, whenever these operators are not undefined on $T$ \cite[Theorem 5.18]{GLP17}. Moreover, the primed and uprimed operators $E_i'$, $E_i$, $F_i'$ and $F_i$ commute whenever the compositions among them are defined, for each $i \in I$ \cite[Proposition 5.36]{GLP17}.

\begin{ex}
Let $T= \begin{ytableau}
1 & 1 & 1 & 1 & 2'\\
\none & 2 & 3' & 3\\
\none & \none & 3
\end{ytableau}$. To compute $E_2$ and $F_2$, we consider the subword of $w(T)$ in the alphabet $\{2,3\}'$, which is $323'32'$, with the following lattice walk (in the alphabet $\{1,2\}'$):

\begin{center}
	\begin{tikzpicture}
	\draw[gray,dashed,->] (0,0) -- (0,2.5);\draw[gray,dashed,->] (0,0) -- (2,0);
	\node[below left] at (0,2.5) {{\footnotesize $y$}};\node[below] at (2,0) {{\footnotesize $x$}};
	
	\node at (0,0) (1) {$\bullet$};
	\node at (0,0.9) (2) {};
	\node at (1,0.9) (3) {};
	\node at (1,1.1) (4) {};
	\node at (0,1.1) (5) {};
	\node at (0,2) (6) {};
	\node at (1,2) (7) {$\circ$};

	\draw[->] (1) to node[left] {$2$} (5);
	\draw[->] (2) to node[below] {$1$} (3);
	\draw[->] (4) to node[above] {$2'$} (5);
	\draw[->] (5) to node[left] {$2$} (6);
	\draw[->] (6) to node[above] {$1'$} (7);
	\end{tikzpicture}	
\end{center}

Since $T$ has a final $E_2$-critical substring of type 2E, and a final $F_2$-critical substring of type 4F. Thus, applying the correct substitution we obtain
$$E_2 (T) = \begin{ytableau}
1 & 1 & 1 & 1 & 2'\\
\none & 2 & 2 & 3'\\
\none & \none & 3
\end{ytableau} \qquad
F_2 (T) = \begin{ytableau}
1 & 1 & 1 & 1 & 3'\\
\none & 2 & 3' & 3\\
\none & \none & 3
\end{ytableau}. $$
\end{ex}

The primed and unprimed operators may be used to give an alternative formulation for ballot (and anti-ballot) words. Indeed, a word $w$ on the alphabet $\{i,i+1\}'$ is ballot (respectively anti-ballot) if and only if $E_i (w) = E_i' (w) = \emptyset$ (respectively $F_i (w) = F_i' (w) = \emptyset$) \cite[Proposition 5.17]{GLP17}. Hence, a word on the alphabet $[n]'$ is ballot (respectively anti-ballot) if and only if $E_{i}(w) = E_i' (w)=\emptyset$ (respectively $E_{i}(w) = E_i' (w)=\emptyset$) for all $i \in I$.

Whenever they are defined, $E_i(T)$ and $F_i(T)$ are dual equivalent to $T$ \cite[Corollary 5.3]{GLP17}. Since this is also true for the primed operators, as the standardization is unchanged, then any two tableaux that differ by a sequence of any lowering or raising operators are dual equivalent. Moreover, the unprimed operators are coplactic, whenever defined \cite[Theorem 5.35]{GLP17}.

An \emph{highest weight element} (respectively \emph{lowest weight element}) of $\mathsf{ShST}(\lambda/\mu,n)$ is a tableau $T$ such that $E_i(T) = E_i' (T) = \emptyset$ (respectively $F_i (T) = F_i' (T) = \emptyset$), for any $i \in I$. This means that the reading word of $T$ is ballot (respectively anti-ballot).

\begin{prop}[\cite{GLP17}, Proposition 6.4]\label{highuni}
Let $\nu$ be a strict partition. We have that $Y_{\nu}$ is the unique $T\in \mathsf{ShST}(\nu,n)$ for which $E_{i}(T) = E_i' (T)=\emptyset$, for all $i \in I$. Then, every $T\in \mathsf{ShST}(\nu,n)$ may be obtained from every other by a sequence of primed and unprimed lowering and raising operators.
\end{prop}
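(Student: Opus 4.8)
The plan is to prove the two assertions in turn: first the uniqueness of the highest weight element, and then the connectedness of $\mathsf{ShST}(\nu,n)$, from which the final statement follows immediately.

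For the uniqueness, I would invoke the characterization recalled just before the statement, namely that a tableau $T$ is a highest weight element (all $E_i(T)=E_i'(T)=\emptyset$) if and only if its reading word $w(T)$ is ballot, which by Definition \ref{ballot} means $\mathsf{rect}(T) = Y_{\mathsf{wt}(T)}$. Since $T$ has straight shape $\nu$, rectification acts trivially, so $T = \mathsf{rect}(T) = Y_{\mathsf{wt}(T)}$. But $Y_{\mathsf{wt}(T)}$ has shape equal to its weight, which forces $\mathsf{wt}(T)=\nu$ and hence $T = Y_\nu$. (Equivalently, once one knows $\mathsf{wt}(T)=\nu$, Proposition \ref{yamunique} identifies $T=Y_\nu$ directly, since $Y_\nu$ is the unique tableau of shape and weight $\nu$.) Thus $Y_\nu$ is the unique highest weight element of $\mathsf{ShST}(\nu,n)$.

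For the connectedness, the key step is to show that from any $T\in\mathsf{ShST}(\nu,n)$, repeatedly applying the raising operators $E_i, E_i'$ (whenever defined) reaches, after finitely many steps, a highest weight element. To guarantee termination I would use the statistic $\phi(T) := \sum_{j=1}^n (n-j)\,\mathsf{wt}(T)_j$; since each raising operator changes the weight by $+\alpha_i = e_i-e_{i+1}$, applying any $E_i$ or $E_i'$ increases $\phi$ by exactly $1$. As $\mathsf{ShST}(\nu,n)$ is finite, $\phi$ is bounded, so the process must halt, and by construction it halts precisely at a tableau on which every raising operator is undefined, that is, a highest weight element. By the uniqueness just established, this element is $Y_\nu$, so every $T$ is joined to $Y_\nu$ by a chain of raising operators.

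Finally, to pass from an arbitrary $T$ to an arbitrary $T'$, I would reverse such chains using the mutual inverse relations between raising and lowering operators: for the primed operators one has $E_i'(S)=S'\iff S=F_i'(S')$ (as recalled after Definition 3.3), and the analogous relation $E_i(S)=S'\iff S=F_i(S')$ holds for the unprimed ones. Concatenating the raising chain from $T$ up to $Y_\nu$ with the reverse (now a lowering chain) of the raising chain from $T'$ up to $Y_\nu$ produces a sequence of operators carrying $T$ to $T'$. The main obstacle is precisely the termination and reachability argument: one must be sure the raising process stops only at a highest weight element and nowhere else, which is exactly what the strict monotonicity of $\phi$ secures.
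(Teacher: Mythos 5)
The paper does not prove this proposition at all: it is quoted verbatim from \cite{GLP17} (Proposition 6.4 there), so there is no internal proof to compare against. Your argument is nevertheless a correct reconstruction using only facts the paper recalls. The uniqueness step is sound: the recalled equivalence ``highest weight $\iff$ reading word ballot'' together with the fact that rectification is trivial on straight shapes gives $T=\mathsf{rect}(T)=Y_{\mathsf{wt}(T)}$, and comparing shapes forces $\mathsf{wt}(T)=\nu$, whence $T=Y_\nu$ by Proposition \ref{yamunique}. The connectedness step is also sound: each of $E_i$ and $E_i'$ shifts the weight by $+\alpha_i$ (for the unprimed operators this is visible from the critical-substring transformations, each of which converts one letter of value $\mathbf{i+1}$ into one of value $\mathbf{i}$), so your statistic $\phi$ increases strictly, the set is finite and closed under the operators, and the process can only halt at a highest weight element, which by uniqueness is $Y_\nu$; reversing chains via $E_i'(S)=S'\iff S=F_i'(S')$ and its unprimed analogue then connects any two vertices through $Y_\nu$. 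This is essentially the standard argument one would find in \cite{GLP17}, and I see no gap.
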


The set $\mathsf{ShST}(\lambda/\mu,n)$ is closed under the operators $E_i, E_i', F_i, F_i'$, for  $i \in I$.
Moreover, we also have the \emph{partial length functions} \cite{GL19} given by:
\begin{align*}
\varepsilon_i'(T) &:= max\{k:\, E_i'^k(T) \neq \emptyset\}
&\widehat{\varepsilon_i} (T) := max\{k:\, E_i^k(T) \neq \emptyset\}\\
\varphi_i'(T) &:= max\{k:\, F_i'^k(T) \neq \emptyset\}
&\widehat{\varphi_i} (T) := max\{k:\, F_i^k(T) \neq \emptyset\},
\end{align*}
and \emph{total length functions} $\varepsilon_i (T)$ and $\varphi_i(T)$, defined as the $y$-coordinate and $x$-coordinate, respectively, of the endpoints of the $i$-th lattice walk of $T$, for $i \in I$ \cite[Section 5.1]{GLP17}.

\begin{figure}[h!]
\begin{center}
\includegraphics[scale=0.5]{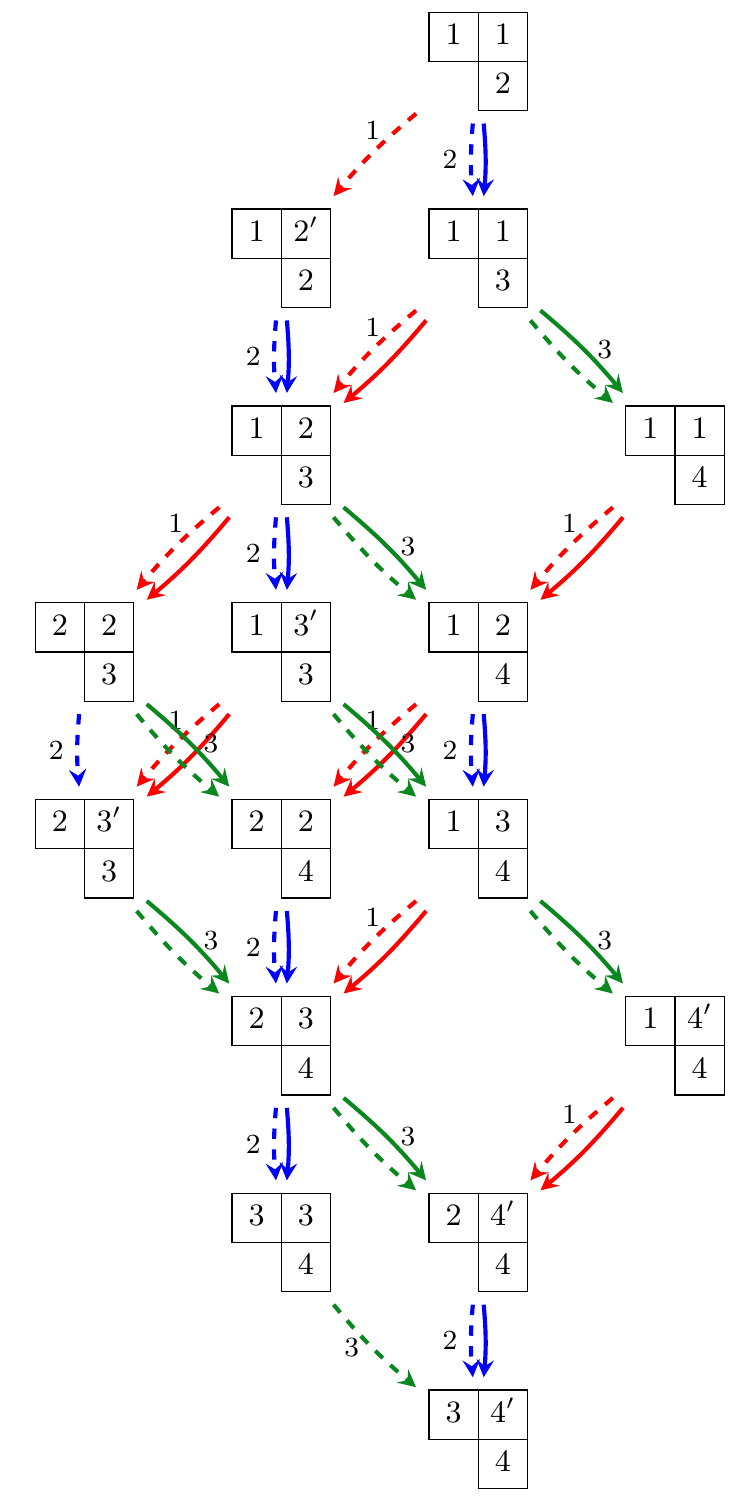}
\hspace{1cm}
\vrule{}
\hspace{1cm}
\includegraphics[scale=0.5]{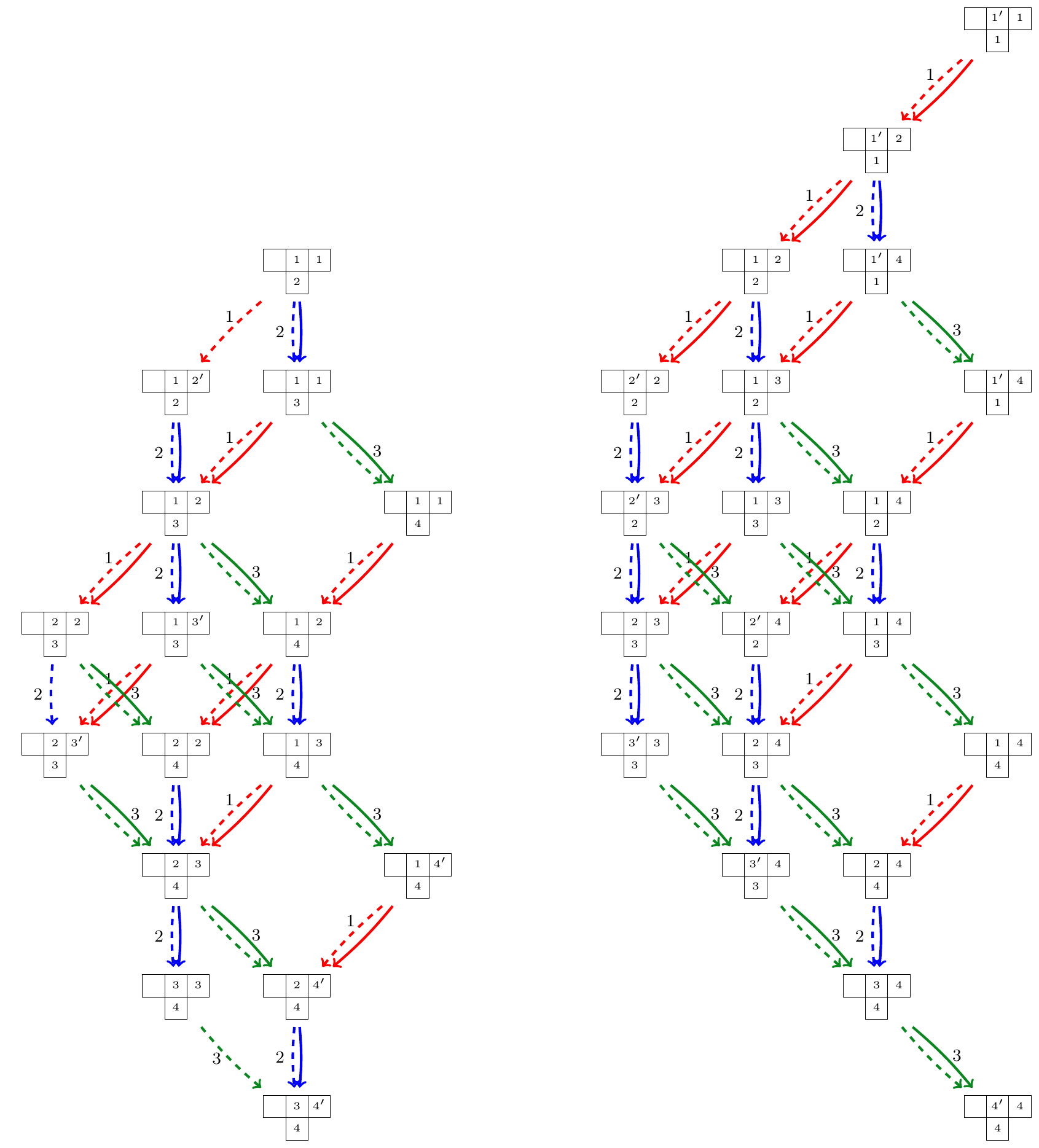}
\end{center}
\caption{On the left, the shifted tableau crystal graph $\mathcal{B}(\nu,4)$, for $\nu=(2,1)$. On the right, the shifted tableau crystal graph $\mathcal{B}(\lambda/\mu,4)$, for $\lambda=(3,1)$ and $\mu = (1)$. The operators $F_1,F_1'$ are in red, the $F_2,F_2'$ in blue, and $F_3,F_3'$ in green. Note that $\mathcal{B} (\lambda/\mu,4)$ has two connected components, one of them being isomorphic to $\mathcal{B}(\nu,4)$.}
\label{fig:crystal}
\end{figure}

The set $\mathsf{ShST}(\lambda/\mu,n)$, together with primed and unprimed operators, partial and total length functions, and weight function, is called a \emph{shifted tableau crystal} and denoted by $\mathcal{B}(\lambda/\mu,n)$. It may be regarded as a directed acyclic graph with weighted vertices, and $i$-coloured labelled double edges, the solid ones being labelled with $i$ ($x \overset{i}{\rightarrow} y$ if $F_i(x)=y$) and the dashed ones with $i'$ ($x\overset{i'}{\dashrightarrow} y$ if $F'_i(x)=y$), for $i\in I$. Examples are shown in Figures \ref{fig:crystal}, and in Figures \ref{fig:crystal521} and \ref{fig:crystal531} of Appendix \ref{appendix}. The connected components of $\mathcal{B}(\lambda/\mu,n)$ are the connected components of the underlying undirected and non-labelled graph. We also remark that the set $\mathsf{ShST}(\lambda/\mu,n)$ together with only the primed (or the unprimed) operators and with the same auxiliary functions, $\mathsf{wt}$, and total length functions $\varphi_i$, $\varepsilon_i$, satisfies the axioms of a Kashiwara crystal in type $A$ \cite[Proposition 6.9]{GLP17}.

\begin{prop}[\cite{GLP17}, Corollary 6.5]\label{isomhigh}
Each connected component of $\mathcal{B}(\lambda/\mu,n)$ has a unique highest weight element $T^{\mathsf{high}}$, which is a LRS tableau, and is isomorphic, as a weighted edge-labelled graph, to the shifted tableau crystal $\mathcal{B}(\nu,n)$, where $\nu = \mathsf{wt}(T^{\mathsf{high}})$.  Then, every $T$ in a connected component of $ \mathsf{ShST}(\lambda/\mu,n)$ may be obtained from every other by a sequence of primed and unprimed lowering and raising operators.
\end{prop}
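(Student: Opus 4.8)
The plan is to prove Proposition \ref{isomhigh} by reducing it to the already-established structure of the straight-shaped crystals together with the coplacity of all the operators. I want to show that each connected component of $\mathcal{B}(\lambda/\mu,n)$ contains a unique highest weight element, that this element is an LRS tableau, and that the whole component is isomorphic as an edge-labelled weighted graph to a straight-shaped crystal $\mathcal{B}(\nu,n)$ with $\nu$ the weight of that highest weight element.

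First I would fix a connected component $\mathcal{C}$ of $\mathcal{B}(\lambda/\mu,n)$ and recall that, by definition, every vertex of $\mathcal{C}$ is obtained from every other by a sequence of the operators $E_i,E_i',F_i,F_i'$; this is just connectedness of the underlying undirected graph. The key structural fact I would invoke is that all these operators are \emph{coplactic} (they commute with shifted \textit{jeu de taquin}), so rectification $\mathsf{rect}$ intertwines them: $\mathsf{rect}\circ A = A \circ \mathsf{rect}$ for $A\in\{E_i,E_i',F_i,F_i'\}$ whenever $A$ is defined. Consequently $\mathsf{rect}$ maps the whole component $\mathcal{C}$ into a single connected component of a straight-shaped crystal $\mathcal{B}(\nu,n)$, and since rectification preserves weight it is a weight-preserving graph morphism that sends $i$-edges to $i$-edges (both solid and dashed). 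The map $\mathsf{rect}$ is a bijection on each shifted Knuth class by Theorem \ref{jdtknuth}, and since the vertices of $\mathcal{C}$ that rectify to a given straight-shaped tableau are distinguished by their common dual equivalence class, I would argue that $\mathsf{rect}$ restricted to $\mathcal{C}$ is injective: any two tableaux in $\mathcal{C}$ are dual equivalent (they differ by raising/lowering operators, which preserve dual class), so they lie in distinct Knuth classes unless equal, forcing $\mathsf{rect}$ to be a bijection from $\mathcal{C}$ onto $\mathcal{B}(\nu,n)$.

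Next I would pin down the highest weight element. A vertex $T$ of $\mathcal{C}$ is highest weight iff its reading word is ballot, which by the characterization recalled just before the proposition is equivalent to $E_i(T)=E_i'(T)=\emptyset$ for all $i\in I$. Under the graph isomorphism $\mathsf{rect}$, a highest weight element of $\mathcal{C}$ maps to a highest weight element of $\mathcal{B}(\nu,n)$; but by Proposition \ref{highuni} the straight-shaped crystal $\mathcal{B}(\nu,n)$ has the single highest weight element $Y_\nu$. Pulling this back through the bijection $\mathsf{rect}$ gives a \emph{unique} highest weight element $T^{\mathsf{high}}$ in $\mathcal{C}$, namely the preimage of $Y_\nu$; since $\mathsf{rect}(T^{\mathsf{high}})=Y_\nu$, the tableau $T^{\mathsf{high}}$ is by definition an LRS tableau, and $\mathsf{wt}(T^{\mathsf{high}})=\mathsf{wt}(Y_\nu)=\nu$ because rectification preserves weight. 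This simultaneously establishes existence, uniqueness, and the LRS property, and identifies $\nu$ as the weight of the highest weight element.

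Finally I would assemble the isomorphism claim: having shown $\mathsf{rect}\colon\mathcal{C}\to\mathcal{B}(\nu,n)$ is a weight-preserving bijection that commutes with all $E_i,E_i',F_i,F_i'$, it is precisely an isomorphism of weighted edge-labelled graphs, where an $i$-solid edge $x\xrightarrow{i}y$ corresponds to $F_i$ and an $i$-dashed edge to $F_i'$. The last assertion, that every $T$ in $\mathcal{C}$ is reachable from every other by a sequence of primed and unprimed lowering and raising operators, then follows by transporting the corresponding statement for $\mathcal{B}(\nu,n)$ (the final sentence of Proposition \ref{highuni}) across the isomorphism. I expect the main obstacle to be the injectivity of $\mathsf{rect}$ on $\mathcal{C}$: one must carefully combine the fact that the operators preserve the dual equivalence class (so all of $\mathcal{C}$ lies in one dual class) with Theorem \ref{jdtknuth} and the uniqueness of the tableau sharing a given shape, Knuth class, and dual class, to conclude that distinct vertices of $\mathcal{C}$ cannot rectify to the same straight-shaped tableau.
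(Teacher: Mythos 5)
This proposition is quoted by the paper from \cite{GLP17} (Corollary 6.5) and is not proved internally, so there is no in-paper proof to compare against; your reconstruction is correct and follows essentially the route of the original source. The skeleton --- coplacity of $E_i,E_i',F_i,F_i'$ makes $\mathsf{rect}$ an intertwiner; all vertices of a component lie in one shifted dual equivalence class, so Haiman's uniqueness forces injectivity; connectedness of $\mathcal{B}(\nu,n)$ (the second sentence of Proposition \ref{highuni}) forces surjectivity; and the unique highest weight element, its LRS property, and the identification $\nu=\mathsf{wt}(T^{\mathsf{high}})$ are then pulled back from $Y_\nu$ --- is exactly right. Two points carry the real weight and should be made explicit. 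First, coplacity must be used in the strong sense that $F_i(T)=\emptyset$ if and only if $F_i(\mathsf{rect}(T))=\emptyset$ (and likewise for the other operators); this is what the cited results of \cite{GLP17} actually establish, and without it $\mathsf{rect}$ would only be a graph morphism on the defined edges, not an isomorphism of edge-labelled graphs, and the pullback of ``highest weight'' would not be justified. Second, your injectivity step rests on Haiman's theorem that a shifted Knuth class and a shifted dual equivalence class intersect in at most one tableau; the paper only records the special instance of this (uniqueness of the reversal), so the general form needs to be cited. With those two citations in place your argument is complete.
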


\begin{prop}[\cite{GLP17}, Corollary 6.6]\label{compdual}
Each connected component of $\mathcal{B}(\lambda/\mu,n)$ forms a shifted dual equivalence class.
\end{prop}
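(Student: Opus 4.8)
The plan is to show that each connected component of $\mathcal{B}(\lambda/\mu,n)$ coincides exactly with one shifted dual equivalence class, by proving the two inclusions separately. First I would observe that if two tableaux $T$ and $T'$ lie in the same connected component, then by Proposition \ref{isomhigh} one can be obtained from the other by a sequence of primed and unprimed lowering and raising operators $E_i', E_i, F_i', F_i$. Since each of these operators produces a tableau that is shifted dual equivalent to its input (the primed operators because they leave the standardization unchanged, as noted after Definition \ref{def:dualstd}, and the unprimed operators by \cite[Corollary 5.3]{GLP17}), and since shifted dual equivalence is an equivalence relation, transitivity gives $T \sim_{\mathsf{dual}} T'$. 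Thus every connected component is contained in a single dual equivalence class.

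For the reverse inclusion, I would argue that a full dual equivalence class cannot be split among several components. The cleanest route is to use rectification: by Theorem \ref{jdtknuth} and the fact that each dual class contains a unique LRS tableau (i.e.\ a unique highest weight element, via the ballot characterization in terms of $E_i, E_i'$), I would show that any $T$ in a given dual class rectifies to a straight-shaped tableau whose dual class is determined by its shape (Proposition \ref{haimdual}). Since the operators $E_i', E_i, F_i', F_i$ are coplactic, they commute with rectification, so the component of $T$ maps isomorphically onto the component of $\mathsf{rect}(T)$ in $\mathcal{B}(\nu,n)$; by Proposition \ref{highuni} the straight-shaped crystal $\mathcal{B}(\nu,n)$ is connected, containing a unique highest weight $Y_\nu$. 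Pulling this back, any two tableaux in the same dual class share the same rectification target and hence lie in the same connected component of $\mathcal{B}(\lambda/\mu,n)$.

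I expect the main obstacle to be making the second inclusion fully rigorous, namely controlling how dual equivalence interacts with the connected-component structure on the skew shape. The subtlety is that one must know not merely that dual-equivalent tableaux rectify compatibly, but that the connected component is precisely the preimage of a single straight-shaped component under rectification; this relies on the coplacticity of all four operators together with the uniqueness of the LRS representative in each dual class. Once these facts are assembled, the two inclusions combine to give that connected components and dual equivalence classes coincide, which is exactly the claimed statement.
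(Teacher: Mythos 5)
The paper does not actually prove this proposition --- it is imported verbatim from \cite{GLP17}, Corollary 6.6 --- so your proposal has to stand on its own. Your first inclusion does: by Proposition \ref{isomhigh} any two vertices of a connected component are linked by a chain of $E_i, E'_i, F_i, F'_i$, each of which preserves shifted dual equivalence (the primed ones because standardization is unchanged, the unprimed ones by \cite[Corollary 5.3]{GLP17}), and transitivity puts the whole component inside one dual class.

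The reverse inclusion has a genuine gap, precisely where you suspected trouble. Two dual-equivalent tableaux do \emph{not} in general have the same rectification; dual equivalence only forces the rectifications to have the same straight shape $\nu$ (Proposition \ref{haimdual}). Worse, even equality of rectifications would not let you ``pull back'' to a common component: rectification is not injective across components, since $f_{\mu,\nu}^{\lambda}$ distinct connected components of $\mathcal{B}(\lambda/\mu,n)$ all map onto the single connected crystal $\mathcal{B}(\nu,n)$. So the assertion that ``the connected component is precisely the preimage of a single straight-shaped component under rectification'' is false whenever $f_{\mu,\nu}^{\lambda}>1$, and your argument cannot separate two components with the same highest weight $\nu$ lying in different dual classes, nor merge two pieces of one dual class. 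The correct repair is the uniqueness argument you only gesture at parenthetically: by your first inclusion, every component is contained in a single dual class; every component has a unique highest weight element, which is LRS (Proposition \ref{isomhigh}); and every dual class contains a unique LRS tableau (Haiman's result, invoked in Section \ref{subsectevacrev} --- a tableau is determined by its Knuth class together with its dual class, so within a fixed dual class at most one member rectifies to $Y_{\nu}$). Hence two components contained in the same dual class would share their highest weight element and therefore coincide, so each dual class consists of exactly one component.
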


Therefore, by decomposing $\mathcal{B}(\lambda/\mu,n)$ into connected components, we have the crystal isomorphism 
$$\mathcal{B}(\lambda/\mu,n)\simeq\bigsqcup_\nu \mathcal{B}(\nu,n)^{f_{\mu,\nu}^\lambda}$$ 
where $f_{\mu,\nu}^{\lambda}$ is the shifted Littlewood-Richardson coefficient, which yields the well known decomposition of skew Schur $Q$-function $Q_{\lambda/\mu} = \sum\limits_{\nu} f_{\mu,\nu}^{\lambda} Q_{\nu}$ (for details, see \cite[Section 7]{GLP17}).

\subsection{Decomposition into strings}

Erasing all arrows of colours $j, j'\notin \{i,i'\}$ in $\mathcal{B}(\lambda/\mu,n)$, for each $i \in I$, one obtains the $\{i',i\}$-connected components of $\mathcal{B}(\lambda/\mu,n)$. This may also be done by defining an equivalence relation on $\mathcal{B}(\lambda/\mu,n)$ as a set, in which $x \sim y$ if $x$ and $y$  are related  by a sequence of $F_i$,  $F'_i$, $E_i$ or $E'_i$. The equivalence classes are called the $i$-\emph{strings}, which are the underlying subsets of the $\{i',i\}$-connected components.

Hence, $\mathcal{B}(\lambda/\mu,n)$ may be partitioned, as a set, into $i$-\emph{strings} (see Figure \ref{figcrystalpart}). The $i$-strings, as crystal graphs, are the $\{i',i\}$-connected components of $\mathcal{B}(\lambda/\mu,n)$, and have  two possible arrangements \cite[Section 3.1]{GL19} \cite[Section 8]{GLP17}, as shown in Figure \ref{figstringsepcol}. A string consisting of two $i$-labelled chains of equal lenght, connected by $i'$-labelled edges is called a \emph{separated $i$-string}. The smallest separated string is formed by two vertices connected by a $i'$-labelled edge. A string formed by a double chain of both $i$- and $i'$-labelled strings is called a \emph{collapsed $i$-string}. A single vertex (without edges) is the smallest collapsed string.

\begin{figure}[H]
\begin{center}
\includegraphics[scale=0.9]{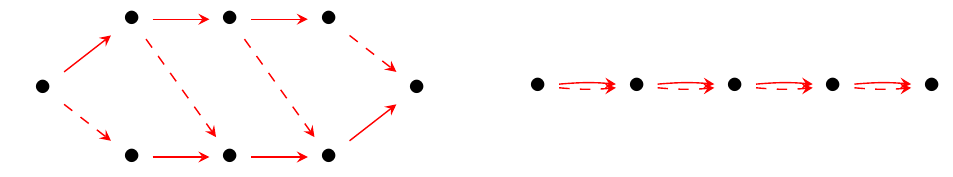}
\end{center}
\caption{A separated $i$-string (left) and a collapsed $i$-string (right).}
\label{figstringsepcol}
\end{figure}

\begin{figure}[h]
\begin{center}
\includegraphics[scale=0.6]{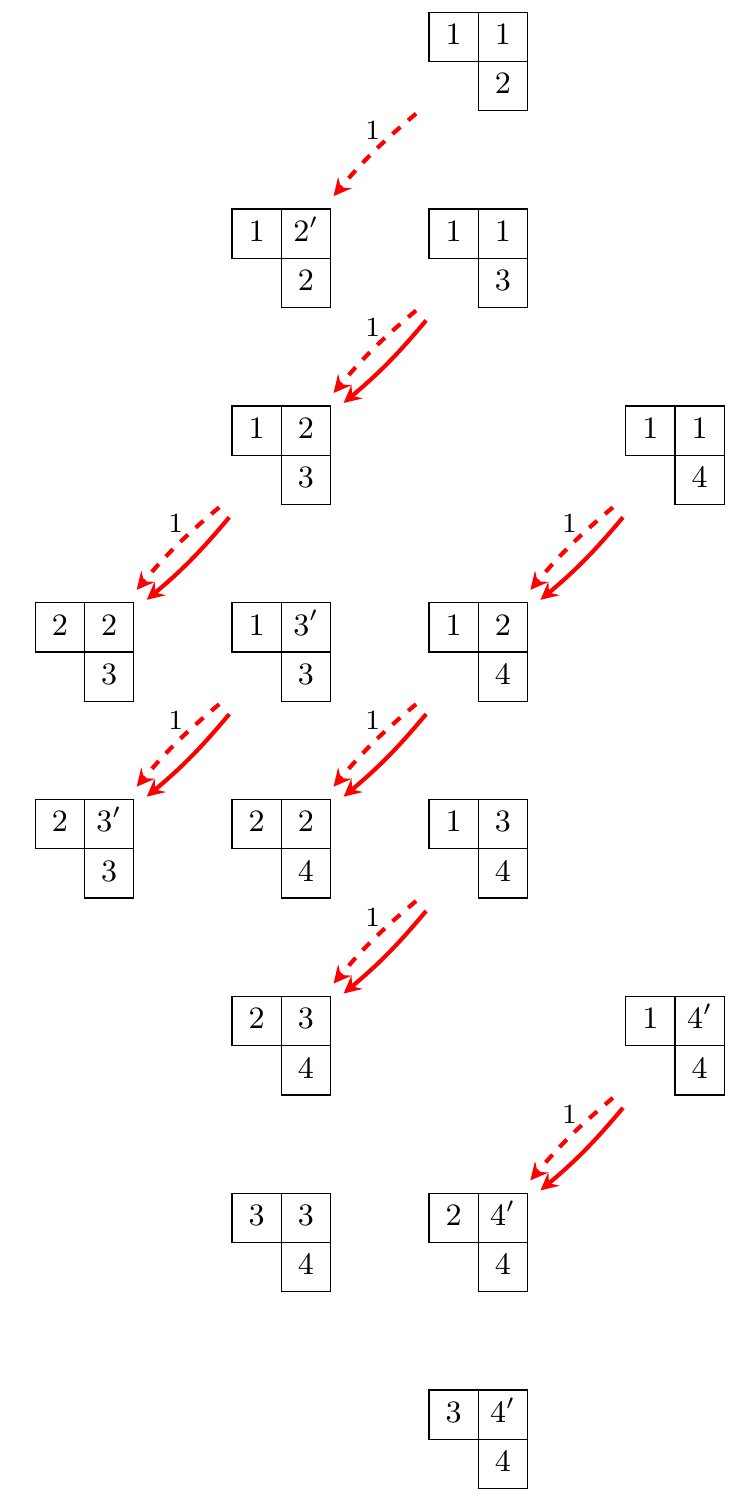}\qquad
\includegraphics[scale=0.6]{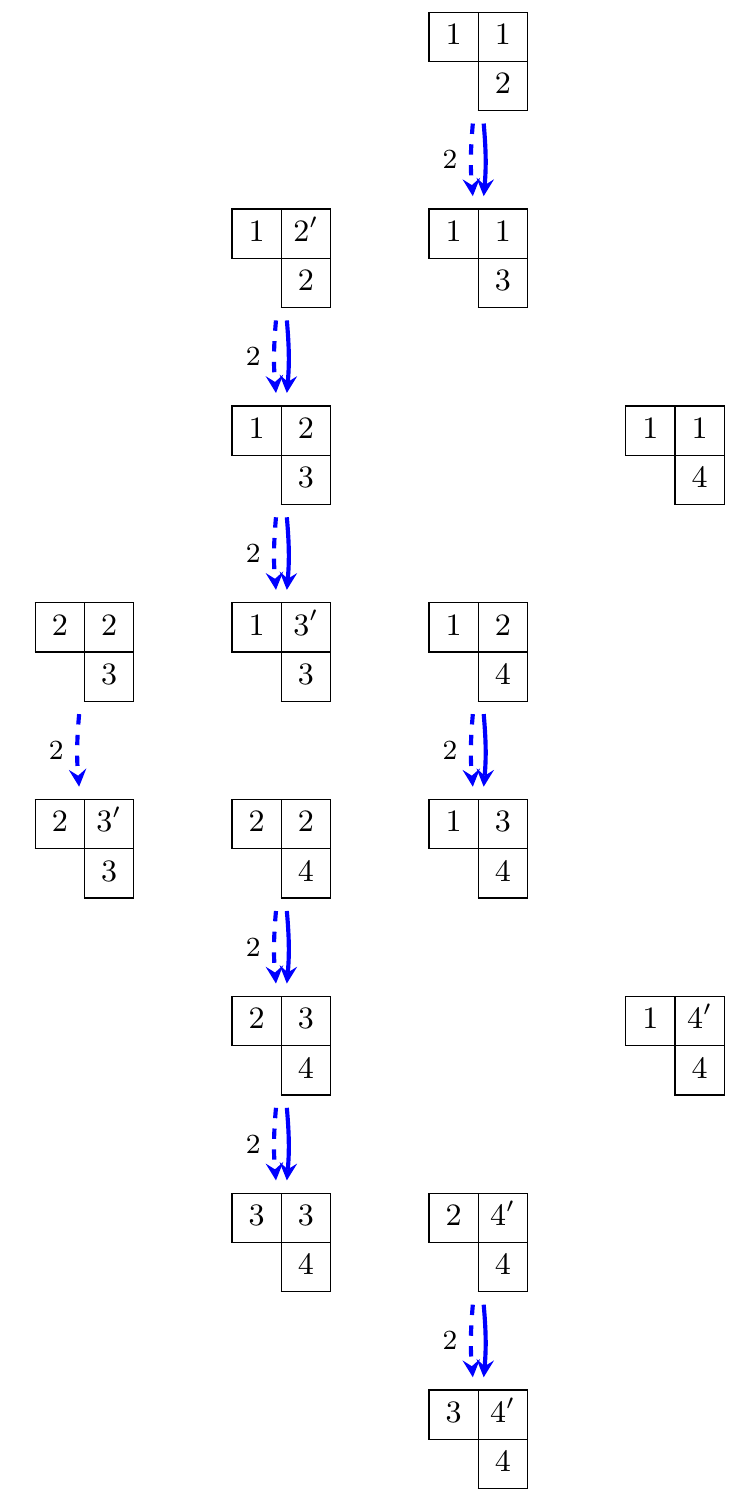}\qquad
\includegraphics[scale=0.6]{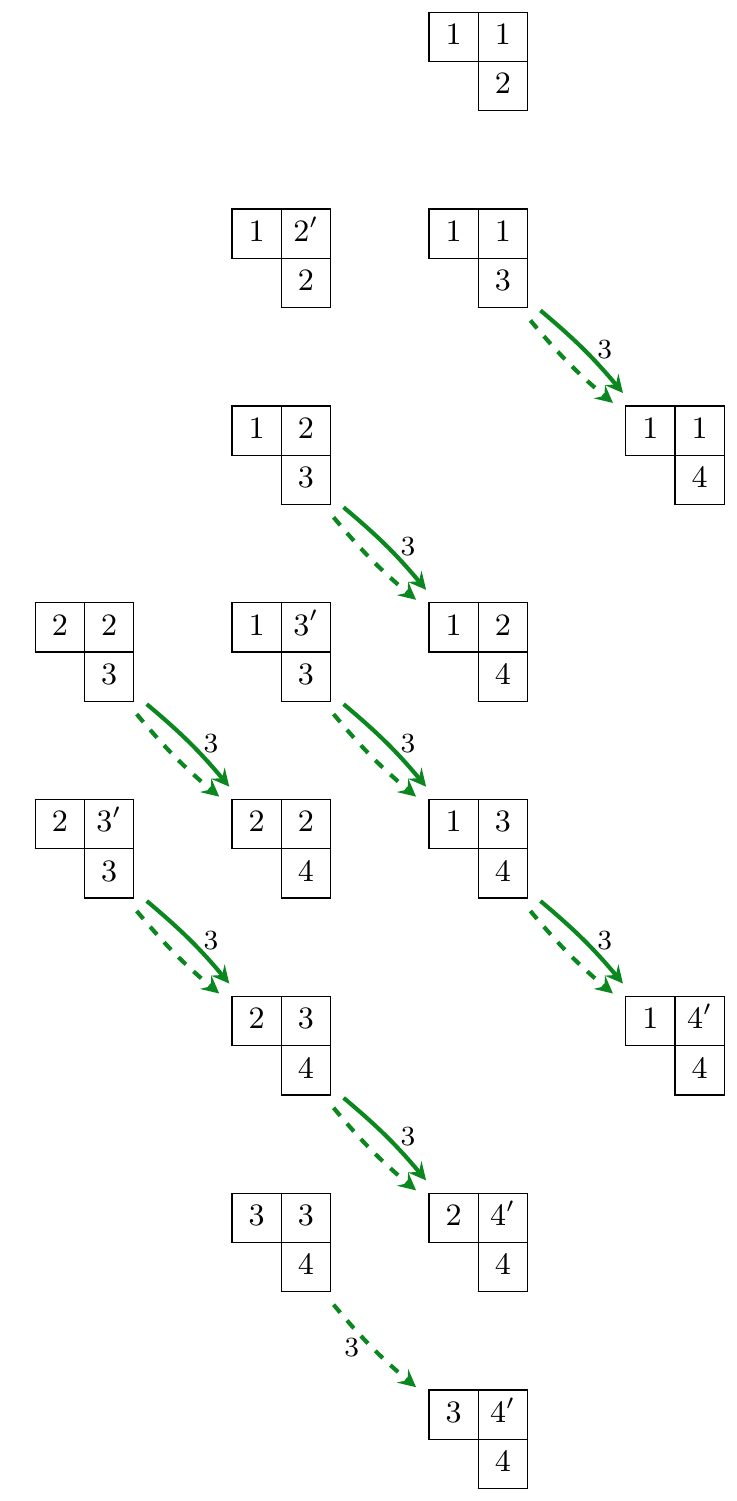}
\end{center}
\caption{The crystal graph $\mathcal{B}(\nu,4)$, for $\nu=(2,1)$, partitioned into 1-strings (left), 2-strings (middle) and 3-strings (right).}
\label{figcrystalpart}
\end{figure}

The following propositions are intended to detail the possible arrangements for an $i$-string. This corresponds to the details of the axiom (B1) in \cite{GL19}. The next result provides a condition for a $i$-string to be separated or collapsed in terms of its highest weight element. The highest weight element of an $i$-string is the unique tableau $T$ in that string such that $E_i (T) = E_i' (T) = \emptyset$, and the lowest weight element is defined similarly.
 
\begin{prop}\label{axb1}
Let $T \in \mathcal{B}(\lambda/\mu,n)$ and $i \in I$. Suppose that $T$ is the highest weight element of its $i$-string. Then, $F_i(T) = F_i'(T)$ if and only if $\mathsf{wt}(T)_{i+1} = 0$. In this case, the $i$-string containing $T$ is collapsed.
\end{prop}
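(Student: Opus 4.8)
The plan is to analyze the highest weight element $T$ of its $i$-string directly through its reading word, restricted to the alphabet $\{i,i+1\}'$. Since $T$ is highest weight in its $i$-string, we have $E_i(T)=E_i'(T)=\emptyset$, so by the characterization recalled just before Proposition \ref{highuni}, the subword $w_i$ of $w(T)$ in the letters $\{\mathbf{i},\mathbf{i+1}\}$ is ballot. By Proposition \ref{cor4.5}, the $i$-lattice walk of $w_i$ ends on the $x$-axis, that is, at a point $(x,0)$ where $x=\varphi_i(T)$ is the total length function and where $y=\varepsilon_i(T)=0$ as expected for a highest weight vertex.

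First I would reduce to the two-letter alphabet $\{1,2\}'$, as permitted by the conventions established before Proposition \ref{computEF}, writing $T$ for the relevant restriction and noting $\mathsf{wt}(T)_{i+1}=\mathsf{wt}(T)_2$ counts the total number of $2$ and $2'$ in $w_i$. The key computation is to compare $F_1(T)$, obtained by locating the final $F_1$-critical substring and applying the corresponding transformation from Definition-table of $F_1$-critical substrings, with $F_1'(T)$, obtained via Proposition \ref{computEF} by changing the last $1$ to $2'$ in a suitable representative. I expect that when $\mathsf{wt}(T)_2=0$ the word $w_i$ contains only $1$'s and $1'$'s (in canonical form, a string of $1$'s), so that both operators act in the same place: the unique applicable $F_1$-critical substring is of type 3F, $u=1 \to 2$ at $y=0$, and this changes the same last entry that $F_1'$ would change to $2'$, the two results being identified in canonical form since they create the first $2$-entry. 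This is exactly where $F_1(T)=F_1'(T)$.

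For the converse I would argue contrapositively: if $\mathsf{wt}(T)_2 \geq 1$ then $w_i$ genuinely contains a $2$ or $2'$, and the ballotness forces the lattice walk to return to the axis, so the final $F_1$-critical substring is of a type (1F or 2F) that modifies entries involving the existing $2$'s, producing a tableau distinct from the one obtained by $F_1'$ merely repriming a trailing $1$. Here I would use Lemma \ref{standard} together with the distinct weight changes — $\mathsf{wt}(F_1(T))=\mathsf{wt}(T)-\alpha_1$ versus $\mathsf{wt}(F_1'(T))=\mathsf{wt}(T)-\alpha_1$ — to pin down that although the weights agree, the actual tableaux differ, which can be checked by comparing standardizations or the positions of the altered boxes.

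The main obstacle will be the careful bookkeeping of the critical-substring analysis: making precise, in the case $\mathsf{wt}(T)_2=0$, that no type-1F, 2F, or 5F substring can occur and that the type-3F transformation genuinely coincides with the primed operator after canonicalization, rather than merely agreeing on weight. Once $F_i(T)=F_i'(T)$ is established, the final assertion that the $i$-string is collapsed follows from the structural dichotomy recalled in the discussion of Figure \ref{figstringsepcol}: a separated $i$-string has its highest weight element sending the primed and unprimed lowering operators to \emph{distinct} images (the two parallel $i$-chains begin at distinct vertices joined by an $i'$-edge), so the coincidence $F_i(T)=F_i'(T)$ is incompatible with separation and forces the collapsed arrangement.
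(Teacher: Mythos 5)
Your forward direction is essentially the paper's argument: when $\mathsf{wt}(T)_{i+1}=0$ the restricted word consists only of $\mathbf{1}$'s, $F_1'$ turns the last $1$ into a $2'$ that canonicalizes to $2$ because it is the first occurrence of $\mathbf{2}$, and $F_1$ effects the same change via its final critical substring. One small inaccuracy there: the final critical substring need not be of type 3F --- it can be of type 2F (the pattern $11'\mapsto 2'1$, e.g.\ for the word $11'$), and the paper treats both types; this is the bookkeeping you flagged and it does not derail the argument.

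The converse, however, has a genuine gap. Your proposed mechanism --- that when $\mathsf{wt}(T)_{i+1}\geq 1$ the final $F_1$-critical substring ``is of a type (1F or 2F) that modifies entries involving the existing $2$'s'' --- is false. For $T=Y_{(3,1)}$ with word $2111$ we have $\mathsf{wt}(T)_2=1$, yet the final critical substring is the trailing $1$, of type 3F at $y=0$, so $F_1$ produces $2112$ while $F_1'$ produces $2112'$; the two differ only because the new $2'$ is no longer the first $\mathbf{2}$ and hence does not canonicalize to $2$. Your fallback --- Lemma \ref{standard} together with ``distinct weight changes'' --- cannot work as stated, since you yourself record that both images have weight $\mathsf{wt}(T)-\alpha_1$; what Lemma \ref{standard} actually buys is a reduction of the converse to showing that $F_1$ alters the standardization whenever $\mathsf{wt}(T)_2>0$, and that is precisely the content still to be proved. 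The paper's proof of this direction is a two-case analysis (according to whether a $\mathbf{2}$ occurs after the last $1$) that crucially uses the highest-weight hypothesis $E_1(T)=E_1'(T)=\emptyset$ to exclude the configurations in which $F_1$ and $F_1'$ could still coincide; your sketch invokes ballotness only to place the endpoint of the lattice walk and never brings that hypothesis to bear on the structure of the word. The final step --- that $F_i(T)=F_i'(T)$ at the highest weight element forces the string to be collapsed --- is fine, following from the separated/collapsed dichotomy exactly as you say.
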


\begin{proof}
We consider the alphabet $\{1,2\}'$, to simplify notation. Suppose $\mathsf{wt}(T)_{2}=0$. If $\mathsf{wt}(T)_1=0$, then $F_1(T)=F_1'(T)=\emptyset$ and the 1-string is a trivial collapsed string. Thus, we may assume, without loss of generality, that $\mathsf{wt}(T)_1 > 0$. Since $\mathsf{wt}(T)_{2} = 0$, there are no occurrences of $2'$ to the right of the last $1$. Therefore, $F'_1(T)$ is defined and obtained by changing the last $1$ into $2'$. Moreover, since $\mathsf{wt}(F'_1(T))_2=1$, this $2'$ is identified with $2$ in the canonical form.
We claim that $F_1 (T)$ is defined, i.e., there exists a final $F_1$-critical substring that is not of type $5F$. Since $\mathsf{wt}(T)_2=0$, the location of a possible substring is $y=0$, excluding the type 5F, and since $\mathsf{wt}(T)_1>1$, there is necessarily a substring of type 2F (with $x=0$) or 3F (4F would be the case where either $w(T) = 1'$, which is equal to $1$ in canonical form, or $w(T) = 1' (\mathbf{1})^*$, and again the first $1'$ would be identified with $1$, and the substring would be of type 2F and 3F).
If $w(T)$ has a final $F_1$-critical substring of type 2F, then $F_1$ changes the substring $11'$ into $2'1$, which is identified with $21$ since this is the only occurrence of $2$. If it is of type 3F, then $F_1$ changes the substring $1$ into $2$. In both cases, $F_1$ changes the last $1$ into $2$, coinciding with $F_1'$.

Now suppose that $F_1(T) = F'_1(T)$. If they are both undefined, then $\mathsf{wt}(T)_1 = 0$, which is a trivial case. Thus, we may assume that $F_1(T) = F'_1(T) \neq \emptyset$. Since $F'_1(T) \neq \emptyset$, we have $\mathsf{wt}(T)_1 > 0$ and there is no occurrences of $2'$ to the right of the last $1$ in $T$. Suppose that $\mathsf{wt}(T)_2 > 0$. We have the following cases:

\begin{description}
\item[Case 1] We assume there are no occurrences of $2$ after the mentioned $1$. Therefore, the occurrences of $\mathbf{2}$ to the left of $1$, since we are assuming that $\mathsf{wt}(T)_2 >0$. Moreover, $F_1(T)$ must coincide with $F'_1(T)$, so we either have:
	\begin{itemize}
	\item $F_1$ changes $1$ into $2$, which implies that the $2'$ resulting from $F'_1$ must be identified with $2$ in canonical form. For this to happen, this $2$ must be the only occurrence of $\mathbf{2}$ in $F_1(T)$. Hence, $\mathsf{wt}(F_{1}(T))_2 =1$ and necessarily $\mathsf{wt}(T)_2 = 0$, contradicting the hypothesis.
	\item $F_1$ changes $1$ into $2'$. For this to happen, $w(T)$ must have a final $F_1$-critical substring of type 1F or 2F. If it is 1F, there would be some $2'$ to the right of the last $1$ and $F'_1$ would not be defined. If it is 2F, and since there are no occurrences of $\mathbf{2}$ to the right of the last $1$, by assumption, it must be the case $11' \longmapsto 2' 1$. So, since $F_1(T) = F'_1(T)$, we have that the canonical form of $2'1'$ is  $2'1$. For $1$ and $1'$ to be identified, there must be no occurrences of $\mathbf{1}$ to the right of the last $1$. By hypothesis, $E_1(T) = E'_1(T) = \emptyset$. Clearly $E'_1$ is undefined since the last $2'$ is not right to the last $1$. Hence, it must be the case where $T$ has no final $E_1$-critical substring or has some of type 5E. If it is 5E, then it is either $2'$ at $x\geq 1, y = 1$. Since $y = 1$, there must be some $\mathbf{2}$ before the $2'$ of the substring, and since $x\geq 1$, there must be some $\mathbf{1}$ after it, which contradicts the non-existence of $\mathbf{1}$ to the right of the $1$ to be changed. Therefore, it must be the case where there is no final $E_1$-critical substring. Since we are assuming that $\mathsf{wt}(T)_2>0$, some $\mathbf{2}$ must appear before the last $1$, yielding at least some final $E_1$-critical substring of type 3E or 4E, which is a contradiction.
	\end{itemize}

\item[Case 2] Assume there are some $2$ after the referred $1$. Then, we must have the substring $12$ (the $1$ appearing is the one to be changed) at either one of these locations:
	\begin{itemize}
	\item At $x \geq 0$ and $y=0$. In this case $E_1$ would be defined, being type $4E$.
	\item At $x=0$ and $y > 0$. But then, $y > 0$ implies that there are some $\mathbf{2}$ before this string, placing it at location $y=0$ and yielding a 3E or 4E type.
	\item At $y=1$. Then, the $2$ is located at $y=0$ yielding a $4E$ type.
	\item At $y > 1$. In this case necessarily $x >1$, otherwise this would be a final $F_1$-critical substring of type 5F and $F_1(T) = \emptyset$. But then, the location obtained is not a valid one for this string to be the final critical substring. Therefore, the $1$ to be changed by $F_1$ is not the same as the one changed by $F'_1$, which contradicts their equality.
	\end{itemize}
	
\end{description}
\end{proof}

The next lemmas concern the total length functions, which are the total distances from a vertex to the highest and
lowest weight vertices of its $i$-string.

\begin{lema}\label{lemalenfun}
Let $T \in \mathcal{B}(\lambda/\mu,n)$ and let $i \in I$. Then,

$$\varepsilon_i(T) =
\begin{cases*}
\widehat{\varepsilon_i}(T) =\varepsilon'_i(T) & if $T$ is in a collapsed $i$-string\\
\widehat{\varepsilon_i}(T) + \varepsilon_i' (T) & if $T$ is in a separated $i$-string.
\end{cases*}$$
The result is also valid for $\varphi_i$ with the adequate changes.
\end{lema}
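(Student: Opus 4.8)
The plan is to relate the total length function $\varepsilon_i$, defined as the $y$-coordinate of the endpoint of the $i$-th lattice walk, to the combinatorial distances $\widehat{\varepsilon_i}$ and $\varepsilon_i'$ to the top of the $i$-string, splitting into the two possible string arrangements described in Figure \ref{figstringsepcol}. Throughout I would work on the alphabet $\{1,2\}'$ to lighten notation, as is done in the proof of Proposition \ref{axb1}, since the general case for $\{i,i+1\}'$ is identical. First I would recall that $\varepsilon_i'(T)$ counts the maximal number of times $E_i'$ can be applied (the length of the dashed $i'$-chain above $T$) and $\widehat{\varepsilon_i}(T)$ counts the maximal number of times $E_i$ can be applied (the length of the solid $i$-chain above $T$), while $\varepsilon_i(T)$ is the total distance, read off the lattice walk, from $T$ up to the highest weight vertex of its $\{i',i\}$-component.

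The key step is to read off the two distances from the shape of the string. In a \emph{separated} $i$-string, the component consists of two parallel $i$-labelled chains joined by $i'$-labelled edges, so moving from $T$ to the highest weight element requires first applying $E_i$ repeatedly to climb the solid chain and then applying $E_i'$ to cross to the unprimed top (or the symmetric order); since the primed and unprimed operators commute whenever defined (\cite[Proposition 5.36]{GLP17}), the total number of upward steps is $\widehat{\varepsilon_i}(T)+\varepsilon_i'(T)$, and this total equals the $y$-coordinate $\varepsilon_i(T)$. In a \emph{collapsed} $i$-string, by contrast, the double chain has coincident $i$- and $i'$-edges ($F_i=F_i'$ at each step, as characterized in Proposition \ref{axb1}), so a single upward move is simultaneously an $E_i$-step and an $E_i'$-step; hence the three quantities $\widehat{\varepsilon_i}(T)$, $\varepsilon_i'(T)$ and $\varepsilon_i(T)$ all count the same chain and coincide. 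The analogous statement for $\varphi_i$, $\widehat{\varphi_i}$, $\varphi_i'$ follows by reading the distances downward to the lowest weight element, using the $x$-coordinate of the lattice walk endpoint in place of the $y$-coordinate, or alternatively by applying the complementation $\mathsf{c}_n$ which exchanges raising and lowering operators and swaps the two coordinates.

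The main obstacle I anticipate is justifying rigorously that the lattice-walk endpoint coordinate $\varepsilon_i(T)$ equals the graph-theoretic distance to the top of the string in each case, rather than merely asserting it from the pictures in Figure \ref{figstringsepcol}. This requires knowing precisely how a single application of $E_i$ or $E_i'$ changes the lattice walk endpoint: in the separated case each $E_i$ lowers $x$ while $E_i'$ raises $y$ in a way that, combined, accounts for the full $y$-displacement, whereas in the collapsed case the identification $F_i=F_i'$ forces each step to move the endpoint by one unit counted identically by both operators. I would make this precise by invoking Lemma \ref{lemaxindef} (which pins down when $F_i$ is undefined via the $x$-coordinate) together with the explicit critical-substring tables for $E_1$ and $F_1$ to track the endpoint coordinates step by step, and by using that the highest weight element is the unique vertex with $E_i=E_i'=\emptyset$, at which the walk has returned to the origin. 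Once these two coordinate computations are in place, the case split yields the stated formula directly.
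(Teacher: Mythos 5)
Your overall architecture matches the paper's: split on whether the $i$-string is collapsed or separated, count the number of raising operators needed to reach the unique highest weight element of the string, and convert that count into the $y$-coordinate of the lattice-walk endpoint using the fact that the highest weight element (being ballot for the alphabet $\{i,i+1\}'$, Proposition \ref{cor4.5}) has $y$-coordinate $0$. The paper does exactly this, also using the commutativity of $E_i$ and $E_i'$ to pin down $\widehat{\varepsilon_i}(T)$ in the separated case.

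However, there is a concrete error in the one step you yourself single out as the crux. You assert that ``each $E_i$ lowers $x$ while $E_i'$ raises $y$,'' so that the two effects ``combined'' account for the $y$-displacement. This is false, and the mechanism you describe would not produce $\varepsilon_i(T)=\widehat{\varepsilon_i}(T)+\varepsilon_i'(T)$: if $E_i$ did not change $y$ and $E_i'$ \emph{increased} $y$, then climbing from $T$ to the highest weight element would not bring the $y$-coordinate down to $0$ at all. The fact actually needed — and the reason the count of operator applications equals the $y$-coordinate — is that \emph{both} $F_i$ and $F_i'$ shift the endpoint of the $i$-th lattice walk by the same vector $(-1,1)$ (equivalently, both $E_i$ and $E_i'$ shift it by $(1,-1)$); this is \cite[Corollary 5.12]{GLP17} and \cite[Proposition 4.9]{GLP17}, which the paper simply cites rather than re-deriving from the critical-substring tables. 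With that fact, writing $T=F_i'^{a}F_i^{b}(T_0^{\mathsf{high}})$ with $a=\varepsilon_i'(T)\in\{0,1\}$ and $b=\widehat{\varepsilon_i}(T)$ immediately gives $\varepsilon_i(T)=a+b$ in the separated case, and the collapsed case follows since there $F_i=F_i'$ forces $\widehat{\varepsilon_i}=\varepsilon_i'$. A second, smaller slip: at the highest weight element the walk has \emph{not} returned to the origin, only to the $x$-axis ($y=0$); its $x$-coordinate is $\varphi_i$ of that element, which is the length of the string and is generally nonzero. Neither error is fatal to the strategy, but as written the separated-case computation does not go through; replacing your displacement claim with the cited $(-1,1)$-shift repairs it and essentially reproduces the paper's proof.
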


\begin{proof}Suppose that $T$ is in a collapsed $i$-string. Then, by Proposition \ref{isomhigh}, that collapsed string has an highest weight element $T_0$, and so $T_0 = E_i^k (T)$, for some $k \geq 0$. Since $T_0$ is an highest weight, then $E_i (T_0) = \emptyset$, hence $E_i^{k+1} (T) = \emptyset$. Consequently, $\widehat{\varepsilon}(T) = k$. On the other hand, $T_0$ is a LRS tableau (for the alphabet $\{i',i,(i+1)',i+1\}$), thus, by Proposition \ref{cor4.5}, the endpoint of the $i$-th lattice walk of its word has the $y$-coordinate equal to $0$. The operator $F_i$ shifts the endpoint of the $i$-th lattice walk by $(-1,1)$ \cite[Corollary 5.12]{GLP17}. Since $T_0 = E_i^k (T)$, then $T = F_i^k (T_0)$, and so the $i$-th lattice walk of $T$ has the $y$-coordinate equal to $k$. Then, $\varepsilon_i(T) = k = \widehat{\varepsilon_i} (T)$.

Now suppose that $T$ is in a separated $i$-string. This $i$-string has a highest weight element $T_0$, which is a LRS tableau for the mentioned alphabet. Thus, the endpoint of the $i$-th lattice walk of its word has the $y$-coordinate equal to zero. Then, we have two cases.

\begin{itemize}
\item Suppose that $T$ is such that $E_i'(T) \neq \emptyset$. Let $T_1 := E_i' (T)$. By definition of $E_i'$, $T_1$ is obtained from $T$ by replacing the last $(i+1)'$ that was right to the last $i$ with $i$. Hence, $E_i' (T_1) = E_i'^2 (T) = \emptyset$ and so

\begin{equation}\label{varepslinha}
\varepsilon_i' (T) = 1.
\end{equation}

Since $T_0$ is the highest weight element, we have that $E_i^k E_i' (T) = T_0$, for some $k \geq 0$. This is equivalent to $F_i' F_i^{k} (T_0) = T$, and since both $F_i$ and $F_i'$ shift the endpoint of the $i$-th lattice walk by $(-1,1)$ \cite[Propostion 4.9]{GLP17}, the $y$-coordinate of the $i$-th lattice walk of the word of $T$ must be equal to $k+1$. Hence,

\begin{equation}\label{vareps}
\varepsilon_i (T) = k+1.
\end{equation}

Since the operators $E_i$ and $E_i'$ commute when defined, we have $T_0 = E_i' E_i^k (T)$, and so $E_i^k (T) = F_i'(T_0) \neq \emptyset$ (recall that the shortest $i$-string is one with a $i'$-labelled edge). Thus, $E_i^{k+1} (T) = E_i F_i' (T) = \emptyset$, and we have

\begin{equation}\label{varepshat}
\widehat{\varepsilon_i}(T) = k.
\end{equation}

By \eqref{varepslinha}, \eqref{vareps} and \eqref{varepshat}, we have
$$\varepsilon_i (T) = \varepsilon_i' (T) + \widehat{\varepsilon_i}(T).$$

\item Suppose now that $T$ is such that $E_i' (T) = \emptyset$. Then, $\varepsilon_i'(T) = 0$. As in the previous case, there exists a highest weight element $T_0$ in this $i$-string, and the endpoint of the $i$-th lattice walk of its words has $y$-coordinate equal to zero. If $T=T_0$, then $\widehat{\varepsilon_i}(T)=0$ and the proof is done. Otherwise, there exists $k > 0$ such that $T_0 = E_i^k (T)$, and so, $F_i^k (T_0) = T$. Consequently, the endpoint of the $i$-th lattice walk of the word of $T$ has its $y$-coordinate equal to $k$. Hence,
\begin{equation}\label{vareps2}
\varepsilon_i(T) = k.
\end{equation}
Moreover, $E_i^{k+1} (T) = E_i (T_0) = \emptyset$, as $T_0$ is an highest weight element. So,
\begin{equation}\label{varepshat2}
\widehat{\varepsilon_i}(T) = k.
\end{equation}

Hence, by \eqref{vareps2} and \eqref{varepshat2}, and since $\varepsilon_i' (T) = 0$, we have $\varepsilon_i (T) = \varepsilon_i' (T) + \widehat{\varepsilon_i}(T)$.

\end{itemize}
\end{proof}

\begin{lema}\label{lemadasstrings}
Let $T \in \mathcal{B}(\lambda/\mu,n)$ and suppose its $i$-string has highest weight element $T_0^{\mathsf{high}}$ and lowest weight element $T_0^{\mathsf{low}}$, and that $T \neq T_0^{\mathsf{high}}, T_0^{\mathsf{low}}$. The following holds:
	\begin{enumerate}
	\item If the $i$-string is separated, then
		\begin{align*}
		T_0^{\mathsf{low}} &= F_i'^a F_i^b (T) =F_i^bF_i'^a(T) = F_i^{b-k} F_i'^a F_i^k (T),\; \text{for some}\; k\geq 0\\
		T_0^{\mathsf{high}} &= E_i'^c E_i^d (T) =  E_i^d E_i'^c(T) = E_i^{d-k} E_i'^c E_i^{k},\; \text{for some}\; k\geq 0
		\end{align*}
	
	with $a=\varphi_i'(T) \in \{0,1\}$, $b=\hat{\varphi}_i(T) \geq 0$, $c=\varepsilon_i'(T) \in \{0,1\}$, and $d=\hat{\varepsilon}_i(T) \geq 0$.
	\item If the $i$-string is collapsed, then
		\begin{align*}
		T_0^{\mathsf{low}} &= F_i^a (T) = F_i'^a (T)\\
		T_0^{\mathsf{high}} &= E_i^b (T) = E_i'^b (T)
		\end{align*}
	with $a = \varphi_i(T)$ and $b = \varepsilon_i(T)$.
	\end{enumerate}
\end{lema}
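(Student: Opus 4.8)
The plan is to prove Lemma \ref{lemadasstrings} as a bookkeeping consequence of the string structure already established, together with the total length function identities of Lemma \ref{lemalenfun} and the commutativity of the primed and unprimed operators for a fixed $i$. Throughout I work in the simplifying alphabet $\{i',i,(i+1)',(i+1)\}$, so that the shape of each $i$-string is exactly one of the two pictures in Figure \ref{figstringsepcol}: a separated string (two parallel $i$-labelled chains of equal length joined at each level by $i'$-labelled edges) or a collapsed string (a single double chain of $i$- and $i'$-labelled edges). The key structural facts I will invoke are that each $i$-string has a unique highest and a unique lowest weight element (Proposition \ref{isomhigh}), that $E_i,E_i',F_i,F_i'$ all shift the endpoint of the $i$-lattice walk by $(-1,1)$ or its reverse, and that $E_i$ commutes with $E_i'$ (hence $F_i$ with $F_i'$) whenever the compositions are defined.

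First I would treat the collapsed case (2), which is the easier one. Here there is a single double chain, so every vertex lies on one maximal path from $T_0^{\mathsf{high}}$ to $T_0^{\mathsf{low}}$, and at each step the operators $F_i$ and $F_i'$ act identically: by Proposition \ref{axb1} the collapsed structure is exactly the situation where $F_i$ and $F_i'$ agree at the highest weight, and this agreement propagates along the chain. Consequently $F_i^a(T)=F_i'^a(T)=T_0^{\mathsf{low}}$ where $a$ is the number of downward steps from $T$, and by Lemma \ref{lemalenfun} this number is $\varphi_i(T)=\widehat{\varphi}_i(T)=\varphi_i'(T)$. The dual statement for $E_i^b=E_i'^b$ and $b=\varepsilon_i(T)$ follows identically, reading the chain upward. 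I would phrase this as a short induction on the distance from $T$ to $T_0^{\mathsf{low}}$ (respectively $T_0^{\mathsf{high}}$), using that applying a lowering operator to a vertex of a collapsed string lands in a collapsed string with highest weight still $T_0^{\mathsf{high}}$.

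For the separated case (1), the argument is a coordinate count on the rectangular-grid string. Since the string is separated, Lemma \ref{lemalenfun} gives $\varphi_i(T)=\widehat{\varphi}_i(T)+\varphi_i'(T)$, and as observed in the proof of Lemma \ref{lemalenfun} the primed operator can be applied at most once, so $c=\varepsilon_i'(T),a=\varphi_i'(T)\in\{0,1\}$. The lowest weight element $T_0^{\mathsf{low}}$ sits at the bottom corner of the grid, reached from $T$ by descending $b=\widehat{\varphi}_i(T)$ steps along an $i$-chain and crossing $a$ times along an $i'$-edge. Because $F_i$ and $F_i'$ commute whenever both sides are defined, and because the grid is a product of the two chain directions, the order in which these moves are performed does not matter; this yields the three displayed expressions $F_i'^aF_i^b(T)=F_i^bF_i'^a(T)=F_i^{b-k}F_i'^aF_i^k(T)$ for any $0\le k\le b$. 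The identity $T_0^{\mathsf{high}}=E_i'^cE_i^d(T)=E_i^dE_i'^c(T)=E_i^{d-k}E_i'^cE_i^k(T)$ is the mirror statement obtained by reading upward, with $d=\widehat{\varepsilon}_i(T)$.

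The main obstacle is the last, reordered form $F_i^{b-k}F_i'^aF_i^k(T)$: one must check that interposing the single $i'$-step between two blocks of $i$-steps still reaches the bottom corner, for \emph{every} admissible $k$, not merely for $k=0$ or $k=b$. The subtlety is that commutativity of $F_i$ and $F_i'$ is only guaranteed when both compositions are defined, so I must verify that at each intermediate vertex along the path both the relevant $F_i$ and $F_i'$ are defined, which is exactly where the separated (rather than collapsed) hypothesis is used: in a separated string every interior vertex of the lower chain still admits the crossing $i'$-edge. I would make this precise by noting that applying $F_i^k$ to $T$ stays on the same $i$-chain of the separated string, so $F_i'$ remains applicable there with $\varphi_i'(F_i^k(T))=a$, and then commuting it past the remaining $F_i^{b-k}$ steps; an induction on $k$, repeatedly swapping one $F_i'$ past one $F_i$ using the commutation relation, completes the verification. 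The endpoint count then confirms that the result is $T_0^{\mathsf{low}}$, since it lies at lattice-walk height $0$ and at the far end of the $i'$-direction.
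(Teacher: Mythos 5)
Your proposal is correct and follows essentially the same route as the paper: both arguments rest on the commutativity of $E_i$ with $E_i'$ (resp.\ $F_i$ with $F_i'$), the definitions of the partial length functions via Lemma \ref{lemalenfun}, and the uniqueness of the extremal elements of each $i$-string, with the collapsed case handled by the identity $\varepsilon_i=\widehat{\varepsilon}_i=\varepsilon_i'$. The only cosmetic difference is that the paper identifies $E_i'^cE_i^d(T)$ as $T_0^{\mathsf{high}}$ by checking that it is annihilated by both $E_i$ and $E_i'$ and then invoking uniqueness, whereas you read the same conclusion off the grid picture of the separated string; your explicit verification that the interleaved form $F_i^{b-k}F_i'^aF_i^k(T)$ stays defined at every intermediate vertex is a detail the paper leaves implicit.
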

\begin{proof}
We prove the case for the separated $i$-string and for the raising operators. For collapsed $i$-string, the proof is similar, noting that by Lemma \ref{lemalenfun} we have $\varepsilon_i (T) = \widehat{\varepsilon_i} (T) = \varepsilon_i' (T)$. Let $T$ be in separated $i$-string. We have $E_i' (E_i'^c E_i^d (T)) = E_i'^{c+1} E_i^d (T) = E_i^d E_i'^{c+1} (T)$, since these operators commute. By definition of $c = \varepsilon_i'(T)$, we have $E_i'^{c+1} (T) = \emptyset$. Hence
$E_i' (E_i'^c E_i^d (T)) = E_i^d (\emptyset) = \emptyset $.
On the other hand, we have $E_i (E_i'^c E_i^d (T)) = E_i'^c E_i^{d+1} (T)$. By definition of $d = \widehat{\varepsilon_i}(T)$, $E_i^{d+1} = \emptyset$. Consequently, $E_i (E_i'^c E_i^d (T)) = E_i'^c (\emptyset) = \emptyset$. Thus, $E_i'^c E_i^d (T)$ must be the highest weight element of this $i$-string.
\end{proof}

\section{The Schützenberger involution and shifted crystal reflection operators}\label{sec4}

The Schützenberger involution, or Lusztig involutin, is defined on the shifted tableau crystal \cite[Section 2.3.1]{GL19} in the same fashion as for type $A$ Young tableau crystal. Similarly, we also realize it through shifted evacuation, for tableaux of straight shape, and through shifted reversal otherwise. For each $i \in I = [n-1]$, we define the shifted crystal reflection operator $\sigma_i$, using the primed and unprimed crystal operators $E_i, E_i', F_i$ and $F_i'$. We also show in Example \ref{exbraidnot} that they do not need to satisfy the braid relations and, therefore, do not yield a natural action of $\mathfrak{S}_n$ on this crystal. Throughout this section $\nu$ will denote a strict partition.

\begin{prop}\label{defSchu}
Let $\mathcal{B}(\nu,n)$ denote the shifted tableau crystal with $T^{\mathsf{high}}=Y_\nu$ as highest weight and $T_{\mathsf{low}}=\mathsf{evac}(Y_\nu)$ as lowest weight. Then, there exists a unique map of sets $\eta: \mathcal{B}(\nu,n) \longrightarrow \mathcal{B}(\nu,n)$ that satisfies the following, for all $T\in \mathcal{B}(\nu,n)$ and for all $i \in I$:
	\begin{enumerate}
	\item $E'_i \eta (T) = \eta F'_{n-i} (T)$.
	\item $E_i \eta (T) = \eta F_{n-i} (T)$.	
	\item $F'_i \eta (T) = \eta E'_{n-i} (T)$.
	\item $F_i \eta (T) = \eta E_{n-i} (T)$.
	\item $\mathsf{wt}(\eta(T)) = \mathsf{wt}(T)^{\mathsf{rev}}$.
	\end{enumerate}
This map may be defined on $\mathcal{B}(\lambda/\mu,n)$ by extending it to its connected components. Moreover, it coincides with the evacuation in $\mathcal{B}(\nu,n)$, and with the reversal on the connected components of $\mathcal{B}(\lambda/\mu,n)$.
\end{prop}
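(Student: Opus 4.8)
The plan is to define $\eta$ as $\mathsf{evac}$ on the straight-shaped crystal $\mathcal{B}(\nu,n)$, verify the five relations there, deduce uniqueness from connectivity, and finally transport the map to the connected components of a general skew crystal, identifying the result with the reversal. The first step is to record the word-level conjugation relations between $\mathsf{c}_n$ and the crystal operators. For the unprimed operators these are immediate from the definition $E_i = \mathsf{c}_n F_{n-i}\mathsf{c}_n$ (and its analogue for $F_i$), giving $\mathsf{c}_n E_i \mathsf{c}_n = F_{n-i}$ and $\mathsf{c}_n F_i \mathsf{c}_n = E_{n-i}$. For the primed operators I would invoke the characterization of $E_i'$ and $F_i'$ by standardization and weight (Lemma \ref{standard}): since $\mathsf{c}_n$ commutes with standardization and reverses weight, and since $\alpha_i^{\mathsf{rev}} = -\alpha_{n-i}$, the pair $(\mathsf{std},\mathsf{wt})$ characterizing $E_i'(w)$ is carried by $\mathsf{c}_n$ to the pair characterizing $F_{n-i}'(\mathsf{c}_n w)$, whence $\mathsf{c}_n E_i'\mathsf{c}_n = F_{n-i}'$ and $\mathsf{c}_n F_i'\mathsf{c}_n = E_{n-i}'$.

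Setting $\eta := \mathsf{evac} = \mathsf{rect}\circ \mathsf{c}_n$ on straight shapes, I would check (1)--(4) by one uniform computation. For $A\in\{E_i',E_i,F_i',F_i\}$ write $A^{\mathsf{c}_n} := \mathsf{c}_n A\,\mathsf{c}_n$, so that $A\,\mathsf{c}_n = \mathsf{c}_n A^{\mathsf{c}_n}$ at the word level. Then coplacticity of $A$ gives $A\,\eta(T) = A\,\mathsf{rect}(\mathsf{c}_n T) = \mathsf{rect}\big(A\,w(\mathsf{c}_n T)\big)$; using $w(\mathsf{c}_n T)\equiv_k \mathsf{c}_n(w(T))$ from \eqref{eq:eqkwcol} together with the conjugation relation, this equals $\mathsf{rect}\big(\mathsf{c}_n A^{\mathsf{c}_n} w(T)\big) = \mathsf{evac}\big(A^{\mathsf{c}_n}(T)\big) = \eta\,A^{\mathsf{c}_n}(T)$. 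Reading off $A^{\mathsf{c}_n} = F_{n-i}',\,F_{n-i},\,E_{n-i}',\,E_{n-i}$ in the four cases yields exactly relations (1)--(4), while (5) is the already-established weight reversal $\mathsf{wt}(\mathsf{evac}(T)) = \mathsf{wt}(T)^{\mathsf{rev}}$.

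For uniqueness I would use that $\mathcal{B}(\nu,n)$ is connected with unique highest weight $Y_\nu$ (Proposition \ref{highuni}). Applying (3) and (4) at $Y_\nu$, where all raising operators vanish, forces $F_i'\eta(Y_\nu) = F_i\eta(Y_\nu) = \emptyset$ for every $i$, so $\eta(Y_\nu)$ is a lowest weight element; by (5) it has weight $\nu^{\mathsf{rev}}$, and Proposition \ref{Yevanunique} identifies it as $\mathsf{evac}(Y_\nu)$. Rewriting (1)--(4) as $\eta F_i = E_{n-i}\eta$ and $\eta F_i' = E_{n-i}'\eta$ (with the analogous statements for raising operators), the value $\eta(T)$ at any $T$ obtained from $Y_\nu$ by a sequence of lowering operators $F_i, F_i'$ is then determined along that path; since every $T$ is reachable this way, $\eta$ is forced to equal $\mathsf{evac}$ on all of $\mathcal{B}(\nu,n)$, which is the asserted coincidence with evacuation.

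Finally, for a general skew crystal I would define $\eta$ on each connected component $C$ by transporting $\mathsf{evac}$ through the crystal isomorphism $C\simeq\mathcal{B}(\nu,n)$ of Proposition \ref{isomhigh}; this isomorphism is unique, as it must carry the highest weight of $C$ to $Y_\nu$ and then propagate via the operators, so the transported map is well defined and automatically inherits (1)--(5). Because that isomorphism is realized by rectification, which commutes with all four crystal operators by coplacticity, transporting $\mathsf{evac}$ back amounts to rectifying, evacuating, and then reversing the recorded slides, i.e.\ the coplactic extension of evacuation; hence $\eta(T) = T^e$ on each component and $\eta$ restricts to $\mathsf{evac}$ on straight shapes. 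I expect the main obstacle to be the careful lifting of the word relation $A\,\mathsf{c}_n = \mathsf{c}_n A^{\mathsf{c}_n}$ to tableaux via \eqref{eq:eqkwcol}: one must verify that the Knuth equivalence $w(\mathsf{c}_n T)\equiv_k \mathsf{c}_n(w(T))$ interacts correctly with the coplactic operators so that passing to $\mathsf{rect}$ is legitimate, and must handle the primed operators through their $(\mathsf{std},\mathsf{wt})$ characterization, since those conjugation relations do not follow directly from a definition as the unprimed ones do.
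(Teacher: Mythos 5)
Your proposal is correct and follows essentially the same route as the paper: define $\eta$ as $\mathsf{evac}$ on straight shapes, verify the intertwining relations by combining the $(\mathsf{std},\mathsf{wt})$ characterization of the primed operators with the $\mathsf{c}_n$-conjugation definition of the unprimed ones (via coplacticity and the Knuth equivalence $w(\mathsf{c}_n T)\equiv_k \mathsf{c}_n(w(T))$), obtain uniqueness by propagating from $Y_\nu$ through the connected crystal, and extend to skew shapes by coplacticity, recovering the reversal. The only cosmetic differences are that you package all four relations into one word-level conjugation computation where the paper treats the primed and unprimed cases by separate (but equivalent) arguments and deduces (3)--(4) from (1)--(2) via the involution property.
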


The map $\eta$ is called the \emph{Schützenberger or Lusztig involution}. We use the notation $\eta$ for both straight-shaped and skew tableaux. The map $\eta$ is indeed an involution on the set of vertices of $\mathcal{B}(\nu,n)$, that reverses all arrows and indices, thus sending the highest weight element to the lowest, i.e. $\eta(T^{\mathsf{high}}) = T^{\mathsf{low}}$, and vice versa. It is coplactic and a weight-reversing, shape-preserving involution. Note that the operator $\mathsf{c}_n$ also acts on $\mathcal{B}(\nu,n)$ by reversing arrows and indices, however it does not preserve the shape, although the resulting crystal $\mathsf{c}_n (\mathcal{B}(\nu,n))$ is isomorphic as sets to $\mathcal{B}(\nu,n)$ and to $\mathsf{evac}(\mathcal{B}(\nu,n))$, being isomorphic as crystal graph to the latter.

\begin{proof}[Proof of Proposition \ref{defSchu}]
It suffices to do the proof for $\mathcal{B}(\nu,n)$. We prove that the evacuation $\mathsf{evac}$ satisfies the assertions $(1)$ and $(2)$. Let $T \in \mathcal{B}(\nu,n)$ and let $i \in I$.  Since $\mathsf{evac}$ is an involution then $(3)$ and $(4)$ are satisfied, and we have seen already that $\mathsf{evac}$ satisfies $(5)$. By definition of the primed operators, $\mathsf{std} (E_{i}'\mathsf{evac}(T)) = \mathsf{std} (\mathsf{evac}(T))$. Therefore, since standardization commutes with evacuation, we have
$$\mathsf{std} (\mathsf{evac} E_{i}'\mathsf{evac}(T)) = \mathsf{std} (\mathsf{evac}^2(T)) = \mathsf{std} (T).$$
Moreover, we have
\begin{align*}
\mathsf{wt} (\mathsf{evac}E_{i}'\mathsf{evac}(T) )  &= \mathsf{wt} (E_{i}'\mathsf{evac}(T))^{\mathsf{rev}}\\
&= (\mathsf{wt}(\mathsf{evac}(T))+\alpha_{i})^{\mathsf{rev}}\\
&= (\mathsf{wt}(T)^{\mathsf{rev}} - \alpha_i^{\mathsf{rev}})^{\mathsf{rev}}\\
&=((\mathsf{wt}(T)-\alpha_i)^{\mathsf{rev}})^{\mathsf{rev}}\\
&= \mathsf{wt}(T) - \alpha_i.
\end{align*}
Hence, by the definition of $F_{n-i}'$, we have $\mathsf{evac} E_{i}' \mathsf{evac}(T) = F_{n-i}' (T)$ and consequently, $E_{i}' \mathsf{evac}(T) = \mathsf{evac} F_{n-i}'(T)$. 

To prove that $E_i \mathsf{evac} (T) = \mathsf{evac} F_{n-i} (T)$ we note that $E_i \mathsf{evac} (T)$ and $F_{n-i} (T)$ are in the same connected component of $\mathcal{B}(\nu,n)$, hence they are dual equivalent due to Proposition \ref{compdual}. Thus, it remains to show that $E_i \mathsf{evac} (T)$ and $\mathsf{c}_n (F_{n-i} (T))$ are shifted Knuth equivalent. We have that $w(\mathsf{evac}(T)) \equiv_k \mathsf{c}_n(T)$ and since $E_i$ is coplactic, we have $E_i (w(\mathsf{evac}(T))) \equiv_k E_i(\mathsf{c}_n(T))$. Then, we have

\begin{align}\label{eq:Eievac1}
w (E_i (\mathsf{evac}(T))) &= E_i (w(\mathsf{evac}(T)))\nonumber\\
&\equiv_k E_i (w(\mathsf{c}_n (T)))\\
&= \mathsf{c}_n F_{n-i} \mathsf{c}_n (w(\mathsf{c}_n(T))).\nonumber
\end{align}

By \eqref{wcolwrow}, we have $\mathsf{c}_n w(\mathsf{c}_n(T)) = w_{\mathsf{col}} (\mathsf{c}_n^2(T)) = w_{\mathsf{col}} (T)$. Moreover, the row and column words of a shifted semistandard tableau are shifted Knuth equivalent (see, for instance, \cite[Lemma 6.4.12]{Wor84}). Thus, since $\mathsf{c}_n$ and $F_{n-i}$ are coplactic,

\begin{align}\label{eq:Eievac2}
\mathsf{c}_n F_{n-i} \mathsf{c}_n (w(\mathsf{c}_n(T))) &= \mathsf{c}_n F_{n-i} (w_{\mathsf{col}}(T))\nonumber\\
&\equiv_k \mathsf{c}_n F_{n-i} (w(T))\\
&= \mathsf{c}_n w(F_{n-i} (T)).\nonumber
\end{align}

Finally, by \eqref{wcolwrow}, we have $\mathsf{c}_n w(F_{n-i} (T)) = w_{\mathsf{col}} (\mathsf{c}_n F_{n-i}(T)) \equiv_k w (\mathsf{c}_nF_{n-i}(T))$. Thus, from \eqref{eq:Eievac1} and \eqref{eq:Eievac2} we have 
$$w(E_i \mathsf{evac}(T)) \equiv_k w(\mathsf{c}_n F_{n-i}(T))$$ 
and, consequently, $\mathsf{evac} (F_{n-i}(T)) = E_i (\mathsf{evac}(T))$. Finally, we note that the definition of the shifted evacuation ensures that $\mathsf{wt}(\mathsf{evac}(T))=\mathsf{wt}(\mathsf{evac}(T))^{\mathsf{rev}}$.

For the uniqueness part, suppose that there is another involution $\xi$ on $\mathcal{B}(\nu,n)$ satisfying the previous properties and let $Y_{\nu}$ be the highest weight element of $\mathcal{B}(\nu,n)$. By Proposition \ref{isomhigh}, we have $T = H_{i_1}\ldots H_{i_k} (Y_{\nu})$, where $H_i \in \{F'_i, F_i, E'_i, E_i\}$, with $i_k \in I$. Moreover, let $\tilde{H}_i$ be $E_i'$ (respectively, $E_i$, $F_i'$ and $F_i$) if $H_i$ is $F_i'$ (respectively $F_i$, $E_i'$ and $E_i$). Then,
	\begin{align*}
	\xi(T) &= \xi H_{i_1}\ldots H_{i_k} (Y_{\nu})\\
	&= \tilde{H}_{n-i_1} \ldots \tilde{H}_{n-i_k} \xi(Y_{\nu})\\
	&= \tilde{H}_{n-i_1} \ldots \tilde{H}_{n-i_k} \mathsf{evac} (Y_{\nu})\\
	&=  \mathsf{evac} H_{i_1} \ldots H_{i_k} (Y_{\nu})\\
	&=  \mathsf{evac}(T).
	\end{align*}
\end{proof}

\begin{lema}\label{lemaphieta}
Let $T \in \mathsf{ShST}(\lambda/\mu,n)$ and let $i \in I$. Then,
\begin{enumerate}
\item $\varphi_i (T) = \varepsilon_{n-i} \eta (T)$.
\item $\varepsilon_i (T) = \varphi_{n-i} \eta (T)$.
\end{enumerate}
\end{lema}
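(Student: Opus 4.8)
The plan is to reduce both identities to statements about the \emph{partial} length functions, which transform transparently under $\eta$, and then to reassemble them using Lemma~\ref{lemalenfun}. I will prove $(1)$ and obtain $(2)$ for free by symmetry.

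First I would use the intertwining relations of Proposition~\ref{defSchu}. Since $\eta$ is an involution, relation $(4)$ rewrites as $\eta F_i = E_{n-i}\,\eta$, whence $\eta F_i^k = E_{n-i}^k\,\eta$ for every $k \geq 0$; likewise relation $(3)$ gives $\eta F_i' = E_{n-i}'\,\eta$. Because $\eta$ is a bijection fixing $\emptyset$, we have $F_i^k(T) \neq \emptyset$ if and only if $E_{n-i}^k(\eta(T)) \neq \emptyset$, and similarly for the primed operators. Taking maxima over $k$ yields
\begin{equation*}
\widehat{\varphi_i}(T) = \widehat{\varepsilon_{n-i}}(\eta(T)), \qquad \varphi_i'(T) = \varepsilon_{n-i}'(\eta(T)).
\end{equation*}

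Next I would check that $\eta$ identifies the $i$-string of $T$ with the $(n-i)$-string of $\eta(T)$ in a way that preserves the separated/collapsed dichotomy. Rewriting the relations of Proposition~\ref{defSchu} as above shows that $\eta$ carries each solid $i$-edge of the $i$-string of $T$ to a solid $(n-i)$-edge of the $(n-i)$-string of $\eta(T)$ with reversed orientation, and each dashed $i'$-edge to a reversed dashed $(n-i)'$-edge. Thus $\eta$ restricts to an arrow-reversing isomorphism of edge-coloured graphs between these two strings. Reversing all arrows turns a separated string (a ladder of two solid chains joined by dashed rungs) into a separated string, and a collapsed string (a single chain of doubled edges) into a collapsed string; hence $T$ lies in a collapsed $i$-string if and only if $\eta(T)$ lies in a collapsed $(n-i)$-string.

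Finally I would combine these observations through Lemma~\ref{lemalenfun} (in its $\varphi$-form on the left and its $\varepsilon$-form on the right). In the collapsed case both sides reduce to a single partial length function, giving $\varphi_i(T) = \widehat{\varphi_i}(T) = \widehat{\varepsilon_{n-i}}(\eta(T)) = \varepsilon_{n-i}(\eta(T))$; in the separated case the two partial contributions add, giving $\varphi_i(T) = \widehat{\varphi_i}(T) + \varphi_i'(T) = \widehat{\varepsilon_{n-i}}(\eta(T)) + \varepsilon_{n-i}'(\eta(T)) = \varepsilon_{n-i}(\eta(T))$. This proves $(1)$. For $(2)$, I would apply $(1)$ with $T$ replaced by $\eta(T)$ and $i$ by $n-i$, and use $\eta^2 = \mathrm{id}$ to get $\varphi_{n-i}(\eta(T)) = \varepsilon_i(\eta^2(T)) = \varepsilon_i(T)$. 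The one point requiring genuine care is the string-type preservation, since the additive formula $\varphi_i = \widehat{\varphi_i} + \varphi_i'$ holds only on separated strings; aligning the two case distinctions across $\eta$ is what makes the bookkeeping go through.
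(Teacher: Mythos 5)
Your proof is correct and follows essentially the same route as the paper: both arguments hinge on the intertwining relations of Proposition~\ref{defSchu} together with the collapsed/separated case split and Lemma~\ref{lemalenfun}. The only organizational difference is that you establish the equalities $\widehat{\varphi_i}(T)=\widehat{\varepsilon_{n-i}}(\eta(T))$ and $\varphi_i'(T)=\varepsilon_{n-i}'(\eta(T))$ directly and then sum, whereas the paper transports the lowest-weight element of the $i$-string through $\eta$ and deduces the two opposing inequalities $\varepsilon_{n-i}(\eta(T))\geq\varphi_i(T)$ and $\varphi_i(T)\geq\varepsilon_{n-i}(\eta(T))$; your explicit justification that $\eta$ preserves the separated/collapsed type (which the paper only asserts parenthetically) is a welcome addition.
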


\begin{proof}
We prove the first assertion, the second one is analogous. Using Lemma \ref{lemadasstrings}, there are two cases:
	\begin{enumerate}
	\item Suppose that the $i$-component in which $T$ lies is a collapsed $i$-string $\mathcal{S}_i^0$. Then, by Lemma \ref{lemadasstrings}, $F_{i}^{\varphi_i(T)}(T)$ is the lowest weight element of $\mathcal{S}_i^0$. We have that $\eta F_{i}^{\varphi_i(T)}(T)$ is in a $(n-i)$-string $\mathcal{S}_{n-i}^0$ (which is also collapsed) and by Proposition \ref{defSchu},	
	$$\eta F_{i}^{\varphi_i(T)}(T) = E_{n-i}^{\varphi_i(T)} (\eta (T)).$$	
Hence, by the definition of $\varepsilon_{n-i}$, we have that $\varepsilon_{n-i} (\eta (T)) \geq \varphi_i (T)$.

On the other hand, since $E_{n-i}^{\varepsilon_{n-i}(\eta (T))} (\eta(T))$ is in $\mathcal{S}_{n-i}^0$, then $$\eta E_{n-i}^{\varepsilon_{n-i}(\eta (T))} (\eta(T))$$ must be in $\mathcal{S}_i^0$. By Proposition \ref{defSchu}, and since $\eta$ is an involution, we have
$$\eta E_{n-i}^{\varepsilon_{n-i}(\eta (T))} (\eta(T)) = F_i^{\varepsilon_{n-i}(\eta(T))} (T).$$
Consequently, by the definition of $\varphi_i$, we have $\varphi_i (T) \geq \varepsilon_{n-i} (\eta (T))$.

\item Now suppose that $T$ is in $\mathcal{S}_i^0$, a separated $i$-string. Then, by Lemma \ref{lemadasstrings} $F_i'^{\varphi_i'(T)} F_i^{\hat{\varphi}_i (T)} (T)$ is the lowest weight element of $\mathcal{S}_i^0$. Consequently, $\eta F_i'^{\varphi_i(T)} F_i ^{\hat{\varphi}_i (T)} (T)$ is in a $(n-i)$-string $\mathcal{S}_{n-i}^0$ (which is also separated). As before, we have
$$ \eta F_i'^{\varphi_i(T)} F_i ^{\hat{\varphi}_i (T)} (T) =  E_{n-i}'^{\varphi_i'(T)} E_{n-i}^{\hat{\varphi}_i(T)} (\eta(T)),$$
and by definition of $\varepsilon_{n-i}$, we have $\varepsilon_{n-i}(\eta(T)) \geq \varphi_i'(T) + \hat{\varphi}_i(T) = \varphi_i (T)$.

Since $E_{n-i}'^{\varepsilon_{n-i}'(\eta(T))} E_{n-i}^{\hat{\varepsilon}_{n-i} (\eta(T))} (\eta (T))$ is in $\mathcal{S}_{n-i}^0$, we have that  $$\eta E_{n-i}'^{\varepsilon'_{n-i}(\eta(T))} E_{n-i}^{\hat{\varepsilon}_{n-i} (\eta(T))} (\eta (T))$$ is in $\mathcal{S}_{n-i}^0$. By Proposition \ref{defSchu}, and since $\eta$ is an involution, we have
$$\eta E_{n-i}'^{\varepsilon'_{n-i}(\eta(T))} E_{n-i}^{\hat{\varepsilon}_{n-i} (\eta(T))} (\eta (T)) =  F_{i}'^{\varepsilon'_{n-i}(\eta(T))} F_{i}^{\hat{\varepsilon}_{n-i} (\eta(T))} (T),$$
and then, $\varphi_i (T) \geq \varepsilon_{n-i}'(\eta(T)) + \hat{\varepsilon}_{n-i}(\eta(T)) = \varepsilon_{n-i}(\eta(T))$.
\end{enumerate}
\end{proof}

\subsection{The shifted reflection crystal operators}
We now introduce a shifted version of the \textit{crystal reflection operators} $\sigma_i$ (see \cite[Definition 2.35]{BumpSchi17}) on $\mathcal{B}(\nu,n)$, for each $i\in I$. Crystal reflection operators were originally defined by Lascoux and Schutzenberger \cite{LaSchu81} in the Young tableau crystal of type $A$. They are involutions, on type $A$ crystals, so that each $i$-string is sent to itself by reflection over its middle axis, for all $i \in I$. It coincides with the restriction of the Schützenberger involution to the tableaux consisting of the letters  $i,i+1$, ignoring the remaining ones. On $\mathcal{B}(\nu,n)$, collapsed strings are similar to the $i$-strings of type $A$ crystals, hence the shifted reflection operator $\sigma_i$ is expected to resemble the one for Young tableaux. However, for separated strings, a sole reflection of the $i$-string would not coincide with the restriction of the Schützenberger involution to $\{i,i+1\}'$, hence we have the next definition.

\begin{defin}\label{def:crystalreflection}
Let $i\in I$ and $T\in \mathcal{B}(\nu,n)$. Let $k= \langle \mathsf{wt}(T),\alpha_i \rangle$ (usual inner product in $\mathbb{R}^n$). The \emph{shifted crystal reflection operator} $\sigma_i$ is defined as follows

$$\sigma_i (T) =
\begin{cases*}
F_i' F_i^{k-1} (T) & if $k >0$ and $F_i'(T) \neq \emptyset$\\
E_i' F_i^{k+1} (T) & if $k > 0$ and $F_i'(T) = \emptyset$\\
E_i F_i' (T) & if $k=0$ and $F_i'(T)\neq \emptyset$\\
E_i' F_i (T) & if $k=0$ and $F_i' (T) = \emptyset$ and $F_i (T) \neq \emptyset$\\
T & if $k=0$ and $F_i' (T) = F_i (T) = \emptyset$\\
E_i^{-k+1} F_i' (T) & if $k < 0$ and $F_i' (T) \neq \emptyset$\\
E_i^{-k-1} E_i'(T) & if $k<0$ and $F_i' (T) = \emptyset$
\end{cases*}$$
\end{defin}

As the definition suggests, the shifted reflection operator $\sigma_i$ must do a double reflection, by vertical and horizontal middle axes (see Figure \ref{fig:flip}). As we will see in Theorem \ref{sigmarever}, a simple reflection in the same fashion as the type $A$ crystal fails to ensure the coincidence of the shifted crystal reflection operators with the adequate restriction of the Schützenberger involution, on separated strings. By coplacity, the operator $\sigma_i$ is extended to $\mathcal{B}(\lambda/\nu,n)$, for $i\in I$.

\begin{figure}[H]
\centering
\includegraphics[scale=0.8]{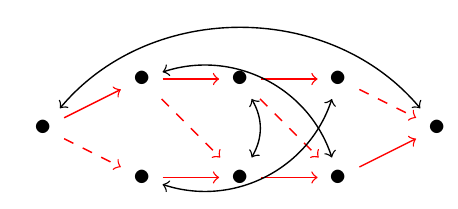}
\includegraphics[scale=0.8]{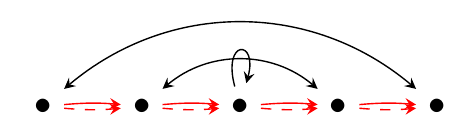}
\caption{The action of a crystal reflection operator in separated and collapsed strings, which corresponds to the Schützenberger involution.}
\label{fig:flip}

\end{figure}

We remark that this definition is the same for both separated or collapsed strings. However, for the latter there is simpler formulation, as stated in the following lemma, since in this case the primed and unprimed operators coincide.

\begin{lema}
Let $i \in I$ and let $T \in \mathsf{ShST}(\lambda/\mu,n)$ be such that $F_i (T) = F_i'(T)$. Let $k= \langle \mathsf{wt}(T),\alpha_i \rangle$. Then,
$$\sigma_i(T) =
\begin{cases*}
F_i^k(T) & if $k > 0$\\
T & if $k=0$\\
E_i^{-k} (T) & if $k <0$
\end{cases*}$$
\end{lema}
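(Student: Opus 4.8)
The plan is to localize everything to the $i$-string $\mathcal{S}$ containing $T$ and to use that, on a collapsed string, the primed and unprimed operators are literally equal maps. I would first reduce, as in the surrounding discussion, to the alphabet $\{i,i+1\}'$, and record the structural input supplied by the hypothesis: the equality $F_i(T)=F_i'(T)$ signals that $\mathcal{S}$ is collapsed (this is the content of Proposition \ref{axb1} at the highest weight element of $\mathcal{S}$, and is exactly the ``latter'' case singled out in the remark preceding the statement). On a collapsed string, which by definition is a single chain carrying coincident $i$- and $i'$-labelled edges, one has $F_i=F_i'$ and $E_i=E_i'$ as partial maps at every vertex; Lemma \ref{lemadasstrings}(2) records the same thing at the level of full compositions, and Lemma \ref{lemalenfun} gives $\varepsilon_i=\widehat{\varepsilon_i}=\varepsilon_i'$ and $\varphi_i=\widehat{\varphi_i}=\varphi_i'$ throughout $\mathcal{S}$.

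With this in place I would record the two arithmetic facts that steer the case analysis: on the collapsed string $\mathcal{S}$ one has $k=\langle\mathsf{wt}(T),\alpha_i\rangle=\varphi_i(T)-\varepsilon_i(T)$, and the $\mathfrak{sl}_2$-relation $E_iF_i(T)=T$ holds whenever $F_i(T)\neq\emptyset$. The proof is then a line-by-line comparison of Definition \ref{def:crystalreflection} with the claimed three-line formula, organized by the sign of $k$, in which the coincidence of the operators both eliminates the ``mixed'' branches and lets me interchange primed and unprimed versions inside the compositions.

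I would then run the three cases. If $k>0$ then $\varphi_i(T)=\varepsilon_i(T)+k>0$, so $F_i(T)=F_i'(T)\neq\emptyset$ and only the first branch can fire, giving $\sigma_i(T)=F_i'F_i^{k-1}(T)=F_iF_i^{k-1}(T)=F_i^k(T)$, where the middle equality uses $F_i=F_i'$ at the vertex $F_i^{k-1}(T)\in\mathcal{S}$. If $k=0$ then $\varphi_i(T)=\varepsilon_i(T)$, the branch with $F_i'(T)=\emptyset\neq F_i(T)$ is vacuous because $F_i=F_i'$, and the two surviving branches give either $\sigma_i(T)=E_iF_i'(T)=E_iF_i(T)=T$ (when $F_i'(T)\neq\emptyset$) or $\sigma_i(T)=T$ directly (when $F_i(T)=F_i'(T)=\emptyset$). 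If $k<0$ then $\varepsilon_i(T)=\varphi_i(T)-k>0$, so $E_i(T)=E_i'(T)\neq\emptyset$; when $F_i'(T)\neq\emptyset$ I get $\sigma_i(T)=E_i^{-k+1}F_i'(T)=E_i^{-k}\bigl(E_iF_i(T)\bigr)=E_i^{-k}(T)$, and when $F_i'(T)=\emptyset$ I get $\sigma_i(T)=E_i^{-k-1}E_i'(T)=E_i^{-k-1}E_i(T)=E_i^{-k}(T)$. Each line reproduces the claimed formula.

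The step I expect to be the genuine obstacle is the first one: rigorously upgrading the pointwise hypothesis $F_i(T)=F_i'(T)$ to the assertion that $F_i=F_i'$ and $E_i=E_i'$ hold at \emph{every} vertex of $\mathcal{S}$, which is what legitimizes the interchanges performed inside $F_i^{k-1}$ and $E_i^{-k}$. I would handle this through the dichotomy of $i$-strings into separated and collapsed types: the hypothesis identifies the collapsed regime, on which the coincidence of the operators is part of the double-edge structure of the string and is reflected in Lemma \ref{lemadasstrings}(2) and Lemma \ref{lemalenfun}. Once $\mathcal{S}$ is known to be collapsed, the length-function identity $k=\varphi_i(T)-\varepsilon_i(T)$ and the relation $E_iF_i=\mathrm{id}$ make the case analysis purely mechanical.
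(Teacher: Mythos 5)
The paper gives no written proof of this lemma beyond the preceding remark that on a collapsed string the primed and unprimed operators coincide, and your three-way case analysis is a correct and complete expansion of that remark: the identities $k=\varphi_i(T)-\varepsilon_i(T)$ and $E_iF_i=\mathrm{id}$ (where defined), together with the coincidence of $F_i$ with $F_i'$ and of $E_i$ with $E_i'$ at every vertex of a collapsed string, do reproduce the stated formula branch by branch.

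The genuine gap is the step you yourself flag: upgrading the pointwise hypothesis $F_i(T)=F_i'(T)$ to ``the $i$-string of $T$ is collapsed.'' Proposition \ref{axb1} gives this implication only when $T$ is the \emph{highest weight} element of its string, and neither Lemma \ref{lemalenfun} nor Lemma \ref{lemadasstrings} supplies it for general $T$. When $F_i(T)=F_i'(T)\neq\emptyset$ the implication does hold, but for a reason you should state instead of the one you cite: on a separated string the images $F_i(T)$ and $F_i'(T)$, when both defined, occupy different positions of the double chain and are therefore distinct vertices, so equality of defined values rules out the separated type. When $F_i(T)=F_i'(T)=\emptyset$, however, the implication is simply false: the lowest weight element $S$ of a nontrivial \emph{separated} $i$-string also satisfies $F_i(S)=F_i'(S)=\emptyset$. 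There $k=-\varepsilon_i(S)=-(\widehat{\varepsilon_i}(S)+1)<0$, Definition \ref{def:crystalreflection} gives $\sigma_i(S)=E_i^{-k-1}E_i'(S)$, which is the highest weight element of the string, whereas the claimed formula yields $E_i^{-k}(S)=E_i^{\widehat{\varepsilon_i}(S)+1}(S)=\emptyset$. So in this residual case the conclusion itself breaks down, and no argument can close the gap without reinterpreting the hypothesis: you must either read $F_i(T)=F_i'(T)$ as ``both are defined and equal'' (equivalently, assume outright that $T$ lies in a collapsed $i$-string, which is plainly what the paper intends), or exclude and discuss the case $F_i(T)=F_i'(T)=\emptyset$ on a separated string separately rather than folding it into the collapsed regime.
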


\begin{prop}\label{sigmai}
For $i \in I$, the operator $\sigma_i$ satisfies the following:
	\begin{enumerate}
	\item $\sigma_i$ sends each connected component of $\mathcal{B}(\lambda/\mu,n)$ to itself.
	\item $\sigma_i$ takes each $i$-string to itself.
	\item $\sigma_i^2 = id$ and $\sigma_i\sigma_j=\sigma_j\sigma_i$, if $|i-j| >1$.
	\item $\mathsf{wt}(\sigma_i(T)) = \theta_i \cdot \mathsf{wt}(T)$, where $\theta_i = (i,i+1) \in \mathfrak{S}_n$.
	\end{enumerate}
\end{prop}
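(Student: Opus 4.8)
The plan is to treat the four assertions in increasing order of difficulty, isolating the involution identity $\sigma_i^2 = \mathrm{id}$ as the core of the argument. Assertions (1) and (2) I would dispatch immediately: by Definition \ref{def:crystalreflection}, $\sigma_i$ is in every case a composition of the operators $E_i, E_i', F_i, F_i'$, and by Proposition \ref{isomhigh} these never leave the connected component of their argument, while by the very definition of an $i$-string as an equivalence class under $E_i, E_i', F_i, F_i'$ they never leave the $i$-string. Hence $\sigma_i$ fixes each connected component and each $i$-string setwise. For assertion (4) I would run through the seven cases of Definition \ref{def:crystalreflection}: writing $k = \langle \mathsf{wt}(T), \alpha_i\rangle$ and recalling that each of $F_i, F_i'$ shifts the weight by $-\alpha_i$ while each of $E_i, E_i'$ shifts it by $+\alpha_i$, a direct count yields in every case a net weight change of $-k\alpha_i$; for instance $F_i' F_i^{k-1}$ contributes $-k\alpha_i$, while $E_i^{-k+1}F_i'$ contributes $(-k+1)\alpha_i - \alpha_i = -k\alpha_i$. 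Since $\mathsf{wt}(T) - k\alpha_i$ is exactly the vector obtained from $\mathsf{wt}(T)$ by interchanging its $i$-th and $(i+1)$-th coordinates, it equals $\theta_i \cdot \mathsf{wt}(T)$, proving (4).

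The heart of the proof is $\sigma_i^2 = \mathrm{id}$, which by (2) I may check one $i$-string at a time, splitting according to whether the string is collapsed or separated (the dichotomy of Proposition \ref{axb1}). For a collapsed string the simplified formula of the preceding lemma applies: labelling the chain $C_0 \to \cdots \to C_N$ from its highest weight element, so that $\mathsf{wt}(C_s)$ has $\{i,i+1\}$-part $(N-s,s)$ and $k(C_s) = N - 2s$, one checks $\sigma_i(C_s) = C_{N-s}$ in all three sign regimes; this is the ordinary type-$A$ reflection and is manifestly involutive. For a separated string I would first record its ladder structure, which follows from Lemma \ref{lemadasstrings} together with $\varphi_i', \varepsilon_i' \in \{0,1\}$: two equal-length $i$-chains $L_0 \xrightarrow{i}\cdots\xrightarrow{i} L_m$ and $R_0 \xrightarrow{i}\cdots\xrightarrow{i} R_m$ joined by horizontal $i'$-edges $L_j \xrightarrow{i'} R_j$, with $L_0$ highest and $R_m$ lowest and $m = \mathsf{wt}(L_0)_i - \mathsf{wt}(L_0)_{i+1} - 1$.

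Using $F_i'(L_j) = R_j \neq \emptyset$ and $F_i'(R_j) = \emptyset$ to select the relevant cases of Definition \ref{def:crystalreflection}, together with the identities $k(L_j) = m+1-2j$ and $k(R_j) = m-1-2j$, I would verify across the regimes $k>0$, $k=0$, $k<0$ that $\sigma_i(L_j) = R_{m-j}$ and $\sigma_i(R_j) = L_{m-j}$ (for example, when $k>0$ one has $F_i^{k-1}(L_j)=L_{m-j}$, and when $k>0$ on the right column $E_i' F_i^{k+1}(R_j)=E_i'(R_{m-j})=L_{m-j}$). The map $L_j \leftrightarrow R_{m-j}$ is patently an involution, so $\sigma_i^2 = \mathrm{id}$. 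I expect this separated-string bookkeeping — keeping the three sign regimes and the two columns consistent with the single combinatorial reflection $j \mapsto m-j$ — to be the main obstacle; everything else is formal.

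Finally, for the commutation $\sigma_i\sigma_j = \sigma_j\sigma_i$ with $|i-j|>1$, I would argue that the two operators act on disjoint data. Since $E_i, E_i', F_i, F_i'$ are computed from the $i$-lattice walk, which reads only the letters $\{i, i+1\}$, they commute with $E_j, E_j', F_j, F_j'$ and preserve each other's domains whenever $\{i,i+1\}\cap\{j,j+1\} = \emptyset$, that is, $|i-j|>1$. Moreover the data selecting which case of Definition \ref{def:crystalreflection} governs $\sigma_i$ — namely $\langle \mathsf{wt}(T),\alpha_i\rangle$ and whether $F_i'(T) = \emptyset$ — is unaffected by first applying $\sigma_j$: by (4) the weight is modified by $\theta_j$, which leaves the $i$-th and $(i+1)$-th coordinates untouched, and definedness of $F_i'$ is preserved by the $j$-operators. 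Hence $\sigma_i$ selects the same case before and after $\sigma_j$, the underlying operator strings commute termwise, and $\sigma_i\sigma_j = \sigma_j\sigma_i$ follows.
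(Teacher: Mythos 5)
Your proposal is correct, and for assertions (1), (2), (4) and the commutation $\sigma_i\sigma_j=\sigma_j\sigma_i$ it matches the paper's (equally brief) treatment. Where you genuinely diverge is in the proof that $\sigma_i^2=\mathrm{id}$. The paper works directly from Definition \ref{def:crystalreflection}: for each case it computes $\mathsf{wt}(\sigma_i(T))$, deduces $\tilde{k}=\langle\mathsf{wt}(\sigma_i(T)),\alpha_i\rangle=-k$, determines which case of the definition governs $S=\sigma_i(T)$, and cancels the operator strings using commutativity of the primed and unprimed operators. The delicate point there is establishing which case applies to $S$ — e.g.\ in its Case 2 the paper must argue via Proposition \ref{computEF} that $F_i'(S)\neq\emptyset$. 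You instead first pin down the explicit coordinates of $\sigma_i$ on each $i$-string (collapsed: $C_s\mapsto C_{N-s}$; separated: $L_j\mapsto R_{m-j}$, $R_j\mapsto L_{m-j}$), from which involutivity is immediate and the case-selection data for $\sigma_i(T)$ ($\tilde k$ and definedness of $F_i'$) is read off from the ladder position rather than re-derived. Your route relies on the structural dichotomy of $i$-strings (Proposition \ref{axb1}, Lemma \ref{lemadasstrings}, and the two arrangements recorded in Figure \ref{figstringsepcol}), and in exchange it actually proves the ``double reflection'' description that the paper only illustrates in Figure \ref{fig:flip}; the paper's computation is more self-contained but has to handle the definedness of the intermediate operators case by case. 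Your index bookkeeping checks out in all sign regimes (e.g.\ $k(L_j)=m+1-2j$ gives $F_i^{k-1}(L_j)=L_{m-j}$ with the exponent $m-2j\geq 0$ exactly when $k>0$), and the degenerate case $k=0$, $F_i(T)=F_i'(T)=\emptyset$ occurs only on trivial collapsed strings, consistent with both arguments.
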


\begin{proof}
The first two assertions result directly from the definition of the raising and lowering operators.
For the third assertion, it is clear that $\sigma_i\sigma_j=\sigma_j\sigma_i$, for $|i-j| >1$, since each $\sigma_k$ acts only on the primed subinterval of adjacent letters $\{k,k+1\}'$, leaving the remaining ones unchanged. To prove that $\sigma_i$ is an involution, we must analyse various cases according to Definition \ref{def:crystalreflection}. Let $T \in \mathsf{ShST} (\lambda/\mu,n)$.
	\begin{description}
	\item[Case 1.] Suppose that $k>0$ and that $F_i(T)\neq \emptyset$. Let $S = \sigma_i (T) = F_i' F_i^{k-1} (T)$. Then, $F_i' (S) = \emptyset$. By definition of $\sigma_i$, we have
		\begin{align*}
		\mathsf{wt}(S) &= \mathsf{wt} (F_i' F_i^{k-1} (T))\\
		&= \mathsf{wt}(F_i^{k-1}(T))-\alpha_i\\
		&= \mathsf{wt}(T) - (k-1)\alpha_i - \alpha_i\\
		&= \mathsf{wt}(T) - k\alpha_i.
		\end{align*}
		
	Hence, putting $\tilde{k} := \mathsf{wt}(S)_i - \mathsf{wt}(S)_{i+1}$ we have
		\begin{align*}
		\tilde{k} &= (\mathsf{wt}(T)_i - (k\alpha_i)_i) - (\mathsf{wt}(T)_{i+1}-(k\alpha_i)_{i+1})\\
		&= \mathsf{wt}(T)_i - k - \mathsf{wt}(T)_{i+1} - k\\
		&= -k < 0.
		\end{align*}
		
	Consequently,
		\begin{align*}
		\sigma_i(S) &= E_i^{-(-k)-1} E_i' (S)\\
		&= E_i^{k-1} E_i' (S)\\
		&= E_i^{k-1} E_i' F_i' F_i^{k-1} (T)\\
		&= E_i^{k-1} F_i^{k-1} (T) = T.
		\end{align*}
	
	\item[Case 2.] Now suppose $T$ is such that $k > 0$ and $F_i' (T) = \emptyset$. Let $S = \sigma_i(T)$. We have $\mathsf{wt}(S) = \mathsf{wt}(T) + \alpha_i - (k+1)\alpha_i = \mathsf{wt}(T) - k\alpha_i $. Using the same notation as before, we have
		\begin{align*}
		\tilde{k} &= (\mathsf{wt}(T)_i - (k\alpha_i)_i) - (\mathsf{wt}(T)_{i+1} - (k\alpha_i)_{i+1})\\
		&= \mathsf{wt}(T)_i - \mathsf{wt}(T)_{i+1} -2k \\
		&= -k < 0.
		\end{align*}
	
	Moreover, since $E_i' F_i^{k+1} (T)$ is defined, this means that in the word of $F_i^{k+1}(T)$ the last $(i+1)'$ was to the right of the last $i$, and was then changed to $i$, due to Proposition \ref{computEF}. Consequently, in S the last $i$ is to the right of the last $(i+1)'$, which means that $F_i' (S) \neq \emptyset$. Hence,
		\begin{align*}
		\sigma_i(S) &= E_i'^{-(-k)+1} F_i' (S)\\
		&= E_i^{k+1} F_i' E_i' F_i^{k+1} (T)\\
		&= E_i^{k+1} F_i^{k+1} (T) = T.
		\end{align*}
		
	\item[Case 3.] Suppose that $T$ is such that $k = 0$ and $F_i' (T) \neq \emptyset$. Let $S = \sigma_i (T)$. We have $\mathsf{wt}(S) = \mathsf{wt} (T)$, hence $\tilde{k} = k$. Since $F_i'(T) \neq \emptyset$, we have $F_i' (S) = \emptyset$. Then,
		\begin{align*}
		\sigma_i(S) &= E_i F_i' (S)\\
		&=E_i F_i' E_i' F_i (T)\\
		&= E_i F_i (T) = T.
		\end{align*}

	\item[Case 4.] Suppose that $k =0$ and that $F_i'(T) = F_i'(T) = \emptyset$. Then, $\sigma_i^2 (T) = \sigma_i (T) = T$.	
	\end{description}
The remaining cases are dual to the first three, which concludes the proof that $\sigma_i^2 = id$.
Finally, using the same notation as before, for the first case we have
	\begin{align*}
	\mathsf{wt}(\sigma_i(T)) &= \mathsf{wt} (T) - k \alpha_i\\
	&= \mathsf{wt}(T) - (\mathsf{wt}(T)_i - \mathsf{wt}(T)_{i+1})\alpha_i\\
	&= (\mathsf{wt}(T)_1, \ldots, \mathsf{wt}(T)_i - \mathsf{wt}(T)_i+ \mathsf{wt}(T)_{i+1},\\
	&\quad \mathsf{wt}(T)_{i+1} + \mathsf{wt}(T)_i - \mathsf{wt}(T)_{i+1}, \ldots, \mathsf{wt}(T)_n)\\
	&= (\mathsf{wt}(T)_1, \ldots, \mathsf{wt}(T)_{i+1}, \mathsf{wt}(T)_i, \ldots, \mathsf{wt}(T)_n)\\
	&= \theta_i \cdot (\mathsf{wt}(T)).
	\end{align*}
The remaining cases are proved analogously.
\end{proof}

For each $i\in I$, let $\mathcal{B}_{i,i+1}$ be the subgraph of $\mathcal{B}(\lambda/\mu)$ consisting of the $\{i,i'\}$-connected components. Given $T \in \mathsf{ShST}(\lambda/\mu,n)$ and $i \in I$, let $T^{i,i+1} := T^{i} \sqcup T^{i+1}$ be the tableau obtained from $T$ considering only the letters in $\{i,i+1\}'$, and let $\lambda^i/\mu^i$ denote its skew-shape of $T^{i,i+1}$. Let $\mathcal{C}$ be the (unique) connected component of $\mathcal{B}_{i,i+1}$ containing $T$. We may ignore the entries of the tableaux in $\mathcal{C}$ that are not in $\{i,i+1\}'$ and subtract $i-1$ to each entry and to each edge label, thus obtaining a connected component of the shifted tableau crystal $\mathcal{B}(\lambda^i/\mu^i,2)$. Then, when we write $\eta(T^{i,i+1})$, we mean that we first apply the Schützenberger involution $\eta$ to $T^{i,i+1}$ as an element of $\mathcal{B}(\lambda^i/\mu^i,2)$, and then we add $i-1$ to the entries of the resulting tableau, to obtain one in the alphabet $\{i,i+1\}'$, in $\mathcal{C}$. Thus, we define
\begin{equation}\label{eq:etaii1}
\eta_{i,i+1}(T):=T^1\sqcup\cdots \sqcup T^{i-1}\sqcup\eta(T^{i,i+1})\sqcup T^{i+2}\sqcup\cdots\sqcup T^n\in \mathcal{C}, 
\end{equation}
where $\eta$ is the Schützenberger involution on $\mathcal{B}(\lambda^i/\mu^i,2)$. This makes rigorous the restriction of the Schützenberger involution on $\mathcal{B}(\lambda/\mu)$ to the alphabet $\{i,i',i+1,i+1'\}$.

\begin{teo}\label{sigmarever}
Let $T$ be a shifted semistandard tableau on the alphabet $[n]'$. Then, for any $i \in I$,
	$$\sigma_i (T) = \eta_{i,i+1}(T).$$
\end{teo}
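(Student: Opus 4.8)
The plan is to reduce the statement to the case $n=2$ and a single $i$-string, and then verify the equality separately for collapsed and for separated strings.

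First I would note that both operators are local. By definition \eqref{eq:etaii1}, $\eta_{i,i+1}$ only modifies the subtableau $T^{i,i+1}$ on the letters $\{i,i+1\}'$ and keeps $T$ inside the connected component $\mathcal{C}$ of $\mathcal{B}_{i,i+1}$ containing it; and $\sigma_i$, being a composition of $E_i,E_i',F_i,F_i'$, preserves that component and its ambient $i$-string by Proposition \ref{sigmai}(1)--(2). Subtracting $i-1$ from all entries and edge labels identifies $\mathcal{C}$ with a connected component of $\mathcal{B}(\lambda^i/\mu^i,2)$, on which $\eta_{i,i+1}$ becomes the Schützenberger involution $\eta$ of Proposition \ref{defSchu} with $n=2$, so that $n-i=i=1$. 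Since for $n=2$ the connected components of $\mathcal{B}(\lambda^i/\mu^i,2)$ are exactly its $1$-strings, it suffices to prove $\sigma_1(T)=\eta(T)$ for every $T$ in a single such string, where $\eta$ reverses all arrows and weights and interchanges the highest and lowest weight vertices.

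For a collapsed string I would use the simplified expression for $\sigma_i$ (the Lemma immediately following Definition \ref{def:crystalreflection}) together with Lemma \ref{lemadasstrings}(2), where $F_1=F_1'$, $\varepsilon_1=\widehat{\varepsilon_1}=\varepsilon_1'$ and $T^{\mathsf{high}}=E_1^{\,\varepsilon_1(T)}(T)$, $T^{\mathsf{low}}=F_1^{\,\varphi_1(T)}(T)$. Writing $T=F_1^{\,b}(T^{\mathsf{high}})$ with $b=\varepsilon_1(T)$ and using $\eta F_1 = E_1\eta$ and $\eta(T^{\mathsf{high}})=T^{\mathsf{low}}$ from Proposition \ref{defSchu}, one gets $\eta(T)=E_1^{\,b}(T^{\mathsf{low}})$, which is the reflection of $T$ across the middle of the chain. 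Since each downward step changes $k=\langle\mathsf{wt}(T),\alpha_1\rangle$ by $-2$, the chain has length $k(T^{\mathsf{high}})$, so $\sigma_1(T)=F_1^{\,k}(T)$ lands on the same reflected vertex; the cases $k=0$ and $k<0$ are symmetric.

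The separated case is where the real work lies, and I expect it to be the main obstacle. Here I would first fix the ladder structure coming from Lemma \ref{lemadasstrings}(1): label the two equal-length $F_1$-chains $L_0\to\cdots\to L_m$ and $R_0\to\cdots\to R_m$ with solid $1$-edges, connected by the dashed $1'$-edges $F_1'(L_j)=R_j$, with $L_0=T^{\mathsf{high}}$ and $R_m=T^{\mathsf{low}}$. Using the weight-reversal property $k(\eta X)=-k(X)$ and the fact that each of $F_1,F_1'$ lowers $k$ by $2$, I would derive $k(L_0)=m+1$, hence $k(L_j)=m+1-2j$ and $k(R_j)=m-1-2j$. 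The relations $\eta F_1=E_1\eta$, $\eta F_1'=E_1'\eta$ of Proposition \ref{defSchu} then give the $180^\circ$ rotation $\eta(L_j)=R_{m-j}$, $\eta(R_j)=L_{m-j}$, which is the announced double reflection. It remains to evaluate $\sigma_1$ on each $L_j$ (where $F_1'$ is defined) and each $R_j$ (where $F_1'=\emptyset$), running through the sign of $k$ in Definition \ref{def:crystalreflection}. In each branch the computation collapses to a single identity such as $F_1'F_1^{\,k-1}(L_j)=F_1'(L_{m-j})=R_{m-j}$ or $E_1^{\,-k-1}E_1'(R_j)=E_1^{\,2j-m}(L_j)=L_{m-j}$, matching $\eta$. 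The delicate points, and the ones I would treat most carefully, are verifying that the exponents $k-1$, $-k+1$, $k+1$, $-k-1$ are nonnegative in their respective ranges and that the intermediate vertices exist, and confirming that the fixed-point branch ($k=0$ with $F_1=F_1'=\emptyset$) never occurs in a genuine separated string, since its lowest weight $R_m$ has $k=-(m+1)\neq 0$.
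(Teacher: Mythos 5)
Your proof is correct, but it takes a genuinely different route from the paper's. You pin down $\eta$ on each $\{i',i\}$-component abstractly, using only the intertwining relations and weight reversal of Proposition \ref{defSchu} together with the ladder structure of separated and collapsed strings from Lemma \ref{lemadasstrings}: this forces $\eta(L_j)=R_{m-j}$, $\eta(R_j)=L_{m-j}$ (the $180^\circ$ rotation), and then a case check on the sign of $k$ in Definition \ref{def:crystalreflection} shows $\sigma_1$ lands on the same vertex; your bookkeeping of $k(L_j)=m+1-2j$, $k(R_j)=m-1-2j$ and of the exponents is right, and your observation that the trivial branch $k=0$, $F_1=F_1'=\emptyset$ cannot occur inside a genuine separated string closes the last case. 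The paper instead proceeds through word combinatorics: by coplacity it reduces to straight-shaped tableaux on $\{1,2\}'$ (which have at most two rows), uses the characterization of $\eta(T)$ as the unique tableau dual equivalent to $T$ and shifted Knuth equivalent to $\mathsf{c}_1(T)$, reduces via the detaching operator $\mathsf{r}$ (Lemma \ref{sigmadetached}) to shape $(m,1)$, and then exhibits explicit Knuth-move sequences (Lemmas \ref{lemam1}, \ref{lemadosas}, \ref{lemacasomaissimples}, Corollary \ref{equivrev1}, Proposition \ref{equivrev2}) connecting $w(\mathsf{c}_1(T))$ to $w(\sigma_1(T))$. The paper explicitly acknowledges after the theorem statement that a proof ``by directly verifying the conditions on Proposition \ref{defSchu}'' is possible; yours is essentially that proof, phrased via the uniqueness in that proposition. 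Your approach is shorter, avoids all word-level computation, and would transfer to any crystal with the same string axioms; the paper's buys explicit Knuth equivalences and auxiliary lemmas about two-row shifted tableaux that are reused elsewhere (e.g.\ Lemma \ref{sigmadetached} in the Appendix). One small point to make explicit if you write this up: the identification of $\sigma_i$ on $\mathcal{C}$ with $\sigma_1$ on the relabelled component requires noting that $k=\langle\mathsf{wt}(T),\alpha_i\rangle$ depends only on the letters $\mathbf{i},\mathbf{i+1}$, so the whole of Definition \ref{def:crystalreflection} is insensitive to the relabelling; this is immediate but is the hinge of the reduction.
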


It suffices to prove this result for tableaux on the primed alphabet of two adjacent letters and we consider it to be $\{1,2\}'$, to simplify the notation. Moreover, the raising and lowering operators are coplactic, thus $\sigma_1$ is also coplactic. Hence, it suffices to prove the result for tableaux of straight shape. We remark that such tableaux have at most two rows. Furthermore, $T$ and $\sigma_1(T)$ are in the same $1$-string (which, in particular, is a connected component), hence by Proposition \ref{compdual}, $T$ and $\sigma_1(T)$ are shifted dual equivalent. It remains to show that $\mathsf{c}_1(T)$ and $\sigma_1(T)$ are shifted Knuth equivalent. We remark that another proof may be done by directly verifying the conditions on Proposition \ref{defSchu}. The one we choose to present highlights some of the properties of straight-shaped tableaux with at most two rows. First, we introduce some technical results on shifted Knuth equivalence.

\begin{lema}\label{lemam1}
Let $a_1, \ldots, a_n, b_1, \ldots, b_m, c \in [n]'$, with $m,n \geq 1$.
\begin{enumerate}
\item If $b_m < \ldots < b_1 < c < a_1 < \ldots < a_n$ in standardization ordering, then $$c a_1 \ldots a_n b_1 \ldots b_m \equiv_k c b_1 \ldots b_m a_1 \ldots a_n.$$

\item If $a_1 < \ldots < a_n < c < b_m < \ldots < b_1$ in standardization ordering, then $$ a_1 \ldots a_n b_1 \ldots b_m c\equiv_k  b_1 \ldots b_m a_1 \ldots a_n c.$$
\end{enumerate}
\end{lema}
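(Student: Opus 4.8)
The plan is to prove both parts by induction on $m$, the number of $b$'s, reducing each to a single ``bubbling'' of the letter $b_1$ across the increasing run $a_1 \ldots a_n$ by repeated Knuth moves of one fixed type, and then invoking Lemma \ref{congrdir} to reattach the pivot. Since the hypothesis fixes the total standardization order of the letters $a_1, \ldots, a_n, b_1, \ldots, b_m, c$, and since the moves (K1) and (K2) leave $\mathsf{std}$ unchanged, this order is constant throughout, so every move performed is a legitimate Knuth move on the letters in question.

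For part (1) I first slide $b_1$ leftwards past $a_1 \ldots a_n$. Because $b_1 < a_{j-1} < a_j$ for all $j$, each consecutive triple $a_{j-1} a_j b_1$ has the shape (mid)(large)(small), which is the left-hand side $bca$ of a (K1) move; rewriting it as $a_{j-1} b_1 a_j$ moves $b_1$ one step left, and the final step $c a_1 b_1 \mapsto c b_1 a_1$ is again (K1), using $b_1 < c < a_1$. After $n$ applications of (K1) the word becomes $c\, b_1 a_1 \ldots a_n b_2 \ldots b_m$. The suffix $u = b_1 a_1 \ldots a_n b_2 \ldots b_m$ now satisfies the hypotheses of part (1) with $b_1$ in the role of the pivot and only $m-1$ trailing letters, since $b_m < \cdots < b_2 < b_1 < a_1 < \cdots < a_n$; by the induction hypothesis $u \equiv_k b_1 b_2 \ldots b_m a_1 \ldots a_n$ using only (K1). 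As this equivalence uses no (S1) or (S2) move, Lemma \ref{congrdir} allows me to prepend $c$, giving $c u \equiv_k c\, b_1 \ldots b_m a_1 \ldots a_n$, the desired right-hand side. The base case $m=1$ is exactly the first bubbling, with empty suffix.

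Part (2) is handled symmetrically, now with the pivot $c$ at the right end and using (K2). Here I bubble $b_1$ all the way to the front: the first move rewrites $a_n b_1 x$ as $b_1 a_n x$ (where $x$ is the letter immediately right of $b_1$, either $b_2$ or, when $m=1$, $c$), and each subsequent move rewrites $a_{j-1} b_1 a_j$ as $b_1 a_{j-1} a_j$. In every case the triple has the shape (small)(large)(mid), the left side $acb$ of (K2), because $a_{j-1} < a_j < b_1$ and $a_n < x < b_1$. After reaching $b_1 a_1 \ldots a_n b_2 \ldots b_m c$, the factor $a_1 \ldots a_n b_2 \ldots b_m c$ satisfies part (2) with $m-1$ trailing letters, so by induction it is Knuth equivalent, via (K2) only, to $b_2 \ldots b_m a_1 \ldots a_n c$; prepending $b_1$ through Lemma \ref{congrdir} completes the inductive step.

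The main difficulty is bookkeeping rather than conceptual: I must check that at each stage the triple to which (K1) or (K2) is applied is genuinely consecutive and genuinely satisfies the strict inequality required by the move. The one delicate point is that these inequalities are phrased ``in standardization ordering,'' which a priori depends on the whole word and could shift as letters are permuted; I resolve this by noting that (K1) and (K2) correspond to ordinary Knuth moves on the standardized word and therefore preserve $\mathsf{std}$ (compatibility of standardization with shifted \textit{jeu de taquin}, Lemma \ref{standard}), so the total order assumed in the hypothesis is fixed under every move. Confining all moves to types (K1) and (K2) is exactly what keeps Lemma \ref{congrdir} applicable when the pivot is reattached, so I take care never to invoke (S1) or (S2).
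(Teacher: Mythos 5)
Your proof is correct, and it is essentially the mirror image of the one in the paper. The paper inducts on $n$ (the number of $a$'s): it bubbles the largest letter $a_{n}$ \emph{rightward} past $b_1\ldots b_m$ by repeated (K1) moves, applies the induction hypothesis to the prefix $c a_1\ldots a_{n-1}b_1\ldots b_m$, and then reattaches $a_{n}$ at the \emph{end} of the word, which is covered by the unconditional concatenation Lemma \ref{congresq}. You instead induct on $m$, bubble $b_1$ \emph{leftward} past $a_n,\ldots,a_1,c$, and reattach the pivot at the \emph{front}, which forces you through the conditional Lemma \ref{congrdir} and hence obliges you to carry the strengthened inductive statement that the equivalence is realized by (K1) (resp.\ (K2)) moves only. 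You discharge that obligation correctly, and in fact this stronger form of the lemma is exactly what the paper needs downstream (e.g.\ in Lemma \ref{lemacasomaissimples} and Proposition \ref{equivrev2}, where Lemma \ref{lemam1} is fed back into Lemma \ref{congrdir}), so your bookkeeping is not wasted; the paper's right-append route is just marginally lighter because Lemma \ref{congresq} imposes no side condition. Your remark that the standardization ordering of the letters is invariant under (K1)/(K2) moves is the right thing to say and is also left implicit in the paper's own argument.
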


\begin{proof}
We prove the first part by induction on $n$, the second part is proved similarly. If $n=1$, we have
	\begin{alignat*}{2}
	\underline{c a_1 b_1} b_2 \ldots b_m &\equiv_k c \underline{b_1 a_1 b_2} \ldots b_m &\qquad\text{(K1)}\\
	&\equiv_k c b_1 \underline{b_2 a_1 b_3} \ldots b_m &\text{(K1)} \\
	&\ldots\\
	&\equiv_k c b_1 \ldots \underline{b_{m-1} a_1 b_m} &\text{(K1)}\\
	&\equiv_k c b_1 \ldots b_{m-1} b_m a_1 &\text{(K1)}
	\end{alignat*}
	
Now suppose the result is true for some $n \geq 1$ and let $a_{n+1} > a_n$ in standardization ordering. Then,
	\begin{alignat*}{2}
	c a_1 \ldots  \underline{a_n a_{n+1} b_1} b_2 \ldots b_m &\equiv_k c a_1 \ldots a_n \underline{b_1 a_{n+1} b_2} b_3 \ldots b_m &\text{(K1)}\\
	&\ldots\\
	&\equiv_k c a_1 \ldots a_n b_1 \ldots \underline{b_{m-1} a_{n+1} b_m} &\text{(K1)}\\
	&\equiv_k c a_1 \ldots a_n b_1 \ldots b_m a_{n+1} &\text{(K1)}\\
	&\equiv_k c b_1 \ldots b_m a_1 \ldots a_n a_{n+1} &\text{{\footnotesize Induction hypothesis + Lemma \ref{congrdir}}}
	\end{alignat*}
\end{proof}

\begin{lema}\label{lemadosas}
Let $a \in [n]'$. Then, for any $m \geq 1$, $a (a')^m \equiv_k a^{m+1}.$
\end{lema}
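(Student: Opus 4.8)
The plan is to prove the identity $a(a')^m \equiv_k a^{m+1}$ by induction on $m$, using the special Knuth moves (S1) and (S2) that act on the first two letters of a word, together with the congruence results already established. The base case $m=1$ is exactly the move (S2): the word $aa'$ has $a$ and $a'$ as its first two letters, so $aa' \longleftrightarrow aa$ is a direct application of (S2), giving $a a' \equiv_k a^2$.

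For the inductive step, assume $a(a')^m \equiv_k a^{m+1}$ and consider the word $a(a')^{m+1}$. First I would split off the leading pair: the word is $a a' (a')^m$, and the first two letters are $a$ and $a'$, so (S2) lets me rewrite $aa' \longleftrightarrow aa$, yielding $a(a')^{m+1} \equiv_k a a (a')^m = a^2 (a')^m$. Now I want to push the second $a$ past the block of primed letters, or more naturally, strip a trailing $(a')$ and apply the induction hypothesis. The cleanest route is to observe that $a^2(a')^m$ and $a \cdot [a(a')^m]$ coincide as strings, so after establishing $a(a')^m \equiv_k a^{m+1}$ I would like to prepend the letter $a$ to both sides. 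This is precisely where Lemma \ref{congrdir} is needed: the equivalence $a(a')^m \equiv_k a^{m+1}$ must be realizable using only the moves (K1) and (K2), so that left-multiplication by $a$ is permitted.

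The main obstacle, therefore, is that Lemma \ref{congrdir} only licenses prepending a letter when the equivalence is witnessed by (K1) and (K2) alone, whereas my derivation of $a(a')^m \equiv_k a^{m+1}$ fundamentally relies on (S2), which is not allowed under that hypothesis. So a careful reorganization is required. The cleaner strategy I would pursue is to avoid prepending altogether and instead argue directly: starting from $a(a')^{m+1}$, apply (S2) once to the leading pair to get $a^2(a')^m$, then use (S1) on the first two letters (both equal to $a$, trivially) is unhelpful, so instead I would rectify. The most reliable approach is to compare rectifications via Theorem \ref{jdtknuth}: I would compute $\mathsf{rect}(a(a')^m)$ directly by showing both $a(a')^m$ and $a^{m+1}$ rectify to the same single-row straight-shaped tableau. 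Indeed, the diagonally-shaped tableau $D_{a(a')^m}$ should rectify, via the exception slides governing the diagonal (the (S2)-type slide $\begin{ytableau} {} & {a'}\\ \none & a \end{ytableau} \longleftrightarrow \begin{ytableau} a & a \end{ytableau}$), to the single row $a^{m+1}$, which is the canonical form of the constant word. Since rectification is well-defined and independent of the slide sequence, this gives $a(a')^m \equiv_k a^{m+1}$ immediately.

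Concretely, I expect the induction-free argument to be preferable: the word $a(a')^m$ has weight $(0,\ldots, m+1, \ldots, 0)$ concentrated in position corresponding to $a$, and any straight-shaped tableau of that weight on the single letter $\mathbf{a}$ must, by the semistandardness conditions (at most one unprimed $a$ per column, at most one $a'$ per row), be the single row $a^{m+1}$ in canonical form. Hence $\mathsf{rect}(a(a')^m) = \mathsf{rect}(a^{m+1})$, and Theorem \ref{jdtknuth} yields the claim. The subtle point to verify carefully is the behaviour of the diagonal exception slides when the primed letters accumulate on the main diagonal, ensuring the primes are correctly absorbed into unprimed entries upon rectification; this is the step I would check most attentively, but it follows from the stated sliding rules.
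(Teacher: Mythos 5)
Your final argument is correct, but it is a genuinely different route from the paper's. The paper proves the lemma by induction on $m$ entirely at the level of Knuth moves: it writes $a(a')^{m+1}=a(a')^m\cdot a'$, appends $a'$ on the \emph{right} of the induction hypothesis (this is Lemma \ref{congresq}, which — unlike Lemma \ref{congrdir} — carries no restriction on which moves witness the equivalence), then commutes the trailing $a'$ leftwards past the block of $a$'s using Lemma \ref{lemam1}, and finishes with a single application of (S2). You correctly diagnosed that prepending is blocked because the equivalence needs (S2) and Lemma \ref{congrdir} only licenses (K1)/(K2)-witnessed equivalences, but you did not notice the right-append workaround that the paper actually uses; instead you switched to a rectification argument. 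That argument is sound: $D_{a(a')^m}$ and $D_{a^{m+1}}$ are legitimate diagonally-shaped tableaux, rectification is well defined and weight-preserving, and a straight-shaped shifted semistandard tableau all of whose entries are $\mathbf{a}$ must be a single row (any shape with a second row would force two entries of column $2$, hence an $a'$ in position $(1,2)$, hence two $a'$'s in the first row), so both rectify to the one-row tableau $a^{m+1}$ and Theorem \ref{jdtknuth} gives $a(a')^m\equiv_k a^{m+1}$. What each approach buys: yours is shorter and more conceptual, outsourcing the combinatorics to Theorem \ref{jdtknuth}; the paper's stays inside the elementary Knuth-move calculus that the surrounding lemmas (\ref{lemam1}, \ref{congresq}, \ref{congrdir}) are built on, which keeps visible exactly which moves are used — a distinction the paper cares about elsewhere when it wants to prepend letters.
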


\begin{proof}
For $m=1$ the result corresponds to the (S2) relation. Suppose the result holds for some $m \geq 1$. Then,
\begin{alignat*}{2}
a (a')^{m+1} &= a (a')^m a' &\\
&\equiv_k (a)^{m+1} a' &\quad\text{{\footnotesize Induction hypothesis + Lemma \ref{congrdir}}}\\
&= a (a)^{m} a'\\
&\equiv_k \underline{a a'} (a)^m &\text{{\footnotesize Lemma \ref{lemam1}}}\\
&\equiv_k aa (a)^m &\text{(S2)}\\
&= (a)^{m+2}.
\end{alignat*}
\end{proof}

In order to prove that $\mathsf{c}_n (T)$ and $\sigma_1(T)$ are shifted Knuth equivalent, we will present sequences of Knuth moves between their words. We have remarked that $T$ has at most two rows. The case where it has one row is in the following result.

\begin{prop}\label{prop:sigmaevac1}
Let $T$ be a shifted semistandard tableau with one row, filled with in the alphabet $\{1,2\}'$. Then,
$\sigma_1 (T) = \mathsf{evac}(T)$.
\end{prop}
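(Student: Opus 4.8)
The plan is to reduce everything to an explicit description of one-row tableaux and then read off both sides. Working over $\{1,2\}'$, as the proposition allows, a one-row shifted semistandard tableau in canonical form carries no primes: the only entries that could be primed are the first $\mathbf{1}$ and the first $\mathbf{2}$, and canonical form unprimes exactly these. Hence $T = 1^a 2^b$ for some $a,b \geq 0$ with $\mathsf{wt}(T) = (a,b)$, and conversely the one-row canonical tableau of a prescribed weight is unique. I would first identify $\mathsf{evac}(T)$: by Proposition \ref{Jshape} evacuation preserves the (one-row) shape and, as recorded immediately after that proposition, reverses the weight, so $\mathsf{evac}(T)$ is the one-row canonical tableau of weight $(b,a)$, namely $1^b 2^a$. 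Thus the statement reduces to showing $\sigma_1(T) = 1^b 2^a$.

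Next I would pin down the type of the $1$-string containing $T$. Applying $E_1'$ to the reading word via Proposition \ref{computEF} changes the leftmost $2$ into a $1$, so $E_1'$ is undefined precisely when there is no $2$; hence the unique highest weight element of this $1$-string is the all-ones tableau $1^{a+b}$, reached from $T$ by iterating $E_1'$. Its weight has second coordinate $0$, so Proposition \ref{axb1} guarantees that $F_1 = F_1'$ there and that the whole $1$-string is collapsed. This is the crucial structural fact: on a collapsed string the primed and unprimed operators coincide, and Definition \ref{def:crystalreflection} simplifies to $\sigma_1(T) = F_1^{k}(T)$ for $k>0$, $\sigma_1(T)=T$ for $k=0$, and $\sigma_1(T) = E_1^{-k}(T)$ for $k<0$, where $k = \langle \mathsf{wt}(T), \alpha_1\rangle = a-b$.

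It then remains to iterate the operators. On $T = 1^a 2^b$, Proposition \ref{computEF} changes the last $1$ into a $2'$, which canonicalizes to $2$ since it is the first $\mathbf{2}$; hence $F_1 = F_1'$ sends $1^a 2^b \mapsto 1^{a-1}2^{b+1}$, and dually $E_1 = E_1'$ sends $1^a 2^b \mapsto 1^{a+1}2^{b-1}$. Applying $F_1$ exactly $k=a-b$ times when $a>b$, doing nothing when $a=b$, or applying $E_1$ exactly $b-a$ times when $a<b$ produces $1^{b}2^{a}$ in every case. Therefore $\sigma_1(T) = 1^b 2^a = \mathsf{evac}(T)$, as required.

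The only real subtlety, and the step I would treat most carefully, is the canonical-form bookkeeping: verifying that the one-row string is genuinely collapsed (so that the simplified $\sigma_1$ formula is available) and that each application of $F_1'$ truly turns the last $1$ into a $2$ after canonicalization, rather than leaving a stray prime. Once the collapsed structure is established, the rest is a short induction on $|k|$.
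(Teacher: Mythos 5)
Your proof is correct, but it takes a genuinely different route from the paper's. The paper proves the proposition by invoking Haiman's characterization of $\mathsf{evac}(T)$ as the unique tableau shifted dual equivalent to $T$ (which is automatic here, since $T$ and $\sigma_1(T)$ lie in the same connected component) and shifted Knuth equivalent to $\mathsf{c}_n(T)$; it then verifies the latter by writing down $w(\mathsf{c}_n(T)) = 2(2')^{a-1}1(1')^{b-1}$ and exhibiting an explicit chain of shifted Knuth moves to $1^b2^a$ via Lemmas \ref{lemam1}, \ref{lemadosas}, \ref{congresq} and \ref{congrdir}. You instead identify both sides outright: $\mathsf{evac}(T)$ is forced to be $1^b2^a$ by shape-preservation, weight-reversal and the uniqueness of the one-row canonical tableau of a given weight, while $\sigma_1(T)$ is computed by observing that the highest weight element of the $1$-string is $1^{a+b}$, so Proposition \ref{axb1} makes the string collapsed and the simplified formula $\sigma_1 = F_1^k$, $\mathrm{id}$, or $E_1^{-k}$ applies. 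Your argument is more elementary and avoids the Knuth-move bookkeeping entirely; the price is that it is special to one row, whereas the paper's Knuth-equivalence strategy is the same one it must deploy anyway for the two-row case (Corollary \ref{equivrev1} and Proposition \ref{equivrev2}), where an explicit closed-form computation of both sides is no longer available. Your attention to the canonical-form details (no stray primes in a one-row tableau, and the $2'$ produced by $F_1'$ canonicalizing to $2$) is exactly the right place to be careful, and those checks go through.
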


\begin{proof}
As stated before, if suffices to show that $\sigma_1 (T)$ is shifted Knuth equivalent to $\mathsf{c}_n(T)$. Suppose that $\mathsf{wt}(T) = 1^a$, for $a \geq 1$. If $a=1$, then $w(\mathsf{c}_n(T)) = 2 = w(F_1 (T)) = w(\sigma_1 (T))$. If $a > 1$, then $w(\mathsf{c}_n(T)) = 2(2')^{a-1}$ and $w(\sigma_1 (T)) = w(F_1^a (T)) = 2^a$. Hence, by Lemma \ref{lemadosas}, $w(\mathsf{c}_n(T)) \equiv_k w(\sigma_1 (T))$. Now suppose that $w(T) = 1^a 2^b$, with $a,b \geq 1$. Then, $w(\sigma_1(T)) = 1^b 2^a$ and $w(\mathsf{c}_n(T)) = 2(2')^{a-1} 1 (1')^{b-1}$. There are two cases:
	\begin{description}
	\item[Case 1] If $a=1$, we have
		\begin{align*}
 		\underline{21} (1')^{b-1} &\equiv_k 12 (1')^{b-1} &\text{(S1)}\\
 		&\equiv_k 1 (1')^{b-1} 2 & \text{Lemma \ref{congrdir}}\\
 		&\equiv_k 1^b 2. &\text{Lemmas \ref{congrdir} and \ref{lemadosas}}
		\end{align*}
		
	\item[Case 2] If $a > 1$, then we have
		\begin{align*}
		2(2')^{a-1} 1 (1')^{b-1} &\equiv_k 2^a 1 (1')^{b-1} &\text{Lemmas \ref{lemadosas} and \ref{congrdir}}\\
		&= 2 2^{a-1} 1 (1')^{b-1} &\\
		&\equiv_k \underline{2 1} (1')^{b-1} 2^{a-1} &\text{Lemma \ref{lemam1}}\\
		&\equiv_k 1 2 (1')^{b-1} 2^{a-1} & \text{(S1)}\\
		&\equiv_k 1 (1')^{b-1} 2 2^{a-1} & \text{Lemmas \ref{lemam1} and \ref{congrdir}}\\
		&\equiv 1^b 2^a. & \text{Lemmas \ref{lemadosas} and \ref{congrdir}}
		\end{align*}
	\end{description}
\end{proof}

If $T$ has two rows, we remark that it suffices to verify the case where the second row has only one box. To make this statement rigorous, we need to introduce some notation. A shifted semistandard tableau $T$ is called \emph{detached} if its main diagonal has exactly one box. Then, we may define the following operator on shifted semistandard tableaux:

$$\mathsf{r}(T) =
\begin{cases*}
T & if $T$ is detached\\
\widehat{T} & otherwise
\end{cases*}$$
where $\widehat{T}$ is obtained from $T$ removing its main diagonal and shifting every box one unit to the left (so that its second diagonal becomes the main diagonal).

\begin{ex}
If $T = \begin{ytableau}
1 & 1 & 1 & 1 & 2' & 2\\
\none & 2 & 2 & 2
\end{ytableau}$, then $\mathsf{r}(T) = \begin{ytableau}
1 & 1 & 1 & 2' & 2\\
\none & 2 & 2
\end{ytableau}$,
 $\mathsf{r} ^2(T) = \begin{ytableau}
1 & 1 & 2' & 2\\
\none & 2
\end{ytableau}$, and 
$\mathsf{r}^m (T) = \begin{ytableau}
1 & 2 & 2
\end{ytableau}$, for $m \geq 3$.
\end{ex}

The following lemma states that, if $T$ is not detached and its $(l+1)$-th diagonal is the first with one box, then $\sigma_1 (T)$ is determined by $\mathsf{r}^{l-1} (T)$, i.e., one may temporarily remove the first $l-1$ diagonals with two elements, compute $\sigma_1$ on the remaining tableau, and then place the diagonals back.

\begin{lema}\label{sigmadetached}
Let $T$ be a shifted semistandard tableau of straight shape, with two rows, filled in the alphabet $\{1,2\}'$. Let $l$ be such that $\{(1,l),(2,l+1)\}$ and $\{(1,l+1)\}$ are adjacent diagonals of $T$ with two and one box, respectively. Then,
$$\mathsf{r}^{l-1} \sigma_1 (T) = \sigma_1 \mathsf{r}^{l-1}(T).$$
\end{lema}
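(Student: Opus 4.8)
The plan is to understand precisely how the operator $\mathsf{r}$ interacts with the crystal operators $F_1$, $F_1'$, $E_1$, $E_1'$, and then to show that $\sigma_1$, being built from these, commutes with $\mathsf{r}^{l-1}$ under the stated hypothesis. The key observation I would establish first is that if $T$ is a straight-shaped two-row tableau whose first $l$ diagonals are the ``two-box'' diagonals and whose $(l+1)$-th diagonal has one box, then removing a full two-box diagonal from the main diagonal does not change which letter the crystal operators act on. Concretely, the main diagonal of such a $T$ consists of a box in row $1$ and a box in row $2$; in the alphabet $\{1,2\}'$ semistandardness forces these to be $1$ (possibly $1'$ in a non-canonical representative) over $2$. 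The action of $F_1, F_1', E_1, E_1'$ is determined by the reading word, and specifically (via Propositions \ref{computEF} and the two-row description) by the position of the last $1$ relative to the last $2'$, and by the relevant lattice-walk data. The content of the lemma is that these positions are governed by the boxes strictly to the right of the first one-box diagonal, so the leftmost two-box diagonal plays no role.

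First I would make precise the effect of $\mathsf{r}$ on reading words. Writing $w(T)$ and comparing it with $w(\mathsf{r}(T))$, removing the main diagonal (a $1$ from the first row and a $2$ from the second row, under the hypothesis that it is a genuine two-box diagonal) deletes one leading $2$ from the bottom-row contribution and one $1$ from the top-row contribution of the reading word. I would verify that this deletion changes $\mathsf{wt}$ by $\alpha_1 = e_1 - e_2$ in a controlled way and, crucially, does not alter the final $F_1$-critical substring nor the position of the last $1$ relative to the last $2'$, since these are determined by the rightmost occurrences, which lie in the later (rightmost) diagonals untouched by $\mathsf{r}$. This is where the hypothesis that the $(l+1)$-th diagonal is the \emph{first} one-box diagonal is used: it guarantees that the diagonals we strip are full two-box diagonals sitting at the left, away from the decisive rightmost entries.

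With that established, I would argue by cases exactly matching Definition \ref{def:crystalreflection}. For each branch the value $k = \langle \mathsf{wt}(T), \alpha_1 \rangle$ and the truth of ``$F_1'(T) = \emptyset$'' must be compared for $T$ and $\mathsf{r}(T)$. Since $\mathsf{r}$ shifts $k$ by a fixed amount (removing one $1$ and one $2$ leaves $k = \mathsf{wt}_1 - \mathsf{wt}_2$ unchanged, as both coordinates drop by $1$), the value of $k$ is in fact preserved by $\mathsf{r}$, and likewise the condition $F_1'(T) = \emptyset$ is preserved because it depends only on the rightmost entries. Hence $T$ and $\mathsf{r}(T)$ fall into the same case of the definition, and $\sigma_1$ is computed by the same composition of operators. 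It then remains to check that each such operator commutes with $\mathsf{r}$: I would show $\mathsf{r} F_1 = F_1 \mathsf{r}$, $\mathsf{r} F_1' = F_1' \mathsf{r}$, and the dual statements, under the two-box-diagonal hypothesis, which reduces to the word-level statement from the previous paragraph. Iterating $\mathsf{r}$ exactly $l-1$ times (stripping all but one two-box diagonal) then yields $\mathsf{r}^{l-1}\sigma_1(T) = \sigma_1 \mathsf{r}^{l-1}(T)$.

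The main obstacle I anticipate is the bookkeeping in the exceptional branches of Definition \ref{def:crystalreflection}, particularly the second and sixth cases where $F_1'(T) = \emptyset$ and the operator involves a change at the diagonal (the ``exception slide'' rule where a $2'$ on the diagonal becomes unprimed). Here one must check that $\mathsf{r}$ genuinely commutes with the operator even though the diagonal behaviour of the crystal operators is subtle; I would handle this by passing to standardizations and using that $\mathsf{std}$ commutes with shifted \textit{jeu de taquin} (Lemma \ref{standard}) together with the coplacticity of the operators, so that the verification reduces to the already-understood type $A$ behaviour of the underlying unprimed crystal, where the commutation with removing a leftmost column is standard. The collapsed-versus-separated distinction should not cause trouble, since the argument is purely at the level of reading words and weights and the same case analysis covers both.
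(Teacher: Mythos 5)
Your strategy is sound and is genuinely different in structure from the paper's. The paper first disposes of $Y_{\nu}$ and $\mathsf{evac}(Y_{\nu})$ by weight-uniqueness (Propositions \ref{yamunique} and \ref{Yevanunique}), and then, for the remaining tableaux, writes the reading word explicitly in the form $2^a 1^{a+1} 1^b \mathbf{2}\, 2^c$ (observing that $k=b-c$ does not depend on $a$) and computes $\sigma_1(T)$ and $\sigma_1 \mathsf{r}^{a-1}(T)$ directly in a representative branch of Definition \ref{def:crystalreflection}, declaring the other branches analogous. You instead factor the problem: show that $\mathsf{r}$ preserves the case-selection data ($k$ is unchanged because the stripped diagonal contributes one $1$ and one $2$, and definedness of $F_1'$ depends only on the rightmost entries of the first row) and that $\mathsf{r}$ commutes with each of $F_1, F_1', E_1, E_1'$ individually, whence commutation with $\sigma_1$ follows formally. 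Your route buys uniformity --- it treats $Y_{\nu}$, $\mathsf{evac}(Y_{\nu})$ and all seven branches of Definition \ref{def:crystalreflection} at once, and isolates the one genuinely combinatorial fact (invariance of the final critical substring under stripping a full two-box diagonal) --- at the cost of having to track the lattice walk globally: the location conditions in the critical-substring table depend on the whole prefix of the word, not just on ``the rightmost occurrences,'' so you must actually verify (as a short computation shows) that the walks of $w(T)$ and $w(\mathsf{r}(T))$ agree at every step past the stripped diagonal, the deleted leading $2$ and the deleted ``descending'' $1$ cancelling each other. The paper's route avoids any discussion of critical substrings by brute-force word computation, but proves less per line.

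One caveat: the fallback you propose for the exceptional branches --- passing to standardizations and ``reducing to the already-understood type $A$ behaviour, where the commutation with removing a leftmost column is standard'' --- is not a legitimate step as stated. The operator $\mathsf{r}$ removes a \emph{diagonal}, not a column, and the priming behaviour at the main diagonal is precisely the shifted-specific phenomenon that has no type $A$ counterpart; moreover $\mathsf{r}$ is not a \textit{jeu de taquin} operation, so coplacticity does not transfer the question anywhere. Fortunately you do not need this fallback: your main lattice-walk argument, carried out for the representatives (note that the set of representatives changes under $\mathsf{r}$ only in \emph{which} box realizes the first $\mathbf{1}$ and the first $\mathbf{2}$, both of which remain at the start of their rows), already covers every branch. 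With that paragraph replaced by the explicit walk comparison, the proof is complete.
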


\begin{proof}
If $T=Y_{\nu}$, for $\nu = (\nu_1, \nu_2)$, then $wt (\sigma_1 (Y_{\nu})) = (\nu_2,\nu_1)$. Consequently, $\sigma_1 (Y_{\nu}) = \mathsf{evac}(Y_{\nu})$. Then, $\mathsf{r}^{\nu_2 -1} \mathsf{evac} (Y_{\nu}) = \mathsf{evac}(Y_{\nu^0})$, where $\nu^0 = (\nu_1-\nu_2,1)$. Similarly, $\mathsf{r}^{\nu_2-1} (Y_{\nu}) = Y_{\nu^0}$, and using the same argument with the weight, $\sigma_1 \mathsf{r}^{\nu_2-1} (Y_{\nu}) = \mathsf{evac}(Y_{\nu^0})$. The proof for $\mathsf{evac}(Y_{\nu})$ is similar.

Suppose now that $T$ is neither $Y_{\nu}$ nor $\mathsf{evac}(Y_{\nu})$. Suppose that the word of $T$ is given by $w(T)=2^a 1^{a+1} 1^b \mathbf{2} 2^c$, with $a \geq 1$ and $b, c \geq 0$. Then, $\mathsf{wt}(T)=(a+b+1,a+c+1)$ and considering Definition \ref{def:crystalreflection}, we have $k = (a+b+1)-(a+c+1) = b-c$ (note that it does not depend on $a$). We show the case when $k > 0$ and $\mathbf{2} =2$. The proof for the other cases is analogous. If $\mathbf{2} = 2$, then $F_1 (T) \neq \emptyset$ and we have
	\begin{align*}
	\sigma_i(T) &= F_1' F_1^{b-c-1} (T)\\
	&= F_1' F_1^{b-c-1} (2^a 1^{a+1} 1^b 2^{c+1})\\
	&= F_1' F_1^{b-c-1} (2^a 1^{a+1} 1^{b -(b-c-1)} 2^{(c+1)+(b-c-1)})\\
	&= F_1' (2^a 1^{a+1} 1^{c+1} 2^b)\\
	&= 2^a 1^{a+1} 1^{c} 2' 2^b
	\end{align*}
	and so, $\mathsf{r}^{a-1} \sigma_1(T) = 2 1^2 1^c 2' 2^b$.
		
	On the other hand, we have $\mathsf{r}^{a-1}(T) = 2 1^2 1^b 2 2^c$ and so
	\begin{align*}
	\sigma_1 \mathsf{r}^{a-1}(T) &= F_1' F_1^{b-c-1} (2 1^2 1^b 2^{c+1})\\
	&= F_1' (2 1^2 1^{c+1} 2^{b})\\
	&= 2 1^2 1^c 2' 2^b.
	\end{align*}

\end{proof}

In what follows, we consider $T$ to be of shape $\nu = (m, 1)$, i.e., such that its second row has only one box. To show that $\mathsf{c}_1 (T)$ is shifted Knuth equivalent to $\sigma_1(T)$ is equivalent to show that $\mathsf{rect}(\mathsf{c}_1 (T)) = \sigma_1(T)$, since $T$ is of straight shape. Moreover, we ask for $T$ to be neither $Y_{\nu}$ nor $\mathsf{evac}(Y_{\nu})$, since the result for those cases is already proved. We have the following lemma, which is easy to prove.

\begin{lema}
Let $\nu = (m,1)$, for $m \geq 3$. Let $T \in \mathsf{ShST}(\nu,2)$ such that $T \neq Y_{\nu}, \mathsf{evac}(Y_{\nu})$ and let $k=\varepsilon_1(T)$. Then,
	\begin{enumerate}
	\item If $T = \begin{ytableau}
	1 & 1 & 1 & {\ldots} & 1 & {2'} & 2 & {\ldots} & 2\\
	\none & 2
	\end{ytableau}$, with $\mathsf{wt}(T) = (m-k,k+1)$, then
	$$\sigma_1 (T) = \begin{ytableau}
	1 & 1 & 1 & {\ldots} & 1 & 2 & 2 & {\ldots} & 2\\
	\none & 2
	\end{ytableau}$$
	with $\mathsf{wt} (\sigma_1(T)) = (k+1,m-k)$.
	
		\item If $T = \begin{ytableau}
	1 & 1 & 1 & {\ldots} & 1 & {2} & 2 & {\ldots} & 2\\
	\none & 2
	\end{ytableau}$, with $\mathsf{wt}(T) = (m-k,k+1)$, then
	$$\sigma_1 (T) = \begin{ytableau}
	1 & 1 & 1 & {\ldots} & 1 & {2'} & 2 & {\ldots} & 2\\
	\none & 2
	\end{ytableau}$$
	with $\mathsf{wt} (\sigma_1(T)) = (k+1,m-k)$.
	\end{enumerate}
\end{lema}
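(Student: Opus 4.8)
The plan is to compute $\sigma_1(T)$ directly from Definition \ref{def:crystalreflection}, after first recording the explicit effect of the four operators $E_1,E_1',F_1,F_1'$ on the two families of tableaux that can occur. Since $T$ has shape $(m,1)$ on $\{1,2\}'$ and the single box on the main diagonal is forced to be an unprimed $2$ in canonical form, at each weight $(m-k,k+1)$ there are exactly the two tableaux displayed in the statement: the Case~1 tableau with a $2'$ in its first row, which I abbreviate $P_k$, with reading word $w(P_k)=2\,1^{m-k}\,2'\,2^{k-1}$, and the Case~2 tableau without a $2'$, which I call $Q_k$, with $w(Q_k)=2\,1^{m-k}\,2^{k}$. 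A short lattice-walk computation on these words shows the endpoint has $y$-coordinate $k$, confirming $\varepsilon_1(T)=k$, and records that the integer driving Definition \ref{def:crystalreflection} is $\langle \mathsf{wt}(T),\alpha_1\rangle=(m-k)-(k+1)=m-2k-1$. I will write $\bar k:=m-2k-1$ to keep it distinct from the $k=\varepsilon_1(T)$ of the statement.

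Next I would tabulate the operator dictionary. By the proposition following Proposition \ref{computEF}, on a two-row straight-shaped tableau $F_1'$ is undefined exactly when a $2'$ is present and otherwise turns the rightmost $1$ into a $2'$, with $E_1'$ dual; hence $F_1'(P_k)=\emptyset$, $E_1'(P_k)=Q_{k-1}$, $F_1'(Q_k)=P_{k+1}$, $E_1'(Q_k)=\emptyset$. For the unprimed operators I would read off the critical-substring rules: on $w(Q_k)$ the final $F_1$-critical substring is the rightmost $1$ (type 3F), which becomes $2$, giving $F_1(Q_k)=Q_{k+1}$; on $w(P_k)$ it is the factor $1\,2'$ (type 1F), which becomes $2'\,2$, giving $F_1(P_k)=P_{k+1}$. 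Both are defined precisely down to the bottom of their chain, and $E_1$ is the inverse. In other words $\mathcal{B}((m,1),2)$ is a single separated $1$-string whose two unprimed chains are $Y_\nu=Q_0,Q_1,\dots,Q_{m-2}$ and $P_1,\dots,P_{m-1}=\mathsf{evac}(Y_\nu)$, joined by the primed rungs $Q_j\xrightarrow{1'}P_{j+1}$.

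With this dictionary in place the statement becomes a telescoping, and I would organize it by the sign of $\bar k$ following the seven cases of Definition \ref{def:crystalreflection}. For Case~2 with $\bar k>0$, for instance, $\sigma_1(Q_k)=F_1'F_1^{\bar k-1}(Q_k)=F_1'(Q_{m-k-2})=P_{m-k-1}$, which is exactly the tableau $1^{k+1}2'2^{m-k-2}/2$ of weight $(k+1,m-k)$ asserted in Case~2; the sub-cases $\bar k=0$ and $\bar k<0$ replace the initial run of $F_1$'s by the appropriate number of $E_1$'s applied after $F_1'(Q_k)=P_{k+1}$, and land on the same $P_{m-k-1}$. Case~1 is entirely symmetric, with $\sigma_1(P_k)=Q_{m-k-1}=1^{k+1}2^{m-k-1}/2$ of weight $(k+1,m-k)$. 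In each case the output weight $(k+1,m-k)$ also follows a priori from Proposition \ref{sigmai}(4), giving an independent check, so only the \emph{identity} of the tableau (i.e.\ whether a $2'$ survives) needs the computation above.

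The main obstacle is purely bookkeeping, concentrated in two spots: correctly distinguishing $F_1$ from $F_1'$ on the tableaux carrying a $2'$, which is the separated-string phenomenon where the $x$-coordinate of the lattice-walk endpoint is split between primed and unprimed steps as in Lemma \ref{lemalenfun}; and the canonical-form identification whereby a newly created $2'$ that is the unique $2$ gets reabsorbed as an unprimed $2$, exactly the subtlety handled in the proof of Proposition \ref{axb1}. Once the operator dictionary and the separated chain structure are established, each of the seven cases collapses to a one-line telescoping along the chains, which is why the verification is routine.
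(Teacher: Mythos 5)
Your computation is correct; the paper states this lemma without proof (``which is easy to prove''), and your argument --- classifying the tableaux of shape $(m,1)$ into the two chains $P_k$ (with a $2'$) and $Q_k$ (without), tabulating $E_1,E_1',F_1,F_1'$ on them via the critical-substring and primed-operator rules, and then evaluating the cases of Definition~\ref{def:crystalreflection} according to the sign of $\langle\mathsf{wt}(T),\alpha_1\rangle=m-2k-1$ --- is precisely the direct verification the paper intends. Every case telescopes to $\sigma_1(Q_k)=P_{m-k-1}$ and $\sigma_1(P_k)=Q_{m-k-1}$, which are the stated tableaux of weight $(k+1,m-k)$, so there is no gap.
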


The rectification process does not depend on the sequence of inner corners, so, for simplicity, we may fix that we always choose the rightmost inner corner in the highest-index row. Thus, we apply \textit{jeu de taquin} slides on $\mathsf{c}_1 (T)$, following this sequence, to the point where an occurrence of $2$ or $2'$ on $T$ (which correspond to $1'$ or $1$ in $\mathsf{c}_1 (T)$) will determine different slides on the next move. For instance, consider the following tableaux:

\begin{align*}
T_1 &= \begin{ytableau}
1 & 1 & 1 & 2 & 2\\
\none & 2
\end{ytableau} \longrightarrow
\mathsf{c}_1 (T_1) = \begin{ytableau}
{} & {} & {} & {} & {1'}\\
\none & {} & {} & {} & {\color{red} 1'}\\
\none & \none & {} & {} & {2'}\\
\none & \none & \none & 1 & {2'}\\
\none & \none & \none & \none & 2
\end{ytableau}
\equiv_k
\begin{ytableau}
{} & {} & {} & {} & {1'}\\
\none & {} & *(gray){} & {\color{red} 1'} & 2\\
\none & \none & 1 & {2'}\\
\none & \none & \none & 2
\end{ytableau}
& \text{{\footnotesize the box $(2,4)$ with {\color{red}$1'$} will go \textbf{left} on the next slide.}}
\\
T_2 &= \begin{ytableau}
1 & 1 & 1 & {2'} & 2\\
\none & 2
\end{ytableau} \longrightarrow
\mathsf{c}_1 (T_2) = \begin{ytableau}
{} & {} & {} & {} & {1'}\\
\none & {} & {} & {} & {\color{red}1}\\
\none & \none & {} & {} & {2'}\\
\none & \none & \none & 1 & {2'}\\
\none & \none & \none & \none & 2
\end{ytableau}
\equiv_k
\begin{ytableau}
{} & {} & {} & {} & {1'}\\
\none & {} & *(gray){} & {\color{red} 1} & 2\\
\none & \none & 1 & {2'}\\
\none & \none & \none & 2
\end{ytableau}
& \text{{\footnotesize the box $(3,3)$ with 1 will go \textbf{up} on the next slide.}}
\end{align*}

Continuing the rectification process we obtain, respectively:

$$
\mathsf{rect}(\mathsf{c}_1 (T_1)) = \begin{ytableau}
1 & 1 & 1 & 2' & 2\\
\none & 2
\end{ytableau}\qquad \quad
\mathsf{rect}(\mathsf{c}_1 (T_2)) = \begin{ytableau}
1 & 1 & 1 & 2 & 2\\
\none & 2
\end{ytableau}.
$$

We begin by stating some auxiliary results.

\begin{lema}\label{lemacasomaissimples}
Let $T=Y_{\nu}$ be the highest weight of its $1$-string, with $m \geq 3$, for $\nu = (m,1)$. Then,
	\begin{enumerate}
	\item $w (\mathsf{c}_1 F_1(T)) = 21(2')^{m-2} 1 \equiv_k 21 2' 1 (2)^{m-3}$.
	\item $w (\mathsf{c}_1 F_1'(T)) = 21(2')^{m-2} 1' \equiv_k 21 2' 1' (2)^{m-3}$.
	\end{enumerate}
\end{lema}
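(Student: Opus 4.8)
The plan is to compute the two tableaux $F_1(T)$ and $F_1'(T)$ together with their images under $\mathsf{c}_1$ explicitly, and then to reduce the resulting reading words to the displayed forms using the Knuth calculus of Lemmas~\ref{lemam1}, \ref{lemadosas}, \ref{congrdir} and \ref{congresq}. Since $T=Y_\nu$ with $\nu=(m,1)$ has reading word $w(T)=2\,1^{m}$ and $\mathsf{wt}(T)_2=1>0$, Proposition~\ref{axb1} tells us that $T$ is the highest weight of a \emph{separated} $1$-string, so that $F_1(T)$ and $F_1'(T)$ are both defined and distinct. By Proposition~\ref{computEF} (and the straight-shape description following it) both operators act on the box $(1,m)$: $F_1'$ turns this last $1$ into $2'$ and $F_1$ turns it into $2$, giving $w(F_1(T))=2\,1^{m-1}2$ and $w(F_1'(T))=2\,1^{m-1}2'$, both of shape $(m,1)$. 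I would then apply $\mathsf{c}_1$ through its description as the flip across the anti-diagonal of the ambient staircase followed by the entrywise complement, read the two resulting skew tableaux of shape $\delta/(m,1)^\vee$ row by row from the bottom, and canonicalise; this yields exactly the two words in the statement, the trailing letter being $1$ or $1'$ according to whether the complemented entry of box $(1,m)$ is unprimed or primed.

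It remains to establish the two Knuth equivalences, which I would do by reducing each side to a common normal form. For the first word, the interior $1$ (the image of the box $(2,2)$, now sitting immediately after the leading $2$) can be pushed to the right across the whole block $(2')^{m-2}$ by a single application of Lemma~\ref{lemam1}(2), taking $a_1$ to be that $1$, the $b_j$ to be the successive $2'$'s, and $c$ the trailing $1$; the strict inequality $a_1<c<b_{m-2}<\cdots<b_1$ holds precisely because a left-to-right run of $2'$'s is \emph{strictly decreasing} in standardisation order, primed letters being read right to left. Since Lemma~\ref{lemam1} uses only move $(K1)$, the leading $2$ may be reattached via Lemma~\ref{congrdir}, producing $2\,(2')^{m-2}\,1\,1$; then Lemma~\ref{lemadosas} collapses the prefix $2\,(2')^{m-2}$ to $2^{\,m-1}$ and Lemma~\ref{congresq} reattaches the suffix $1\,1$, giving the normal form $2^{\,m-1}\,1\,1$. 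For the right-hand word $2\,1\,2'\,1\,(2)^{m-3}$ I would first use Lemma~\ref{lemam1}(1) to rewrite its tail $2'\,1\,2^{m-3}$ as $2'\,2^{m-3}\,1$, and then manoeuvre a $2\,2'$ pair to the front---so that $(S2)$ becomes applicable---before collapsing; this again lands on $2^{\,m-1}\,1\,1$, and transitivity yields the first equivalence. The second is obtained identically, carrying the primed trailing letter $1'$ (which is \emph{smaller} than $1$, so that Lemma~\ref{lemam1}(1) rather than (2) governs its relocation) through the argument. The base case $m=3$ is immediate, since there both displayed words already coincide.

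The main obstacle is the bookkeeping forced by the fact that the mixing relations $(S1)$ and $(S2)$ are legal only on the \emph{first two} letters of a word and do not form a congruence---as the paper stresses with $3\,2\,2'\,1\not\equiv_k 3\,2\,2\,1$. Consequently one cannot convert an interior run of $2'$'s into $2$'s in place: every use of Lemma~\ref{lemadosas} must act on a genuine prefix, any reattachment on the right must pass through the unconditional Lemma~\ref{congresq}, and any reattachment on the left must pass through Lemma~\ref{congrdir} and hence be confined to subderivations using only $(K1)$/$(K2)$. The delicate point is to keep the standardisation order of the repeated primed (and unprimed) letters straight---using that such repeated letters are standardisation-distinct, which is exactly what allows Lemma~\ref{lemam1} to slide a single letter through an entire run---so that the strict inequalities in its hypotheses are genuinely met at each step.
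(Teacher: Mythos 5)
Your explicit computation of the two tableaux and of their images under $\mathsf{c}_1$ is fine, and your reduction of the left-hand word $21(2')^{m-2}1$ to the normal form $2^{m-1}1\,1$ is correct and complete: Lemma~\ref{lemam1}(2) really does apply to the suffix $1(2')^{m-2}1$, Lemma~\ref{congrdir} reattaches the leading $2$, and Lemmas~\ref{lemadosas} and~\ref{congresq} finish. (Note in passing that since $\mathsf{c}_1$ sends $2\mapsto 1'$ and $2'\mapsto 1$ and the box $(1,m)$ is fixed by the anti-diagonal flip, your computation attaches the trailing $1'$ to $F_1$ and the trailing $1$ to $F_1'$, the opposite pairing from the statement's numbering; this is harmless since both equivalences are needed.) The genuine gap is in the right-hand word. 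Your overall strategy is a direct two-sided reduction with no induction, whereas the paper inducts on the exponent of $2'$, and the difference matters exactly here. After Lemma~\ref{lemam1}(1) you reach $2\,1\,2'\,(2)^{m-3}\,1$ and propose to ``manoeuvre a $2\,2'$ pair to the front so that (S2) becomes applicable''. But this word has a single $2'$, separated from the leading $2$ by a $1$, and no Knuth move juxtaposes them: the front triple $2,1,2'$ has standardization pattern $cab$, so (K2) turns it into $1,2,2'$, leaving the pair $2\,2'$ at positions $2$--$3$ where (S2) is illegal, and (S1) then merely undoes the move. The chain that actually works must first push the $2'$ rightward through the block of $2$'s, accumulate the $2$'s at the front, and only then bring the $2'$ back to position $2$; in effect it re-derives, $m-3$ times, the six-move front manoeuvre that the paper performs on the two-prime configuration $1\,2'\,2'$ in its inductive step. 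That chain is the real content of the lemma, and the paper's induction (together with the standardization-invariance trick allowing the induction hypothesis to be applied to the prefix $21(2')^a 2'$) is designed precisely so that one is always in the configuration where the manoeuvre is short. Your sketch lands on the hard step without supplying it.

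Part (2) also does not follow ``identically''. With trailing letter $1'$ the suffix to be rearranged is $1\,(2')^{m-2}\,1'$, and neither form of Lemma~\ref{lemam1} applies: the $1'$ carries the \emph{smallest} standardization label, so the hypothesis $a_1<c$ of part (2) fails, while the $(2')^{m-2}$ block is decreasing in standardization order and so cannot serve as the increasing $a$-block of part (1). The obstruction is not merely that the cited lemma is inapplicable: the intended intermediate word $2\,(2')^{m-2}\,1\,1'$ has strictly decreasing standardization, hence admits no (K1) or (K2) move anywhere, and (S1), (S2) only touch the first two letters of the full word, which are $2,1$; so it is not reachable from $2\,1\,(2')^{m-2}\,1'$ by moves confined to the suffix. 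The primed case therefore needs a different chain, which is again what the paper's induction supplies via the companion identity $2'(2)^a 1'\equiv_k 2'1'(2)^a$.
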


\begin{proof}
We prove a more general claim that, if $a \geq 1$, then,
	\begin{align*}
	21(2')^a 1 &\equiv_k 21 2' 1 (2)^{a-1}\\
	21(2')^a 1' &\equiv_k 21 2' 1' (2)^{a-1}.
	\end{align*}
Then, the result follows, observing that $m \geq 3$ ensures that $a:=m-2 \geq 1$. 
If $a=1$, the claim is trivial. Suppose this is true for some $a \geq 1$. We have,
$$ 21(2')^{a+1} 1 = 21 (2')^a 2' 1.$$

The word $21 (2')^a 2'$ has the same standardization of $21 (2')^a 1$. Therefore, by induction hypothesis $
21 (2')^a 2' \equiv_k 21 2' 2' (2)^{a-1}$. Then, we have
\begin{align*}
21 (2')^a 2' 1  &\equiv_k 2\underline{ 1 2' 2'} (2)^{a-1} 1 & \text{Lemma \ref{congresq}}\\
&\equiv_k \underline{22'} 1 2' (2)^{a-1} 1 &\text{(K2)} \\
&\equiv_k \underline{221}2' (2)^{a-1} 1 &\text{(S2)}\\
&\equiv_k \underline{21} 22' (2)^{a-1} 1 &\text{(K1)}\\
&\equiv_k 1 \underline{2 22'} (2)^{a-1} 1 &\text{(S1)}\\
&\equiv_k \underline{12} 2'2 (2)^{a-1} 1 & \text{(K1)}\\
&\equiv_k 21 2'2 (2)^{a-1} 1 & \text{(S1)}\\
&= 21 2' (2)^{a} 1.
\end{align*}

Moreover, we have $2'(2)^a 1 \equiv_k 2' 1 (2)^a$, by Lemma \ref{lemam1}, using only (K1) Knuth moves. Hence, by Lemma \ref{congrdir}, we have
$$21 2' (2)^a 1 \equiv_k 21 2'1 (2)^a.$$
Consequentely, $21(2')^a 1 \equiv_k 21 2' 1 (2)^{a-1}$. The proof that $21(2')^a 1' \equiv_k 21 2' 1' (2)^{a-1}$ is done similarly, since Lemma \ref{lemam1} also ensures that $2'(2)^a 1' \equiv_k 2' 1' (2)^a$, using only (K1) Knuth moves.
\end{proof}

\begin{cor}\label{equivrev1}
Let $T$ be a shifted semistandard tableaux in the $1$-string of $Y_{\nu}$, with $\nu = (m,1)$ and $m \geq3$, such that $T$ is not $Y_{\nu}$ neither $\mathsf{evac}(Y_{\nu})$. Let $a = \varepsilon_1 (T)$.

\begin{enumerate}
\item If $T= F_1^a(Y_{\nu})$, then $w (\mathsf{c}_1 (T)) = 21 (2')^{m-a-1} 1 (1')^{a-1} \equiv_k 21 2' 1 (2)^{m-a-2} (1')^{a-1}$.
\item If $T = F_1' F_1^{a-1} (Y_{\nu})$ $w (\mathsf{c}_1 (T)) = 21 (2')^{m-a-1} 1' (1')^{a-1} \equiv_k 21 2' 1' (2)^{m-a-2} (1')^{a-1}$.
\end{enumerate}
\end{cor}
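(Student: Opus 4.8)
The plan is to reduce both parts of the corollary to the general Knuth equivalence already isolated inside the proof of Lemma~\ref{lemacasomaissimples}, namely that for every $b\geq 1$ one has $21(2')^b1\equiv_k 212'1(2)^{b-1}$ and $21(2')^b1'\equiv_k 212'1'(2)^{b-1}$. The displayed statements are exactly the $a=1$ instances of these relations carrying an extra right factor $(1')^{a-1}$, so the argument splits into two independent steps: first identify the reading word $w(\mathsf{c}_1(T))$, and then propagate the Knuth equivalence past the appended suffix.

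For the word, I would begin from the explicit shape of $T$ supplied by the lemma immediately preceding the corollary. In case (1) the tableau $T=F_1^a(Y_\nu)$ is the two-row straight tableau whose first row is $1^{m-a}2^a$ and whose second row is a single $2$; in case (2) the tableau $T=F_1'F_1^{a-1}(Y_\nu)$ is obtained from it by marking the leftmost $2$ of the first row as $2'$, giving first row $1^{m-a}2'2^{a-1}$. Applying $\mathsf{c}_1$ (the flip across the antidiagonal of the staircase, complementing entries) yields an anti-straight tableau of shape $\delta/\lambda^\vee$ with $\lambda=(m,1)$, all of whose rows but one are single boxes in the last column; reading it row by row from the bottom produces the stated word. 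This is precisely the $a=1$ computation of Lemma~\ref{lemacasomaissimples}, the only change being that the $a$ entries equal to $\mathbf{2}$ in $T$ now complement to $\mathbf{1}$-type letters and contribute the trailing block $1(1')^{a-1}$ (resp. $1'(1')^{a-1}=(1')^a$), while the $\mathbf{1}$'s of $T$ contribute $(2')^{m-a-1}$ instead of $(2')^{m-2}$. This gives the first equality in each case.

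For the Knuth equivalence I would write the word as $\bigl[\,21(2')^{m-a-1}\mathbf{1}\,\bigr](1')^{a-1}$, where the bracketed prefix is exactly the left-hand side of the general relation from Lemma~\ref{lemacasomaissimples} with $b=m-a-1$ (and $\mathbf{1}=1$ in case (1), $\mathbf{1}=1'$ in case (2)); here $b=m-a-1\geq 1$ because $T\neq\mathsf{evac}(Y_\nu)$ forces $a\leq m-2$, and likewise the exponent $m-a-2$ appearing on the right is nonnegative. That relation gives $21(2')^{m-a-1}\mathbf{1}\equiv_k 212'\,\mathbf{1}\,(2)^{m-a-2}$, and then Lemma~\ref{congresq}, applied once for each of the $a-1$ letters of the suffix $(1')^{a-1}$, appends this suffix on the right without disturbing the equivalence, producing $212'\,\mathbf{1}\,(2)^{m-a-2}(1')^{a-1}$, which is the right-hand side claimed in the corollary.

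The only delicate point will be the word identification: reading off the complemented tableau exactly, in particular the priming of the single $\mathbf{1}$ separating the $(2')$-block from the $(1')$-block and the canonicalization of first occurrences, since a misplaced mark there would switch which of the two relations of Lemma~\ref{lemacasomaissimples} one must invoke. Once that word is correct the second step is automatic, as appending letters on the right is precisely the setting of Lemma~\ref{congresq} and requires none of the stronger hypotheses of Lemma~\ref{congrdir}.
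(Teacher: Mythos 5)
Your reduction is exactly the paper's: establish $a \leq m-2$ (hence $m-a-1 \geq 1$) from $T \neq \mathsf{evac}(Y_{\nu})$ being above the lowest weight, invoke the general claim $21(2')^{b}\mathbf{1} \equiv_k 21 2' \mathbf{1} (2)^{b-1}$ proved inside Lemma \ref{lemacasomaissimples} with $b = m-a-1$, and append the suffix $(1')^{a-1}$ one letter at a time using Lemma \ref{congresq}. The paper's proof consists of precisely these three steps and, like you, treats the identification of $w(\mathsf{c}_1(T))$ as something to be read off directly from the shape of $T$; so on the Knuth-equivalence side there is nothing to object to.

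The one step you spell out that the paper does not --- the explicit computation of $w(\mathsf{c}_1(T))$ --- is, however, carried out with the complementation reversed. Under $\mathsf{c}_1$ an \emph{unprimed} $2$ becomes $1'$ and a $2'$ becomes $1$, and the leftmost $\mathbf{2}$ of the first row of $T$ is sent to the box of $\mathsf{c}_1(T)$ that is read \emph{first} within the trailing block. Hence for $T = F_1^a(Y_{\nu})$, whose trailing $\mathbf{2}$'s are all unprimed, the trailing block of $w(\mathsf{c}_1(T))$ is $(1')^a$, while for $T = F_1'F_1^{a-1}(Y_{\nu})$ it is $1(1')^{a-1}$ --- the opposite of what you assert. (Check $m=3$, $a=1$: one finds $w(\mathsf{c}_1 F_1(Y_{\nu})) = 212'1'$ and $w(\mathsf{c}_1 F_1'(Y_{\nu})) = 212'1$.) Your assignment reproduces the corollary as printed, but the printed statement itself appears to have cases (1) and (2) interchanged, consistently with Lemma \ref{lemacasomaissimples} and Proposition \ref{equivrev2}, so the chain of equivalences feeding Theorem \ref{sigmarever} is unaffected by the swap; nevertheless, as a derivation of the displayed words your computation does not go through as written, and since you yourself single this out as the only delicate point, it is the part that needs to be redone carefully.
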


\begin{proof}
Since $T \neq \mathsf{evac}(Y_{\nu})$ and $\mathsf{evac}(Y_{\nu})$ is a lowest weight, then $a = \varepsilon_1(T) < \varepsilon_1 (\mathsf{evac}(Y_{\nu})) = m-1$. Then, $a \leq m - 2$ and so we have that $m-a-1 \geq 1$. Therefore, using Lemma \ref{lemacasomaissimples}, we have
$$21 (2')^{m-a-1} 1 \equiv_k 21 2' 1 (2)^{m-a-2}.$$

Consequentely, by Lemma \ref{congresq}, we have $21 (2')^{m-a-1} 1 (1')^{a-1} \equiv_k 21 2' 1 (2)^{m-a-2} (1')^{a-1}$. The proof for the second case is similar.
\end{proof}

\begin{prop}\label{equivrev2}
Let $T$ be a shifted semistandard tableau in the $1$-string of $Y_{\nu}$, with $\nu = (m,1)$ and $m \geq 3$, such that $T$ is neither $Y_{\nu}$ or $\mathsf{evac}(Y_{\nu})$. Let $a = \varepsilon_1 (T)$.
	\begin{enumerate}
	\item If $T = F_1^a (Y_{\nu})$, then $21 2' 1 (2)^{m-a-2} (1')^{a-1} \equiv_k 2 (1)^{a+1} 2 (2)^{m-a-2} = \sigma_1 (T)$.
	\item If $T = F_1' F_1^a(Y_{\nu})$, then $21 2' 1' (2)^{m-a-2} (1')^{a-1} \equiv_k 2 (1)^{a+1} 2' (2)^{m-a-2} = \sigma_1 (T)$.
	\end{enumerate}
\end{prop}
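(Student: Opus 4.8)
By the immediately preceding lemma, $\sigma_1(T)$ is computed explicitly for $\nu=(m,1)$: it is the two‑row straight tableau whose reading word is exactly the right‑hand side displayed in each case (the Form with a $2$, respectively with a $2'$, in the first row). Thus the equality ``$=\sigma_1(T)$'' is immediate from that lemma, and the entire content of the statement is the pair of shifted Knuth equivalences. The plan is to prove these by exhibiting explicit sequences of Knuth moves, exactly in the style of Proposition \ref{prop:sigmaevac1} and Lemma \ref{lemacasomaissimples}, picking up where Corollary \ref{equivrev1} left off (the left‑hand sides here are the words produced there, after one $2'$ has been isolated). Throughout I may assume $a\ge 1$ and $m-a-2\ge 0$, since $T\neq Y_\nu,\ \mathsf{evac}(Y_\nu)$ forces $1\le a\le m-2$, exactly as in Corollary \ref{equivrev1}. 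Writing $p:=m-a-2\ge 0$ and $q:=a-1\ge 0$, the first equivalence to prove is $2\,1\,2'\,1\,(2)^{p}(1')^{q}\equiv_k 2\,(1)^{q+2}(2)^{p+1}$, and the second is the same with the interior $1$ replaced by $1'$.

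\textbf{Key steps, in order.} The toolkit is: Lemma \ref{lemam1} to commute a block of $2$'s past the single pivots $1$ and $2'$ using only \textup{(K1)}/\textup{(K2)}; Lemma \ref{lemadosas} to collapse a primed run $1(1')^{*}$ into an unprimed run $1^{*}$; Lemma \ref{congresq} to append letters on the right (unconditionally) and Lemma \ref{congrdir} to prepend letters on the left (valid precisely when only \textup{(K1)}/\textup{(K2)} are used in between). I would first gather the tail $(1')^{q}$ next to the interior $1$ by commuting it leftward past the block $(2)^{p}$, and then work the resulting $1$‑run and the remaining $2'$ toward the front so that the $1$'s can be merged and the letters re‑sorted:
\begin{align*}
2\,1\,2'\,1\,(2)^{p}(1')^{q}
&\equiv_k 2\,1\,2'\,1\,(1')^{q}(2)^{p}
\qquad\text{(Lemma \ref{lemam1}, then Lemma \ref{congrdir})}\\
&\;\;\vdots\\
&\equiv_k 2\,(1)^{q+2}(2)^{p+1}.
\end{align*}
The intermediate dotted steps are where the $1$‑run $1(1')^{q}$ is collapsed to $1^{q+1}$ via Lemma \ref{lemadosas} and the two $2$‑blocks are reunited via Lemma \ref{lemam1}, each \textup{(K)}‑shuffle being reattached to the fixed prefix and suffix through Lemmas \ref{congrdir} and \ref{congresq}. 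The second case is handled by the identical sequence, since Lemma \ref{lemam1} equally gives $2'(2)^{p}1'\equiv_k 2'\,1'\,(2)^{p}$ using only \textup{(K1)}, exactly as in the parallel computation in Lemma \ref{lemacasomaissimples}.

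\textbf{Main obstacle.} The genuine difficulty is that shifted Knuth equivalence is \emph{not} a two‑sided congruence: the moves \textup{(S1)} and \textup{(S2)}, which are the only ones that can alter the number of primed first occurrences and hence perform the merges, are legal solely on the first two letters of a word. Consequently every merge that requires an \textup{(S)}‑move must be carried out with the relevant pair brought to the very front by a preliminary string of \textup{(K)}‑moves, then reattached to the rest of the word through the congruence lemmas; this is the same maneuver that produces the leading ``$2\,2'\mapsto 2\,2$'' step inside the proof of Lemma \ref{lemacasomaissimples}. The accompanying delicate bookkeeping is to keep track of canonical‑form identifications (a first‑occurrence $2'$ or $1'$ silently unpriming) so that the words displayed at each stage remain the true canonical reading words. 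Orchestrating this interleaving of front‑loaded \textup{(S)}‑moves with prefix/suffix‑protecting applications of Lemmas \ref{congresq} and \ref{congrdir}, uniformly in $p$ and $q$, is the step I expect to require the most care; once it is in place, the combination of these equivalences with Corollary \ref{equivrev1} yields $\mathsf{c}_1(T)\equiv_k \sigma_1(T)$, which is the remaining ingredient needed for Theorem \ref{sigmarever}.
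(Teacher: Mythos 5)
Your strategy is the paper's strategy: reduce everything to the two displayed Knuth equivalences, and prove them with Lemmas \ref{lemam1}, \ref{lemadosas}, \ref{congresq} and \ref{congrdir}, front-loading every (S1)/(S2) merge. Your opening step $2\,1\,2'\,1\,(2)^{p}(1')^{q}\equiv_k 2\,1\,2'\,1\,(1')^{q}(2)^{p}$ is exactly the paper's first line, and your identification of the obstacle (shifted Knuth equivalence is not a two-sided congruence, so merges must happen at the front of the word) is the right diagnosis. The reading of the preliminary lemma to get the right-hand sides equal to $\sigma_1(T)$ is also fine.

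However, the proof is not complete: the ``$\vdots$'' hides the entire content of the proposition. Your sketch says the $1$-run $1(1')^{q}$ is collapsed via Lemma \ref{lemadosas} and the $2$-blocks reunited via Lemma \ref{lemam1}, with the shuffles ``reattached to the fixed prefix and suffix'' through the congruence lemmas. But with the word in the state $2\,1\,2'\,1\,(1')^{q}(2)^{p}$, Lemma \ref{lemadosas} cannot be applied to the interior factor $1(1')^{q}$: its proof uses (S2), so Lemma \ref{congrdir} does not let you prepend the prefix $212'$ to that equivalence, and Lemma \ref{congresq} only appends on the right. The actual work in the paper is precisely the dismantling of the prefix — $212'1\equiv_k 22'11\equiv_k 2211\equiv_k 2121\equiv_k 1221\equiv_k 1212$ via (K2), (S2), (K2), (S1), (K1) — followed by shuttling a single $2$ rightward past $(1')^{a-1}$ and two further front-loaded (S1) moves, so that the word reaches the form $1(1')^{a-1}\cdot(\text{suffix})$ with the primed run genuinely at the front; only then does Lemma \ref{lemadosas} apply (with the suffix reattached by appending). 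You correctly anticipate that ``orchestrating this interleaving'' is the hard step, but that orchestration is the proof, and as written your argument does not supply it; in particular the order in which the (S)-moves must be performed, and the fact that the interior $1$ and the leading $1$ get separated and recombined along the way, are not recoverable from the outline. The second case is indeed much easier (a single (S1) suffices, as in the paper), so your claim that it is ``handled by the identical sequence'' is an overstatement but not a fatal one.
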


\begin{proof}
We first prove the first assertion. We have

\begin{align*}
212' 1 (2)^{m-a-2} (1')^{a-1} &\equiv_k \underline{212'}1 (1')^{k-1}(2)^{m-a-2} &\text{Lemmas \ref{lemam1} and \ref{congrdir}}\\
&\equiv_k \underline{22'} 11  (1')^{a-1}(2)^{m-a-2} &\text{(K2)}\\
&\equiv_k 2\underline{211} (1')^{a-1}(2)^{m-a-2} &\text{(S2)}\\
&\equiv_k \underline{21}21 (1')^{a-1}(2)^{m-a-2} &\text{(K2)}\\
&\equiv_k 1\underline{221} (1')^{a-1}(2)^{m-a-2} &\text{(S1)}\\
&\equiv_k 1212(1')^{a-1} (2)^{m-a-2} &\text{(K1)}\\
&\equiv_k \underline{12}1 (1')^{a-1} 2 (2)^{m-a-2} &\text{Lemma \ref{congrdir}}\\
&\equiv_k 211 (1')^{a-1} 2 (2)^{m-a-2} &\text{(S1)}\\
&\equiv_k \underline{21} (1')^{a-1} 12 (2)^{m-a-2} &\text{Lemmas \ref{lemam1}, \ref{congresq}, and \ref{congrdir}}  \\
&\equiv_k 12 (1')^{a-1} 12 (2)^{m-a-2} &\text{(S1)}\\
&\equiv_k 1(1')^{a-1} 2 12 (2)^{m-a-2} &\text{Lemmas \ref{lemam1} and \ref{congresq}}\\
&\equiv_k (1)^a 212 (2)^{m-a-2} &\text{Lemmas \ref{lemadosas} and \ref{congrdir}}\\
&\equiv_k 2 (1)^a 1 2 (2)^{m-a-2} &\text{Lemmas \ref{lemam1} and \ref{congrdir}}\\
&= 2 (1)^{a+1} 2 (2)^{m-a-2}.
\end{align*}

For the second assertion, we have

\begin{align*}
21 2' 1' (2)^{m-a-2} (1')^{a-1} &\equiv_k 212'1' (1')^{a-1} (2)^{m-a-2} &\text{Lemma \ref{lemam1}}\\
&= 212' (1')^a (2)^{m-a-2}\\
&\equiv_k \underline{21} (1')^a 2' (2)^{m-a-2} &\text{Lemma \ref{lemam1}}\\
&\equiv_k 12 (1')^a 2' (2)^{m-a-2} &\text{(S1)}\\
&\equiv_k 1 (1')^a  22' (2)^{m-a-2} &\text{Lemma \ref{lemam1}}\\
&\equiv_k (1)^{a+1} 2 2' (2)^{m-a-2} &\text{Lemma \ref{lemadosas}}\\
&\equiv_k 2 (1)^{a+1} 2' (2)^{m-a-2}. &\text{Lemma \ref{lemam1}}
\end{align*}
\end{proof}

We are now able to prove Theorem \ref{sigmarever}.

\begin{proof}[Proof of Theorem \ref{sigmarever}]
It suffices to show the result for $T$ of straight shape with two rows. Corollary \ref{equivrev1} and Proposition \ref{equivrev2} ensure that the words $w(\mathsf{c}_1 (T))$ and $w(\sigma_1 (T))$ are shifted Knuth equivalent, thus $\mathsf{c}_1 (T) \equiv_k \sigma_1(T)$. Since $T$ and $\sigma_1(T)$ are dual equivalent, this concludes the proof that $\sigma_1(T) = \mathsf{evac}(T)$.
\end{proof}

As a direct consequence of Proposition \ref{highuni} and Proposition \ref{defSchu}, we have that the evacuation of a Yamanouchi tableau is the lowest weight of its crystal, i.e., $F_i'(\mathsf{evac}(T)) = F_i(\mathsf{evac}(T)) = \emptyset$, for all $i \leq I$.

Unlike the type $A$ crystals, the reflection operators $\sigma_i$, for $i \in I$, do not define an action of the symmetric group $\mathfrak{S}_n$ on $\mathcal{B}(\lambda/\mu,n)$. In particular, the braid relations $\sigma_i \sigma_{i+1} \sigma_i = \sigma_{i+1} \sigma_i \sigma_{i+1}$ do not need to hold, as shown in the next example.

\begin{ex}\label{exbraidnot}
Let $\mathcal{B}(\nu,3)$ where $\nu=(5,3,1)$, and consider the shifted semistandard tableau
$$T = \begin{ytableau}
1 & 1 & 1 & 1 & {3'}\\
\none & 2 & 2 & {3'}\\
\none & \none & 3\end{ytableau}$$

The weight of $T$ is given by $\mathsf{wt}(T)=(4,2,3)$. Then, since $\langle \mathsf{wt}(T) , (1,-1,0) \rangle = 4 -2 = 2 > 0$ and $F_1'(T) \neq \emptyset$, we have

$$ \sigma_1(T) = F_1'F_1 (T) = \begin{ytableau}
1 & 1 & {2'} & 2 & {3'}\\
\none & 2 & 2 & {3'}\\
\none & \none & 3
\end{ytableau}$$

Putting $T_1 := \sigma_1 (T)$, we have that $\langle \mathsf{wt}(T_1), (0,1,-1) \rangle = 4 - 3 = 1 > 0$. As $F_2' (T_1) = \emptyset$, we have

$$\sigma_2 \sigma_1 (T) = E_2' F_2^2 (T_1)
	=E_2' (\begin{ytableau}
1 & 1 & {2'} & {3'} & 3\\
\none & 2 & {3'} & 3\\
\none & \none & 3\end{ytableau})
= \begin{ytableau}
1 & 1 & {2'} & 2 & 3\\
\none & 2 & {3'} & 3\\
\none & \none & 3\end{ytableau}$$

And putting $T_2 := \sigma_2 \sigma_1 (T)$, we have $\langle \mathsf{wt}(T_2), (1,-1,0) \rangle = 2 - 3 = -1 <0$ and $F_1' (T_2) = \emptyset$. Thus,

\begin{equation}\label{sigma121}
\sigma_1 \sigma_2 \sigma_1 (T) = \sigma_1 (T_2)
	= E_1' (T_2)
	= \begin{ytableau}
1 & 1 & 1 & 2 & 3\\
\none & 2 & {3'} & 3\\
\none & \none & 3\end{ytableau}
\end{equation}

On the other hand, we have that $\langle \mathsf{wt}(T), (0,1,-1) \rangle = 2-3 = -1 <0$ and $F_2' (T) = \emptyset$, hence

$$ \sigma_2 (T) = E_2' (T) = \begin{ytableau}
1 & 1 & 1 & 1 & 2\\
\none & 2 & 2 & {3'}\\
\none & \none & 3\end{ytableau}$$

We put $T_3 := \sigma_2 (T)$, thus we have that $\langle \mathsf{wt}(T_2), (1,-1,0) \rangle = 4-3 = 1 >0$ and $F_1' (T_3) \neq \emptyset$ and consequently

$$	\sigma_1 \sigma_2 (T) = \sigma_1 (T_3)
	= F_1' (T_3)
	= \begin{ytableau}
1 & 1 & 1 & {2'} & 2\\
\none & 2 & 2 & {3'}\\
\none & \none & 3\end{ytableau}$$

Finally, putting $T_4 := \sigma_1 \sigma_2 (T)$, we have that $\langle \mathsf{wt}(T_4), (0,1,-1) \rangle = 3 -2 = 1 > 0$ and $F_2' (T_4) \neq \emptyset$, thus

\begin{equation}\label{sigma212}
\sigma_2 \sigma_1 \sigma_2 (T) = \sigma_2 (T_4)
	= F_2' F_2 (T_4)
	= \begin{ytableau}
1 & 1 & 1 & {2'} & {3'}\\
\none & 2 & {3'} & 3\\
\none & \none & 3\end{ytableau}
\end{equation}

Then, by \eqref{sigma121} and \eqref{sigma212}, we have that $\sigma_1 \sigma_2 \sigma_1 (T) \neq \sigma_2 \sigma_1 \sigma_2 (T)$.

\end{ex}

However, we have the following result, as in \cite[Section 3.2]{AzMaCo20, AzMaCo09} for ordinary LR tableaux, ensuring that the longest permutation of $\mathfrak{S}_n$ acts on a connected component of $\mathcal{B}(\lambda/\mu,n)$ by sending the highest weight element to the lowest weight element.

\begin{teo}\label{sigmalongperm}
Let $T$ be a LRS tableau in $\mathcal{B}(\lambda/\mu,n)$. Let $\omega_0 = \theta_{i_1} \cdots \theta_{i_k}$ be the longest permutation in $\mathfrak{S}_n$. Then, $\omega_0$ acts on a connected component of $\mathcal{B}(\lambda/\mu,n)$ by sending the highest weight element $T$ to the lowest,  $\sigma_{i_1} \cdots \sigma_{i_k} (T) = T^e$.
\end{teo}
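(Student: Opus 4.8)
The plan is to prove the statement by a weight computation combined with a uniqueness argument, avoiding any attempt to track tableaux combinatorially through the composition $\sigma_{i_1}\cdots\sigma_{i_k}$ (which would be delicate, since the braid relations fail by Example~\ref{exbraidnot}). Each $\sigma_i$ is a total involution on $\mathcal{B}(\lambda/\mu,n)$ by Proposition~\ref{sigmai}(3), so the composite $S := \sigma_{i_1}\cdots\sigma_{i_k}(T)$ is well defined. First I would observe, using Proposition~\ref{sigmai}(1), that each $\sigma_{i_j}$ preserves connected components, so $S$ lies in the connected component $\mathcal{C}$ of $T$. Since $T$ is an LRS tableau, it is the highest weight element of $\mathcal{C}$, and Proposition~\ref{isomhigh} identifies $\mathcal{C}$, as a weighted edge-labelled graph, with $\mathcal{B}(\nu,n)$ for $\nu = \mathsf{wt}(T)$.

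Next I would compute the weight of $S$. By Proposition~\ref{sigmai}(4) we have $\mathsf{wt}(\sigma_i(R)) = \theta_i\cdot\mathsf{wt}(R)$ for every $R$; iterating from the innermost operator outward gives
\[
\mathsf{wt}(S) = (\theta_{i_1}\cdots\theta_{i_k})\cdot\mathsf{wt}(T) = \omega_0\cdot\nu = \nu^{\mathsf{rev}},
\]
since the longest permutation acts on $\mathbb{Z}^n$ by reversing coordinates. I would emphasize that this weight depends only on the permutation $\omega_0$, not on the chosen factorization into the $\theta_{i_j}$, which is precisely what reconciles the statement with the failure of the braid relations: any decomposition of $\omega_0$ yields an element of the same weight.

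Finally I would identify $S$ with $T^e$. By Proposition~\ref{defSchu}, the reversal $T^e = \eta(T)$ is the lowest weight element of $\mathcal{C}$ and has weight $\mathsf{wt}(T)^{\mathsf{rev}} = \nu^{\mathsf{rev}}$. On the other hand, Proposition~\ref{Yevanunique} shows $\mathsf{evac}(Y_\nu)$ is the unique element of $\mathcal{B}(\nu,n)$ of weight $\nu^{\mathsf{rev}}$, as every element of $\mathcal{B}(\nu,n)$ has the straight shape $\nu$. Transporting this uniqueness along the weight-preserving isomorphism of Proposition~\ref{isomhigh}, the component $\mathcal{C}$ contains exactly one element of weight $\nu^{\mathsf{rev}}$, namely $T^e$; since $S$ also lies in $\mathcal{C}$ with weight $\nu^{\mathsf{rev}}$, we conclude $S = T^e$. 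The hard part will be this last step: converting ``same component, same weight'' into genuine equality, which relies on carrying the straight-shape uniqueness of Proposition~\ref{Yevanunique} across the crystal isomorphism and on recognizing the reversal of the LRS element as exactly the lowest weight element of the reversed weight.
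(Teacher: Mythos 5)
Your proposal is correct and follows essentially the same route as the paper: compute that $\sigma_{i_1}\cdots\sigma_{i_k}$ sends $\mathsf{wt}(T)=\nu$ to $\nu^{\mathsf{rev}}$ via Proposition \ref{sigmai}, and then invoke the uniqueness of the weight-$\nu^{\mathsf{rev}}$ element from Proposition \ref{Yevanunique}. The only cosmetic difference is that the paper reduces to the straight-shape case by coplacticity (rectifying $T$ to $Y_\nu$), whereas you transport the uniqueness through the crystal isomorphism of Proposition \ref{isomhigh}; these amount to the same thing.
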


\begin{proof}
Since the operators $\sigma_i$ are coplactic, we may consider $Y_{\nu} = \mathsf{rect}(T)$, $\nu=\mathsf{wt}(T)$. By Proposition \ref{sigmai}, $\sigma_j$ permutes the entries $j$ and $j+1$ on the weight and keeps the shape $\nu$, and as $\omega_0$ is the longest permutation, $\sigma_{i_1} \ldots \sigma_{i_k}$ reverts the weight of $T$. Then, from Proposition \ref{Yevanunique}, we have $\sigma_{i_1} \ldots \sigma_{i_k}Y_{\nu}=\mathsf{evac}(Y_{\nu})$.
\end{proof}

\begin{obs}\label{remarkdih}
Let $G_n := \langle \sigma_1, \ldots, \sigma_{n-1} \rangle$ be the free group generated by the shifted crystal reflection operators $\sigma_i$, for $i \in I$, modulo the relations satisfied by them on shifted semistandard tableaux. We know from Proposition \ref{sigmai} that the relations $\sigma_i^2 = id$ and $\sigma_i \sigma_j = \sigma_j \sigma_i$, for $|i-j|>1$, hold on $G_n$, but not the braid relations of $\mathfrak{S}_n$, $(\sigma_i \sigma_{i+1})^3 = id$, for $i \in [n-2]$. However, since $\mathcal{B}(\nu,n)$ is finite, we have that, given $T \in \mathcal{B}(\nu,n)$, there must exist some $m > 3$ such that $(\sigma_i \sigma_{i+1})^m (T)=T$, for $i \in [n-2]$. We computed some examples on the alphabet $\{1,2,3\}'$ (see Appendix \ref{appendix}), which show that $(\sigma_1 \sigma_2)^m = id$, for $m$ a multiple of at least 90, but we do not know if an upper bound valid for any shape $\nu$ exists.
\end{obs}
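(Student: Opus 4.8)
The plan is to read this remark as an assembly of four independent assertions and to secure each in turn, since it is an observation rather than a single theorem. The relations $\sigma_i^2 = id$ and $\sigma_i\sigma_j = \sigma_j\sigma_i$ for $|i-j| > 1$ are exactly Proposition \ref{sigmai}(3), and since $G_n$ is by construction the free group on $\sigma_1,\dots,\sigma_{n-1}$ modulo every identity these operators satisfy on shifted tableaux, they hold in $G_n$ with nothing further to check. For the failure of the braid relation I would invoke Example \ref{exbraidnot}: since $\sigma_i^2 = id$, the relation $\sigma_i\sigma_{i+1}\sigma_i = \sigma_{i+1}\sigma_i\sigma_{i+1}$ is equivalent to $(\sigma_i\sigma_{i+1})^3 = id$, and the example exhibits $T \in \mathcal{B}((5,3,1),3)$ with $\sigma_1\sigma_2\sigma_1(T) \neq \sigma_2\sigma_1\sigma_2(T)$, so $(\sigma_1\sigma_2)^3 \neq id$ as an operator on that crystal.

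For the existence of the period, finiteness suffices. By Proposition \ref{sigmai}(1)--(2) each $\sigma_i$ is an involution fixing the finite connected component $\mathcal{B}(\nu,n)$ setwise, so $\sigma_i\sigma_{i+1}$ is a permutation of the vertex set of $\mathcal{B}(\nu,n)$ and has finite order $d_i$; then $(\sigma_i\sigma_{i+1})^m = id$ on $\mathcal{B}(\nu,n)$ for every multiple $m$ of $d_i$, and any $m \geq 6$ among these delivers the required $m > 3$, giving $(\sigma_i\sigma_{i+1})^m(T) = T$ for all $T$. The substantive content is that, whenever the braid relation fails, the minimal period $d_i$ \emph{itself} already exceeds $3$; I would establish this by combining Proposition \ref{sigmai}(4), which shows $\sigma_i\sigma_{i+1}$ acts on weights as the $3$-cycle $\theta_i\theta_{i+1} = (i\; i{+}1\; i{+}2)$, with the weight function. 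Applying $\mathsf{wt}$ to $(\sigma_i\sigma_{i+1})^{d_i} = id$ at a tableau whose weight has pairwise distinct entries in positions $i,i+1,i+2$ (for instance $Y_\nu$ when $\ell(\nu) \geq i+2$) forces $3 \mid d_i$, while $(\sigma_i\sigma_{i+1})^3 \neq id$ rules out $d_i = 3$; hence $d_i \geq 6 > 3$.

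The remaining assertion — that in the computed examples on $\{1,2,3\}'$ one finds $(\sigma_1\sigma_2)^m = id$ precisely for $m$ a multiple of $90$ — is computational rather than structural, and I would present it as such: for each strict partition $\nu$ with $\ell(\nu) \leq 3$ one enumerates the finite vertex set of $\mathcal{B}(\nu,3)$, records $\sigma_1\sigma_2$ as an explicit permutation, and reads its order off the cycle type, the least common multiple of the observed orders being $90 = 2 \cdot 3^2 \cdot 5$ (recorded in Appendix \ref{appendix}). The genuine open problem, which I flag as the main obstacle, is whether $\sup_\nu d_i(\nu)$ is finite: unlike the type $A$ situation the $\sigma_i$ do not generate a Coxeter group, so $G_n$ is an uncontrolled quotient of the free group, and — absent a tensor product or an external cactus-group model for these crystals (cf. the Introduction) — there is at present no representation-theoretic or geometric mechanism forcing a uniform bound on the order of $\sigma_i\sigma_{i+1}$ across all shapes.
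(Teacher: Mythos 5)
Your proposal is correct, and its skeleton is exactly the paper's (implicit) justification: the remark carries no formal proof, being assembled precisely from Proposition \ref{sigmai}(3) for the relations that do hold, Example \ref{exbraidnot} for the failure of the braid relation (your reduction of $\sigma_i\sigma_{i+1}\sigma_i=\sigma_{i+1}\sigma_i\sigma_{i+1}$ to $(\sigma_i\sigma_{i+1})^3=id$ via involutivity is the right reading), the trivial finite-order argument for the existence of $m>3$, and the appendix table whose per-shape minimal periods $3,9,18,45$ have least common multiple $90$. The one genuinely new ingredient you add is the weight argument: since $\mathsf{wt}(\sigma_i\sigma_{i+1}(T))=\theta_i\theta_{i+1}\cdot\mathsf{wt}(T)$ with $\theta_i\theta_{i+1}$ a $3$-cycle on positions $i,i+1,i+2$, evaluating $(\sigma_i\sigma_{i+1})^{d_i}=id$ at $Y_\nu$ forces $3\mid d_i$, so braid failure pushes the global order to at least $6$. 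This is a clean strengthening the paper does not state (the remark only asserts the trivial existence of \emph{some} $m>3$), and it is consistent with all observed orders; note, though, that it is conditional on the component containing a tableau whose weight is actually moved by the $3$-cycle — your instance $\ell(\nu)\geq i+2$ suffices for the shapes at hand (and in fact $\ell(\nu)\geq i$ already does, since a weight of the form $(\nu_i,0,0)$ in positions $i,i+1,i+2$ is not fixed by any nontrivial power of the $3$-cycle), whereas per-tableau periods need not be divisible by $3$, as the period-$5$ tableaux of weight $(3,3,3)$ in $\mathcal{B}((5,3,1),3)$ show, so the divisibility claim must be kept at the level of the order of the full permutation, as you do. Your framing of the final assertion as purely computational, and of the uniform bound over all $\nu$ as genuinely open for lack of a tensor-product or external model, matches the paper.
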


\section{A cactus group action on the shifted tableau crystal}\label{sec5}

In this section we show that the adequate restrictions of the shifted Shützenberger involution, which include the crystal reflection operators, define an action of the cactus group $J_n$ on the shifted tableau crystal $\mathcal{B}(\lambda/\mu,n)$. This follows a similar approach to Halacheva \cite{Hala16, Hala20}, where this action is shown for any $\mathfrak{g}$-crystal, for $\mathfrak{g}$ a complex reductive finite-dimensional Lie algebra, which includes the classical Lie algebras.

Let $1 \leq p < q \leq n$ and define $[p,q]:=\{p<\cdots<q\}$. Let $\theta_{p,q \shortminus 1}$ denote the longest permutation in $\mathfrak{S}_{[p,q \shortminus 1]}$ embedded in $\mathfrak{S}_{n-1}$, that is, $\theta_{p, q \shortminus 1}$ sends $i$ to $p+q-i-1$ if $i\in[p,q-1]$, leaving $i$ unchanged otherwise, and put $\theta := \theta_{1,n \shortminus 1}$. The lemma below is straightforward.

\begin{lema}\label{lemacompridito}
Let $1 \leq p < q \leq n$. Then,
\begin{enumerate}
\item $\theta_{1,q \shortminus 1} [p, q-1] = [1,q-p]$.
\item $\theta_{1,q \shortminus p} = \theta_{1,q \shortminus 1} \theta_{p,q \shortminus 1} \theta_{q \shortminus 1}$.
\end{enumerate}
\end{lema}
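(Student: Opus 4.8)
The plan is to treat both assertions as elementary facts about the order-reversing permutations $\theta_{p,q-1}$, working directly from the explicit rule ``$\theta_{p,q-1}(i)=p+q-i-1$ for $i\in[p,q-1]$ and $\theta_{p,q-1}(i)=i$ otherwise'', and reading $\theta_{q-1}$ as the abbreviation $\theta_{1,q-1}$ consistent with the earlier convention $\theta:=\theta_{1,n-1}$.

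For (1) I would simply specialize the rule to $\theta_{1,q-1}$, which sends $i\mapsto q-i$ on the interval $[1,q-1]$. Since $[p,q-1]\subseteq[1,q-1]$, as $i$ runs through $p,p+1,\ldots,q-1$ its image $q-i$ runs through $q-p,q-p-1,\ldots,1$, that is, exactly the set $[1,q-p]$; hence $\theta_{1,q-1}[p,q-1]=[1,q-p]$.

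For (2) the key observation is that $\theta_{1,q-1}$ is an involution, so the right-hand side $\theta_{1,q-1}\,\theta_{p,q-1}\,\theta_{q-1}=\theta_{1,q-1}\,\theta_{p,q-1}\,\theta_{1,q-1}^{-1}$ is a conjugate of the reversal $\theta_{p,q-1}$. Morally, conjugating the longest element of $\mathfrak{S}_{[p,q-1]}$ by $\theta_{1,q-1}$ (which restricts to an order-reversing bijection of $[p,q-1]$ onto $[1,q-p]$) produces the longest element of $\mathfrak{S}_{\theta_{1,q-1}[p,q-1]}$, which by (1) is $\mathfrak{S}_{[1,q-p]}$, i.e.\ $\theta_{1,q-p}$. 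To make this rigorous I would verify the equality pointwise: both sides are involutions, so it suffices to check the action on the three ranges $[1,q-p]$, $[q-p+1,q-1]$, and $\{q,\ldots,n-1\}$. For $j\in[1,q-p]$ the composite sends $j\mapsto q-j\mapsto p+j-1\mapsto q-p+1-j$, agreeing with $\theta_{1,q-p}(j)=(q-p)+1-j$; for $j\in[q-p+1,q-1]$ one has $j\mapsto q-j\mapsto q-j\mapsto j$, since the middle factor fixes $q-j\in[1,p-1]$; and for $j\ge q$ all three factors fix $j$. As $\theta_{1,q-p}$ also fixes every $j>q-p$, the two permutations coincide.

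There is essentially no real obstacle here, since the content is bookkeeping with the index arithmetic; the only things to be careful about are (i) confirming the notational convention $\theta_{q-1}=\theta_{1,q-1}$, and (ii) tracking in each case whether the intermediate image lands inside $[p,q-1]$, so that the correct branch of the defining rule is applied. The conjugation viewpoint in (2) is what makes the identity transparent and explains its later role: reversing $[1,q-1]$ carries the reversal of the sub-block $[p,q-1]$ to the reversal of the front block $[1,q-p]$.
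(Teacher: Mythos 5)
Your proof is correct: the paper itself gives no argument (it declares the lemma ``straightforward''), and your direct pointwise verification from the rule $\theta_{p,q-1}(i)=p+q-i-1$, split over the ranges $[1,q-p]$, $[q-p+1,q-1]$ and $[q,n-1]$, is exactly the computation being left to the reader. Your reading of $\theta_{q-1}$ as $\theta_{1,q-1}$ is the right one — it is the only interpretation under which (2) holds — and the conjugation viewpoint correctly explains why the identity is the transport of the longest element of $\mathfrak{S}_{[p,q-1]}$ to that of $\mathfrak{S}_{[1,q-p]}$.
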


In what follows, and without loss of generality, we will consider the shifted tableau crystal $\mathcal{B}(\nu,n)$ of straight shape. Given $T \in \mathcal{B}(\nu,n)$ and $1 \leq p < q \leq n$, let $T^{p,q}:=T^p\sqcup T^{p+1}\sqcup\cdots\sqcup T^q$, the tableau obtained from $T$ considering only the entries in $[p,q]'$. In particular, we have $T^{1,n} = T$. By convention, we set $T^{p,p}:=T^p$ and $T^{1,0}=T^{n+1,n}  := \emptyset$.

For each $1 \leq p < q \leq n$, let $\mathcal{B}_{p,q}$ be the subgraph of $\mathcal{B}(\nu,n)$ where the edges coloured in $I\setminus [p,q - 1]$ were removed, and the vertices are the same as in $\mathcal{B}(\nu,n)$ but the letters in $[n]' \setminus [p, q]'$ are ignored. In particular, $\mathcal{B}_{p,p+1}$ is the collection of all the $p$-strings. For each $1 \leq p < q \leq n$, the set $\mathcal{B}(\nu,n)$ is partitioned into classes consisting of the underlying sets of the connected components $\mathcal{B}_{p,q}$. By imposing an equivalence relation on the set $\mathcal{B}(\nu,n)$, defined by $x \sim y$ if $x$ and $y$ are related by a sequence of any lowering or raising
operators coloured in $[p, q-1]$. The equivalence classes are the underlying subsets of the $[p,q]'$-connected
components. An highest weight element of $\mathcal{B}_{p,q}$ is a tableau $T$ such that $E_i (T) = E_i'(T) = \emptyset$, for any $i \in [p,q-1]$ and a lowest weight element is defined similarly.

\begin{lema}\label{highestpq}
Let  $1 \leq p < q \leq n$. Each connected component of $\mathcal{B}_{p,q}$ has unique highest and lowest weight elements. 
\end{lema}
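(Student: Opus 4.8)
The plan is to reduce the statement to the uniqueness of highest and lowest weight elements in a genuine shifted tableau crystal of smaller rank, which is exactly the content of Proposition \ref{isomhigh}. Fix $1 \le p < q \le n$, set $m := q-p+1$, and let $\mathcal{C}$ be a connected component of $\mathcal{B}_{p,q}$. First I would record that along $\mathcal{C}$ the entries of $T$ lying outside $[p,q]'$ stay frozen: the operators $E_i, E_i', F_i, F_i'$ with $i \in [p,q-1]$ only modify letters in $\{i,i+1\}' \subseteq [p,q]'$, and they preserve the underlying shape $\nu$. Consequently the set of boxes carrying entries $<p$ (a shifted shape $\mu^{p,q}$) and the set of boxes carrying entries $\le q$ (a shifted shape $\lambda^{p,q}$) are the same for every tableau in $\mathcal{C}$, so the sub-tableau $T^{p,q}$ obtained by keeping only the $[p,q]'$-entries has one and the same skew shifted shape $\lambda^{p,q}/\mu^{p,q}$ throughout $\mathcal{C}$.

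Next I would introduce the map $\Phi$ sending $T \in \mathcal{C}$ to the tableau obtained from $T^{p,q}$ by subtracting $p-1$ from every entry, landing in $\mathsf{ShST}(\lambda^{p,q}/\mu^{p,q}, m)$ on the alphabet $[m]'$. The key point is that $\Phi$ is a crystal isomorphism onto the connected component of $\mathcal{B}(\lambda^{p,q}/\mu^{p,q}, m)$ containing $\Phi(T)$: extracting the $[p,q]'$-subword of a row reading word is the same as reading $T^{p,q}$, and each operator $E_i, E_i', F_i, F_i'$ (for $i \in [p,q-1]$) depends only on the subword in $\{i,i+1\}'$, so it corresponds under $\Phi$ precisely to the operator indexed by $i-p+1$ on $\Phi(T)$. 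Since the complementary entries are frozen, $\Phi$ is injective on $\mathcal{C}$, and it is surjective onto a single connected component by connectedness of $\mathcal{C}$. In particular $T$ is a highest (resp. lowest) weight element of $\mathcal{C}$, meaning $E_i(T)=E_i'(T)=\emptyset$ (resp. $F_i(T)=F_i'(T)=\emptyset$) for all $i \in [p,q-1]$, if and only if $\Phi(T)$ is a highest (resp. lowest) weight element of its component in $\mathcal{B}(\lambda^{p,q}/\mu^{p,q}, m)$.

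Then I would conclude by invoking Proposition \ref{isomhigh}: the component $\Phi(\mathcal{C})$ has a unique highest weight element $H$, an LRS tableau, and is isomorphic as an edge-labelled weighted graph to $\mathcal{B}(\tilde\nu, m)$ with $\tilde\nu = \mathsf{wt}(H)$. Pulling back through $\Phi$ gives the unique highest weight element of $\mathcal{C}$. For the lowest weight element, the same isomorphism identifies it with the lowest weight element of $\mathcal{B}(\tilde\nu,m)$, which is $\mathsf{evac}(Y_{\tilde\nu})$ and is unique by Proposition \ref{Yevanunique}; equivalently, it is the image of $H$ under the Schützenberger involution $\eta$ of Proposition \ref{defSchu}, a bijection flipping the component top-to-bottom. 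Transporting back through $\Phi$ yields the unique lowest weight element of $\mathcal{C}$.

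The main obstacle is the verification underlying the second paragraph, namely that $\Phi$ is a genuine isomorphism of shifted tableau crystals: one must check that $T^{p,q}$ is always an honest shifted skew semistandard tableau (the column and row prime conditions and the diagonal conventions being inherited from $T$), that its reading word is exactly the $[p,q]'$-subword of $w(T)$, and that the primed and unprimed operators restrict compatibly. This is precisely the $m$-letter analogue of the two-letter restriction already made rigorous in the passage preceding \eqref{eq:etaii1}, and it rests on the locality of the operators in the relevant subword together with the shape-preservation recorded in the first paragraph.
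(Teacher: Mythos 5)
Your proposal is correct and follows essentially the same route as the paper: freeze the entries outside $[p,q]'$, observe that the $[p,q]'$-subtableau has a fixed skew shape on each connected component, relabel the alphabet to $[1,q-p+1]'$ to identify the component with a connected component of a genuine shifted tableau crystal of smaller rank, and invoke Proposition \ref{isomhigh}. Your explicit justification of the uniqueness of the \emph{lowest} weight element (via the isomorphism with $\mathcal{B}(\tilde\nu,m)$ together with Proposition \ref{Yevanunique}) is a small but welcome elaboration of a point the paper leaves implicit.
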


\begin{proof}
If $\{p, \ldots, q\} = \{i,i+1\}$, then $\mathcal{B}_{i,i+1}$ is an union of $i$-strings, each one having unique highest and lowest weight elements. Suppose now that the cardinality of $\{p, \ldots, q\}$ is larger than 2. Each connected component $\mathcal{B}^0$ of $\mathcal{B}_{p,q}$ has vertices the shifted tableaux of shape $\nu$ filled with letters $[p,q]'$, ignoring the letters in $[n]' \setminus [p,q]'$. Thus, given $T \in \mathcal{B}^0$, we may regard it as shifted tableaux of skew shape $\lambda_0^1 / \lambda_0^2$, for strict partitions $\lambda_0^2 \subseteq \lambda_0^1 \subseteq \lambda$, where $\lambda_0^2$ corresponds to the part of tableaux corresponding to the letters $[1,p-1]'$ and $\lambda/\lambda_0^1$ corresponding to $[q+1,n]'$. These parts indeed define the same shapes within each connected components, since the operators corresponding to the edges coloured in $\{p, \ldots,q\}$ leave them unchanged. Then, similarly to the construction of $\eta_{i,i+1}$ in \eqref{eq:etaii1}, we may relabel the filling of these tableaux by
\begin{align}\label{rhopq}
\rho_{p,q} \colon & [p,q]' \to [1,q-p+1]' \nonumber\\
 &i \mapsto i-p+1\\
 &i' \mapsto (i-p+1)'\nonumber
\end{align}
obtaining tableaux of shape $\lambda_0^1/\lambda_0^2$ in the alphabet $[1, q-p+1]'$. Hence, after rectification, each connected component $\mathcal{B}^0$ may be identified with $\mathcal{B}(\lambda_0^1 / \lambda_0^2, q-p+1)$, which is isomorphic to $\mathcal{B}(\nu_0,q-p+1)$, and consequently, by Proposition \ref{isomhigh} it has unique highest and lowest weight elements.
\end{proof}

Recall that, given a shifted semistandard tableau $T \in \mathcal{B}(\nu,n)$, $\eta(T)$ is the unique tableau shifted dual equivalent to $T$ and shifted Knuth equivalent to $\mathsf{c}_n(T)$ \cite{Haim92}, and coincides with the evacuation if $T$ is of straight shape. The following definition formalizes the restriction of the Schützenberger involution to an interval $[p,q]'$.

Given $T \in \mathcal{B}(\nu,n)$, let $\mathcal{B}^0$ be the connected component of $\mathcal{B}_{p,q}$ containing $T$. Similarly to \eqref{eq:etaii1}, we write $\eta (T^{p,q})$ to mean that we first apply the Schützenberger involution $\eta$ to $T^{p,q}$ as an element of $\mathcal{B}(\lambda_0^1 / \lambda_0^2, q-p+1)$ and then add $q-p+1$ to the entries of the obtained tableau, obtaining one in the alphabet $[p,q]'$, in $\mathcal{B}^0$. Thus, we have the following definition.

\begin{defin}\label{def:schupq}
For $1 \leq p < q \leq n$, let $\eta_{p,q}:\mathcal{B}(\nu,n)\rightarrow \mathcal{B}(\nu,n)$ be such that
$$\eta_{p,q} (T) := T^{1,p \shortminus 1} \sqcup \eta(T^{p,q}) \sqcup T^{q+1, n}.$$
In particular, we have $\eta_{1,n} (T) = \eta(T)$, and, from Theorem \ref{sigmarever}, $\eta_{i,i+1}=\sigma_i$.
\end{defin}

\begin{lema}\label{lemautil}
Let $1 \leq p < q \leq n$ and let $i \in [p, q]$. Given $T \in \mathcal{B}(\nu,n)$, we have, whenever the operators are defined:
	\begin{enumerate}
	\item $E_i' \eta_{p,q} (T) = \eta_{p,q} F_{\theta_{p,q \shortminus 1 }(i)}' (T)$.
	\item $E_i \eta_{p,q} (T) = \eta_{p,q} F_{\theta_{p,q \shortminus 1}(i)} (T)$.
	\item $F_i' \eta_{p,q} (T) = \eta_{p,q} E_{\theta_{p,q \shortminus 1}(i)}' (T)$.
	\item $F_i \eta_{p,q} (T) = \eta_{p,q} E_{\theta_{p,q \shortminus 1}(i)} (T)$.
	\item $\mathsf{wt}(\eta_{p,q} (T)) = \theta_{p,q \shortminus 1} \cdot \mathsf{wt} (T)$.
	\end{enumerate}
\end{lema}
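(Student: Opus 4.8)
The plan is to reduce the five relations to the already-established intertwining properties of the Schützenberger involution in Proposition \ref{defSchu}, transported to the smaller crystal attached to the $[p,q]'$-component of $T$. Set $m := q-p+1$. By the identification constructed in the proof of Lemma \ref{highestpq} and used in Definition \ref{def:schupq}, the relabeling $\rho_{p,q}$ of \eqref{rhopq} carries $T^{p,q}$ to a shifted tableau $S$ of shape $\lambda_0^1/\lambda_0^2$ on the alphabet $[m]'$, and $\eta_{p,q}(T)$ agrees with $T$ outside the $[p,q]'$ part while its $[p,q]'$ part relabels, under $\rho_{p,q}$, to $\eta(S)$ in $\mathcal{B}(\lambda_0^1/\lambda_0^2,m)$. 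Hence it suffices to match the big-crystal operators coloured in $[p,q-1]$ with the small-crystal operators through $\rho_{p,q}$ and then invoke Proposition \ref{defSchu}.

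The key point I would verify is that, for $i\in[p,q-1]$, the operators $E_i'$, $E_i$, $F_i'$, $F_i$ are \emph{local} to the two-letter subalphabet $\{i,i+1\}'\subseteq [p,q]'$: by their definition through the $i$-th lattice walk and the critical substrings of the subword in $\mathbf{i}$ and $\mathbf{i+1}$, they alter only entries equal to $\mathbf{i}$ or $\mathbf{i+1}$, leaving $T^{1,p \shortminus 1}$ and $T^{q+1,n}$ fixed, and they depend only on the relative order of letters inside that subalphabet. Consequently each commutes with $\rho_{p,q}$, the operator $E_i'$ on the $[p,q]'$-part becoming $E_{i-p+1}'$ on $S$, and similarly for the others. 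Writing $j:=i-p+1\in[1,m-1]$, Proposition \ref{defSchu} gives $E_j'\eta(S)=\eta F_{m-j}'(S)$, so the only remaining work is index bookkeeping: in the small alphabet $m-j=q-i$, and relabeling back by $\rho_{p,q}^{-1}$ (adding $p-1$) yields $(q-i)+(p-1)=p+q-i-1=\theta_{p,q \shortminus 1}(i)$. Transporting this identity back through $\rho_{p,q}$ produces exactly assertion (1); the three other relations of Proposition \ref{defSchu} give assertions (2)--(4) by the identical computation. I would phrase the ``whenever the operators are defined'' clause so that both sides vanish simultaneously, which again follows from the relabeling-equivariance.

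For assertion (5) I would use that $\eta$ reverses weight in the small crystal (Proposition \ref{defSchu}(5)), so $\mathsf{wt}(\eta(S))=\mathsf{wt}(S)^{\mathsf{rev}}$; since $\eta_{p,q}$ fixes the outer parts, $\mathsf{wt}(\eta_{p,q}(T))$ coincides with $\mathsf{wt}(T)$ outside the coordinates $p,\ldots,q$ and reverses the block $(\mathsf{wt}(T)_p,\ldots,\mathsf{wt}(T)_q)$, which is precisely $\theta_{p,q \shortminus 1}\cdot\mathsf{wt}(T)$. The genuine obstacle is the locality/relabeling-equivariance of the crystal operators, namely making rigorous that applying $E_i$ in $\mathcal{B}(\nu,n)$ and then restricting and relabeling to $[p,q]'$ agrees with relabeling first and applying $E_{i-p+1}$ in $\mathcal{B}(\lambda_0^1/\lambda_0^2,m)$; once this is secured, everything reduces to Proposition \ref{defSchu} and the arithmetic $\theta_{p,q \shortminus 1}(i)=p+q-i-1$.
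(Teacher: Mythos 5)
Your proposal is correct and follows essentially the same route as the paper, whose proof is just the two-sentence remark that the case $[p,q]=[1,n]$ is Proposition \ref{defSchu} and that coplacticity allows one to pass to the subinterval $[p,q]$. You have simply made explicit the relabeling via $\rho_{p,q}$, the locality of the operators, and the index arithmetic $\theta_{p,q\shortminus 1}(i)=p+q-i-1$ that the paper leaves implicit.
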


\begin{proof}
If $[p,q] = [1,n]$, this corresponds to Proposition \ref{defSchu}. Otherwise, we remark that since the crystal operators are coplactic, we may consider the subinterval $[p,q]$ of $[1,n]$.
\end{proof}

\begin{defin}[\cite{HenKam06}]\label{def:cactus}
Let $J_n$ be the free group with generators $s_{p,q}$, for $1 \leq p < q \leq n$, subject to the relations:
	\begin{enumerate}
	\item $s_{p,q}^2 = id$.
	\item $s_{p,q} s_{k,l} = s_{k,l} s_{p,q}$ for $[p,q] \cap [k,l] = \emptyset$.
	\item $s_{p,q}s_{k,l} = s_{p+q-l,p+q-k} s_{p,q}$ for $[k,l] \subseteq [p,q]$.
	\end{enumerate}
This is called the \emph{$n$-fruit cactus group}.
\end{defin}

\begin{ex}
For $n=2$, we have $J_2 = \langle s_{1,2} | s_{1,2}^2 = 1 \rangle = C_2$, the cyclic group of order 2.
For $n=3$, we have $J_3 = \langle s_{1,2}, s_{1,3}, s_{2,3} | s_{1,2}^2= s_{1,3}^2= s_{2,3}^2=1, s_{1,3} s_{1,2} = s_{2,3}s_{1,3}  \rangle$, which is infinite.
\end{ex}

Note that there exists an epimorphism $J_n \longrightarrow \mathfrak{S}_n$, that sends $s_{p,q}$ to $\theta_{p,q}$. The kernel of this surjection is known as the \emph{pure cactus group} and denoted by $PJ_n$ (see \cite[Section 3.4]{HenKam06}). In the next result, we show that the cactus group $J_n$ acts on the shifted tableau crystal $\mathcal{B}(\lambda/\mu,n)$ via the shifted Scützenberger involutions $\eta_{p,q}$ of Definition \ref{def:schupq}. An example is shown in Figure \ref{fig:crystalcactus}.

\begin{figure}[h]
\begin{center}
\includegraphics[scale=0.7]{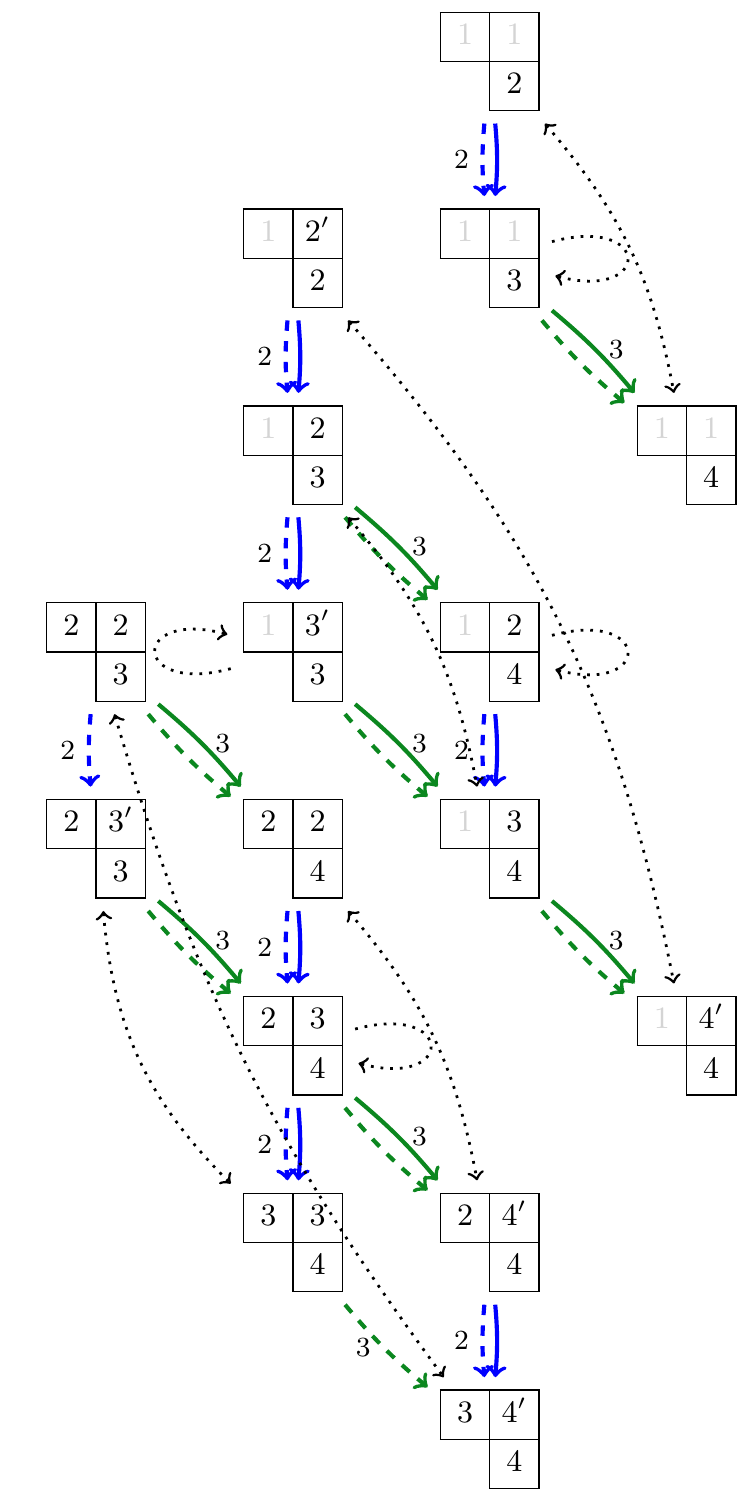}\qquad
\includegraphics[scale=0.7]{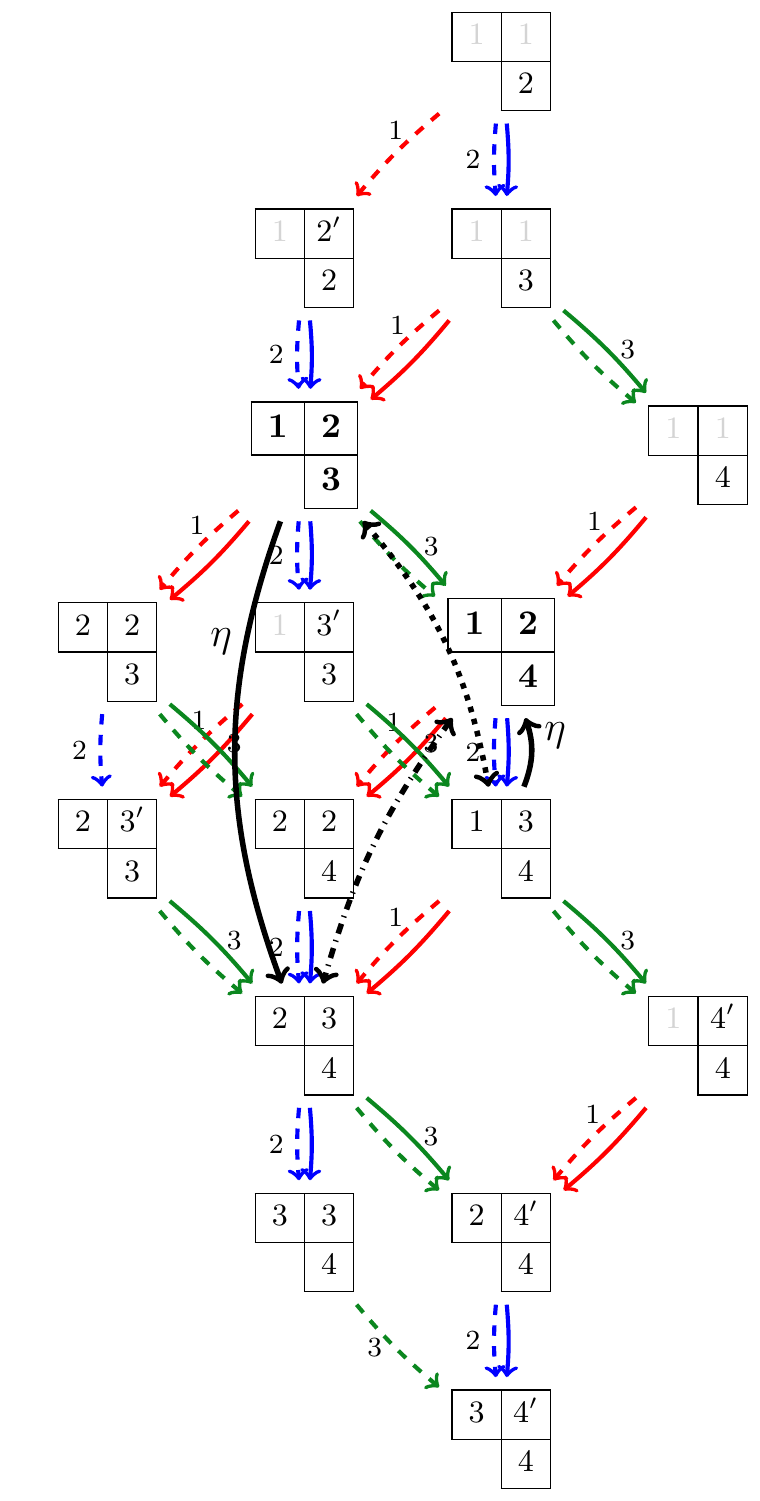}
\end{center}
\caption{On the left, the action of $s_{2,4}$ on $\mathcal{B}(\nu,4)$, with $\nu = (2,1)$. On the right, an illustration of $s_{1,3} s_{1,4} = s_{1,4} s_{2,4}$.}
\label{fig:crystalcactus}
\end{figure}

\begin{teo}\label{teo:cactusaction}
There is a natural action of the $n$-fruit cactus group $J_n$ on the shifted tableau crystal $\mathcal{B}(\lambda/\mu,n)$ given by the group homomorphism:

\begin{align*}
\phi: J_n & \longrightarrow \mathfrak{S}_{\mathcal{B}(\lambda/\mu,n)}\\
s_{p,q} & \longmapsto \eta_{p,q}
\end{align*}
for $1 \leq p < q \leq n$.
\end{teo}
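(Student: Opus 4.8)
The plan is to use the presentation of $J_n$ by generators $s_{p,q}$ subject to the three families of relations in Definition \ref{def:cactus}. Since $\phi$ is prescribed on generators of a group defined by generators and relations, it extends to a group homomorphism precisely when the operators $\eta_{p,q}$ satisfy the images of those relations, namely (1) $\eta_{p,q}^2=\mathrm{id}$, (2) $\eta_{p,q}\eta_{k,l}=\eta_{k,l}\eta_{p,q}$ for $[p,q]\cap[k,l]=\emptyset$, and (3) $\eta_{p,q}\eta_{k,l}=\eta_{p+q-l,p+q-k}\eta_{p,q}$ for $[k,l]\subseteq[p,q]$. Throughout I would exploit that each $\eta_{p,q}$ restricts, on every connected component of $\mathcal{B}_{p,q}$, to the Schützenberger involution of that component (Lemma \ref{highestpq}, Definition \ref{def:schupq}), together with the intertwining identities of Lemma \ref{lemautil} and the uniqueness clause of Proposition \ref{defSchu}.

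Relations (1) and (2) are the routine ones. For (1), $\eta_{p,q}$ maps each $[p,q]'$-connected component to itself and acts there as the Schützenberger involution, which squares to the identity by Proposition \ref{defSchu} (equivalently Proposition \ref{Jshape}); hence $\eta_{p,q}^2=\mathrm{id}$, and in particular each $\eta_{p,q}$ is a bijection lying in $\mathfrak{S}_{\mathcal{B}(\lambda/\mu,n)}$. For (2), I would argue directly from Definition \ref{def:schupq}. Assuming without loss of generality $q<k$, the operator $\eta_{k,l}$ leaves every box with an entry in $[1,k-1]'$ untouched (so it fixes $T^{p,q}$), while $\eta_{p,q}$ leaves every box with entry in $[q+1,n]'$ untouched (so it fixes $T^{k,l}$); since $\eta$ applied to a letter-range depends only on that range and preserves its skew shape, both composites expand to the same tableau $T^{1,p-1}\sqcup\eta(T^{p,q})\sqcup T^{q+1,k-1}\sqcup\eta(T^{k,l})\sqcup T^{l+1,n}$, giving the commutation.

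Relation (3) is the heart of the argument and the main obstacle. Because all three operators appearing in it act only within $[p,q]'$ and fix $T^{1,p-1}$ and $T^{q+1,n}$, I would first reduce, by the relabeling $\rho_{p,q}$ and coplacity exactly as in Lemma \ref{highestpq} and Lemma \ref{lemautil}, to the case $[p,q]=[1,m]$, where it reads $\eta\,\eta_{k,l}\,\eta=\eta_{J'}$ with $\eta:=\eta_{1,m}$ and $J':=[m+1-l,m+1-k]$. Writing $\psi:=\eta\,\eta_{k,l}\,\eta$, the plan has two steps. First I would show $\psi$ preserves each $J'$-connected component $\mathcal{C}$: the full involution $\eta$ conjugates the root-$i$ operators to the root-$(m-i)$ ones, hence carries the $J'$-component $\mathcal{C}$ to a $J$-component $\eta(\mathcal{C})$ (as $m-i$ runs through the root indices $[k,l-1]$ of $J$ when $i$ runs through those of $J'$); as $\eta_{k,l}$ fixes each $J$-component setwise, $\psi(\mathcal{C})=\eta\big(\eta(\mathcal{C})\big)=\mathcal{C}$. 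Second, I would check that $\psi$ obeys the five intertwining identities of Lemma \ref{lemautil} for $J'$ by composing those for $\eta$ and for $\eta_{k,l}$; for instance $E_i'\psi=\eta F'_{m-i}\eta_{k,l}\eta=\eta\,\eta_{k,l}E'_{k+l-1-m+i}\eta=\psi\,F'_{2m+1-k-l-i}=\psi\,F'_{\theta_{J'-1}(i)}$, the other three operators being handled identically and the weight identity following from the longest-element conjugation $\theta_{1,m-1}\theta_{k,l-1}\theta_{1,m-1}=\theta_{m+1-l,m-k}$ (cf. Lemma \ref{lemacompridito}). Since $\psi$ then restricts on each connected $J'$-component to a self-map satisfying the defining relations of the Schützenberger involution, the uniqueness clause of Proposition \ref{defSchu} forces $\psi=\eta_{J'}$, which is relation (3).

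I would emphasize that the delicate point is the component-preservation step in (3): the intertwining identities alone only show that $\psi$ reverses each $J'$-component, so one genuinely needs the sandwiching argument $\psi(\mathcal{C})=\eta\,\eta_{k,l}\,\eta(\mathcal{C})=\mathcal{C}$ to know $\psi$ lands back in the \emph{same} component before invoking uniqueness. Tracking the index reflections $i\mapsto m-i\mapsto k+l-1-m+i\mapsto 2m+1-k-l-i$ through the three maps is the only other place where care is required, and everything else is bookkeeping.
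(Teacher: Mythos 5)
Your proposal is correct, and for the crucial third relation it takes a genuinely different route from the paper. You prove the conjugation form $\eta_{1,m}\,\eta_{k,l}\,\eta_{1,m}=\eta_{m+1-l,m+1-k}$ directly for an \emph{arbitrary} inner subinterval $[k,l]\subseteq[1,m]$, by (i) the sandwiching argument showing $\psi=\eta\,\eta_{k,l}\,\eta$ preserves each $J'$-connected component, (ii) checking that $\psi$ satisfies the intertwining identities of Lemma \ref{lemautil} for $J'$ (your index chase $i\mapsto m-i\mapsto k+l-1-m+i\mapsto 2m+1-k-l-i$ matches $\theta_{m+1-l,m-k}(i)$, consistent with Lemma \ref{lemacompridito}), and (iii) invoking the uniqueness clause of Proposition \ref{defSchu} on each component via the identification of Lemma \ref{highestpq}. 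The paper instead proves only the special case $\eta_{1,q}\eta_{p,q}=\eta_{1,1+q-p}\eta_{1,q}$ (inner interval sharing its right endpoint with the outer one), by writing an arbitrary $T$ as an explicit string of lowering/raising operators applied to the highest weight of its $\mathcal{B}_{p,q}$-component and to $\eta_{1,q}(Y_\nu)$, pushing both composites through these strings with Lemma \ref{lemautil} until both sides become the same word in the $F$'s applied to $Y_\nu$; the general relation is then deduced from this special case by a purely formal manipulation using four instances of it. Both arguments need the same component-tracking fact (that $\eta_{1,q}$ carries $\mathcal{B}_{p,q}$-components to $\mathcal{B}_{1,q-p+1}$-components, exchanging highest and lowest weights — the paper's equation preceding its displayed computation, your step (i)), and both ultimately rest on Proposition \ref{defSchu} and Lemma \ref{lemautil}. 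Your characterize-and-apply-uniqueness scheme buys a shorter and more conceptual proof that avoids both the operator-string bookkeeping and the extra formal derivation step; the paper's version is more explicitly computational but makes the action on each vertex visible. You are also right to flag component preservation as the delicate point: the intertwining identities alone would only place $\psi(\mathcal{C})$ in \emph{some} $J'$-component, and without step (i) the appeal to uniqueness would not go through.
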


\begin{proof}
To show that $\phi$ is a group homomorphism, we show that the operators $\eta_{p,q}$ satisfy the cactus group relations of Definition \ref{def:cactus}, for any $1 \leq p < q \leq n$. Due to Proposition \ref{isomhigh}, it suffices to do this for shifted crystals of the form $\mathcal{B}(\nu,n)$, as every connected component of $\mathcal{B}(\lambda/\mu,n)$ is isomorphic to these, via rectification. Since the operator $\eta$ is an involution on each connected component of $\mathcal{B}_{p,q}$, it follows that $s_{p,q}^2 = id$ for all $1\leq p < q \leq n$.
The second relation is a direct consequence of the definition. Suppose that $[p,q] \cap [k,l] = \emptyset$, and without loss of generality assume that $1 \leq p < q < k < l \leq n$. We remark that each $\eta_{p,q}$ acts on the entries of $[p,q]'$, leaving the entries in $[n]' \setminus [p,q]'$ unchanged. Thus,
	\begin{align*}
	\eta_{p,q} \eta_{k,l} (T)	&= T^{1,p-1} \sqcup \eta(T^{p,q}) \sqcup T^{q+1,k-1} \sqcup \eta(T^{k,l}) \sqcup T^{l+1,n}\\
	&= \eta_{k,l}\eta_{p,q} (T^{1,p-1} \sqcup T^{p,q} \sqcup T^{q+1,k-1} \sqcup T^{k,l} \sqcup T^{l+1,n}).
	\end{align*}	
	
For the third relation, we claim that it suffices to show that, for any $[p,q] \subseteq [1,n]$,
\begin{equation}\label{eq:3cact_1}
\eta_{1,q} \eta_{p,q} = \eta_{1,1+q-p} \eta_{1,q}.
\end{equation}

Given $[k,l] \subseteq [p,q]$, we show that \eqref{eq:3cact_1} implies the third relation $\eta_{p,q} \eta_{k,l} = \eta_{p+q-l,p+q-k} \eta_{p,q}$. Since, in particular, $[k,l] \subseteq [1,q]$, then \eqref{eq:3cact_1} ensures that
\begin{equation}\label{eq:3cact_2}
\eta_{1,q} \eta_{k,l} = \eta_{1+q-l,1+q-k} \eta_{1,q}.
\end{equation}

Moreover, $[k,l] \subseteq [p,q]$ implies that $[k-p+1,l-p+1] \subseteq [1,1+q-p]$, and consequently, by \eqref{eq:3cact_1},
\begin{equation}\label{eq:3cact_3}
\eta_{1,1+q-p} \eta_{k-p+1,l-p+1} = \eta_{q-l+1,q-k+1} \eta_{1,1+q-p}.
\end{equation} 

We also remark that $[k,l] \subseteq [p,q]$ implies that $[q+p-l,q+p-k] \subseteq [1,q]$, and thus,
\begin{equation}\label{eq:3cact_4}
\eta_{1,q} \eta_{q+p-l,q+p-k} = \eta_{k-p+1,l-p+1} \eta_{1,q}.
\end{equation}

Then, for $[k,l] \subseteq [p,q]$, and using the fact that $\eta_{p,q}$ is an involution, for any $[p,q]$, we have
\begin{alignat*}{2}
\eta_{p,q} \eta_{k,l} &= \eta_{1,q} \eta_{1,1+q-p} \eta_{1,q} \eta_{k,l} \eta_{1,q} \eta_{1,q} & \qquad \text{\eqref{eq:3cact_1}}\\
&= \eta_{1,q} \eta_{1,1+q-p} \eta_{1+q-l,1+q-k} \eta_{1,1+q-p} \eta_{1,1+q-p} \eta_{1,q} & \qquad \text{\eqref{eq:3cact_2}}\\
&= \eta_{1,q} \eta_{k-p+1,l-p+1} \eta_{1,q} \eta_{1,q} \eta_{1,1+q-p} \eta_{1,q} & \qquad \text{\eqref{eq:3cact_3}}\\
&= \eta_{q+p-l,q+p-l} \eta_{p,q} & \qquad \text{\eqref{eq:3cact_4} and \eqref{eq:3cact_1}}
\end{alignat*}

We will now prove \eqref{eq:3cact_1}. Given $T \in \mathcal{B}(\nu,n)$, if $T$ is an isolated vertex of $\mathcal{B}_{p,q}$, then the result is trivially true. Hence, we assume that $T$ is in a connected component $\mathcal{B}_0$ of $\mathcal{B}_{p,q}$ with at least two vertices. This component $\mathcal{B}_0$ is contained in $\mathcal{B}(\nu,q) \subseteq \mathcal{B}(\nu,n)$. We may regard $\mathcal{B}(\nu,q)$ as the shifted tableau crystal obtained from $\mathcal{B}(\nu,n)$ by considering only the arrows labelled in $[1,q-1]$ and the vertices of $\mathcal{B}(\nu,n)$, deleting the letters in $[q+1,n]'$, which yield straight-shaped tableaux. By Lemma \ref{highestpq}, this connected component $\mathcal{B}_0$ has a unique highest weight element, $T_{0}^{\mathsf{high}}$, as well as a lowest weight

\begin{equation}\label{Tlowhigh}
T_{0}^{\mathsf{low}} = \eta_{p,q} (T_{0}^{\mathsf{high}}),
\end{equation}
which is also unique by Lemma \ref{lemautil}. Since we assume the cardinality of $\mathcal{B}_0$ to be greater or equal to 2, we have that $T^{\mathsf{high}}_0 \neq T^{\mathsf{low}}_0$.
Moreover, $\mathcal{B}(\nu,q)$ has a unique highest weight element $Y_{\nu}$ (which is the same as $\mathcal{B}(\nu,n)$, ignoring the letters in $[q+1,n]'$) and a lowest weight element $\eta_{1,q} (Y_{\nu})$. Then, we have:
	\begin{equation}\label{TTlow}
	\begin{aligned}	
	T &= F_{i_1}'^{m_1} F_{i_1}^{n_1} \ldots F_{i_k}'^{m_k} F_{i_k}^{n_k} (T_{0}^{\mathsf{high}})\\
	T_{0}^{\mathsf{low}} &= E_{j_1}'^{a_1} E_{j_1}^{b_1} \ldots E_{j_l}'^{a_l} E_{j_l}^{b_l} \eta_{1,q} (Y_{\nu})
	\end{aligned}
	\end{equation}
for some $i_1, \ldots, i_k \in [p,q-1]$, $j_1, \ldots, j_l \in [1, q-1]$, $m_i, a_j \in \{0,1\}, n_i, b_j \geq 0$. Consequently,
	\begin{align*}
	\eta_{1,q} \eta_{p,q} (T) &= \eta \eta_{p,q} F_{i_1}'^{m_1} F_{i_1}^{n_1} \ldots F_{i_k}'^{m_k} F_{i_k}^{n_k} (T_{0}^{\mathsf{high}}) &\text{\eqref{TTlow}}\\
	&= \eta_{1,q} E_{\theta_{p,q \shortminus 1} (i_1)}'^{m_1} E_{\theta_{p,q \shortminus 1}(i_1)}^{n_1} \ldots E_{\theta_{p,q \shortminus 1}(i_k)}'^{m_k} E_{\theta_{p,q \shortminus 1}(i_k)}^{n_k} \eta_{p,q} (T_{0}^{\mathsf{high}}) &\text{Lemma \ref{lemautil}}\\
	&= \eta_{1,q} E_{\theta_{p,q \shortminus 1} (i_1)}'^{m_1} E_{\theta_{p,q \shortminus 1}(i_1)}^{n_1} \ldots E_{\theta_{p,q \shortminus 1}(i_k)}'^{m_k} E_{\theta_{p,q \shortminus 1 }(i_k)}^{n_k} (T_{0}^{\mathsf{low}}) &\text{\eqref{Tlowhigh}}\\
	&= \eta_{1,q} E_{\theta_{p,q \shortminus 1} (i_1)}'^{m_1} E_{\theta_{p,q \shortminus 1 }(i_1)}^{n_1} \ldots E_{\theta_{p,q \shortminus 1}(i_k)}'^{m_k} E_{\theta_{p,q \shortminus 1 }(i_k)}^{n_k} \\
	&\qquad E_{j_1}'^{a_1} E_{j_1}^{b_1} \ldots E_{j_l}'^{a_l} E_{j_l}^{b_l} \eta_{1,q} (Y_{\nu}) &\text{\eqref{TTlow}}\\
	&= F_{\theta_{1,q \shortminus 1} \theta_{p,q \shortminus 1} (i_1)}'^{m_1} F_{\theta_{1,q \shortminus 1}\theta_{p,q \shortminus 1}(i_1)}^{n_1} \ldots F_{\theta_{1,q \shortminus 1}\theta_{p,q \shortminus 1}(i_k)}'^{m_k} F_{\theta_{1,q \shortminus 1}\theta_{p,q \shortminus 1}(i_k)}^{n_k}\\
	&\qquad F_{\theta_{1,q \shortminus 1}(j_1)}'^{a_1} F_{\theta_{1,q \shortminus 1}(j_1)}^{b_1} \ldots F_{\theta_{1,q \shortminus 1}(j_l)}'^{a_l} F_{\theta_{1,q \shortminus 1}(j_l)}^{b_l} (\eta_{1,q}^2 Y_{\nu}) &\text{Lemma \ref{lemautil}}\\
	&= F_{\theta_{1,q \shortminus 1}\theta_{p,q \shortminus 1} (i_1)}'^{m_1} F_{\theta_{1,q \shortminus 1}\theta_{p,q \shortminus 1}(i_1)}^{n_1} \ldots F_{\theta_{1,q \shortminus 1}\theta_{p,q \shortminus 1}(i_k)}'^{m_k} F_{\theta_{1,q \shortminus 1}\theta_{p,q \shortminus 1}(i_k)}^{n_k}\\
	&\qquad F_{\theta_{1,q \shortminus 1}(j_1)}'^{a_1} F_{\theta_{1,q \shortminus 1}(j_1)}^{b_1} \ldots F_{\theta_{1,q \shortminus 1}(j_l)}'^{a_l} F_{\theta_{1,q \shortminus 1}(j_l)}^{b_l} (Y_{\nu}).
	\end{align*}		
Thus, we have
\begin{equation}\label{eq:cact1}
\begin{aligned}
\eta_{1,q} \eta_{p,q} (T) = &F_{\theta_{1,q \shortminus 1}\theta_{p,q \shortminus 1} (i_1)}'^{m_1} F_{\theta_{1,q \shortminus 1}\theta_{p,q \shortminus 1}(i_1)}^{n_1} \ldots F_{\theta_{1,q \shortminus 1}\theta_{p,q \shortminus 1}(i_k)}'^{m_k} F_{\theta_{1,q \shortminus 1}\theta_{p,q \shortminus 1}(i_k)}^{n_k}\\ 
&F_{\theta_{1,q \shortminus 1}(j_1)}'^{a_1} F_{\theta_{1,q \shortminus 1}(j_1)}^{b_1} \ldots F_{\theta_{1,q \shortminus 1}(j_l)}'^{a_l} F_{\theta_{1,q \shortminus 1}(j_l)}^{b_l} (Y_{\nu}).
\end{aligned}
\end{equation}

If $Q$ is a tableau in a connected component of $\mathcal{B}_{p,q}$, then either $Q$ is an isolated vertex, meaning that $F_i,F_i', E_i$ and $E_i'$ are undefined on $Q$, for $i \in [p,q-1]$, or else $Q$ is adjacent to some other element, meaning that $F_i,F_i', E_i$ and $E_i'$ are not simultaneously undefined on $Q$, for $i \in [p,q-1]$. Proposition \ref{defSchu} ensures that, in the former case, the operators $F_i,F_i', E_i$ and $E_i'$ are undefined on $\eta_{1,q}(Q)$, for $i \in [1,q-p] = \theta_{1,q \shortminus 1} [p,q-1]$, and in the latter case, that the operators are not simultaneously undefined for $i \in [1,q-p]$. In either cases, $\eta_{1,q}(Q)$ is in some connected component of $\mathcal{B}_{1,q-p+1}$. Hence, $\eta_{1,q}$ takes the connected component $\mathcal{B}_0$ to another connected component $\mathcal{B}_1$ of $\mathcal{B}_{1,q-p+1}$. Lemma \ref{lemaphieta} implies that $\varphi_j(T) = \varepsilon_{\theta_{1, q \shortminus 1}} \eta_{1,q} (T)$, for any $j \in [p,q-1]$, and thus $\eta_{1,q}$ exchanges highest and lowest weights of $\mathcal{B}_0$ and $\mathcal{B}_1$. Thus, $\eta_{1,q}(T_0^{\mathsf{low}})$ and $\eta_{1,q} (T_0^{\mathsf{high}})$ are, respectively, the highest and lowest weight elements of $\mathcal{B}_1$. Since $\mathcal{B}_1$ is a component of $\mathcal{B}_{1,q-p+1}$, then $\eta_{1,q-p+1}$ maps its lowest weight element to its highest weight element, hence $\eta_{1,q-p+1} \eta_{1,q} (T_0^{\mathsf{high}})$ is the highest weight in $\mathcal{B}_1$. Then, we have
\begin{equation}\label{etalowzero}
\eta_{1,q-p+1} \eta_{1,q} (T_{0}^{\mathsf{high}}) = \eta_{1,q} (T_{0}^{\mathsf{low}}).
\end{equation}
and thus we may write
\begin{align*}
\eta_{1,q-p+1} \eta_{1,q} (T) &= \eta_{1,q-p+1} F_{i_1}'^{m_1} F_{i_1}^{n_1} \ldots F_{i_k}'^{m_k} F_{i_k}^{n_k} (T_{0}^{\mathsf{high}}) &\text{\eqref{TTlow}}\\
&= \eta_{1,q-p+1} E_{\theta_{1,q \shortminus 1}(i_1)}'^{m_1} E_{\theta_{1,q \shortminus 1}(i_1)}^{n_1} \ldots E_{\theta_{1,q \shortminus 1}(i_k)}'^{m_k} E_{\theta_{1,q \shortminus 1}(i_k)}^{n_k} (\eta_{1,q}(T_{0}^{\mathsf{high}})) &\text{Proposition \ref{defSchu}}\\
&= F_{\theta_{1,q-p}\theta_{1,q \shortminus 1}(i_1)}'^{m_1} F_{\theta_{1,q-p}\theta_{1,q \shortminus 1}(i_1)}^{n_1} \ldots F_{\theta_{1,q-p}\theta_{1,q \shortminus 1}(i_k)}'^{m_k}\\
&\phantom{==} F_{\theta_{1,q-p}\theta_{1,q \shortminus 1}(i_k)}^{n_k} \eta_{1,q-p+1} \eta_{1,q} (T_{0}^{\mathsf{high}}) &\text{Lemma \ref{lemautil}}\\
&= F_{\theta_{1,q \shortminus 1}\theta_{p,q \shortminus 1}(i_1)}'^{m_1} F_{\theta_{1,q \shortminus 1}\theta_{p,q \shortminus 1}(i_1)}^{n_1} \ldots\\
&\qquad \ldots F_{\theta_{1,q \shortminus 1}\theta_{p,q \shortminus 1}(i_k)}'^{m_k} F_{\theta_{1,q \shortminus 1}\theta_{p,q \shortminus 1}(i_k)}^{n_k} \eta_{1,q}(T_{0}^{\mathsf{low}}) &\text{\eqref{etalowzero} and Lemma \ref{lemacompridito}}\\
&=  F_{\theta_{1,q \shortminus 1}\theta_{p,q \shortminus 1}(i_1)}'^{m_1} F_{\theta_{1,q \shortminus 1}\theta_{p,q \shortminus 1}(i_1)}^{n_1} \ldots\\
&\qquad \ldots F_{\theta_{1,q \shortminus 1}\theta_{p,q \shortminus 1}(i_k)}'^{m_k} F_{\theta_{1,q \shortminus 1}\theta_{p,q \shortminus 1}(i_k)}^{n_k} \eta_{1,q}(E_{j_1}'^{a_1} E_{j_1}^{b_1} \ldots E_{j_l}'^{a_l} E_{j_l}^{b_l} \eta_{1,q}(Y_{\nu})) &\text{\eqref{TTlow}}\\
&= F_{\theta_{1,q \shortminus 1}\theta_{p,q \shortminus 1}(i_1)}'^{m_1} F_{\theta_{1,q \shortminus 1}\theta_{p,q \shortminus 1}(i_1)}^{n_1} \ldots F_{\theta_{1,q \shortminus 1}\theta_{p,q \shortminus 1}(i_k)}'^{m_k} F_{\theta_{1,q \shortminus 1}\theta_{p,q \shortminus 1}(i_k)}^{n_k}\\
&\qquad F_{\theta_{1,q \shortminus 1}(j_1)}'^{a_1} F_{\theta_{1,q \shortminus 1}(j_1)}^{b_1} \ldots F_{\theta_{1,q \shortminus 1}(j_l)}'^{a_l} F_{\theta_{1,q \shortminus 1}(j_l)}^{b_l} (Y_{\nu}). &\text{Proposition \ref{defSchu}}
\end{align*}
and so
\begin{equation}\label{eq:cact2}
\begin{aligned}
\eta_{1,q-p+1} \eta_{1,q} (T) = &F_{\theta_{1,q \shortminus 1}\theta_{p,q \shortminus 1}(i_1)}'^{m_1} F_{\theta_{1,q \shortminus 1}\theta_{p,q \shortminus 1}(i_1)}^{n_1} \ldots F_{\theta_{1,q \shortminus 1}\theta_{p,q \shortminus 1}(i_k)}'^{m_k} F_{\theta_{1,q \shortminus 1}\theta_{p,q \shortminus 1}(i_k)}^{n_k}\\ 
&F_{\theta_{1,q \shortminus 1}(j_1)}'^{a_1} F_{\theta_{1,q \shortminus 1}(j_1)}^{b_1} \ldots F_{\theta_{1,q \shortminus 1}(j_l)}'^{a_l} F_{\theta_{1,q \shortminus 1}(j_l)}^{b_l} (Y_{\nu}).
\end{aligned}
\end{equation}
Then, comparing \eqref{eq:cact1} and \eqref{eq:cact2}, we get $\eta_{1,q}\eta_{p,q}(T) = \eta_{1,q-p+1} \eta_{1,q} (T)$, which concludes the proof.
\end{proof}

\begin{cor}
Let $T \in \mathsf{ShST}(\lambda/\mu,n)$ and $1 \leq p < q \leq n$. Then,
$$\eta_{p,q} (T) = \eta_{1,q} \eta_{1,q-p+1} \eta_{1,q} (T).$$
In particular, for $T \in \mathsf{ShST}(\nu,n)$, we have
$$\eta_{p,q}(T) = \mathsf{evac}_{q} \mathsf{evac}_{q-p+1} \mathsf{evac}_{q} (T),$$
where $\mathsf{evac}_{q} (T) := \mathsf{evac} (T^{1,q}) \sqcup T^{q+1,n}$.
\end{cor}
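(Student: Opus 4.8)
The plan is to obtain both identities directly from the cactus relation \eqref{eq:3cact_1} proved inside Theorem \ref{teo:cactusaction}, combined with the involutivity of the operators $\eta_{p,q}$. Recall that \eqref{eq:3cact_1} reads $\eta_{1,q}\eta_{p,q} = \eta_{1,q-p+1}\eta_{1,q}$, and that this holds on all of $\mathcal{B}(\lambda/\mu,n)$, since by Proposition \ref{isomhigh} each connected component is isomorphic, via rectification, to some $\mathcal{B}(\nu,n)$ on which the relation was established. First I would left-compose both sides with $\eta_{1,q}$ and use that $\eta_{1,q}^2 = \mathrm{id}$ (relation $(1)$ of Definition \ref{def:cactus}, valid because $\eta$ is an involution on every connected component of $\mathcal{B}_{1,q}$). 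This gives $\eta_{1,q}^2\,\eta_{p,q} = \eta_{1,q}\eta_{1,q-p+1}\eta_{1,q}$, that is, $\eta_{p,q} = \eta_{1,q}\eta_{1,q-p+1}\eta_{1,q}$, which is the first assertion.

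For the second assertion I would restrict to $T \in \mathsf{ShST}(\nu,n)$ of straight shape and show that $\eta_{1,r} = \mathsf{evac}_r$ as maps on straight-shaped tableaux, for each $1 \leq r \leq n$. By Definition \ref{def:schupq} with $p=1$ we have $\eta_{1,r}(T) = \eta(T^{1,r}) \sqcup T^{r+1,n}$. The key observation is that, when $T$ has straight shape $\nu$, the subtableau $T^{1,r}$ consisting of the entries in $[1,r]'$ is again of straight shape: since the entries of a shifted semistandard tableau weakly increase along each row and down each column, the cells carrying a letter $\leq r$ form a shifted shape $S(\nu')$ for a strict partition $\nu' \subseteq \nu$. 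Hence $\eta(T^{1,r}) = \mathsf{evac}(T^{1,r})$ by Proposition \ref{defSchu}, so $\eta_{1,r}(T) = \mathsf{evac}(T^{1,r}) \sqcup T^{r+1,n} = \mathsf{evac}_r(T)$.

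It then remains to check that the three successive applications in $\eta_{1,q}\eta_{1,q-p+1}\eta_{1,q}$ stay within the straight-shaped setting, so that each $\eta_{1,r}$ may legitimately be replaced by $\mathsf{evac}_r$. For this I would invoke Proposition \ref{Jshape}: $\mathsf{evac}$ preserves straight shape, whence $\mathsf{evac}_q(T) = \mathsf{evac}(T^{1,q}) \sqcup T^{q+1,n}$ again has shape $\nu$. Applying the previous paragraph to this tableau yields $\eta_{1,q-p+1}(\mathsf{evac}_q(T)) = \mathsf{evac}_{q-p+1}(\mathsf{evac}_q(T))$, which is still of shape $\nu$, and the same argument applies to the outermost factor $\eta_{1,q}$. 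Composing the three identities gives $\eta_{p,q}(T) = \eta_{1,q}\eta_{1,q-p+1}\eta_{1,q}(T) = \mathsf{evac}_q\,\mathsf{evac}_{q-p+1}\,\mathsf{evac}_q(T)$, as required.

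The main obstacle is precisely this bookkeeping: verifying that the restriction-to-an-initial-interval operation $T \mapsto T^{1,r}$ produces a genuinely straight-shaped tableau and that this property persists after each evacuation step, so that the identification $\eta_{1,r} = \mathsf{evac}_r$ is valid at every stage of the composition. Once this is in place, the algebraic part is immediate from \eqref{eq:3cact_1} and the involutivity of the $\eta_{p,q}$, and no further computation is needed.
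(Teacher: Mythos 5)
Your proposal is correct and follows essentially the same route as the paper: the first identity is the cactus relation $s_{p,q}=s_{1,q}s_{1,q-p+1}s_{1,q}$ (equivalently, the identity \eqref{eq:3cact_1} combined with $\eta_{1,q}^2=\mathrm{id}$), and the second follows from the coincidence of $\eta_{1,q}$ with $\mathsf{evac}_q$ on straight shapes. Your extra bookkeeping verifying that $T^{1,r}$ and its evacuations remain straight-shaped is a sound elaboration of what the paper leaves implicit in its appeal to Proposition \ref{defSchu}.
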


\begin{proof}
Since $[p,q] \subseteq [1,q]$, the third relation of the cactus group ensures that $s_{p,q} = s_{1,q} s_{1,q-p+q} s_{1,q}$ in $J_n$. Then, since $\phi$ from Theorem \ref{teo:cactusaction} is a group homomorphism, we have
$$\eta_{p,q} = \eta_{1,q} \eta_{1,q-p+1} \eta_{1,q}.$$
The second identity follows from Theorem \ref{defSchu} which ensures that $\eta_{1,q}$ coincides with $\mathsf{evac}_q$ in $\mathcal{B}(\nu,n)$.
\end{proof}

As shown by Halacheva \cite{Hala16,Hala20}, the cactus group $J_n$ acts on the crystal of Young tableaux of a given shape filled in $[n]$, via the type $A$ Schützenberger involutions $\eta_{p,q}$, for $1 \leq p > q \leq n$. The symmetric group $\mathfrak{S}_n$ also acts on this crystal via the type $A$ crystal reflection operators $\sigma_i$, for $1 \leq i < n$. Hence, we have that the action of $J_n$ on type $A$ crystals factorizes in the quotient of this group by the normal subgroup $\langle (s_{i,i+1} s_{i+1,i+2})^3, i \in [n-2] \rangle$ generated by the braid relations, which is contained in the kernel of this action. This is not the case for the action $\phi$ in Theorem \ref{teo:cactusaction} on the shifted tableau crystal $\mathcal{B}(\nu,n)$, since the braid relations of the symmetric group do not need to hold (see Example \ref{exbraidnot}), hence $\langle (s_{i,i+1} s_{i+1,i+2})^3, i \in [n-2] \rangle \nsubseteq ker \phi$. 

\appendix
\section{Shifted tableau crystals for $n=3$}\label{appendix}
In this appendix we present some larger examples of the shifted tableau crystal $\mathcal{B}(\nu,n)$, for $n=3$. Shifted tableaux will be enumerated according to their list enumeration in SageMath (this enumeration starts on 0 by default, we choose to start on 1). We remark that the definiton of shifted tableaux used in SageMath is the one in \cite{AsOg18}, which is not the same as in \cite{GLP17}. Here, shifted tableaux are not required to be in canonical form and the main diagonal must have no have primed entries. However, we remark that both definitions coincide for straight-shaped tableaux that are filled with $[n]'$, where $n$ is equal to the number of rows, as this ensures that the first occurrence of each letter $i$ or $i'$ appears in the main diagonal, thus being unprimed in canonical form. For instance, the tableau $T \in \mathcal{B}(\nu,3)$, with $\nu_1 = (5,3,1)$ (see Figure \ref{fig:crystal531}), in Example \ref{exbraidnot} is $T_{61}$.

\begin{figure}[h!]
\centering
\begin{BVerbatim}
sage: SPT=ShiftedPrimedTableaux([5,3,1], max_entry=3)
sage: L=SPT.list()
sage: T=SPT([[1,1,1,1,'3p'],[2,2,'3p'],[3]])
sage: L.index(T)+1
61
\end{BVerbatim}
\caption{A SageMath code listing the shifted semistandard tableaux of shape $\nu = (5,3,1)$, in the alphabet $\{1,2,3\}'$.}
\end{figure}

We have seen in Example \ref{exbraidnot} that $(\sigma_1 \sigma_2)^3 (T_{61}) \neq T_{61}$. However, we have that $(\sigma_1 \sigma_2)^9 (T_{61})= T_{61}$. If we set $m_i := min \{m: (\sigma_1 \sigma_2)^m (T_i) = T_i\}$, for $i \in [|\mathcal{B}(\nu_1,3)|]=[64]$, then, we have the following in $\mathcal{B}(\nu,n)$:

$$m_i = \begin{cases}
3 & \text{for}\; i \in \{1,2,3,9,10,11,15,18,19,22,23,27,28,31,37,38,44,47\}\\
5 & \text{for}\; i \in \{6,13,25,33,42,50,54,58,62,64\}\\
9 & \text{otherwise}
\end{cases}$$

Therefore, taking $m = lcm (3,5,9) = 45 $, we have that $(\sigma_1 \sigma_2)^m (T) = T$, for all $T \in \mathcal{B}(\nu_1,3)$.

Similarly, for $\nu_2 = (5,2,1)$ (see Figure \ref{fig:crystal521}), we have that, for all $i \in [|\mathcal{B}(\nu_2,3)|] = [48]$,

$$m_i = \begin{cases}
3 & \text{for}\; i \in \{1,2,3,4,5,7,10,11,12,13,14,15,16,19,20,\\
 & \qquad \qquad 21,23,24,25,26,30,31,32,33,35,36,39,40,42,45\}\\
9 & \text{otherwise}
\end{cases}$$

Hence, putting $m = lcm (3,9) = 9$, we have that $(\sigma_1 \sigma_2)^m (T) = T$ for all $T \in \mathcal{B}(\nu_2,3)$. The following table summarizes these and other computations we did. We remark that Lemma \ref{sigmadetached} ensures that the effect of $\sigma_i$ on $\mathsf{rect}(T^{i,i+1})$ (which has, at most, two rows) does not depend on the first diagonals, except for one, with two elements. Thus, it suffices to check strict partitions whose last part is equal to one. This means that the results obtained for $(3,2,1)$ are the same for $(3+k,2+k,1+k)$, for $k \geq 1$.

\begin{table}[H]
\begin{center}
\begin{tabular}{|c|c|c|}
\hline
\multirow{2}{*}{$\nu$} & \multirow{2}{*}{$|\mathcal{B}(\nu,3)|$} & {\footnotesize least $m$ such that $(\sigma_1 \sigma_2)^m(T)=T$}\\
& & {\footnotesize for all $T \in \mathcal{B}(\nu,3)$}\\
\hline
$(3,2,1)$ & 8 & 3 \\
\hline
$(4,2,1)$ & 24 & 3 \\
\hline
$(4,3,1)$ & 24 & 3 \\
\hline
$(5,2,1)$ & 48 & 9 \\
\hline
$(5,3,1)$ & 64 & 45 \\
\hline
$(5,4,1)$ & 48 & 9 \\
\hline
$(6,2,1)$ & 80 & 18\\
\hline
$(6,3,1)$ & 120 & 18\\
\hline
$(6,4,1)$ & 120 & 18
 \\
\hline
\end{tabular}
\end{center}
\end{table}

Given $\nu$ a strict partition, since $\mathcal{B}(\nu,3)$ is finite, we know that there exists a $m > 3$ such that $(\sigma_1 \sigma_2)^m (T)=T$ for all $T \in \mathcal{B}(\nu,3)$ (see Remark \ref{remarkdih}). These computations show that, if there exits an $m$ such that $(\sigma_1 \sigma_2)^m = id$, for any $\nu$, then it should be greater or equal to $lcm(3,9,18,45) = 90$. However, an upper bound for any $\nu$ is not known.
 
\begin{figure}
\includegraphics[scale=0.5]{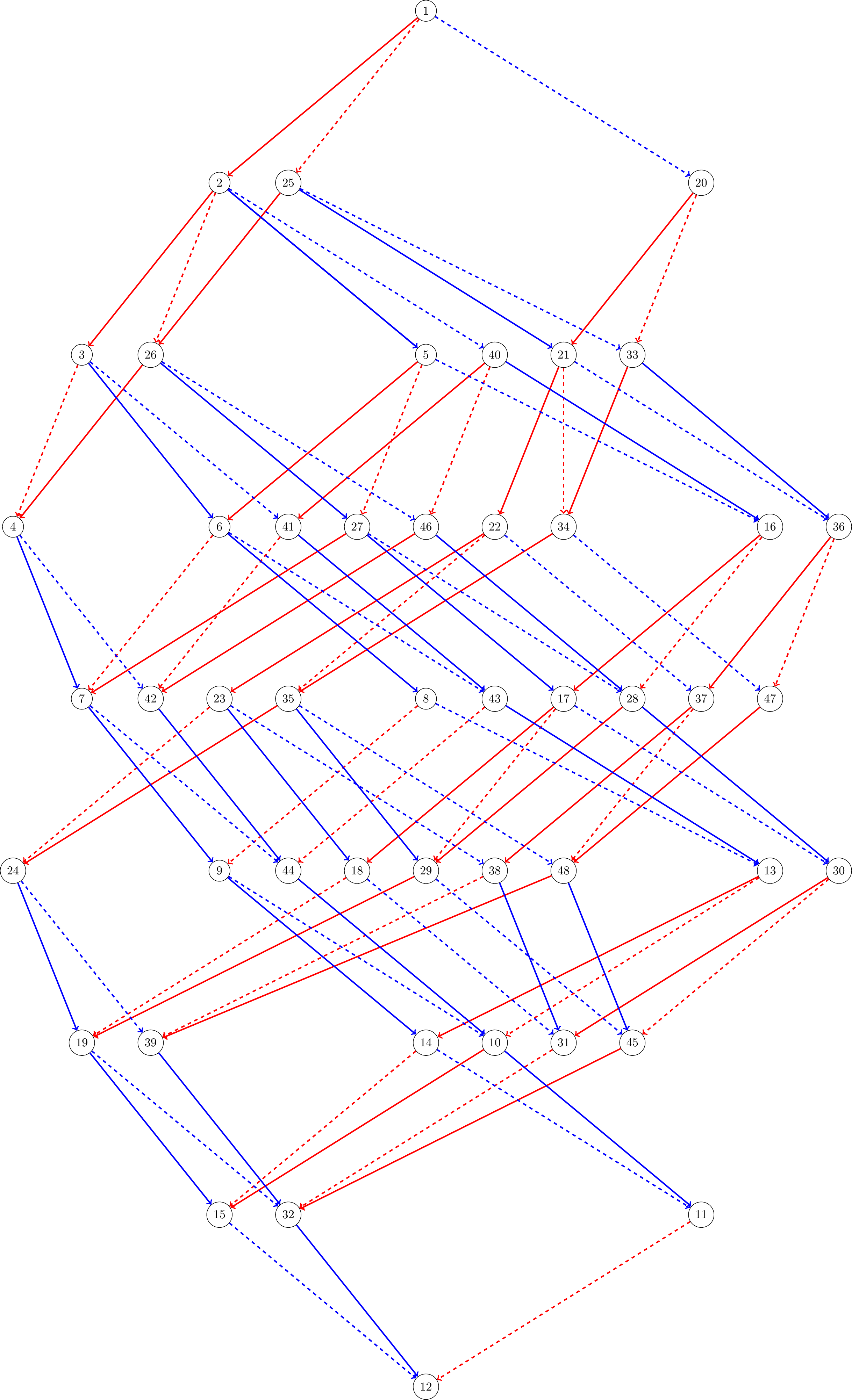}
\caption{Shifted tableau crystal graph $\mathcal{B}(\nu,3)$, with $\nu = (5,2,1)$. The operators $F_1, F_1'$ are in red and the $F_2, F_2'$ are in blue. Vertices with the same weight are grouped together.}
\label{fig:crystal521}
\clearpage
\end{figure}

\begin{figure}
\includegraphics[scale=0.5]{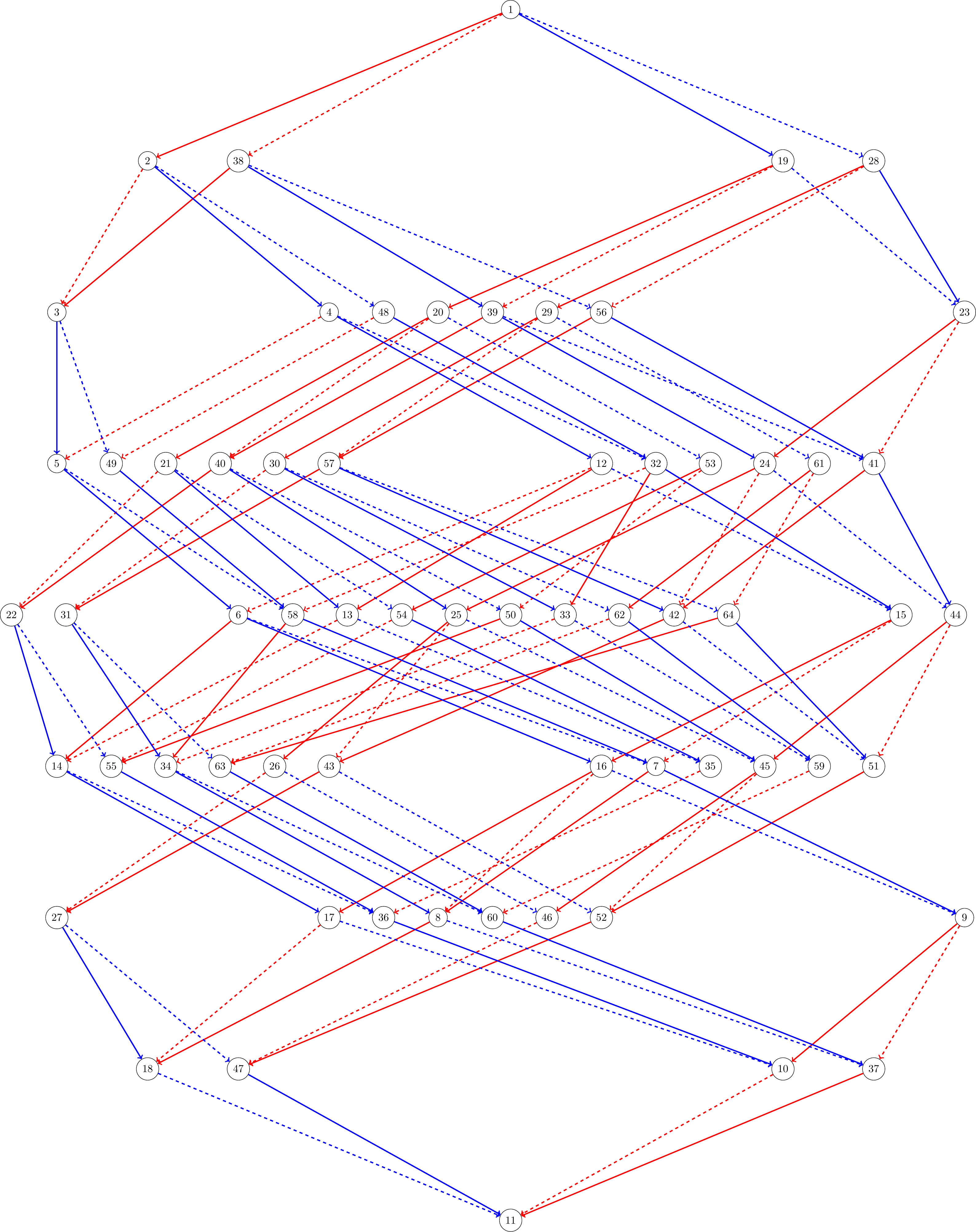}
\caption{Shifted tableau crystal graph $\mathcal{B}(\nu,3)$, with $\nu = (5,3,1)$. The operators $F_1, F_1'$ are in red and the $F_2, F_2'$ are in blue. Vertices with the same weight are grouped together.}
\label{fig:crystal531}
\clearpage
\end{figure}
\clearpage

\section*{Acknowledgements}
The author wishes to express her gratitude to her supervisors Olga Azenhas and Maria Manuel Torres and to acknowledge the hospitality of the Department of Mathematics of University of Coimbra. We also thank the organizers of the Sage Days 105 and OpenDreamKit, for the financial support to participate in the workshop, which was particularly useful for learning more about SageMath. 


\bibliographystyle{siam}
\bibliography{bibliography}

\end{document}